\documentclass{amsart}
\usepackage[dvipsnames]{xcolor}

\usepackage{amsmath, amsfonts, amssymb, amsbsy, bigstrut, graphicx, enumerate,  upref, longtable, comment, bbm}
\usepackage{amsmath}
\usepackage{pstricks,csquotes}
\allowdisplaybreaks
\usepackage{dsfont,mathtools}
\usepackage{bbm}
\usepackage{mathalpha}
\usepackage[colorlinks=true,
            linkcolor=blue,
            citecolor=blue,
            urlcolor=blue]{hyperref}
\usepackage[numbers, sort&compress]{natbib} 
\usepackage{euscript}

\usepackage[noabbrev,capitalize]{cleveref}
\usepackage[top=1in, bottom=1in, left=1in, right=1in]{geometry}

\newcommand{\mv}{\mathfrak{v}}

\newcommand{\sm}{s_{\theta,p}}

\newcommand{\E}{\mathbb{E}}

\newcommand{\reihaneh}[1]{\textcolor{purple}{(Reihaneh comment: #1)}}
\newtheorem{thm}{Theorem}
\newtheorem{lmm}[thm]{Lemma}

\newtheorem{corr}[thm]{Corollary}
\newtheorem{prop}[thm]{Proposition}
\newtheorem{defn}{Definition}

\theoremstyle{definition}
\newtheorem{remark}{Remark}
\newtheorem{ex}{Example}

\numberwithin{thm}{section}
\numberwithin{remark}{section}
\numberwithin{ex}{section}
\numberwithin{assm}{section}
\numberwithin{defn}{section}
\newcommand{\argmin}{\operatorname{argmin}}

\newcommand{\R}{\mathbb{R}}

\renewcommand{\P}{\mathbb{P}}

\DeclareMathOperator*{\arginf}{arg\,inf}

\usepackage{mathrsfs}

\numberwithin{equation}{section}

\allowdisplaybreaks
\begin{document}

\title{LDP for tensor forms}
\author[Malekian]{Reihaneh Malekian}
\address{R. \ Malekian\hfill\break
	Department of Statistics\\ Columbia University\\ New York, NY 10027, USA.}
\email{rm3942@columbia.edu}
\author[Bhattacharya]{Sohom Bhattacharya}
\address{S. \ Bhattacharya\hfill\break
	Department of Statistics\\ University of Florida\\ Gainesville, FL 32603, USA.}
\email{bhattacharya.s@ufl.edu}
\author[Deb]{Nabarun Deb}
\address{N. \ Deb\hfill\break
	Econometrics and Statistics\\ University of Chicago Booth School of Business\\ Chicago, IL 60637, USA.}
\email{nabarun.deb@chicagobooth.edu}

\author[Mukherjee]{Sumit Mukherjee}
\address{S. \ Mukherjee\hfill\break
	Department of Statistics\\ Columbia University\\ New York, NY 10027, USA.}
\email{sm3949@columbia.edu }

\thanks{SM's research is partially supported by NSF grant DMS-2113414.}

\begin{abstract}
In this paper, we study the large deviation principle (LDP) for a tensor-weighted functional of i.i.d. random variables, when the sequence of tensors converges under a variant of the \enquote{bad} cut norm. Using the LDP, we analyze a Gibbs measure with a tensor-valued Hamiltonian, and characterize the optimizers of the limiting variational problem in terms of a functional fixed point equation. As applications, we focus on several concrete examples, which include monochromatic subgraph counts in sparse random graphs, Erd\H{o}s-R\'enyi hypergraphs, and a generalized Potts statistic of order $v\ge 2$. Studying the optimization problem, we give sufficient conditions for uniqueness of the optimizer, as well as for existence of constant optimizers (replica symmetry). Our results demonstrate universal weak laws for a large class of tensor Gibbs models with approximately regular tensors.

% MORE SUGGESTIONS FOR A NAME:\newline1.LDP for Tensor forms
% %Generalized Potts Model.
% \newline 2.LDP for Tensor Interaction Models: The Generalized Potts Case \newline 3.LDP for Tensor Interaction Models(talking about potts on abstract)\newline 4.LDP for Hypergraphon Interaction Models(talking about potts on abstract)

\end{abstract}

% \keywords{Graph limits, magnetization, phase transition, replica-symmetry, tensor Ising model}
\keywords{Large deviations, tensor functionals, hypergraphons, sparse random graphs, Gibbs measures, variational problems, subgraph counts, Potts models.}
\subjclass{82B20, 05C80}

\maketitle

\section{Introduction}

Let ${\bf X}=(X_1,\ldots,X_n)$ be i.i.d. random variables from a probability measure $\mu$ on $[c]:=\{1,\ldots,c\}$, where $c,n\ge 2$ are positive integers, and $\mu_r:=\mathbb P(X_1=r)>0$ for all $r\in[c]$.  %${\bf X}, \boldsymbol{\alpha}, \boldsymbol{f}, \ldots$.
For a positive integer $v\ge 2$, let $Q_n:[n]^v \to \mathbb{R}$ be a $v$-tensor which satisfies the following two properties:
\begin{enumerate}
    \item[(i)] ({Symmetric}) 
$Q_n(i_1,\dots,i_v)=Q_n(i_{\sigma(1)},\ldots,i_{\sigma(v)})$, for all $\sigma\in S_v$, where $S_v$ denotes the set of all permutations of $[v]$.
\item[(ii)] ({Zero-diagonal}) $Q_n(i_1,\ldots,i_v)=0$ if $i_1,\cdots,i_v$ are not all distinct.
\end{enumerate}
Let $\phi:[c]^v\to \R$ be an arbitrary function. %The function $\phi$ determines how the states interact across $v$ coordinates. In particular, different choices of $\phi$ recover a wide range of observables, including weighted subgraph counts and energy functionals ({\color{red} Again highlight with examples. Use the example environment to highlight maybe 2 or 3 choices where you vary $Q_n$ and also $\phi$. In each example, have maybe 2 citations. The $c=2$ example can be taken from the Gibbs paper).}
In this paper we study the large deviation behavior of the tensor form
\begin{align}\label{hamiltoniandef}
    N_n(Q_n,\phi,{{\bf X}}):=\frac{1}{n^v}\sum_{i_1,\cdots,i_v=1}^nQ_n(i_1,\ldots,i_v)\phi(X_{i_1},\ldots, X_{i_v}).
\end{align}

In the very special case when $v=1$ and $Q_n(i)=1$  for all $i\in [n]$, the random variable $N_n(Q_n,\phi,{\bf X})$ reduces to the empirical average $$\frac{1}{n}\sum_{i=1}^n\phi(X_i),$$
the LDP for which follows from the classical Cramér's theorem. 
% it specifies which collections of indices interact and with what strength. 
%\cite{Basak2017,wu1982potts,potts1952some,eichelsbacher2015rates,dembo2014replica,costeniuc2005complete,blanchard2008thermodynamic,gandolfo2010limit}). In this regime, one can exploit the structure of matrices together with tools from graphon/graph limit theory to control the interaction functional.
More generally,  when $Q_n\equiv 1$ and $v\ge 1$, the random variable \eqref{hamiltoniandef} reduces to a U-statistic (V-statistic), large deviation principles for which have been studied extensively in the literature  \cite{arcones1992large, eichelsbacher1995large, eichelsbacher2002large}. In the most general case, the tensor $Q_n$ encodes the underlying interaction structure. Such tensors naturally arise, for example, as adjacency tensors of random graphs and hypergraphs, or as interaction kernels in statistical physics models.  
The LDP for multilinear forms, and more generally for nonlinear functionals of i.i.d.~random variables, has been studied extensively in the literature 
%in the case where the tensor $Q_n$ is induced by a matrix, corresponding to pairwise interactions 
(see \cite{chatterjee2016nonlinear,yan2020nonlinear, augeri2020nonlinearlargedeviationbounds,Basak2017,jain2018meanfield,eldan2018gaussian,austin2019structure,lacker2022mean} and references therein). Our approach differs from the nonlinear large-deviation framework, in that we study a proper large deviation with a limiting rate function, whereas the existing literature mostly focuses on nonasymptotic finite-sample bounds. LDPs for similar multilinear forms have recently been studied in the literature (see ~\cite{bhattacharya2023gibbs,bhattacharya2024ldp}). The main difference is that instead of working with a rank-1 tensor characterized by a two-dimensional graphon and leveraging the counting lemma for two-dimensional $L^p$ graphons studied in   \cite[Theorem 2.18, Theorem 2.20]{BorgsLPII},
we work directly in the higher-order tensor setting under the topology of $\text{Cut}^*$ convergence (see \cref{defcutstar}), which is equivalent to the \enquote{bad} cut norm for tensors~\cite{zhao2015hypergraph}. It is well known that the bad cut norm does not admit a counting lemma for sub-hypergraph counts. Nevertheless, by proving a counting lemma for random graphs (see \cref{lem:uniform-core-discrepancy-ER}), we obtain an LDP for monochromatic subgraph counts under much weaker sparsity conditions in \cref{LDPforINSER}, compared to the existing literature  (see \cite[Corollary 1.7, Section 1.3]{bhattacharya2023gibbs}).
The key ingredient is \cref{lem:uniform-core-reduction-ER}, which allows us to estimate a subgraph count by the corresponding estimate for its 2-core graph. \cref{lem:uniform-core-discrepancy-ER} then invokes sharp large deviation bounds for subgraph counts in random graphs (see~\cite{BasakKarmakar2025UpperTailIrregular,BasakBasu2023UpperTailRegularLocalized,KozmaSamotijLowerTailsRelativeEntropy}) to verify the regularity lemma for random graphs. This allows us to get results for much wider sparsity regimes.

\noindent We now give some examples of random variables of the form \eqref{hamiltoniandef}, which are of interest.

\begin{ex}

    Assume $v=2$ and  $\phi(x,y)=\mathbf 1_{\{x=y\}}$. Then \eqref{hamiltoniandef} becomes 
    $$\frac{1}{n^2}\sum_{i,j} Q_n(i,j)\mathbf 1_{\{X_i=X_j\}},$$ which is the Hamiltonian/sufﬁcient statistic for the Potts model (a generalization of the Ising model) with coupling matrix $Q_n$ (see \cite{ising1925, potts1952some, ellis1990limit, Bresler2019,Rados2019}).

    \end{ex}

        \begin{ex}\label{TRI}

    Assume $v=3$, $\phi(x,y,z)=\mathbf 1_{\{x=y=z\}}$, and  $Q_n(i,j,k)=G_n(i,j)G_n(j,k) G_n(k,i)$, where $G_n$ is the (scaled) adjacency matrix of a graph $G_n$. Then \[\frac{1}{n^3}\sum_{i,j,k} G_n(i,j)G_n(j,k) G_n(k,i)\mathbf 1_{\{X_i=X_j=X_k\}}\] counts the (scaled) number of monochromatic triangles in
$G_n$, where the vertices of $G_n$ receive colors from $[c]$ according to the law $\mu$, independent of other vertices. Limiting distributions of such random variables have been studied in the literature (see \cite{ bhattacharya2019monochromatic,bhattacharya2022normal, fang2015universal, Bhattacharya2017,bhattacharya2020second} and references therein). 

In particular, if ${G}_n$ is an Erd\H{o}s-R\'enyi random graph with parameter $p$ on $n$ vertices, then the LDP for the above random variable has been studied in the literature when the Erd\H{o}s-R\'enyi parameter $p$ is fixed, in \cite{bhattacharya2024ldp}. Our results allow us to study the case when  $(n\log n)^{-1/2}\ll p\ll 1$.   In fact, we study the same problem for a general subgraph $H$ (instead of triangles), and allow $p=p_n$ to go to $0$ at some rate, depending on $H$. %However, our paper is able to address that partially.
%More generally, one can study the number of monochromatic subgraphs in a large graph, whose vertices are colored using a probability distribution $\mu$, independently across vertices.
    \end{ex}

% \begin{ex}
% In the setting of \cref{TRI}, suppose ${G}_n$ is a Erd\H{o}s-R\'enyi random graph with parameter $p$ on $n$ vertices. If $p=p_n\to 0$, then the LDP for ${n^{-3}}\sum_{i,j,k} G_n(i,j)G_n(j,k) G_n(k,i)\mathbf 1_{\{X_i=X_j=X_k\}}$ cannot be covered by existing literature; e.g.  \cite{chatterjee2011large,bhattacharya2024ldp}. However, our paper is able to address that partially.
% \end{ex}

  \begin{ex}\label{Hypex} Assume $\phi(x_1,\ldots,x_v)=\mathbf 1_{\{x_1= \ldots=x_v\}}$ and let ${G}^{(v)}_n$ be a $v$-uniform hypergraph over $n$ vertices. Let $Q_n(i_1,\ldots,i_v)$ be 1 if and only if $\{i_1,\ldots,i_v\}$ form a $v$-hyperedge in ${G}^{(v)}_n$ and otherwise zero. Then 
  \[\frac{1}{n^v}\sum_{i_1,\cdots,i_v=1}^nQ_n(i_1,\ldots,i_v)\mathbf 1_{\{X_{i_1}= \ldots=X_{i_v}\}}\]
counts the (scaled) number of monochromatic hyperedges in ${G}^{(v)}_n$ (see \cite{SomabhaBhattacharya2020}).
\end{ex}
    \begin{ex}[Rainbow triangles]
    Suppose we are in the setting of \cref{TRI}, but with $\phi(x,y,z)$ changed to $\mathbf 1_{\{x,y,z \,\text{are pairwise distinct}\}}$. Then \[\frac{1}{n^3}\sum_{i,j,k} G_n(i,j)G_n(j,k) G_n(k,i)\mathbf 1_{\{X_i, X_j, X_k \,\text{are pairwise distinct}\}}\] counts the (scaled) number of rainbow triangles (where each vertex has a different color) in
$G_n$. More generally, one can study the number
of rainbow subgraphs.
\end{ex}

\paragraph{\textbf{Main Contributions}}
Our main contributions can be summarized as follows:
\begin{itemize}
    \item[(i)] We establish an LDP for the tensor functional $N_n(Q_n,\phi, \mathbf{X})$ defined by~\eqref{hamiltoniandef} (\cref{NewLDP}) with speed $n$, under a convergence condition on the associated hypergraphons. The rate function admits an explicit variational form, and we also obtain convergence of the corresponding log-partition function for a Gibbs measure with Hamiltonian $N_n(Q_n,\phi, \mathbf{X})$, in terms of a variational problem. Our results go beyond existing LDP results, and cover a wide range of examples, including pairwise interactions and general $v$-tensors.
    \item[(ii)] To establish this result, we introduce a notion of convergence for $v$-hypergraphons via cut$^*$ norm (\cref{defcutstar}), and show the equivalence of this norm with the ``bad cut" norm (see \cite{zhao2015hypergraph}).
    
    %that convergence in this norm is sufficient to control the interaction functional and thus, obtain the exponential equivalence needed for the LDP.
      \item[(iii)] We derive a fixed-point equation in~\cref{prop:propotts2} that characterizes optimizers of the limiting variational problem from part (i). Utilizing this, we provide various necessary and (separately) sufficient conditions for the optimizers to be constant (replica symmetry).

    \item[(iv)] As an application of our framework, we derive the LDPs for colored sparse Erd\H{o}s-R\'enyi hypergraphs (\cref{LDPforERHyp}) and colored sparse Erd\H{o}s-R\'enyi graphs (\cref{LDPforINSER}).
  
    % \item[(iv)] As an application of our framework, we provide a general conditional LDP (\cref{LDPforINSER}) for random tensors that converge in cut$^*$, and derive LDPs for sparse Erd\H{o}s-R\'enyi hypergraph (\cref{LDPforERHyp}) and sparse inhomogeneous Erd\H{o}s-R\'enyi graph (\cref{LDPforINSER}).
    \item[(v)] As a further application, we consider a generalized Potts statistic with tensor interactions. We study an LDP for the generalized Potts statistic, as well as a Gibbs measure with the Potts statistic as the Hamiltonian. We derive a variational characterization of the rate function, and the limiting free energy, in terms of constrained and unconstrained optimization problems (\cref{prop:ldp+zn}). 
    We further characterize the optimizers of the unconstrained and the constrained optimization problems in \cref{prop:propotts} and \cref{OPT}, respectively. %analyzes the associated variational problem in special cases and derive explicit rate functions and optimizers, including regimes with multiple or non-constant solutions.
\end{itemize}

\section{Main Results}

% LDP for multilinear forms have been studied in the literature in the case where the tensor $Q_n$ is induced by a matrix, corresponding to pairwise interactions (see \cite{Cha2016,cook2023regularity, augeri2020nonlinearlargedeviationbounds}). In this regime, one can exploit the structure of matrices together with tools from graphon/graph limit theory to control the interaction functional.
% \reihaneh{Some papers to mention here?} {\color{red} Perhaps take a look at Somabha's papers and see the papers they cite.} 

% However, extending these results to general $v$-tensors introduces fundamentally new challenges. In particular, there is no canonical analogue of the graphon framework that simultaneously guarantees compactness of the underlying objects and continuity of the associated multilinear functionals \cite{lovasz2004limitsdensegraphsequences}. A key contribution of this work is to overcome this difficulty by introducing a framework that allows one to analyze large deviations for general tensor interactions.
% To capture the large-scale structure of the tensors $Q_n$, we embed them into a continuous framework analogous to graph limits. This allows us to study asymptotic properties of the interaction functional through suitable limit objects.
\begin{defn}[$v$-Hypergraphon]\label{vhypergraphon}
Let $\mathcal{W}_{v}$ be the space of all real-valued symmetric functions in $L^1([0,1]^v)$. We will refer to $\mathcal{W}_v$ as the space of \emph{``$v$-hypergraphon"}s, or simply, the space of hypergraphons. 
\\

For a symmetric and zero-diagonal $v$-tensor $Q_n$, we define the corresponding hypergraphon $W_{Q_n}\in \mathcal{W}_v$ by setting
\begin{align}\label{eq:funext}
W_{Q_n}(x_1,x_2,\ldots,x_v):=Q_n(i_1,i_2,\ldots,i_v)\quad\text{ if }\,\max\{\lceil nx_r\rceil ,1\}=i_r\text{ for all }r\in   [v]. 
    \end{align}
%where If there is no ambiguity, we write $W_{Q_n}$ as $W_n$.   
\end{defn}

    % Suppose $\mathcal{A}_{n}$ is the space of all real-valued functions $A$ on $[n]^v$ such that 

%This construction allows us to view the discrete tensor $Q_n$ as a continuous object, making it possible to analyze convergence and stability properties using tools from analysis.

% To establish our main results, we introduce a notion of convergence for sequences of $v$-tensors that is sufficiently strong to control the interaction functional. The associated cut$^*$ norm provides a natural extension of classical cut norms to higher-order interactions and may be of independent interest.

Throughout the paper, we work with the associated $v$-hypergraphons $\{W_{Q_n}\}_{n\ge 1}$ and assume that they converge in the cut$^*$ distance defined below (see also~\cite{BorgsLPII, borgs2018p, borgsdense1, borgsdense2, Lovasz2012} regarding the theory of 2-hypergraphons or graphons).

% Cut distance/cut metric that has been introduced in the Combinatorics literature to study limits of graphs and matrices (see~\cite{FriezeKannan1999}), and has received significant attention in the recent literature (\cite{bc_lpi,borgs2018p,borgsdense1,borgsdense2}). For more details on cut metric and its manifold applications, we refer the interested reader to \cite{Lovasz2012}.

\begin{defn}[Cut$^*$ Norm for $v$-Hypergraphons]\label{defcutstar}
 For $W\in \mathcal{W}_v$, define the Cut$^*$ norm by setting  
    \[
\|W\|_{\square^*}:=\sup _{S \subseteq[0,1]} \left|\int_{  S^v} W\left(x_1,\ldots, x_v\right) \prod_{i=1}^v d x_i\right|.\]
\end{defn}
Notice that for all $W\in \mathcal{W}_v$, we have $\|W\|_{\square^*}\le \|W\|_1$. It is straightforward to verify $\|\cdot\|_{\square^*}$ is a norm.
The following lemma shows that this norm is equivalent to the
``bad'' cut norm, which is an extension of the usual cut norm in two dimensions (see  \cite{zhao2015hypergraph}):
  \[\|W\|_{\square}:=\sup _{S_1,\ldots,S_v \subseteq[0,1]} \left|\int_{  S_1\times\ldots\times S_v} W\left(x_1,\ldots, x_v\right) \prod_{i=1}^v d x_i\right|.\]
\begin{lmm}\label{equivalence of two cut norms}
    The Cut$^*$ norm is equivalent to the bad cut norm, i.e. there exists a constant $C_v>0$, depending only on $v$, such that
    \[C_v\|W\|_{\square}\le\|W\|_{\square^*}\le  \|W\|_{\square},\]
    for any $W\in \mathcal{W}_v$.
\end{lmm}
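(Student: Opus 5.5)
The upper bound $\|W\|_{\square^*}\le\|W\|_{\square}$ is immediate: taking $S_1=\cdots=S_v=S$ in the definition of the bad cut norm shows every quantity in the supremum defining $\|W\|_{\square^*}$ also appears in the supremum defining $\|W\|_{\square}$. The substance is the lower bound. There we must control $\int_{S_1\times\cdots\times S_v}W$ by integrals over diagonal boxes $S^v$. Symmetry of $W$ makes the map $(S_1,\ldots,S_v)\mapsto \int_{S_1\times\cdots\times S_v}W$ a symmetric multilinear form. We will recover it from its diagonal by a polarization identity.

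\textbf{Step 1 (reduction to disjoint sets).} Fix measurable $S_1,\ldots,S_v\subseteq[0,1]$. Partition $[0,1]$ into the at most $2^v$ atoms $A_\epsilon=\bigcap_{i}S_i^{\epsilon_i}$, $\epsilon\in\{0,1\}^v$, of the Boolean algebra they generate. Each $S_i$ is a disjoint union of atoms. By additivity of $T\mapsto\int_T W$ (valid since $W\in L^1$), $\int_{S_1\times\cdots\times S_v}W$ splits into at most $2^{v^2}$ integrals of the form $\int_{B_1\times\cdots\times B_v}W$, where each $B_i$ is an atom. It therefore suffices to show
\[
\Big|\int_{B_1\times\cdots\times B_v}W\Big|\le K_v\|W\|_{\square^*}
\]
for any atoms $B_1,\ldots,B_v$ (repetitions allowed), where distinct atoms are disjoint.

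\textbf{Step 2 (polarization).} Let $f_i=\mathbf 1_{B_i}$ and $T(f_1,\ldots,f_v)=\int W\prod_i f_i(x_i)$. This is symmetric and multilinear in bounded functions. The polarization identity gives
\[
v!\,T(f_1,\ldots,f_v)=\sum_{\emptyset\ne I\subseteq[v]}(-1)^{v-|I|}\,T\big(g_I,\ldots,g_I\big),\qquad g_I=\sum_{i\in I}f_i.
\]
If the $B_i$ are pairwise distinct atoms, they are disjoint and $g_I=\mathbf 1_{\cup_{i\in I}B_i}$ is an indicator. Each term is then $\int_{S^v}W$ with $S=\cup_{i\in I}B_i$, so $|T|\le (2^v/v!)\|W\|_{\square^*}$.

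\textbf{Step 3 (repeated atoms; the main obstacle).} When atoms repeat, $g_I$ takes integer values larger than $1$, so $T(g_I,\ldots,g_I)$ is not directly an integral over a cube $S^v$. Since $g_I$ only has repeated atoms, the fix is to group by distinct atoms instead. Suppose the distinct atoms are $D_1,\ldots,D_m$ with multiplicities $k_1,\ldots,k_m$. Consider
\[
P(t_1,\ldots,t_m)=T\Big(\textstyle\sum_j t_j\mathbf 1_{D_j},\ldots,\sum_j t_j\mathbf 1_{D_j}\Big)
\]
for $t\in\{0,1\}^m$. Each such value is $\int_{S^v}W$ with $S=\bigcup_{t_j=1}D_j$, hence bounded by $\|W\|_{\square^*}$. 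The desired quantity is, up to the factor $\binom{v}{k_1,\ldots,k_m}$, the coefficient of $t_1^{k_1}\cdots t_m^{k_m}$ in the polynomial $P$, which is homogeneous of degree $v$.

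A polynomial of degree at most $v$ in each variable is determined by its values on a grid, and its coefficients are bounded linear combinations of those values by Lagrange interpolation. Only $0/1$ evaluations are controlled, however. To handle this, I would replace each $D_j$ by a partition into $v$ sub-pieces $D_j^{(1)},\ldots,D_j^{(v)}$ of equal measure (possible since Lebesgue measure is nonatomic). I would then approximate $t_j\mathbf 1_{D_j}$ by indicators of unions of sub-pieces, making the integer scaling $t_j\in\{0,\ldots,v\}$ correspond to genuine indicator sets. This approximation is only exact in a limiting, equidistributed sense, so the cleanest route is to apply Step 2 directly at the level of sub-pieces.

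Concretely, choose $k_j$ distinct sub-pieces of $D_j$, all disjoint, for each $j$. This reduces the problem to bounding products over disjoint sets, which Step 2 controls, plus error terms in which those sub-pieces are refined further. Justifying that this refinement converges, using $L^1$ continuity and a martingale or Lebesgue differentiation argument for $W$ conditioned on the dyadic refinement, is the step I expect to require the most care.

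Collecting the constants gives $K_v$, depending only on $v$, and hence the lemma with $C_v=1/(2^{v^2}K_v)$.
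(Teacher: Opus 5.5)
\textbf{There is a genuine gap in Step 3, and Step 3 is where all the difficulty lies.} Your Steps 1 and 2 are correct and coincide with the paper's atom decomposition and its disjoint-set polarization (\cref{Disjoint Sets}). In the continuum the polarization identity needs no zero-diagonal hypothesis, since $T$ is simply a symmetric multilinear form.

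The Boolean-cube idea cannot work. Values of $P$ on $\{0,1\}^m$ only determine $P$ modulo $t_j^2=t_j$. For example, take $v=3$ with $D_1$ of multiplicity two and $D_2$ of multiplicity one. You recover $\int_{D_1^3}W$, $\int_{D_2^3}W$ and the sum $\int_{D_1^2\times D_2}W+\int_{D_1\times D_2^2}W$, but not $\int_{D_1^2\times D_2}W$ itself.

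Your fallback, as described, does not give a constant depending only on $v$. That fallback splits each repeated atom into sub-pieces, bounds every term with distinct sub-pieces by Step 2, and recurses on the terms with a repeated sub-piece. Each refinement level multiplies the number of disjoint-product terms by at least $v$. Each such term is bounded only by $(2^v/v!)\|W\|_{\square^*}$, because a cut$^*$ bound does not improve when the sets get small. So the accumulated bound diverges with the depth. The $L^1$/martingale argument you invoke only disposes of the diagonal remainder; it does nothing for these main terms. Nor can you fix $k_j$ particular sub-pieces and appeal to ``equidistribution'': for a deterministic choice this is false, since $W$ may put its mass on products of equal sub-pieces.

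\textbf{The missing idea is to bound the whole off-diagonal sum at once by averaging over random labelings.} This is the paper's \cref{Random-labeling disjointification}. For each atom $D_j$ of multiplicity $k_j\ge 2$, partition it into $N$ pieces of measure $\lambda(D_j)/N$. Give each piece an independent uniform label in $[k_j]$, and let $F_j^{(i)}$ be the union of the pieces labelled $i$. Expanding multilinearly,
\[
\mathbb E\, T\big(\ldots,\mathbf 1_{F_j^{(1)}},\ldots,\mathbf 1_{F_j^{(k_j)}},\ldots\big)=\Big(\prod_j k_j^{-k_j}\Big)\Big(T(\mathbf 1_{B_1},\ldots,\mathbf 1_{B_v})-R_N\Big).
\]
This holds because a tuple of pieces receives the correct labels with probability $\prod_j k_j^{-k_j}$ if the pieces within each block are distinct, and with probability $0$ otherwise. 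Here $R_N=\int_{E_N}W$, where $E_N$ is the set of points of $B_1\times\cdots\times B_v$ at which two coordinates of the same block lie in the same piece. Since $\lambda(E_N)\le\sum_j\binom{k_j}{2}\lambda(D_j)^2/N\to 0$ and $W\in L^1$, we get $R_N\to0$. Every realization is a product over pairwise disjoint sets, so Step 2 bounds it. Letting $N\to\infty$ and using $\prod_j k_j^{k_j}\le v^v$ gives $K_v=v^v2^v/v!$.

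With this one ingredient your outline becomes a direct continuum proof. The paper instead proves the discrete inequality \eqref{discmultitosingle} for zero-diagonal tensors, where zero-diagonality makes the collision terms vanish exactly. It then transfers this to $W$ via the dyadic conditional expectations $Y^{(n)}$, deletion of the diagonal cubes, and \cref{DtC}.
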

The bad cut norm was referred to as \textit{bad}, since one cannot obtain a hypergraph version of the counting lemma for general $v$ with respect to (w.r.t.) this norm \cite[Section 3]{zhao2015hypergraph}. However, as we show below, it is enough to determine the large deviation behavior for the random variable $N_n(Q_n,\phi,{{\bf X}})$. 

% {\color{red} A natural question would be why not work with the bad cut directly? And add cut (star) topology as a tool in proofs?}

We now introduce the limiting functional that will govern both the large deviation rate function and the associated variational problem.
\begin{defn}\label{def:g2}
Let $\mathcal{F}_{c}$ denote the set of all measurable functions ${\boldsymbol f}=(f_1,\ldots,f_c):[0,1]\to [0,1]^c$ such that $\sum_{r=1}^c f_r(x)=1$ for all $x\in [0,1]$ (modulo almost everywhere equality).  For any $W\in \mathcal{W}_v$, define the functional ${G}_{W,\phi}(.): \mathcal{F}_c\to \R$ by 
\[
G_{W,\phi}({\boldsymbol f})
:=
\int_{[0,1]^v}
W(x_1,\ldots,x_v)\,\Phi_{\boldsymbol f}(x_1,\ldots,x_v)\,dx_1\cdots dx_v,
\]
% \[
% \mathscr{G}_{W,\phi}({\boldsymbol f})
% :=
% \int_{[0,1]^v}
% W(x_1,\ldots,x_v)\,\Phi_{\boldsymbol f}(x_1,\ldots,x_v)\,dx_1\cdots dx_v,
% \]
% \[
% {\bf{\mathscr{G}}}_{W,\phi}({\boldsymbol f})
% :=
% \int_{[0,1]^v}
% W(x_1,\ldots,x_v)\,\Phi_{\boldsymbol f}(x_1,\ldots,x_v)\,dx_1\cdots dx_v,
% \]
% \[\symcal{G}\]

\noindent where for ${\boldsymbol f}\in\mathcal F_c$,
\[
\Phi_{\boldsymbol f}(x_1,\ldots,x_v)
:=
\sum_{r_1,\ldots,r_v\in[c]}
\phi(r_1,\ldots,r_v)\prod_{a=1}^v f_{r_a}(x_a).
\]
\end{defn}
One can think of $G_{W,\phi}({\boldsymbol f})$ as the continuum analogue of the statistic $N_n(Q_n, \phi,{\bf X})$ defined in \eqref{hamiltoniandef} evaluated at $\boldsymbol{f}$.
%The large deviation behavior of $N_n(Q_n,\phi,{\bf X})$ is closely related to the asymptotics of the
We also define an associated Gibbs measure, with Hamiltonian $N_n(Q_n, \phi,{\bf X})$. %normalization constant encodes the macroscopic behavior of the system.
% ({\color{red} Depending on how we add references in the intro, we might want to add a couple references where some special case of the Gibbs measure below was studied}).

\begin{defn}
With $N_n(Q_n,\phi,{{\bf X}})$ as in \eqref{hamiltoniandef} and $\beta \in \R$, define the Gibbs measure
\begin{align}\label{eq:gibbs_potts}
	\frac{d\R_{Q_n,{ \beta},\mu}}{d\mu^{\otimes n}}(\mathbf{X}):=\exp\Big(n\beta N_n(Q_n,\phi,{{\bf X}})-nZ_{Q_n}(\beta,\mu)\Big),
\end{align}
where \begin{align}\label{eq:gibbs_pottsloppartition}
    Z_{Q_n}(\beta,\mu):=\frac{1}{n}\log \E_{\mu^{\otimes n}} \exp\Big(n \beta N_n(Q_n,\phi,{{\bf X}})\Big)
\end{align}
is the scaled log normalization constant.
\end{defn}
We now state our main result, which establishes an LDP for $N_n(Q_n,\phi,{\bf X})$ and derives a variational characterization of the limiting free energy.
%\begin{remark}
%    Notice that absolute homogeneity, and triangle inequality trivially hold for $\|\cdot\|_{\square^*}$. Moreover, positive definiteness is easy to verify, noticing that the Borel $\sigma$-algebra in $[0,1]^v$ is generated by product sets and then using \cref{equivalence of two cut norms}. Therefore, $\|\cdot\|_{\square^*}$ is indeed a norm.
%\end{remark}

\begin{thm}%[LDP and Variational Principle for Tensor Interaction Models]
\label{NewLDP}
%Suppose $\mu$ is a probability measure on $[c]$ with ${\bf{X}}=(X_1,\ldots,X_n)$, where $X_i \overset{\text{i.i.d.}}{\sim} \mu$ with $\P(X_1=r)=\mu_r$, 
Suppose $\{Q_n\}_{n\ge 1}$ is a sequence of symmetric, zero-diagonal $v$-tensors, such that   \begin{align}\label{eq:cut}\|W_{Q_n}- W_\infty\|_{\square^*}\xrightarrow{n\to\infty}0,
\end{align}
 for some $W_{\infty}\in \mathcal{W}_v$.
% and 
%  \begin{align}\label{eq:l1}
%    \limsup_{n\to \infty} \left\|
% W_{Q_n}\right\|_1<\infty.
% \end{align} 
Then the following conclusions hold:
 \begin{itemize}
     \item[(i)] Suppose ${\bf{X}}=(X_1,\ldots,X_n)$ where $X_i \overset{\text{i.i.d.}}{\sim} \mu$. Then $N_n(Q_n,\phi,{{\bf X}})$ satisfies an LDP with speed $n$, and the good rate function
 \begin{equation*}
J_{W_\infty}(t):=\inf_{{\boldsymbol f}\in \mathcal{F}_c:\ { G}_{{W_\infty},\phi}({\boldsymbol f})={t}}  \left\{ \int_0^1\sum_{r=1}^c f_{r}(u)\log \frac{f_{r}(u)}{\mu_r}du\right\},
\end{equation*}
with $G_{W,\phi}(\cdot)$ as in \cref{def:g2}.
\item[(ii)] For any $\beta\in\R$ with $Z_{Q_n}(\beta,\mu)$ as in \eqref{eq:gibbs_pottsloppartition}, we have
\begin{align}\label{eq:potts_unconstrained_opt2}
\lim\limits_{n\rightarrow \infty} Z_{Q_n}(\beta,\mu)=\sup_{{\boldsymbol f}\in \mathcal{F}_c}\left\{ \beta{G}_{{W_\infty},\phi}({\boldsymbol f})
-\int_0^1\sum_{r=1}^c f_r(u)\log \frac{f_r(u)}{\mu_r}\,du\right\}.
		\end{align}
        %where for $W\in \mathcal{W}_v$, $\Xi_{W}:\mathcal F_c\to\mathbb R$ denotes the functional\begin{equation}
%\label{eq:Phi_def2}
%\Xi_{W}({\boldsymbol f})
%:=\beta{G}_{{W},\phi}({\boldsymbol f})
%-\int_0^1\sum_{r=1}^c f_r(u)\log \frac{f_r(u)}%{\mu_r}\,du.
%\end{equation}
%+\sum_{r=1}^c h_r\int_0^1 f_r(u)\,du
 Moreover, the maximizers of the above optimization problem are attained.
    \end{itemize}
\end{thm}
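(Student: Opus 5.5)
The plan is to view $N_n(Q_n,\phi,\mathbf X)$ as a continuous image of the empirical "profile" of $\mathbf X$, and then use the contraction principle. To each configuration $\mathbf X$ we attach the step function $\boldsymbol f^{(n)}\in\mathcal F_c$ given by $f^{(n)}_r(x)=\mathbf 1\{X_{\lceil nx\rceil}=r\}$. By \eqref{eq:funext} and the zero-diagonal property,
\[
N_n(Q_n,\phi,\mathbf X)=G_{W_{Q_n},\phi}(\boldsymbol f^{(n)}).
\]
We equip $\mathcal F_c$ with the weak topology, i.e.\ the topology induced by the cut-type metric $d(\boldsymbol f,\boldsymbol g)=\sum_r\sup_{S}|\int_S(f_r-g_r)|$. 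In this topology $\mathcal F_c$ is compact: it is a weak-$*$ closed convex subset of $(L^\infty)^c$, and on this bounded set the weak-$*$ topology is metrizable.

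\textbf{Step 1 (LDP for the profile).} We need an LDP for $\boldsymbol f^{(n)}$ in $(\mathcal F_c,d)$ with speed $n$ and rate function $I(\boldsymbol f)=\int_0^1\sum_r f_r\log(f_r/\mu_r)$. Two routes are available.
\begin{itemize}
\item Dawson--Gärtner: project onto the finite partitions of $[0,1]$ into $k$ dyadic intervals. On each block, Sanov's theorem for the i.i.d.\ colors gives an LDP for the block color frequencies, with rate $\frac1k\sum_{\text{blocks}}D(\cdot\|\mu)$. Taking the projective limit recovers $I$ by lower semicontinuity and convexity of relative entropy.
\item Gärtner--Ellis on $L^1$ test functions $h_r$: here $\frac1n\log\E\exp(n\sum_r\int h_rf_r^{(n)})\to\int_0^1\log\sum_r\mu_re^{h_r(u)}du$, and the Legendre dual of this limit is $I$.
\end{itemize}
In either case compactness of $\mathcal F_c$ makes the rate function good automatically.

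\textbf{Step 2 (Continuity and approximation).} We must show that $G_{W,\phi}$ is continuous on $(\mathcal F_c,d)$ for each fixed $W\in\mathcal W_v$, and that
\[
\sup_{\boldsymbol f\in\mathcal F_c}|G_{W_{Q_n},\phi}(\boldsymbol f)-G_{W_\infty,\phi}(\boldsymbol f)|\le C_{\phi,c,v}\,\|W_{Q_n}-W_\infty\|_{\square}.
\]
For the uniform bound, expand $\Phi_{\boldsymbol f}$ as a linear combination of products $\prod_a f_{r_a}(x_a)$ with $f_{r_a}\in[0,1]$. For $W\in L^1$, the standard fact $\sup_{g_a:[0,1]\to[0,1]}|\int W\prod g_a(x_a)|\le\|W\|_\square$ follows by multilinearity, since the extreme points of the set of $[0,1]$-valued functions are indicators. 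Lemma \ref{equivalence of two cut norms} then converts $\|\cdot\|_\square$ to $\|\cdot\|_{\square^*}$, which tends to $0$ by \eqref{eq:cut}. This gives uniform convergence $G_{W_{Q_n},\phi}\to G_{W_\infty,\phi}$, hence $N_n$ is exponentially equivalent to $G_{W_\infty,\phi}(\boldsymbol f^{(n)})$; in fact the two are deterministically close.

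For continuity at fixed $W_\infty$, approximate $W_\infty$ in $L^1$ by a finite sum of products of indicators of intervals. The error is uniform in $\boldsymbol f$, because $|\Phi_{\boldsymbol f}|\le\max|\phi|$. For step kernels, $G$ is a polynomial in finitely many integrals $\int_{I}f_r$, which are $d$-continuous. I expect this $L^1$-approximation argument, combined with the uniform-in-$n$ replacement of $W_{Q_n}$ by $W_\infty$, to be the main technical point. Note that $W_{Q_n}$ need not be uniformly integrable, so we cannot approximate $W_{Q_n}$ directly, only $W_\infty$; this is exactly why the cut-norm bound above is needed.

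\textbf{Step 3 (Conclusion).} The contraction principle applied to the continuous map $G_{W_\infty,\phi}$ yields part (i) with rate $J_{W_\infty}$. It is good because $I$ is good and $G_{W_\infty,\phi}$ is continuous. For part (ii), the same uniform approximation gives $Z_{Q_n}(\beta,\mu)=\frac1n\log\E\exp(n\beta G_{W_\infty,\phi}(\boldsymbol f^{(n)}))+o(1)$. Since $G_{W_\infty,\phi}$ is bounded by $\max|\phi|\,\|W_\infty\|_1$ and continuous, Varadhan's lemma applied to the profile LDP gives \eqref{eq:potts_unconstrained_opt2}. Finally, the supremum is attained because $\beta G_{W_\infty,\phi}-I$ is upper semicontinuous on the compact space $\mathcal F_c$: here $I$ is weakly lower semicontinuous by convexity.
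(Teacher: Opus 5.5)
Your proposal is essentially the paper's proof. Your profile $\boldsymbol f^{(n)}$ is exactly $\boldsymbol f^{\tilde{\mathcal L}_n}$ under the bijection of \cref{bijection}, and the weak topology on $\mathcal P$ used in the paper corresponds to the weak-$*$ topology on $\mathcal F_c$. The remaining steps coincide:
\begin{itemize}
\item the deterministic cut-norm bound via \cref{functionalcut} and \cref{equivalence of two cut norms}, giving exponential equivalence;
\item the $L^1$-approximation argument for continuity of $G_{W_\infty,\phi}$;
\item the contraction principle, Varadhan's lemma, and compactness plus upper semicontinuity for attainment.
\end{itemize}
The differences are cosmetic. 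The paper cites the LDP for $\tilde{\mathcal L}_n$ instead of deriving it. It also applies Varadhan to the scalar LDP of $N_n$, while you apply it to the profile.

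There is one concrete error to fix. The metric $d(\boldsymbol f,\boldsymbol g)=\sum_r\sup_S|\int_S(f_r-g_r)|$, with $S$ ranging over all measurable sets, does \emph{not} induce the weak topology. Since $\sup_S|\int_S h|=\max(\int h^+,\int h^-)\ge\frac12\|h\|_1$, it is equivalent to the $L^1$ norm.

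In that topology $\mathcal F_c$ is not compact, and the profile LDP of Step 1 is false. Each $f^{(n)}_r$ is $\{0,1\}$-valued, so $\|f^{(n)}_r-\mu_r\|_1\ge\min(\mu_r,1-\mu_r)>0$. Hence a small $d$-ball around the constant profile $\mu$ has probability zero for every $n$, which contradicts the LDP lower bound there, since $I(\mu)=0$.

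The fix is to let $S$ range over intervals only. For $[0,1]$-valued functions, convergence of all $\int_a^b f_r$ is exactly weak-$*$ convergence, and it is automatically uniform in $a,b$ because the antiderivatives are $1$-Lipschitz. Alternatively, work with the weak-$*$ topology directly. That is the topology both of your Step 1 routes actually produce, and the one your step-kernel continuity argument needs. With this correction the rest of your argument goes through as written.
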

% \cref{NewLDP} shows that the large deviation behavior of the interaction functional is governed by a variational problem balancing entropy and interaction energy. This is analogous to classical results in statistical mechanics, where the limiting free energy is characterized by a Gibbs variational principle (see \cite{jain2018meanfield}).
% \reihaneh{more classic citations???}
% \\

A natural question arising from the variational characterization in \eqref{eq:potts_unconstrained_opt2} is the structure of its maximizers. In particular, we are interested in understanding when the system admits only constant optimizers (\emph{replica symmetry}).
Proceeding to study this,  we introduce the following definition.
\begin{defn}\label{def:tr}
Given a hypergraphon $W\in \mathcal{W}_v $, and functions $\phi:[c]^v\to \R$ and ${\boldsymbol{f}}\in \mathcal{F}_c$, we define
 \begin{align}\label{eq:def_T_general_m}
\notag\mathcal T_r^{(m)}[W,\phi,{\boldsymbol f}](x)
:=&
\int_{[0,1]^{v-1}}
W(x_1,\ldots,x_{m-1},x,x_{m+1}, \ldots, x_v)\\
&\sum_{\substack{s_1,\ldots,s_{m-1},s_{m+1},\ldots,s_v\in[c]}}
\phi(s_1,\ldots,s_{m-1},r,s_{m+1},\ldots,s_v)
\prod_{\substack{a=1\\ a\neq m}}^v f_{s_a}(x_a)\prod_{\substack{a=1\\ a\neq m}}^vdx_a,
\end{align}
for $m\in[v], r\in [c]$.

\end{defn}
% Our next result deals with the natural question when does the set of optimizers of \eqref{eq:potts_unconstrained_opt2} consists of only constant functions. Equivalently, borrowing terminology from statistical physics, we want to understand the
% “replica-symmetry” phase of the optimization problem in \eqref{eq:potts_unconstrained_opt2}.
Heuristically, $\mathcal T_r^{(m)}[W,\phi,{\boldsymbol f}](x)$ represents the contribution of the interaction energy when the $m$-th coordinate is fixed at position $x$ and color $r$, while the remaining coordinates are averaged according to ${\boldsymbol f}$.
The following result characterizes the maximizers of the variational problem in \eqref{eq:potts_unconstrained_opt2} through a fixed point equation.

 %This can be interpreted as a mean-field condition, where the local distribution at each point must be consistent with the global interaction it induces.
\begin{prop}\label{prop:propotts2}
Under the assumptions of \cref{NewLDP}, the following conclusions hold:
\begin{itemize}
\item[(i)](Characterization of Optimizers)
If ${\boldsymbol f}=(f_1,\ldots,f_c)$ is a maximizer of  the optimization problem \eqref{eq:potts_unconstrained_opt2}, it satisfies
\begin{equation}\label{eq:general_fp_onlyWsym}
f_r(x)\stackrel{\lambda-a.e.}{=}
\frac{\mu_r\exp\!\big(\beta\,\mathcal T_r[W_{\infty},\phi,{\boldsymbol f}](x)\big)}
{\sum_{s=1}^c\mu_s\exp\!\big(\beta\,\mathcal T_s[W_{\infty},\phi,{\boldsymbol f}](x)\big)},
\qquad r\in[c],
\end{equation}
where
\begin{equation}\label{eq:def_T_general_sum}
\mathcal T_r[W,\phi,{\boldsymbol f}](x)
:=
\sum_{m=1}^v \mathcal T_r^{(m)}[W,\phi,{\boldsymbol f}](x),\qquad r\in[c],
\end{equation}
and we write $\lambda$-a.e. to mean almost everywhere w.r.t. Lebesgue measure.
\item[(ii)](Replica symmetry breaking) Suppose there exist $r,s\in[c]$ such that \[\Gamma_r({(\mu_1,\ldots,\mu_c)})\neq\Gamma_s((\mu_1,\ldots,\mu_c)),\] where
\begin{align}\label{eq:def_Gamma_general}
\Gamma_r({\boldsymbol y})
:=
\sum_{m=1}^v
\sum_{\substack{s_1,\ldots,s_{m-1},s_{m+1},\ldots,s_v\in[c]}}
\phi(s_1,\ldots,s_{m-1},r,s_{m+1},\ldots,s_v)
\prod_{\substack{a=1\\ a\neq m}}^v y_{s_a}.
\end{align}
Moreover, assume $\beta\neq0$ and $\mathcal{V}[W_{\infty}](\cdot)$ (as in~\eqref{deft1}) is not constant $\lambda$-a.e. Then none of the maximizers of \eqref{eq:potts_unconstrained_opt2} are constant $\lambda$-a.e.

% \item[(iii)](Replica symmetry)
% Suppose that \(W_{\infty}\) is constant $\lambda$-a.e.
% Then every maximizer of \eqref{eq:potts_unconstrained_opt2}
% is constant $\lambda$-a.e.

% Moreover, if \({\boldsymbol f}(x)={\boldsymbol y}\) a.e., then
% \({\boldsymbol y}\in\Delta_{c-1}\) satisfies
% \begin{equation}\label{eq:RS_fp}
% y_r
% =
% \frac{\mu_r\exp\!\big(\beta\kappa\,\Gamma_r({\boldsymbol y})\big)}
% {\sum_{s=1}^c\mu_s\exp\!\big(\beta\kappa\,\Gamma_s({\boldsymbol y})\big)},
% \qquad r\in[c].
% \end{equation}
\end{itemize}
\end{prop}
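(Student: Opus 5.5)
The plan for part (i) is a first-order perturbation argument that avoids differentiating the entropy at the boundary of the simplex. For part (ii) we plug a constant candidate into the fixed-point equation from part (i). Throughout, write $H({\boldsymbol f}):=\int_0^1\sum_r f_r\log(f_r/\mu_r)$ and $\Psi({\boldsymbol f}):=\beta G_{W_\infty,\phi}({\boldsymbol f})-H({\boldsymbol f})$. Maximizers of $\Psi$ exist by \cref{NewLDP}(ii).

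\textbf{Step 1: integrability of $\mathcal T_r$.} Since $|\phi|\le\|\phi\|_\infty$ and $0\le f_s\le 1$, we have $|\mathcal T^{(m)}_r[W_\infty,\phi,{\boldsymbol f}](x)|\le \|\phi\|_\infty c^{v-1}\int_{[0,1]^{v-1}}|W_\infty|$, where the integral is taken with the $m$-th coordinate set to $x$. By Fubini and $W_\infty\in L^1$, this bound is integrable in $x$. Hence each $\mathcal T_r$ is finite a.e. and lies in $L^1[0,1]$.

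\textbf{Step 2: expansion of $G$ along a segment.} Fix a maximizer ${\boldsymbol f}$ and any ${\boldsymbol g}\in\mathcal F_c$, and set ${\boldsymbol f}^\varepsilon:=(1-\varepsilon){\boldsymbol f}+\varepsilon{\boldsymbol g}\in\mathcal F_c$. The map $\varepsilon\mapsto G_{W_\infty,\phi}({\boldsymbol f}^\varepsilon)$ is a polynomial of degree at most $v$, because $\Phi_{\boldsymbol f}$ is multilinear in $(f(x_1),\ldots,f(x_v))$. All of its coefficients are bounded by $C\|W_\infty\|_1$. Expanding the product $\prod_a f^\varepsilon_{r_a}(x_a)$ to first order, the linear coefficient is $\int_0^1\sum_r (g_r-f_r)(x)\,\mathcal T_r[W_\infty,\phi,{\boldsymbol f}](x)\,dx$, where each $m\in[v]$ contributes $\mathcal T^{(m)}_r$. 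Convexity of $y\mapsto y\log(y/\mu_r)$ gives $H({\boldsymbol f}^\varepsilon)\le(1-\varepsilon)H({\boldsymbol f})+\varepsilon H({\boldsymbol g})$. Combining the two,
\[
\Psi({\boldsymbol f}^\varepsilon)-\Psi({\boldsymbol f})\ \ge\ \varepsilon\int_0^1\big(\psi_x({\boldsymbol g}(x))-\psi_x({\boldsymbol f}(x))\big)\,dx-C\varepsilon^2,
\]
where $\psi_x({\boldsymbol y}):=\beta\sum_r y_r\mathcal T_r(x)-\sum_r y_r\log(y_r/\mu_r)$ on the simplex.

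\textbf{Step 3: conclusion of part (i).} For each $x$ with $\mathcal T(x)$ finite, $\psi_x$ is strictly concave on the simplex. Its unique maximizer is the softmax vector on the right-hand side of \eqref{eq:general_fp_onlyWsym}, by the standard Gibbs variational principle. Choose ${\boldsymbol g}$ to be this softmax, which is measurable and lies in $\mathcal F_c$. Then the integrand in Step 2 is nonnegative, and it is strictly positive on $\{{\boldsymbol f}\ne{\boldsymbol g}\}$. If this set had positive measure, taking $\varepsilon$ small would give $\Psi({\boldsymbol f}^\varepsilon)>\Psi({\boldsymbol f})$, contradicting maximality. Hence ${\boldsymbol f}={\boldsymbol g}$ a.e., which is \eqref{eq:general_fp_onlyWsym}.

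\textbf{Part (ii).} We take $\mathcal V[W](x)=\int_{[0,1]^{v-1}}W(x,x_2,\ldots,x_v)\,dx_2\cdots dx_v$ (the degree function of \eqref{deft1}). Suppose ${\boldsymbol f}\equiv{\boldsymbol y}$ is a constant maximizer. By the symmetry of $W_\infty$, each $\mathcal T^{(m)}_r(x)$ factors as $\mathcal V[W_\infty](x)$ times the $m$-th summand of $\Gamma_r({\boldsymbol y})$. Therefore $\mathcal T_r(x)=\mathcal V[W_\infty](x)\,\Gamma_r({\boldsymbol y})$. Taking log-ratios in \eqref{eq:general_fp_onlyWsym} gives, for a.e. $x$,
\[
\log\frac{y_r}{y_s}=\log\frac{\mu_r}{\mu_s}+\beta\,\mathcal V[W_\infty](x)\big(\Gamma_r({\boldsymbol y})-\Gamma_s({\boldsymbol y})\big).
\]
The left side does not depend on $x$, $\mathcal V[W_\infty]$ is not a.e. constant, and $\beta\ne0$. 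Hence $\Gamma_r({\boldsymbol y})=\Gamma_s({\boldsymbol y})$ for all $r,s$. The display then forces $y_r/y_s=\mu_r/\mu_s$, so ${\boldsymbol y}=(\mu_1,\ldots,\mu_c)$. This gives $\Gamma_r(\mu)=\Gamma_s(\mu)$ for all $r,s$, contradicting the hypothesis.

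\textbf{Main obstacle.} The delicate point is Step 2, namely avoiding a derivative of the entropy where some $f_r$ vanishes. The convexity bound handles this without first proving $f_r>0$. The other thing to check carefully is that the $O(\varepsilon^2)$ remainder is uniformly controlled by $\|W_\infty\|_1$, which holds because all the $f$'s are bounded by $1$.
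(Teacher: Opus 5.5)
Your proof is correct. Part (ii) is essentially the paper's argument: plug the constant candidate into the fixed-point equation, factor $\mathcal T_r=\mathcal V[W_\infty]\,\Gamma_r(\boldsymbol y)$, and force $\boldsymbol y=\mu$. For part (i), however, you take a genuinely different route.

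The paper works in two stages.
\begin{itemize}
\item It first proves $f_r>0$ a.e.\ by perturbing toward $\mu$ and using the $\varepsilon\log\varepsilon$ gain on $\{f_r=0\}$.
\item Assuming $\boldsymbol f\neq\boldsymbol q$ on a set of positive measure, it then picks colours $r,s$ with $f_r>q_r$ and $f_s<q_s$ there. It truncates to a set $A_M$ on which $f_r,f_s$ stay away from $0$ and $1$ and $\mathcal T_r,\mathcal T_s$ are bounded. It moves mass $t$ from colour $r$ to colour $s$ on $A_M$, computes the one-sided derivative by dominated convergence, and obtains the strictly positive integrand $\log\big(f_rq_s/(f_sq_r)\big)$.
\end{itemize}

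You instead move along the whole segment toward the softmax candidate $\boldsymbol g$. You replace differentiation of the entropy with the chord inequality from convexity, and you control $G$ through its polynomial structure in $\varepsilon$, with an $O(\varepsilon^2)$ remainder bounded by $\|W_\infty\|_1$. The first-order gain is then $\int_0^1 D\big(\boldsymbol f(x)\,\|\,\boldsymbol g(x)\big)\,dx$, by the pointwise Gibbs variational principle.

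What your route buys:
\begin{itemize}
\item It is shorter, with no separate positivity lemma, no truncation sets and no dominated-convergence step. Positivity of $\boldsymbol f$ comes for free from the softmax form.
\item It makes explicit the integrability of $\mathcal T_r$ (your Step 1), which the paper uses only implicitly when it discards a null set.
\end{itemize}

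What the paper's route buys: it is purely local. It needs only a two-coordinate directional derivative at a single point of the simplex, not the identification of the global pointwise maximizer of $\psi_x$.
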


%This fixed-point equation is analogous to the classical mean-field equations arising in statistical physics models, such as the Curie–Weiss and Potts models, but now incorporates higher-order interactions through the operator $\mathcal T$.

% \reihaneh{CHECK IF I NEED THIS}
% \begin{remark}\label{nonneg}
%     It is obvious from \eqref{eq:general_fp_onlyWsym} that if ${\boldsymbol f}=(f_1,\ldots,f_c)$ is an optimizer of \eqref{eq:potts_unconstrained_opt2}, we have $0<f_r<1$ $\lambda$-a.e. for all $r\in [c]$.
% \end{remark}
\begin{remark}
Even though our setup is for the case when the underlying sequence of tensors $Q_n$ is fixed, there are many applications when $Q_n$ is a random tensor. In this setting, it is natural to study the large deviation behavior of $N_n(Q_n,\phi,{\bf X})$ conditional on $Q_n$, and to ask whether a deterministic rate function emerges in the limit. The following definition makes this precise: %(see \cite[Definition 1.3]{bhattacharya2024ldp}).

\begin{defn}(Conditional LDP)
\label{cor:conditional-ldp}
% Let $Q_n$ be random symmetric zero-diagonal $v$-tensors. Suppose
% \begin{align}
% \|W_{Q_n}-W\|_{\square^*}=o_{\mathbb P}(1)
% \label{eq:rand-cut}
% \end{align}
% for some deterministic $W\in\mathcal W_v$, and
% \begin{align}
% \|W_{Q_n}\|_1=O_{\mathbb P}(1).
% \label{eq:rand-l1}
% \end{align}
Assume $Q_n$ is a sequence of random symmetric zero-diagonal $v$-tensors.
We say that $ N_n(Q_n,\phi,{{\bf X}})$ satisfies a \emph{conditional LDP} with speed $n$ and a deterministic good rate function $J(\cdot):\R \to [0,\infty]$, if the level sets of $J(\cdot)$ are compact, and for any Borel set $F\subseteq \R$ and every $\varepsilon>0$,
\begin{align}\label{conldp}
\mathbb{P}_{Q_n} \Bigg(
 - \inf_{x \in F^\circ} J(x) - \varepsilon 
 \le \frac{1}{n} \log \mathbb{P}\big( N_n(Q_n,\phi,{{\bf X}}) \in F \big|\, Q_n\big)
 \le - \inf_{x \in \overline{F}} J(x) + \varepsilon
\Bigg)\xrightarrow{n\to\infty} 1,
\end{align}
where $\mathbb{P}_{Q_n}$ denotes the probability
over the randomness of $Q_n$. Moreover, $F^\circ$ and $\overline{F}$ denote the interior and the closure of $F$, respectively.
\end{defn}
\end{remark}

% \reihaneh{Should it be a corr? a remark? what? Proof needed or not?} {\color{red} I think Corollary is fine.} 
\begin{prop}\label{condldpdef}Assume $Q_n$ is a sequence of random symmetric zero-diagonal $v$-tensors (independent of $\mathbf{X}$) such that
\begin{align}
\|W_{Q_n}-W_\infty\|_{\square^*}=o_{\mathbb P}(1),
% \quad \|W_{Q_n}\|_1=O_{\mathbb P}(1)
\label{eq:rand-cut}
\end{align}
for some deterministic $W_\infty\in\mathcal W_v$.
% \begin{align}
% \|W_{Q_n}\|_1=O_{\mathbb P}(1).
% \label{eq:rand-l1}
% \end{align} 
%Then using \cref{NewLDP}, Then
Then $N_n(Q_n,\phi,{{\bf X}})$ satisfies a conditional LDP (see~\cref{cor:conditional-ldp}) with speed $n$ and the deterministic good rate function $J_{W_\infty}(\cdot)$ as in \cref{NewLDP} (i).
\end{prop}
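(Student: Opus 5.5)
Since $Q_n$ is independent of $\mathbf X$, conditioning on $Q_n$ leaves $\mathbf X$ i.i.d. from $\mu$. So $\mathbb P(N_n(Q_n,\phi,\mathbf X)\in F\mid Q_n)$ is exactly the probability studied in \cref{NewLDP}(i) with the deterministic tensor $Q_n$ frozen. The plan is to reduce the conditional statement to a deterministic one through a subsequence argument. The key is a \emph{uniform} version of \cref{NewLDP}(i): for every fixed Borel $F$ and $\varepsilon>0$ there exist $\delta>0$ and $n_0$ such that, for every deterministic symmetric zero-diagonal $Q$ on $[n]^v$ with $n\ge n_0$ and $\|W_Q-W_\infty\|_{\square^*}\le\delta$,
\[-\inf_{F^\circ}J_{W_\infty}-\varepsilon\le \tfrac1n\log\mathbb P(N_n(Q,\phi,\mathbf X)\in F)\le -\inf_{\overline F}J_{W_\infty}+\varepsilon .\]
Given this, the event in \eqref{conldp} contains $\{\|W_{Q_n}-W_\infty\|_{\square^*}\le\delta\}$ for $n\ge n_0$. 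By \eqref{eq:rand-cut} that event has probability tending to one, which is the claim. Goodness of $J_{W_\infty}$ is already contained in \cref{NewLDP}(i).

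I would prove the uniform statement by contradiction. If it fails, there are $\varepsilon>0$, deterministic tensors $Q^{(k)}$ on $[n_k]^v$ with $n_k\to\infty$, and $\|W_{Q^{(k)}}-W_\infty\|_{\square^*}\to0$, such that one of the two bounds fails for every $k$. Interpolate to a full sequence: set $\tilde Q_{n_k}:=Q^{(k)}$ (passing to a subsequence so that the $n_k$ are strictly increasing). For $n\notin\{n_k\}$, take $\tilde Q_n$ to be any discretization of $W_\infty$, for instance block averages of $W_\infty$ on the grid $\{(i-1)/n,i/n]\}$ with the diagonal set to zero. This satisfies $\|W_{\tilde Q_n}-W_\infty\|_{\square^*}\le\|W_{\tilde Q_n}-W_\infty\|_1\to0$, by $L^1$ convergence of martingale/block averages together with the fact that the diagonal blocks have measure $O(1/n)$ and $W_\infty\in L^1$ has absolutely continuous integral. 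The sequence $\tilde Q_n$ then satisfies \eqref{eq:cut}. By \cref{NewLDP}(i), the limits
\[\liminf_n \tfrac1n\log\mathbb P(N_n(\tilde Q_n,\phi,\mathbf X)\in F)\ge -\inf_{F^\circ}J_{W_\infty},\qquad \limsup_n\tfrac1n\log\mathbb P(N_n(\tilde Q_n,\phi,\mathbf X)\in F)\le -\inf_{\overline F}J_{W_\infty}\]
hold along the whole sequence, and in particular along $n_k$. For large $k$ this contradicts the assumed failure by $\varepsilon$. In the case where $\inf_{F^\circ}J_{W_\infty}=\infty$, the lower bound is vacuous, and the upper bound with value $-\infty$ should be read as ``eventually below any $-M$''. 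The definition in \eqref{conldp} implicitly uses the convention $-\infty+\varepsilon=-\infty$, so I would handle that case by replacing $\varepsilon$ with an arbitrary $M$ in the same argument.

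The main obstacle is measurability and the interpolation bookkeeping. We need $\|W_{Q_n}-W_\infty\|_{\square^*}$ to be a random variable (or use outer probability), which holds because $Q_n$ takes values in a finite-dimensional space and the norm is continuous in $Q_n$. We also need the discretization of $W_\infty$ to be symmetric and zero-diagonal; symmetry follows from the symmetry of $W_\infty$. The remaining content, namely the LDP along each deterministic sequence, is entirely supplied by \cref{NewLDP}(i).
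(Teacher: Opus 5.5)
Your proof is correct, but it takes a different route from the paper. You use \cref{NewLDP}(i) purely as a black box. From the LDP along every deterministic sequence with $\|W_{Q_n}-W_\infty\|_{\square^*}\to0$, you extract uniform LDP bounds on a cut$^*$-neighbourhood of $W_\infty$, by contradiction plus the interpolation trick.

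The paper instead reuses the deterministic estimate from the proof of \cref{NewLDP}:
\[
\sup_{\mathbf x\in[c]^n}\bigl|N_n(Q,\phi,\mathbf x)-T_{W_\infty,\phi}(\widetilde{\mathcal L}_n(\mathbf x))\bigr|\le C_{\phi,v}\|W_Q-W_\infty\|_{\square^*}\quad\text{for every } Q.
\]
On the event $\{C_{\phi,v}\|W_{Q_n}-W_\infty\|_{\square^*}\le\delta/2\}$, this sandwiches the conditional probability between $\mathbb P(T_{W_\infty,\phi}(\widetilde{\mathcal L}_n)\in F_{-\delta})$ and $\mathbb P(T_{W_\infty,\phi}(\widetilde{\mathcal L}_n)\in F^{\delta})$, the open inner and closed outer $\delta$-neighbourhoods of $F$. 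A single $Q$-independent sequence with rate function $J_{W_\infty}$ then does all the work. Goodness of $J_{W_\infty}$ is used to pick $\delta$ so that the infima over $F_{-\delta}$ and $F^\delta$ are within $\varepsilon/2$ of those over $F^\circ$ and $\overline F$.

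What each route buys:
\begin{itemize}
\item The paper's argument is direct and constructive: $\delta$ comes from goodness, $n_0$ comes from the reference LDP alone, and no discretization of $W_\infty$ is needed.
\item Yours is more modular: it would convert any such family of deterministic LDPs into a conditional one. The cost is non-constructiveness and an extra approximation step.
\end{itemize}

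In that approximation step, note that the grids $\{((i-1)/n,i/n]\}$ are not nested for general $n$. So $L^1$ convergence of the block averages is not a martingale statement. Argue it instead by density of continuous functions, together with the fact that block averaging is an $L^1$-contraction.

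One edge case separates the two routes. When $\inf_{\overline F}J_{W_\infty}=\infty$, the literal upper bound in \eqref{conldp} demands $\mathbb P(N_n\in F\mid Q_n)=0$. Your black-box argument cannot give this; it only yields a bound $e^{-Mn}$ for every $M$, which is the weaker reading you adopt.

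The literal form does hold. $T_{W_\infty,\phi}(\widetilde{\mathcal L}_n)$ always lies in the compact set $G_{W_\infty,\phi}(\mathcal F_c)$, which is exactly $\{J_{W_\infty}<\infty\}$. So once $C_{\phi,v}\|W_{Q_n}-W_\infty\|_{\square^*}$ is below the distance from $\overline F$ to this set, $N_n$ cannot land in $F$. Seeing this requires the structural estimate above, not the LDP alone; the paper also leaves this case implicit.
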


We now illustrate our general results, showing how the abstract framework developed above applies to concrete examples.
\subsection{Erd\H{o}s-R\'enyi Hypergraph}\label{ER Hypergraph}

Suppose \(\mathcal G_n^{(v)}\) is a \(v\)-uniform random hypergraph on \([n]\), viewed
as a symmetric zero-diagonal \(v\)-tensor, defined as
\begin{align}\label{DEF:hyptensor}
\mathcal{G}^{(v)}_n(i_1,\ldots,i_v)=\begin{cases}1/p_n, & \mbox{with probability }\,\, p_n\\ 0, & \mbox{with probability }\ 1-p_n\,\end{cases},
\end{align}
independently over unordered \(v\)-sets, and extend this value symmetrically to all
permutations of \((i_1,\ldots,i_v)\). If \(i_1,\ldots,i_v\) are not all distinct, set
\(
\mathcal G_n^{(v)}(i_1,\ldots,i_v)=0.
\) Then $\{\mathcal{G}^{(v)}_n\}_{n\ge1}$ is a sequence of symmetric zero-diagonal random tensors. 

\begin{prop}\label{LDPforERHyp}
Consider the sequence of random tensors $\{\mathcal{G}^{(v)}_n\}_{n\ge1}$ defined in \eqref{DEF:hyptensor} and assume $ n^{1-v}\ll p_n $.
Then the following conclusions hold:
 \begin{itemize}
     \item[(i)] Suppose ${\bf{X}}=(X_1,\ldots,X_n)$ where $X_i \overset{\text{i.i.d.}}{\sim} \mu$. Then $N_n(\mathcal{G}^{(v)}_n,\phi,{{\bf X}})$ satisfies a (conditional) LDP with speed $n$, and the good rate function
 \begin{align*}
J_{\mathbbm{1}}(t)=\inf_{{\boldsymbol f}\in \mathcal{F}_c:\ { G}_{\mathbbm{1},\phi}({\boldsymbol f})={t}}  \left\{ \int_0^1\sum_{r=1}^c f_{r}(u)\log \frac{f_{r}(u)}{\mu_r}du\right\},
\end{align*}
where $\mathbbm{1}$ denotes the constant graphon that equals 1 at all points in $[0,1]^v$.
\item[(ii)] For any $\beta\in\R$ we have
\begin{align*}
 Z_{\mathcal{G}^{(v)}_n}(\beta,\mu)\xrightarrow[n\to\infty]{\mathbb P}\sup_{{\boldsymbol f}\in \mathcal{F}_c}\left\{ \beta\,{G}_{\mathbbm 1,\phi}({\boldsymbol f})
-\int_0^1\sum_{r=1}^c f_r(u)\log \frac{f_r(u)}{\mu_r}\,du\right\}.
		\end{align*}
      
%+\sum_{r=1}^c h_r\int_0^1 f_r(u)\,du
% with \[
% G_{1,\phi}({\boldsymbol f})
% =\int_{[0,1]^v}
% \Phi_{\boldsymbol f}(x_1,\ldots,x_v)\,dx_1\cdots dx_v.
% \]
Moreover, the maximizers of the above optimization problem are attained.
    \end{itemize}
   
\end{prop}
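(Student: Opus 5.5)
The plan is to reduce the statement to \cref{condldpdef} with $W_\infty=\mathbbm 1$. Part (i) is then immediate. For part (ii) we use the conditional LDP together with a Varadhan-type argument, exactly as in the proof of \cref{NewLDP}(ii). So the whole task is to verify \eqref{eq:rand-cut}:
\[
\|W_{\mathcal G^{(v)}_n}-\mathbbm 1\|_{\square^*}\xrightarrow{\mathbb P}0 .
\]
By \cref{equivalence of two cut norms} it is enough to control $\|W_{\mathcal G_n^{(v)}}-\mathbbm 1\|_{\square^*}$ directly. The supremum over measurable $S\subseteq[0,1]$ reduces, up to a boundary error, to sets $S$ that are unions of the intervals $((i-1)/n,i/n]$. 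Concretely, for fixed $S$ the integral is multilinear in the fractions $t_i=\lambda(S\cap((i-1)/n,i/n])\in[0,1]$. A multilinear function on the cube $[0,1]^n$ is maximized in absolute value at a vertex, because the diagonal terms vanish by the zero-diagonal property. Hence the supremum equals
\[
\max_{A\subseteq[n]}\Big|\frac1{n^v}\sum_{i_1,\ldots,i_v\in A}\big(\mathcal G_n^{(v)}(i_1,\ldots,i_v)-\mathbf 1\{i_1,\ldots,i_v\ \text{distinct}\}\big)\Big|+O(1/n),
\]
where the $O(1/n)$ term accounts for the diagonal part of $\mathbbm 1$.

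The random part is a sum over unordered $v$-subsets of $A$ of the centered variables $Y_e=\mathcal G(e)-1$, multiplied by $v!/n^v$. The $Y_e$ are independent, with $|Y_e|\le 1/p_n$ and $\operatorname{Var}(Y_e)\le 1/p_n$. For a fixed $A$, Bernstein's inequality with $N\le n^v$ terms, threshold $t=\varepsilon n^v/v!$, variance proxy $n^v/p_n$ and range $1/p_n$ gives
\[
\mathbb P\Big(\Big|\sum_{e\subseteq A}Y_e\Big|>t\Big)\le 2\exp\Big(-c\,\frac{\varepsilon^2 n^{2v}}{n^v/p_n+\varepsilon n^v/p_n}\Big)=2\exp\big(-c_\varepsilon\, n^v p_n\big).
\]
A union bound over the $2^n$ choices of $A$ then yields
\[
\mathbb P\big(\|W_{\mathcal G^{(v)}_n}-\mathbbm 1\|_{\square^*}>2\varepsilon\big)\le 2^{n+1}e^{-c_\varepsilon n^vp_n}\to0,
\]
since $n^vp_n/n=n^{v-1}p_n\to\infty$ by the assumption $n^{1-v}\ll p_n$. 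This is exactly where the sparsity condition enters, and it is the main step: the bad-cut (here cut$^*$) norm requires only a union bound over single subsets, not a counting lemma.

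Finally, (ii) follows from the conditional LDP. The function $N_n$ is not bounded uniformly, because the entries are of size $1/p_n$. However, $|N_n|\le \|\phi\|_\infty\|W_{\mathcal G_n}\|_1$, and $\|W_{\mathcal G_n}\|_1\to1$ in probability by the law of large numbers, which holds since $n^vp_n\to\infty$. So with probability tending to one, $N_n$ is bounded by a constant. Varadhan's lemma applied to a bounded functional therefore gives convergence in probability of $Z_{\mathcal G^{(v)}_n}(\beta,\mu)$ to the stated supremum.

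Alternatively, one can apply \cref{NewLDP}(ii) along subsequences. Convergence in probability is equivalent to the property that every subsequence has a further subsequence converging almost surely in cut$^*$ norm. Along such a sub-subsequence, \cref{NewLDP}(ii) applies realization-wise, and the attainment of maximizers is inherited from it with $W_\infty=\mathbbm 1$. This subsequence route is the cleanest way to deduce both (i) and (ii) from the deterministic theorem.
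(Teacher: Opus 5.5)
Your proposal is correct and follows the paper's proof. Both reduce everything to $\|W_{\mathcal G^{(v)}_n}-\mathbbm 1\|_{\square^*}=o_{\mathbb P}(1)$ via \cref{condldpdef}, pass from measurable $S\subseteq[0,1]$ to subsets of $[n]$ by the multilinear/vertex argument (the paper's \cref{DtC}), and then apply Bernstein's inequality for a fixed subset with a union bound over the $2^n$ subsets, using $n^{v-1}p_n\to\infty$. Your extra treatment of part (ii), via boundedness of $N_n$ with high probability or the subsequence argument, simply makes explicit what the paper leaves implicit when it says that \cref{condldpdef} and \cref{NewLDP} complete the proof.
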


\subsection{Sparse Erd\H{o}s-R\'enyi Random Graphs}\label{SIERgraph}
Let $H:= (V(H), E(H))$ be a finite connected graph with $ |V(H)|=v\geq 2$ vertices labeled $[v]=\{1,2, \ldots, v\}$, $e:=|E(H)|$ edges, and maximum degree $\Delta$.
% Suppose $\psi:[0,1]^2\to\mathbb{R}_{>0}$ is a  symmetric and continuous function, and $\mathcal{G}_n$ is a random graph with $n$ labeled vertices, where for each pair $i \neq j$, $\mathcal{G}_n(i,j)\sim \mathrm{Ber}(p_n\,\psi(\frac{i}{n},\frac{j}{n}))$ independently, and $\mathcal{G}_n(i,i)=0$ for $i\in[n]$. 
Suppose $\mathcal{G}_n$ is a random graph with $n$ labeled vertices, where for each pair $i< j$, $\mathcal{G}_n(i,j)\sim \mathrm{Ber}(p_n)$ independently, and for $i>j$, set $\mathcal{G}_n(i,j)=\mathcal{G}_n(j,i)$, and $\mathcal{G}_n(i,i)=0$ for $i\in[n]$.
%Here, $\mathrm{Ber}(q)$ denotes the Bernoulli random variable with mean $q$.
% \begin{remark}
%     One could easily check that our proofs are valid if we replace the continuity assumption on $\psi$ with a.e continuity and boundedness.
% \end{remark}
\begin{defn}\label{symg}
Setting 
    % \[\mathrm{Sym}[\mathcal{G}_n](i_1,\ldots,i_v):=\frac{1}{v!p_n^e}\sum_{\sigma \in S_v} \prod_{\{a,b\}\in E(H)} \mathcal{G}_n\Big(i_{\sigma(a)}, i_{\sigma(b)}\Big), \]
    % \[{\Psi_n}(i_1,\ldots,i_v):=\frac{1}{v!}\sum_{\sigma \in S_v} \prod_{\{a,b\}\in E(H)} \psi\Big(i_{\sigma(a)}/n, i_{\sigma(b)}/n\Big), \]
    %$Q_n(i,j):=\mathcal{G}_n(i,j)/p$. 
    \[\mathrm{Sym}[\mathcal{G}_n,H](i_1,\ldots,i_v):=\mathrm{Sym}[\mathcal{G}_n](i_1,\ldots,i_v)=\frac{1}{v!p_n^e}\sum_{\sigma \in S_v} \prod_{\{a,b\}\in E(H)} \mathcal{G}_n\Big(i_{\sigma(a)}, i_{\sigma(b)}\Big)\mathbf 1_{\{\text{all}\,i_1,\ldots,i_v\,\text{are distinct}\}}, \]
    note that $\mathrm{Sym}[\mathcal{G}_n]$ is a zero-diagonal symmetric tensor.
\end{defn}
% \begin{defn}\label{wpsi}
% For any symmetric continuous function $\psi:[0,1]^2\to \R$, define
%     % \[\mathcal{U}_\psi(x_1,\ldots, x_v):=\frac{1}{v!}\sum_{\sigma \in S_v} \prod_{\{a,b\}\in E(H)} \psi\Big(x_{\sigma(a)}, x_{\sigma(b)}\Big),\]
%     \[\mathcal{U}_{\psi,H}(x_1,\ldots, x_v):=\mathcal{U}_{\psi}(x_1,\ldots, x_v)=\frac{1}{v!}\sum_{\sigma \in S_v} \prod_{\{a,b\}\in E(H)} \psi\Big(x_{\sigma(a)}, x_{\sigma(b)}\Big).\]
%     and note that $\mathcal{U}_\psi\in \mathcal{W}_v.$
% \end{defn}

\begin{defn}\label{corH}
For the graph $H$, we define $\widetilde{H}:=(V(\widetilde{H}), E(\widetilde{H}))$ to be the 2-core of $H$, i.e., the maximal subgraph of $H$ in which all vertices have degree at least $2$.\end{defn}

$\widetilde{H}$ can be obtained from $H$ as follows:
Remove any leaf present in $H$, and the edge incident on it, to obtain a new graph. If it has no leaves we are done. If not repeat the above process till the resulting graph has no leaves left. Notice that the final graph $\widetilde{H}$ equals the empty graph if and only if $H$ is a tree. In ~\cite[Theorem 1.2]{bhattacharya2024ldp} the authors prove a result which applies to the case where $H$ is a tree. In this paper we will focus on the case when $H$ is not a tree.

\begin{defn}\label{def:notation}
We denote $|V(\widetilde{H})|, |E(\widetilde{H})|$, and the maximum degree of $\widetilde{H}$ by $\widetilde{v}, \widetilde{e}, \widetilde{\Delta}$, respectively. Define $V':= V(H)\setminus V(\widetilde{H})$ and $E':=E(H)\setminus E(\widetilde{H})$. Notice that $|V'|=|E'|=v-\widetilde{v}=e-\widetilde{e}$, where this quantity is denoted by $\omega$. Also, we have $\widetilde{\Delta}\ge2$, by definition of $\widetilde H$. 
\end{defn}
\begin{prop}\label{LDPforINSER}Suppose $H$ is not a tree and \[(n\log n)^{-1/ {\widetilde \Delta}}\ll p_n \ll1 .\]

Then the following conclusions hold:

 \begin{itemize}
     \item[(i)]Suppose ${\bf{X}}=(X_1,\ldots,X_n)$ where $X_i \overset{\text{i.i.d.}}{\sim} \mu$. Then $N_n(\mathrm{Sym}[\mathcal{G}_n],\phi,{{\bf X}})$ satisfies a (conditional) LDP with speed $n$, and the good rate function
     
 \begin{equation*}
J_{\mathbbm{1}}(t)=\inf_{{\boldsymbol f}\in \mathcal{F}_c:\ { G}_{\mathbbm{1},\phi}({\boldsymbol f})={t}}  \left\{ \int_0^1\sum_{r=1}^c f_{r}(u)\log \frac{f_{r}(u)}{\mu_r}du\right\},
\end{equation*}where $\mathbbm{1}$ denotes the constant graphon that equals 1 at all points in $[0,1]^v$.
\item[(ii)] For any $\beta\in\R$, we have
\begin{align*}
Z_{\mathrm{Sym}[\mathcal{G}_n]}(\beta,\mu)\xrightarrow[n\to\infty]{\mathbb P}\sup_{{\boldsymbol f}\in \mathcal{F}_c}\left\{ \beta\,{G}_{\mathbbm{1},\phi}({\boldsymbol f})
-\int_0^1\sum_{r=1}^c f_r(u)\log \frac{f_r(u)}{\mu_r}\,du\right\}.
		\end{align*}
        Moreover, the maximizers of the above optimization problem are attained.
    \end{itemize}

\end{prop}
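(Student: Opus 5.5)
By \cref{condldpdef}, both conclusions follow once we show
\[
\|W_{\mathrm{Sym}[\mathcal G_n]}-\mathbbm 1\|_{\square^*}=o_{\mathbb P}(1).
\]
Part (ii) then follows from \cref{NewLDP}(ii), applied along subsequences along which the cut$^*$ convergence holds almost surely; the subsequence argument upgrades this to convergence in probability. Attainment of the maximizers is inherited from \cref{NewLDP}.

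\textbf{Step 1: reduce to subsets of $[n]$.} By \cref{equivalence of two cut norms} and the zero-diagonal structure, it suffices to control
\[
\max_{S\subseteq[n]}\frac{1}{n^v}\Bigl|\sum_{i_1,\ldots,i_v\in S\ \mathrm{distinct}}\Bigl(p_n^{-e}\prod_{\{a,b\}\in E(H)}\mathcal G_n(i_a,i_b)-1\Bigr)\Bigr|,
\]
up to $O(1/n)$ errors from discretisation and the diagonal. For a fixed-size measurable $S\subseteq[0,1]$, the integral over $S^v$ of a step function is multilinear in the fractions $|S\cap I_k|\cdot n$. Its extremes are therefore attained at vertices of the cube, i.e.\ at unions of intervals, so the supremum over measurable sets equals the maximum over $S\subseteq[n]$. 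The symmetrization over $\sigma\in S_v$ is harmless because $S^v$ is permutation invariant. So we need
\[
\max_{S}\bigl|\hom_{\mathrm{inj}}(H,\mathcal G_n[S])-p_n^e|S|^{v}\bigr|=o_{\mathbb P}(n^vp_n^e).
\]

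\textbf{Step 2: peel the leaves.} This is the role of \cref{lem:uniform-core-reduction-ER}. Remove the leaves of $H$ one at a time. Each leaf attached to a vertex $u$ contributes the factor $\deg_S(i_u)=|N(i_u)\cap S|$. Uniformly over $i$ and $S$, we have $\deg_S(i)=p_n|S|+O(\sqrt{np_n\log n})$: apply Bernstein's inequality to each fixed $S$ and vertex, and handle the union over $2^n$ sets via $\log 2^n=O(n)$. The deviation term is then $O(\sqrt{np_n\cdot n})=O(n\sqrt{p_n})$. Relative to $np_n$ this is $p_n^{-1/2}$, which is not small, so the crude Bernstein plus union bound fails.

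The fix is to write the leaf factor as $\sum_{j\in S}(\mathcal G_n(i,j)-p_n)$ and sum it against the remaining weights. This turns the error into a bilinear form $\sum_{i\in T,\,j\in S}w_i(\mathcal G_n(i,j)-p_n)$ with bounded-type weights. Such forms are uniformly $O(n^{3/2}\sqrt{p_n})$ over all $S,T$ w.h.p., which gives relative error $(np_n)^{-1/2}\to0$ once $np_n\to\infty$. The weights $w_i$ are counts of partial copies of the core through $i$, and these are themselves concentrated after the core step. Iterating over the $\omega$ peeled edges reduces everything to the statement that, uniformly in $S$,
\[
\hom_{\mathrm{inj}}(\widetilde H,\mathcal G_n[S])=p_n^{\widetilde e}|S|^{\widetilde v}(1+o(1))+o(n^{\widetilde v}p_n^{\widetilde e}),
\]
together with control of the weighted versions where the core copies are weighted by products of $|S|$-sized leaf factors.

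\textbf{Step 3: the core, which is the main obstacle.} This is \cref{lem:uniform-core-discrepancy-ER}. For each fixed $S$, the upper tail bounds for subgraph counts of \cite{BasakKarmakar2025UpperTailIrregular,BasakBasu2023UpperTailRegularLocalized} show
\[
\mathbb P\bigl(\hom(\widetilde H,\mathcal G_n[S])\ge(1+\delta)\,\mathbb E\bigr)\le\exp\bigl(-c\,\min\{n^2p_n^{\widetilde\Delta},\ldots\}\log(1/p_n)\bigr).
\]
The lower tail bounds of \cite{KozmaSamotijLowerTailsRelativeEntropy} give an exponent of order $n^2p_n$. To survive the union bound over $2^n$ sets $S$, we need these exponents to be $\gg n$. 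Note that $n^2p_n^{\widetilde\Delta}\log(1/p_n)\gg n$ exactly when $p_n\gg(n\log n)^{-1/\widetilde\Delta}$ up to logarithmic factors, which matches the hypothesis. This explains why $\widetilde\Delta$, and not $\Delta$, appears.

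Small sets $S$ with $|S|\le\varepsilon n$ are handled trivially: both sides are $O(\varepsilon n^vp_n^e)$ w.h.p., using a global upper tail bound. I expect the delicate point to be matching the exact form of the localized and planted upper-tail rates to the threshold $(n\log n)^{-1/\widetilde\Delta}$. I would first verify it for regular cores, using \cite{BasakBasu2023UpperTailRegularLocalized}, and then for irregular cores using \cite{BasakKarmakar2025UpperTailIrregular}.
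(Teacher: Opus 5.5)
Your overall plan matches the paper's. You reduce via \cref{condldpdef} to $\|W_{\mathrm{Sym}[\mathcal G_n]}-\mathbbm 1\|_{\square^*}=o_{\mathbb P}(1)$, pass to subsets $S\subseteq[n]$ by multilinearity (as in \eqref{DtC2}), peel leaves by writing $\mathcal G_n=p_n+D$ with a spectral bound, and treat the $2$-core with tail bounds, a $2^n$ union bound, and monotonicity for small sets. However, Step 2 has a genuine gap.

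After peeling a leaf attached to $u'$, the error term is $\sum_d\mathcal N(S;d)\,(D\mathbf 1_S)(d)$. Here $\mathcal N(S;d)$ counts injective copies of the intermediate graph $K'$ in $S$ with $u'\mapsto d$. These weights are random, depend on $S$, and are unbounded, so your discrepancy bound for bilinear forms with ``bounded-type weights'' does not apply as stated. Your claim that they are ``concentrated after the core step'' does not follow from anything you prove. \cref{lem:uniform-core-discrepancy-ER} controls only the total count $\mathcal N_{\widetilde H}(S;\mathcal G_n)$, which says nothing about how copies are distributed over roots. Moreover, $K'$ generally contains tree parts outside the core. To make your route work you would need $\max_d\mathcal N([n];d)=O_{\mathbb P}(n^{v_K-2}p_n^{e_K-1})$, i.e.\ a rooted upper-tail estimate that survives a union bound over $n$ roots. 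You do not supply this.

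The missing idea is that an $\ell^2$ bound suffices. Since $0\le\mathcal N(S;d)\le\mathcal N([n];d)$, Cauchy--Schwarz gives $\sup_S|\sum_d\mathcal N(S;d)(D\mathbf 1_S)(d)|\le\|\mathcal N([n];\cdot)\|_2\,\|D\|_{\mathrm{op}}\sqrt n$. Here $\|D\|_{\mathrm{op}}=O_{\mathbb P}(\sqrt{np_n})$ by \cref{lem:ER-centered-opnorm}, and $\sum_d\mathcal N([n];d)^2=O_{\mathbb P}(n^{2v_K-3}p_n^{2e_K-2})$ by \cref{eq:rooted-L2}. The latter is a second-moment overlap count: two rooted copies sharing $r\ge2$ vertices share at most $\widetilde\Delta(r-1)-1$ edges. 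This is exactly where $p_n\gg(n\log n)^{-1/\widetilde\Delta}$ enters the reduction step, and it gives relative error $(np_n)^{-1/2}$ uniformly in $S$.

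You also omit the injectivity correction: the leaf image must avoid $\varphi(V(K'))$. The paper bounds this term in expectation by $O(n^{v_K-1}p_n^{e_K-1})$. Once the leaf factor is replaced by the deterministic $p_n(|S|-v_K+1)$, no weighted version of the core statement is needed; the recursion closes directly on the unweighted count $\mathcal N_{\widetilde H}(S;\mathcal G_n)$.

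A minor point in Step 3: the Janson-type lower-tail exponent is $\min_{J\subseteq\widetilde H,\,e_J\ge1}|S|^{v_J}p_n^{e_J}$, which need not be of order $n^2p_n$ in this regime. For a triangle core with $np_n^2\to0$ it equals $n^3p_n^3\ll n^2p_n$. It is still $\gg n$, which is all the union bound requires.
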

% \begin{prop}\label{LDPforINSER}Suppose $H$ is not a tree and \[(n\log n)^{-1/ {\widetilde \Delta}}\ll p_n \ll1 .\]

% Then the following conclusions hold:
%  \begin{itemize}
%      \item[(i)]Suppose ${\bf{X}}=(X_1,\ldots,X_n)$ where $X_i \overset{\text{i.i.d.}}{\sim} \mu$. Then $N_n(\mathrm{Sym}[\mathcal{G}_n],\phi,{{\bf X}})$ satisfies a (conditional) LDP with speed $n$, and the good rate function
     
%  \begin{equation*}
% J_{\mathcal{U}_\psi}(t)=\inf_{{\boldsymbol f}\in \mathcal{F}_c:\ { G}_{\mathcal{U}_\psi,\phi}({\boldsymbol f})={t}}  \left\{ \int_0^1\sum_{r=1}^c f_{r}(u)\log \frac{f_{r}(u)}{\mu_r}du\right\}.
% \end{equation*}
% \item[(ii)] For any $\beta\in\R$, we have
% \begin{align*}
% Z_{\mathrm{Sym}[\mathcal{G}_n]}(\beta,\mu)\xrightarrow[n\to\infty]{\mathbb P}\sup_{{\boldsymbol f}\in \mathcal{F}_c}\left\{ \beta\,{G}_{{\mathcal{U}_\psi},\phi}({\boldsymbol f})
% -\int_0^1\sum_{r=1}^c f_r(u)\log \frac{f_r(u)}{\mu_r}\,du\right\}.
% 		\end{align*}
%         Moreover, the maximizers of the above optimization problem are attained.
%     \end{itemize}

% \end{prop}

Finally, we connect our results to statistical physics by studying a generalized Potts statistic with higher-order interactions. In this setting, the LDP translates into precise information about the limiting free energy and macroscopic behavior of the system.

\subsection{A Generalized Potts Statistic}\label{GPM}
With $ N_n(Q_n,\phi,{{\bf X}})$ as in \eqref{hamiltoniandef}, let $N^{(r)}_n(Q_n,{\bf X})$ denote \( N_n(Q_n,\phi,{{\bf X}})\) where $\phi(x_1,\ldots,x_v)=\mathbf{1}_r$. In other words,
\begin{align}\label{defNinGPT}
& N^{(r)}_n(Q_n,{\bf X})=\frac{1}{n^v}\sum_{i_1,\cdots,i_v=1}^n\mathbf{1}_{\{ X_{i_1}=\ldots=X_{i_v}=r\}} Q_n(i_1,\ldots,i_v).
\end{align}
 % $N_n(Q_n,\mathbf{1}_r,{{\bf X}})$
 Define the $c$-length vector 
\begin{align}\label{defNinGPT2}
\boldsymbol N_n(Q_n,\mathbf{X})= (N^{(1)}_n(Q_n,\mathbf{X}), \ldots, N^{(c)}_n(Q_n,\mathbf{X}))^\top.    
\end{align}
For any $\boldsymbol\alpha,\boldsymbol h \in \R^c$, define the Gibbs measure
\begin{align}\label{eq:gibbs_potts2}
	\frac{d\mathcal R_{Q_n,{ \boldsymbol{\alpha}},\boldsymbol h}}{d(\mathscr{U}[c])^{\otimes n}}(\mathbf{X}):=\exp\Big(n\boldsymbol\alpha^\top \boldsymbol N_n(Q_n,\mathbf{X})+\sum_{i=1}^{n}\sum_{a=1}^{c} h_a\mathbf{1}_{\{X_i=a\}}-n\mathcal{Z}_{Q_n}({\boldsymbol{\alpha}},\boldsymbol{h})\Big),
\end{align}
and \begin{align}\label{logpf}
\mathcal Z_{Q_n}({\boldsymbol{\alpha}},\boldsymbol h):=\frac{1}{n}\log \E_{(\mathscr{U}[c])^{\otimes n}} \exp\Big(n \boldsymbol\alpha^\top \boldsymbol N_n(Q_n,\mathbf{X})+\sum_{i=1}^{n}\sum_{a=1}^{c}h_a\mathbf{1}_{\{X_i=a\}}\Big),    
\end{align}
where $\mathscr{U}[c]$ denotes the uniform distribution on $[c]$.
%$\boldsymbol N_n(Q_n,\mathbf{X})$ is supposed to reflect all the interactions between the entries of $\bf X$, so one could write $$\mathcal R_{Q_n,{ \boldsymbol{\alpha}},\boldsymbol h}(\mathbf{X}) \propto \exp\Big(n\boldsymbol\alpha^\top \boldsymbol N_n(Q_n,\mathbf{X})\Big)\mu^{\otimes n} ({\bf X}),$$ 
Let $\mu$ be a probability distribution on $[c]$, defined by \begin{align}\label{mu}
 &\mu_r =\frac{e^{h_r}}{\sum_{s=1}^c e^{h_s}}\qquad r\in[c],
\end{align}

and note that
\begin{align*}%\label{eq:gibbs_potts21}
&\frac{d\mathcal R_{Q_n,\boldsymbol\alpha,\boldsymbol h}}
{d\mu^{\otimes n}}(\mathbf X)= \exp\Big(n\boldsymbol\alpha^\top \boldsymbol N_n(Q_n,\mathbf{X})-n\mathcal{Z}_{Q_n}({\boldsymbol{\alpha}}, \boldsymbol{h})\Big)\left(\sum_{s=1}^c e^{h_s}/c\right)^n.
\end{align*}

%so without loss of generality we can assume our Gibbs measure has the form of \eqref{eq:gibbs_potts21}, and our base measure is $\mathscr{U}[c]$, or the Hamiltonian is $n\boldsymbol\alpha^\top \boldsymbol N_n(Q_n,\mathbf{X})$ and the base measure is $\mu$ where $\mu_r\propto e^{h_r}$. 

For ${\boldsymbol{x}}=(x_1,\dots,x_n)\in[c]^n$, define the empirical color proportion vector
\[
{\boldsymbol m}_n({\boldsymbol{x}})
:=\Big(\frac1n\sum_{i=1}^n\mathbf 1_{\{x_i=1\}},\ldots,
\frac1n\sum_{i=1}^n\mathbf 1_{\{x_i=c\}}\Big).
\]

We will refer to the Gibbs measure in~\eqref{eq:gibbs_potts2}  as the generalized Potts model. In particular, if $v=2$ and $Q_n$ is a matrix (quadratic interaction), we get back the usual Potts model, which is what has been extensively studied in the literature (see~\cite{Basak2017,eichelsbacher2015rates,dembo2014replica,costeniuc2005complete,gandolfo2010limit}~
%\cite{Basak2017,wu1982potts,potts1952some,eichelsbacher2015rates,dembo2014replica,costeniuc2005complete,blanchard2008thermodynamic,gandolfo2010limit}
and the references therein). %This model was introduced by \cite{bhattacharya2024ldp} and corresponds to the usual Potts model with \emph{higher order interactions} in the Hamiltonian.
% \reihaneh{useful citATION???\cite{dembo2010ising,basak2017ferromagnetic,dembo2010gibbs}.}
In our next result, we derive a law of large numbers for the empirical color proportion vector $\boldsymbol m_n({\bf X})$ in terms of the maximizers of the variational problem. For stating the result, for any $\boldsymbol{x}\in\R^c$ and $\mathcal S\subseteq \R^c$, we set
\(
\operatorname{dist}(\boldsymbol{x},\mathcal S)
:=\inf_{\boldsymbol{y}\in\mathcal S}\|\boldsymbol{x}-\boldsymbol{y}\|,
\)
where $\|\cdot\|$ denotes a fixed norm on $\mathbb R^c$ (all such norms being equivalent). 
\begin{prop}\label{prop:ldp+zn}
Suppose $\{Q_n\}_{n\ge 1}$ is a sequence of symmetric, zero-diagonal $v$-tensors, such that \eqref{eq:cut} holds. Then the following conclusions hold:
    \begin{itemize}
        \item[(i)] Suppose ${\bf{X}}=(X_1,\ldots,X_n)$ where $X_i \overset{\text{i.i.d.}}{\sim} \mu$ defined in \eqref{mu}. Then for any ${\boldsymbol\alpha}\in \R^c$, $\boldsymbol{\alpha}^\top\boldsymbol N_n(Q_n, \bf{X})$ (see \eqref{defNinGPT2}) satisfies an LDP  with speed $n$ and the good rate function 
        
\begin{equation}\label{eq:i2}
I_{W_\infty,\boldsymbol{\alpha}}({t}):=\underset{\substack{{\boldsymbol f}\in \mathcal{F}_c: \\ \ \boldsymbol{\alpha}^\top {\boldsymbol G}_{W_{\infty}}({\boldsymbol f})=t}}{\inf} \Big\{ \int_0^1\sum_{r=1}^c f_{r}(u)\log f_{r}(u)du-\sum_{r=1}^{c}h_r \int_0^1 f_r(u)du\Big\}+\log \sum_{s=1}^c e^{h_s},
\end{equation}
where ${\boldsymbol G}_{W_{\infty}}({\boldsymbol f}):= \Big(G_{W_{\infty},\mathbf{1}_1}(\boldsymbol f), \ldots, G_{{W_{\infty}},\mathbf{1}_c}(\boldsymbol f)\Big)$, see \cref{def:g2}. 

\item[(ii)] For the scaled log partition function $\mathcal{Z}_{Q_n}({\boldsymbol\alpha},\boldsymbol h)$ as in \eqref{logpf}, we have
\begin{align}\label{eq:potts_unconstrained_opt}
\lim\limits_{n\rightarrow \infty} \mathcal{Z}_{Q_n}({\boldsymbol\alpha},\boldsymbol h)= \sup_{{\boldsymbol f}\in \mathcal{F}_c}\left\{{\boldsymbol\alpha}^\top {\boldsymbol G}_{W_\infty}({\boldsymbol f})
+\sum_{r=1}^c h_r\int_0^1 f_r(u)\,du
-\int_0^1\sum_{r=1}^c f_r(u)\log f_r(u)\,du\right\}-\log c.
		\end{align}   
        Moreover, the maximizers of the above optimization problem are attained.
        %where  $\mathfrak{F}:\mathcal F_c\to\mathbb R$ denotes the variational functional\begin{equation}
%\label{eq:potts_unconstrained_opt}
% \mathfrak{F}({\boldsymbol f})
% :={\boldsymbol\alpha}^\top {\boldsymbol G}_{W}({\boldsymbol f})
% +\sum_{r=1}^c h_r\int_0^1 f_r(u)\,du
% -\int_0^1\sum_{r=1}^c f_r(u)\log f_r(u)\,du.
% \end{equation}

\item [(iii)] Let $\mathcal{A}\subseteq \mathcal{F}_c$ denote the set of optimizers in \eqref{eq:potts_unconstrained_opt}. Then, for every $\varepsilon>0$ we have 
\begin{equation}
\label{eq:LLN_set}
\mathcal R_{Q_n,{ {\boldsymbol\alpha}},\boldsymbol h}\left(
\operatorname{dist}({\boldsymbol m}_n({\bf X}),\mathcal S)>\varepsilon
\right)
\xrightarrow{n\to\infty}0,
\end{equation}
%and in fact the convergence holds with an exponential rate, 
% Let $\mathcal M:=\arg\max_{{\boldsymbol f}\in\mathcal F_c}\Phi({\boldsymbol f})$
% be the set of maximizers of~\eqref{eq:potts_unconstrained_opt}.
where
\[
\mathcal S
:=\Big\{\Big(\int_0^1 f_1(u)\,du,\ldots,\int_0^1 f_c(u)\,du\Big):\, {\boldsymbol f}=(f_1,\ldots,f_c)\in\mathcal{A}\Big\}.
\]
In particular, if $\mathcal{A}=\{{\boldsymbol f}^*\}$, then
\[
{\boldsymbol m}_n({\bf X})\xrightarrow[n\to\infty]{\mathcal R_{Q_n,{ {\boldsymbol\alpha}},\boldsymbol h}\text{-prob.}}
\Big(\int_0^1 f_1^*(u)\,du,\ldots,\int_0^1 f_c^*(u)\,du\Big).
\]
    \end{itemize}
\end{prop}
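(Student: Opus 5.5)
Parts (i) and (ii) follow by specializing \cref{NewLDP}; part (iii) needs a separate Laplace-type argument under the Gibbs measure.

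\emph{Part (i).} Write $\boldsymbol\alpha^\top\boldsymbol N_n(Q_n,\mathbf X)=N_n(Q_n,\phi_{\boldsymbol\alpha},\mathbf X)$ with
\[
\phi_{\boldsymbol\alpha}(x_1,\ldots,x_v):=\sum_{r=1}^c\alpha_r\mathbf 1_{\{x_1=\cdots=x_v=r\}}.
\]
This identity holds because $N_n$ is linear in $\phi$. Since $\Phi_{\boldsymbol f}$ is linear in $\phi$, we also get $G_{W_\infty,\phi_{\boldsymbol\alpha}}=\boldsymbol\alpha^\top\boldsymbol G_{W_\infty}$. Applying \cref{NewLDP}(i) with base measure $\mu$ from \eqref{mu} gives the LDP with rate $J_{W_\infty}$. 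It remains to rewrite the entropy term using $\log\mu_r=h_r-\log\sum_s e^{h_s}$ and $\int_0^1\sum_r f_r=1$:
\[
\int_0^1\sum_r f_r\log\frac{f_r}{\mu_r}=\int_0^1\sum_r f_r\log f_r-\sum_r h_r\int_0^1 f_r+\log\sum_s e^{h_s}.
\]
This is exactly \eqref{eq:i2}.

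\emph{Part (ii).} Using the displayed change of measure between $\mathscr U[c]^{\otimes n}$ and $\mu^{\otimes n}$,
\[
\mathcal Z_{Q_n}(\boldsymbol\alpha,\boldsymbol h)=Z_{Q_n}(1,\mu)+\log\sum_s e^{h_s}-\log c,
\]
where $Z_{Q_n}(1,\mu)$ is computed with $\phi_{\boldsymbol\alpha}$ and $\beta=1$. Now apply \cref{NewLDP}(ii) and rewrite the entropy as above; the constant $\log\sum_s e^{h_s}$ cancels, which yields \eqref{eq:potts_unconstrained_opt}. Attainment of the maximizers is inherited from \cref{NewLDP}(ii).

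\emph{Part (iii).} This is the main step.
\begin{enumerate}
\item Prove a joint LDP for the pair $(\boldsymbol m_n(\mathbf X),\boldsymbol\alpha^\top\boldsymbol N_n)$ under $\mu^{\otimes n}$, with rate
\[
K(\boldsymbol y,t)=\inf\Big\{D(\boldsymbol f):\ \int_0^1 \boldsymbol f=\boldsymbol y,\ \boldsymbol\alpha^\top\boldsymbol G_{W_\infty}(\boldsymbol f)=t\Big\},
\qquad D(\boldsymbol f):=\int_0^1\sum_r f_r\log\frac{f_r}{\mu_r}.
\]
The natural route is to revisit the proof of \cref{NewLDP}: it presumably establishes an LDP for the empirical measure $\frac1n\sum_i\delta_{(i/n,X_i)}$, equivalently for $\boldsymbol f$ in a weak topology, and shows continuity of $\boldsymbol f\mapsto G_{W_\infty}(\boldsymbol f)$ together with exponential approximation of $N_n$ by $G_{W_{Q_n}}$. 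Since $\boldsymbol f\mapsto\int_0^1\boldsymbol f$ is continuous in that topology, the contraction principle gives the joint LDP. Alternatively, add a linear tilt $\boldsymbol\lambda^\top\boldsymbol m_n$, which amounts to changing $\boldsymbol h$, and combine it with part (ii) and G\"artner--Ellis-type reasoning; I would try the contraction route first.
\item Apply Varadhan's lemma to the tilt $e^{n\boldsymbol\alpha^\top\boldsymbol N_n}$ restricted to the closed set $C_\varepsilon:=\{\operatorname{dist}(\boldsymbol m_n,\mathcal S)\ge\varepsilon\}$. The tilt is unbounded when $W_\infty$ is only in $L^1$, so first check the exponential tightness/moment condition $\limsup_n\frac1n\log\E e^{\gamma n\boldsymbol\alpha^\top\boldsymbol N_n}<\infty$ for some $\gamma>1$. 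This follows from part (ii) applied with $\gamma\boldsymbol\alpha$.
\item Varadhan then gives
\[
\limsup_n \tfrac1n\log\mathcal R_{Q_n,\boldsymbol\alpha,\boldsymbol h}(C_\varepsilon)\le \sup_{\boldsymbol f:\ \operatorname{dist}(\int\boldsymbol f,\mathcal S)\ge\varepsilon}\Xi(\boldsymbol f)-\sup_{\boldsymbol f}\Xi(\boldsymbol f),
\]
where $\Xi$ is the functional in \eqref{eq:potts_unconstrained_opt}.
\item Show this gap is strictly negative by a compactness argument, using upper semicontinuity of $\Xi$ and compactness of its superlevel sets in the topology above (already used in \cref{NewLDP} to get attainment). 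Take a maximizing sequence in the constrained set and extract a limit. That limit is a maximizer, hence lies in $\mathcal A$, yet its mean vector is at distance at least $\varepsilon$ from $\mathcal S$ by continuity of $\boldsymbol f\mapsto\int\boldsymbol f$. This contradiction shows the gap is negative, so the probability decays exponentially.
\end{enumerate}
The uniqueness statement is the special case $\mathcal S=\{\int\boldsymbol f^*\}$.

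I expect the main obstacle to be step 1 together with the upper semicontinuity of $\boldsymbol f\mapsto\boldsymbol\alpha^\top\boldsymbol G_{W_\infty}(\boldsymbol f)$ when $W_\infty$ is merely in $L^1$. I would handle this exactly as in the proof of \cref{NewLDP}, by truncating $W_\infty$ and controlling the error through the cut$^*$ norm via \cref{equivalence of two cut norms}.
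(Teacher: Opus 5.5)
Your proposal is correct and essentially matches the paper's proof. Parts (i)--(ii) come from specializing \cref{NewLDP} with $\phi=\sum_r\alpha_r\mathbf 1_r$, $\beta=1$ and $\mu$ from \eqref{mu}. Part (iii) comes from a Varadhan-type upper bound on a closed set, normalization via (ii), and upper semicontinuity plus compactness of $\mathcal P$ to get a strict gap. The paper applies \cite[Exercise 4.3.11]{DZ} directly to $\tilde{\mathcal L}_n$ on $\mathcal P$ rather than through your joint LDP for $(\boldsymbol m_n,\boldsymbol\alpha^\top\boldsymbol N_n)$; your packaging has the small advantage of absorbing the $n$-dependence of the tilt explicitly.

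One minor correction: the tilt is in fact bounded, so your moment-condition check is harmless but unnecessary. Indeed, $|G_{W_\infty,\phi}(\boldsymbol f)|\le\max|\phi|\,\|W_\infty\|_1$, and \eqref{eq:gpt_best} together with \eqref{eq:cut} gives $\sup_n\sup_{\mathbf x}|N_n(Q_n,\phi,\mathbf x)|<\infty$.
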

 %Thus, the LDP not only describes rare events, but also determines the typical macroscopic behavior of the system. 
 When the optimizer in the RHS of \eqref{eq:potts_unconstrained_opt} is unique (i.e.~$\mathcal{A}$ is a singleton), this leads to a true law of large numbers for the empirical color proportions, using part (iii).

\subsubsection{Monochromatic subgraphs of random graphs}
Let $H$ be a finite connected graph with $v\geq 2$ labeled vertices, $e$ edges, and maximum degree $\Delta$. Let $G_n$ be a sequence of simple graphs with $n$ vertices labeled by $[n]$, each containing at least one edge.
%${\bf{X}}=(X_1,\ldots,X_n)$ colored by $c$ colors and $Q_n$ be the scaled adjacency matrix of a graph $G_n$, i.e. $ Q_n := \frac{1}{\|G_n\|_1}G_n$, where $\|G_n\|_1= \frac{1}{n^2}\sum_{i,j}G_n(i, j)$ and $G_n(i, j):=\mathbf{1}_{i \sim j}$. 
Set
\begin{align}\label{eq:333}
 Q_n(i_1,\ldots,i_v)=\frac{1}{v! \|G_n\|_1^{e}}\sum_{\sigma \in S_v} \prod_{\{a,b\}\in E(H)} G_n\Big(i_{\sigma(a)}, i_{\sigma(b)}\Big)\mathbf 1_{\{\text{all}\,i_1,\ldots,i_v\,\text{are distinct}\}},   
\end{align}
 where $\|G_n\|_1:= n^{-2}\sum_{i,j}G_n(i, j)$ and $G_n(i, j):=\mathbf{1}_{\{i \sim j\}}$.
With this choice, \eqref{defNinGPT} reduces to
\begin{align}\label{sbgrphcont}
    N^{(r)}_n(Q_n,{\bf X})= \frac{1}{n^v \|G_n\|_1^{e}}\underset{\substack{ i_1,\ldots,i_v\in [n]\\  {\rm  distinct}}}{\sum} \mathbf{1}_{\{ X_{i_1}=\ldots=X_{i_v}=r\}} \prod_{\{a,b\}\in E(H)}G_n(i_a,i_b)=:N_r(H,G_n,{\bf{X}}).
\end{align}
One can think of  $N_r(H,G_n,\bf{X})$ as the (scaled) number of copies of the subgraph $H$ in $G_n$ in which all vertices have color $r$. %Define the $c$-length vector 
%$$\boldsymbol N(H,G_n,\mathbf{X}):= (N_1(H,G_n,\mathbf{X}), \ldots, N_c(H,G_n,\mathbf{X}))^\top.$$
% Similarly, given the function $\boldsymbol{N}(H,G_n,\bf{X})$ and $\boldsymbol\alpha,\boldsymbol h \in \R^c$, define the Gibbs measure
% \begin{align}\label{eq:gibbs_potts2}
% 	\R_{Q_n,{ {\boldsymbol{\alpha}}},\boldsymbol h}(\mathbf{X})=\exp\Big(n\boldsymbol\alpha^\top \boldsymbol N(H,Q_n,\mathbf{X})+\sum_{i=1}^{n}\sum_{a=1}^{c} h_a\mathbf{1}_{X_i=a}-nZ_{Q_n}({\boldsymbol{\alpha}},\boldsymbol{h})\Big),
% \end{align}
% and $$
% Z_{Q_n}({\boldsymbol{\alpha}},\boldsymbol h):=\frac{1}{n}\log \E_{(\mathscr{U}[c])^{\otimes n}} \exp\Big(n \boldsymbol\alpha^\top \boldsymbol N(H,Q_n,\mathbf{X})+\sum_{i=1}^{n}\sum_{a=1}^{c}h_a\mathbf{1}_{X_i=a}\Big).$$

\cref{prop:ldp+zn}  implies an LDP for  ${\boldsymbol \alpha}^\top N(H,G_n,\mathbf{X}):=\sum_{r=1}^c \alpha_r N_r(H,G_n,\mathbf{X})$, which recovers the LDP studied in ~\cite[Corollary 1.7, Theorem 1.8]{bhattacharya2024ldp}. This demonstrates that our tensor setup incorporates as a special case tensors obtained from a graph as in \eqref{eq:333}. This is formalized in the following corollary.

\begin{corr}\label{thisgivesthat}
Given a sequence of graphs $G_n$ and a fixed graph $H$ above, set $Q_n$ by \eqref{eq:333} and set $\widetilde{G}_n:=G_n/\lVert G_n\rVert_1$. Let $W^{(2)}_{\widetilde{G}_n}\in L^1([0,1]^2)$ be defined as in \eqref{eq:funext}. Suppose the following assumptions are satisfied: 
\begin{align}\label{eq:twomatbd}
    \limsup_{n\to\infty}\,\lVert W^{(2)}_{\widetilde{G}_n}\rVert_{q\Delta} < \infty \qquad \mbox{and} \qquad \lVert W^{(2)}_{\widetilde{G}_n}-W_\infty^{(2)}\rVert_{\square^*}\overset{n\to\infty}{\to} 0, 
\end{align}
for some $q>1$ and $W_\infty^{(2)}\in L^1([0,1]^2)$. Then for any ${\boldsymbol \alpha}\in \R^c$, the  statistic
${\boldsymbol\alpha}^\top {\boldsymbol N}(H,G_n,{\bf{X}})$ satisfies an LDP with speed $n$ and the good rate function from \eqref{eq:i2}, with $G_{W_{\infty,\mathbf{1}_r}}$ replaced by $G^{(2)}_{{\infty},r}$, $1\le r\le c$ defined as
\begin{align}\label{eq:limfunc}
    G^{(2)}_{{\infty},r}(\boldsymbol{f}):=\int_{[0,1]^v}\prod_{\{i,j\}\in E(H)} W^{(2)}_{\infty}(x_{i},x_{j})\prod_{a=1}^v f_r(x_a)\, d x_a.
\end{align}
\end{corr}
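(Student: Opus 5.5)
The plan is to reduce \cref{thisgivesthat} to \cref{prop:ldp+zn}(i). The key step is to show that the $v$-hypergraphons $W_{Q_n}$ built from \eqref{eq:333} converge in $\|\cdot\|_{\square^*}$ to a limit $W_\infty$ for which $G_{W_\infty,\mathbf 1_r}(\boldsymbol f)=G^{(2)}_{\infty,r}(\boldsymbol f)$ for every $r$ and every $\boldsymbol f\in\mathcal F_c$.

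\textbf{Step 1: the candidate limit.} The natural candidate is the symmetrized product
\[
W_\infty(x_1,\ldots,x_v):=\frac{1}{v!}\sum_{\sigma\in S_v}\prod_{\{a,b\}\in E(H)}W^{(2)}_\infty(x_{\sigma(a)},x_{\sigma(b)}).
\]
This lies in $\mathcal W_v$: it is symmetric by construction. Integrability follows from a generalized Hölder inequality for $H$-densities: the bound $\|W^{(2)}\|_{\Delta}$ controls $t(H,|W^{(2)}|)\le \|W^{(2)}\|_\Delta^{e}$, as in \cite{BorgsLPII}.

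For $\boldsymbol f$ with $\phi=\mathbf 1_r$ we have $\Phi_{\boldsymbol f}=\prod_a f_r(x_a)$, which is permutation invariant. Symmetrizing therefore does not change the integral, so $G_{W_\infty,\mathbf 1_r}(\boldsymbol f)=G^{(2)}_{\infty,r}(\boldsymbol f)$.

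\textbf{Step 2: the discrete object versus the unrestricted product.} Compare $W_{Q_n}$ with the symmetrized product $U_n$ built from $W^{(2)}_{\widetilde G_n}$ with no distinctness restriction. The two differ only on the diagonal set where two indices share a cell. That set carries $L^1$ mass $O(1/n)$ times a Hölder bound. Using $\limsup\|W^{(2)}_{\widetilde G_n}\|_{q\Delta}<\infty$ with $q>1$, the diagonal contribution is $o(1)$ in $L^1$, hence in $\|\cdot\|_{\square^*}$.

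\textbf{Step 3: the counting lemma (main obstacle).} It remains to show $\|U_n-W_\infty\|_{\square^*}\to0$. By \cref{equivalence of two cut norms} it suffices to control $\int_{S_1\times\cdots\times S_v}(U_n-W_\infty)$ uniformly over sets $S_1,\ldots,S_v$. Each such integral is an $H$-homomorphism density with vertex weights $\mathbf 1_{S_a}$.

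The plan is the standard telescoping argument: replace the edges of $H$ one at a time, each time swapping $W^{(2)}_{\widetilde G_n}$ for $W^{(2)}_\infty$. Each swap leaves an error of the form
\[
\int (W^{(2)}_{\widetilde G_n}-W^{(2)}_\infty)(x_a,x_b)\,F(x_a)\,G(x_b)\cdots,
\]
and the task is to bound this via the two-dimensional cut norm. This is exactly the $L^p$ counting lemma \cite[Theorem 2.18, 2.20]{BorgsLPII}. Its hypotheses are cut convergence, which holds because $\|\cdot\|_{\square^*}$ and $\|\cdot\|_\square$ are equivalent in dimension $2$ by \cref{equivalence of two cut norms}, together with uniform $L^{q\Delta}$ bounds, which give uniform integrability. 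The indicator weights $\mathbf 1_{S_a}$ are bounded by $1$, so the counting lemma applies to them uniformly in the sets $S_a$. The uniform control over all $S_1,\ldots,S_v$ is the delicate point, and I would verify it by tracking that the counting-lemma constants depend only on the norm bounds and not on the vertex weights.

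\textbf{Step 4: conclusion.} Steps 2 and 3 give \eqref{eq:cut} with this $W_\infty$. \cref{prop:ldp+zn}(i) then yields the LDP with rate function \eqref{eq:i2}, and Step 1 identifies $\boldsymbol G_{W_\infty}$ with $(G^{(2)}_{\infty,r})_{r\in[c]}$.
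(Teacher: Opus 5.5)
Your proposal is correct and follows the same route as the paper: reduce to \cref{prop:ldp+zn}(i) by verifying \eqref{eq:cut}. The paper obtains that convergence in one line by citing \cite[Proposition 3.1, part (ii)]{bhattacharya2024ldp}, whereas you re-derive the symmetrized limit, the diagonal estimate, and the uniform-in-vertex-weights $L^p$ counting lemma in Steps 1--3. The one detail you leave implicit is that $W^{(2)}_\infty\in L^{q\Delta}$, which you need for the integrability of your $W_\infty$ and for the counting lemma; it holds because cut convergence together with the uniform $L^{q\Delta}$ bound forces weak $L^{q\Delta}$ convergence, so $\|W^{(2)}_\infty\|_{q\Delta}\le\liminf_{n}\|W^{(2)}_{\widetilde G_n}\|_{q\Delta}$.
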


\begin{defn}
    With $\mathcal T_r^{(m)}[W,\phi,{\boldsymbol f}](x)$ as in \eqref{eq:def_T_general_m}, for all ${\boldsymbol{f}}\in \mathcal{F}_c$,  $m\in[v]$ and $r\in [c]$, taking $\phi \equiv1$ we obtain
\begin{align}\label{deft1}
\mathcal T_r^{(m)}[W,1,{\boldsymbol f}](x)
    =\int_{[0,1]^{v-1}} W\left(x, x_2, \ldots, x_v\right) \prod_{a=2}^v d x_a=:\mathcal{V}[W](x),
\end{align}
where we use the fact that ${\boldsymbol{f}}\in \mathcal{F}_c$.
Moreover, defining $\mathbf{1}_r:[c]^v\to\R$ as
\begin{align}\label{def1k}
 \mathbf{1}_r(x_1,\ldots,x_v):=\mathbf{1}_{\{x_1=\ldots, x_v=r\}},  
\end{align}
for all $m\in [v]$ we have,
\begin{align}\label{deftg}
   \mathcal{T}_r^{(m)}[W,\mathbf{1}_r,{\boldsymbol f}](x)=\int_{[0,1]^{v-1}} W\left(x, x_2, \ldots, x_v\right) \prod_{a=2}^v f_r(x_a)\prod_{a=2}^vd x_a=:\mathcal{V}_r[W,{\boldsymbol{f} }](x). 
\end{align}
\end{defn}

\begin{prop}\label{prop:propotts}
    %Suppose \eqref{eq:cut} and \eqref{eq:l1} hold.
    For any $W_\infty\in \mathcal{W}_v$,  $\boldsymbol{\alpha}=(\alpha_1,\ldots,\alpha_c)\in \R^c$, and $\boldsymbol{h}=(h_1,\ldots,h_c)\in \R^c$, consider the optimization problem in the RHS of \eqref{eq:potts_unconstrained_opt}. Then the following conclusions hold:
\begin{itemize}
    \item [(i)](Characterization of optimizers)
If ${\boldsymbol f}=(f_1,\ldots,f_c)$ is a maximizer, it satisfies
\begin{equation}\label{eq:propottshow} 
f_r(x)\stackrel{\lambda-a.e.}{=}\frac{\exp(\alpha_r v \mathcal{V}_{r}[W_\infty, \boldsymbol{f}](x)+h_r)}{\sum_{s=1}^c \exp(\alpha_s v \mathcal{V}_{s}[W_\infty, \boldsymbol{f}](x)+h_s)},\ r\in [c].
\end{equation}

\item [(ii)](Replica-symmetry breaking) Suppose there exist $r,s\in[c]$ such that $\alpha_r e^{(v-1)h_r} \neq \alpha_s e^{(v-1)h_s}$. Moreover, assume $\mathcal{V}[W_\infty](\cdot)$ (as in~\eqref{deft1}) is not constant $\lambda$-a.e. Then none of the maximizers are constant $\lambda$-a.e.

\item [(iii)](Replica symmetry) If $\mathcal{V}[W_\infty](\cdot)=1$   $\lambda$-a.e., $W_\infty>0$ $\lambda$-a.e., and $\alpha_r \ge 0$ for $r\in[c]$, then all of the maximizers are constant $\lambda$-a.e.
\end{itemize}
\end{prop}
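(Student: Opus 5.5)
For part (i) we would reduce directly to \cref{prop:propotts2}(i). Take $\phi:=\sum_{r=1}^c \alpha_r \mathbf 1_r$ with $\beta=1$, and take the base measure $\mu$ from \eqref{mu}. Then $\boldsymbol\alpha^\top\boldsymbol G_{W_\infty}(\boldsymbol f)=G_{W_\infty,\phi}(\boldsymbol f)$. Moreover $\sum_r h_r\int f_r-\int\sum_r f_r\log f_r=-\int\sum_r f_r\log(f_r/\mu_r)-\log\sum_s e^{h_s}$. So the two objective functionals differ by an additive constant and have the same maximizers. By \eqref{deftg}, $\mathcal T_r^{(m)}[W_\infty,\phi,\boldsymbol f]=\alpha_r\mathcal V_r[W_\infty,\boldsymbol f]$ for every $m$, since only $s_a\equiv r$ survives. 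Hence $\mathcal T_r=v\alpha_r\mathcal V_r$. Substituting into \eqref{eq:general_fp_onlyWsym}, and using $\mu_r\propto e^{h_r}$, gives \eqref{eq:propottshow}. The attainment of maximizers for arbitrary $W_\infty\in\mathcal W_v$ is part of the same variational analysis used in \cref{NewLDP}(ii).

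For part (ii), suppose towards a contradiction that $\boldsymbol f\equiv\boldsymbol y$ is a constant maximizer. Then $\mathcal V_r[W_\infty,\boldsymbol f](x)=y_r^{v-1}\mathcal V[W_\infty](x)$, and by (i) all $y_r>0$. Taking the ratio of two coordinates of \eqref{eq:propottshow} gives
\[
\log\frac{y_r}{y_s}=v\,\mathcal V[W_\infty](x)\bigl(\alpha_r y_r^{v-1}-\alpha_s y_s^{v-1}\bigr)+h_r-h_s\quad\lambda\text{-a.e.}
\]
The left side does not depend on $x$ and $\mathcal V[W_\infty]$ is not a.e.\ constant, so $\alpha_r y_r^{v-1}=\alpha_s y_s^{v-1}$ for all $r,s$. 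It then follows that $y_r/y_s=e^{h_r-h_s}$, i.e.\ $y_r\propto e^{h_r}$. Plugging this back gives $\alpha_r e^{(v-1)h_r}=\alpha_s e^{(v-1)h_s}$ for all $r,s$, which contradicts the hypothesis.

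For part (iii), the plan is to upper bound the functional pointwise. Since $W_\infty\ge0$ and $\mathcal V[W_\infty]\equiv1$, the measure $W_\infty\,dx$ is a probability measure on $[0,1]^v$ all of whose one-dimensional marginals are Lebesgue measure. Hölder's inequality with $v$ factors, applied with respect to this measure, gives
\[
\int W_\infty\prod_{a=1}^v f_r(x_a)\,dx\le\prod_{a=1}^v\Bigl(\int W_\infty f_r(x_a)^v\,dx\Bigr)^{1/v}=\int_0^1 f_r^v .
\]
Since $\alpha_r\ge0$, the objective is therefore at most $\int_0^1\psi(\boldsymbol f(x))\,dx$, where $\psi(\boldsymbol y):=\sum_r\alpha_r y_r^v+\sum_r h_r y_r-\sum_r y_r\log y_r$ on the simplex. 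This in turn is at most $\max\psi=\psi(\boldsymbol y^*)$. Because $\int W_\infty=1$, the constant function $\boldsymbol y^*$ attains this value exactly, so the supremum equals $\max\psi$.

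The main obstacle is the equality analysis: showing that every maximizer is constant, not merely that some constant maximizer exists. A maximizer must give equality in Hölder for every $r$ with $\alpha_r>0$. Equality in Hölder forces the functions $f_r(x_1)^v,\dots,f_r(x_v)^v$ to agree $W_\infty\,dx$-a.e. Since $W_\infty>0$ $\lambda$-a.e., we get $f_r(x_1)=f_r(x_2)$ for $\lambda^{\otimes 2}$-a.e.\ $(x_1,x_2)$ (using Fubini over the remaining coordinates), so $f_r$ is a.e.\ constant. For $r$ with $\alpha_r=0$, we use the fixed point \eqref{eq:propottshow}. All the quantities $\alpha_s\mathcal V_s[W_\infty,\boldsymbol f](x)$ with $\alpha_s>0$ are now constants times $\mathcal V[W_\infty]\equiv1$, so the denominator is constant. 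Hence $f_r\propto e^{h_r}$ is also constant, which completes the argument.
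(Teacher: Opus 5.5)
Your proposal is correct and follows the paper's proof essentially step for step. Part (i) specializes \cref{prop:propotts2}(i) to $\phi=\sum_r\alpha_r\mathbf 1_r$, $\beta=1$, $\mu_r\propto e^{h_r}$; part (ii) is the same constant-fixed-point argument that proves \cref{prop:propotts2}(ii), here with $\Gamma_r(\boldsymbol y)=v\alpha_r y_r^{v-1}$; and part (iii) uses H\"older's inequality with respect to the probability measure $W_\infty\,dx$ and its equality case, with the $\alpha_r=0$ coordinates handled through the fixed-point equation. Your explicit remark that the denominator becomes constant only after the $\alpha_s>0$ coordinates are shown constant is, if anything, more careful than the paper's wording.

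One harmless slip: the additive constant relating the two objectives is $+\log\sum_s e^{h_s}$, not $-\log\sum_s e^{h_s}$. This does not affect the set of maximizers.
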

\begin{remark}
   Part (i) of~\cref{prop:propotts} characterizes the optimizers of \eqref{eq:potts_unconstrained_opt} in terms of a fixed point equation. Parts (ii) and (iii) give sufficient conditions for replica-symmetry breaking (optimizers are all nonconstant functions), and replica symmetry (optimizers are all constants), respectively. Note that
under the conditions of~\cref{prop:propotts} part (iii), all optimizers are constant functions, which are independent of $W_\infty$. Thus this proves universality of the set of optimizers for this class of functions $W_\infty\in\mathcal{W}_v$.
%In this sense, the geometry of the limiting tensor $W_\infty$ directly determines the phase structure of the system. 
We also stress that neither assumption can, in general, be omitted for replica symmetry. We refer the reader to \cite[Examples 1.2 and 1.3]{bhattacharya2023gibbs} for the relevant counterexamples in the case $v=c=2$, where the authors show that replica symmetry can be violated if either of the aforementioned conditions fails to hold.
\end{remark}

\subsection{Optimization}\label{OPT}
We now turn to a more detailed analysis of the associated optimization problems and the structure of the corresponding rate functions. In particular, we aim to understand when the variational problem admits explicit solutions and how the form of the optimizer influences the rate function. In the entire subsection,
% for each rate function $I$ below, the effective domain is specified
% in the statement; outside the attainable range we have $I(y)=\infty$. Moreover,
we use the convention $0\log 0=0$ and $\mu$ is defined as \eqref{mu}.
% \begin{prop}\label{smllalpha-uniqueness}
%  Assume that~\eqref{eq:cut}~and~\eqref{eq:normcontrol} hold, $W>0$ a.e., and $\mathcal{V}[W](x)=1$ a.e. Moreover, assume $0\leq \alpha_i\leq \frac{1}{v(v-1)}$ for all $i\in [c]$.  Then all the global optimizer of optimization problem \eqref{eq:potts_unconstrained_opt} are constant functions and the optimizer is unique.  
% \end{prop}
% \reihaneh{I need to mention that i will skip the proof for the following.}
% \begin{prop}[LDP for Linear Projections]
% Suppose \eqref{eq:cut} and \eqref{eq:normcontrol} hold, and ${\bf{X}}=(X_1,\ldots,X_n)$ where $X_i \overset{\text{i.i.d.}}{\sim} \mu$.
% Then for each vector ${\boldsymbol{\alpha}} \in \R^c$, $\boldsymbol{\alpha}^\top {\boldsymbol N}(H,Q_n,{\bf{X}})$ satisfies a LDP with the good rate function 

% \begin{equation}\label{eq:Irho}
% I_{\boldsymbol{\alpha}}({t}):=\inf_{{\boldsymbol f}\in \mathcal{F}_c:\ \boldsymbol{\alpha}^\top {\boldsymbol G}_{W}({\boldsymbol f})=t}  \Big\{ \int_0^1\sum_{r=1}^c f_{r}(u)\log f_{r}(u)du-\sum_{r=1}^{c}h_r \int_0^1 f_r(u)du\Big\}.
% \end{equation}
% \end{prop}  

\subsubsection{Results for general $c,v$}
\begin{prop}[Phase transition and critical threshold]\label{prop:potts_gibbs_ldp}
	Consider the optimization problem \eqref{eq:potts_unconstrained_opt}, under the assumptions
    % for some strictly positive function
    %  $W_\infty\in \mathcal{W}_v$ satisfying $\mathcal{V}[W_\infty]\equiv 1$, and  $\alpha_r=\theta\ge 0$ and $h_r=0$ for all $r\in [c]$.  
    \begin{align*}\label{eq:w0}
		W_\infty>0 \,\,\,\lambda\text{-a.e.},\quad \mathcal{V}[W_\infty](x)=1\,\,\lambda\text{-a.e.},\quad
    \text{and} \newline \quad\alpha_r = \theta\geq0, h_r=0 \quad\text{for all  } r\in [c]. 
		\end{align*}
    Then the following conclusions hold:
	\begin{itemize}
		
		\item[(i)]
		Any optimizer $(x_1,\cdots,x_c)\in \Delta_{c-1}$
		 with $x_1\ge \max_{2\le r\le c}x_r$ has the following form:
		\begin{equation}\label{eq:potts_one_large}
			x_1=\frac 1c (1+(c-1)y^*), \quad x_r= \frac 1c (1-y^*), \quad 2 \le r \le c,
		\end{equation}
		for some $0 \le y^* < 1$. 
		\item[(ii)] There exists $\theta_{\rm crit} \in (0,\infty)$ such that, if $\theta<\theta_{\rm crit}$, $(1/c, \ldots, 1/c)$ is the unique optimizer of \eqref{eq:potts_unconstrained_opt} and if $\theta>\theta_{\rm crit}$, there exist exactly $c$ distinct optimizers, none of which are uniform. With $(x_{\theta,1},\ldots ,x_{\theta,c})$ denoting the unique optimizer satisfying $x_{\theta,1}>\max_{2\le r\le c} x_{\theta,r}$, the map $\theta\mapsto (x_{\theta,1},\ldots ,x_{\theta,c})$ is continuous on $(\theta_{\rm crit},\infty)$.

	\end{itemize}
\end{prop}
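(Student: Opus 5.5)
The plan is to reduce everything to a finite-dimensional problem on the simplex $\Delta_{c-1}$ and then analyze it by hand. Under the standing assumptions ($W_\infty>0$ a.e., $\mathcal{V}[W_\infty]=1$ a.e., $\alpha_r=\theta\ge 0$), \cref{prop:propotts}(iii) shows that every maximizer of \eqref{eq:potts_unconstrained_opt} is constant a.e., say $\boldsymbol f\equiv \boldsymbol x\in\Delta_{c-1}$.

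For constant $\boldsymbol f$ we have $G_{W_\infty,\mathbf 1_r}(\boldsymbol f)=x_r^v\int W_\infty=x_r^v$, since $\int W_\infty=\int \mathcal V[W_\infty]=1$. With $h\equiv 0$ the problem therefore becomes
\[
F_\theta(\boldsymbol x):=\theta\sum_{r=1}^c x_r^v-\sum_{r=1}^c x_r\log x_r,\qquad \boldsymbol x\in\Delta_{c-1},
\]
and the maximizers of \eqref{eq:potts_unconstrained_opt} are exactly the constant functions equal to maximizers of $F_\theta$.

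\textbf{Part (i).} Since $\partial_{x_r}$ of the entropy term blows up at $x_r=0$, every maximizer lies in the open simplex. The Lagrange condition reads $g(x_r)=\text{const}$ for all $r$, where $g(t):=\theta v t^{v-1}-\log t$. We have $g'(t)=\theta v(v-1)t^{v-2}-1/t$, and $t\,g'(t)$ is increasing. Hence $g$ is strictly decreasing up to a single point $t_0$ and strictly increasing afterwards, so each level set of $g$ has at most two points. Consequently the coordinates of a maximizer take at most two values $a>b$.

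To see that at most one coordinate equals $a$, use the second-order condition along the direction $e_i-e_j$:
\[
\theta v(v-1)\bigl(x_i^{v-2}+x_j^{v-2}\bigr)-\frac1{x_i}-\frac1{x_j}\le 0 .
\]
If $x_i=x_j=a$, this gives $g'(a)\le 0$, so $a\le t_0$. But then $b<a\le t_0$ puts both $a$ and $b$ on the strictly decreasing branch of $g$, which contradicts $g(a)=g(b)$. So a maximizer with $x_1\ge\max_{r\ge 2}x_r$ is either uniform or has $x_1=a$ and $x_r=b$ for $r\ge 2$. Writing $x_1=(1+(c-1)y)/c$ gives \eqref{eq:potts_one_large}, with $y^*<1$ because $x_r>0$.

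\textbf{Part (ii).} Restrict to the curve $y\mapsto \boldsymbol x(y)$ on $[0,1)$ and write $F_\theta(\boldsymbol x(y))-F_\theta(\boldsymbol x(0))=\theta A(y)-B(y)$. Here $A(y)=\sum_r x_r(y)^v-c^{1-v}$, which is positive for $y>0$ by the power-mean inequality, and $B(y)$ is the entropy deficit, with $B>0$ for $y>0$ and $B(0)=0$. Define
\[
\theta_{\rm crit}:=\inf_{y\in(0,1)}\frac{B(y)}{A(y)} .
\]
\begin{itemize}
\item The ratio stays bounded below near $y=0$, because both $A$ and $B$ are of order $y^2$ with positive leading coefficients (this uses $v\ge2$; for $v=2$ the coefficient of $A$ is still positive).
\item Near $y=1$, $B$ stays bounded and positive while $A$ tends to a positive limit.
\end{itemize}
Together these give $\theta_{\rm crit}\in(0,\infty)$. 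If $\theta<\theta_{\rm crit}$, then $\theta A<B$ on $(0,1)$, so the uniform vector is the unique maximizer. If $\theta>\theta_{\rm crit}$, some $y$ has $\theta A(y)>B(y)$, so the uniform vector is not optimal. By part (i) the maximizers are then the $c$ coordinate permutations of $\boldsymbol x(y^*)$ for maximizers $y^*\in(0,1)$ of $\theta A-B$.

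\textbf{Main obstacle.} The count of exactly $c$ optimizers requires showing that $y^*$ is unique. Stationarity in $y$ is $\theta=R(y)$, where
\[
R(y):=\frac{B'(y)}{A'(y)}=\frac{\log(x_1/x_r)}{v\bigl(x_1^{v-1}-x_r^{v-1}\bigr)} .
\]
I would try to prove that $R$ is strictly decreasing and then strictly increasing on $(0,1)$, i.e., that it has a single critical point. Then $\theta=R(y)$ has at most two roots, the smaller a local minimum and the larger a local maximum of $\theta A-B$, so the global maximizer is unique. The natural substitution is $u=x_1/x_r$, together with a convexity argument on $\log$ versus $t^{v-1}$. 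As a fallback, monotonicity of maximizers in $\theta$ gives uniqueness for all but countably many $\theta$, which I would then need to upgrade.

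Once uniqueness is in hand, continuity of $\theta\mapsto \boldsymbol x_\theta$ on $(\theta_{\rm crit},\infty)$ follows from Berge's maximum theorem: $F_\theta$ is jointly continuous on a compact set and the argmax is single-valued.
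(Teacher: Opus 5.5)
Your reduction to $F_\theta$ on $\Delta_{c-1}$ via \cref{prop:propotts}(iii) matches the paper. Your part (i) is a correct, self-contained replacement for the paper's citation of \cite{bhowal2025limit}: the level-set shape of $g(t)=\theta v t^{v-1}-\log t$, combined with the second-order condition along $e_i-e_j$, does force ``one large coordinate, the rest equal''. The definition $\theta_{\rm crit}=\inf_{(0,1)}B/A$, the proof that it lies in $(0,\infty)$, and the case $\theta<\theta_{\rm crit}$ are also fine.

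The gap is in the case $\theta>\theta_{\rm crit}$. There you show that the uniform vector is not optimal, and that every optimizer is a permutation of $\boldsymbol x(y^*)$ for some global maximizer $y^*\in(0,1)$ of $\theta A-B$. This gives \emph{at least} $c$ optimizers. ``Exactly $c$'' is equivalent to uniqueness of $y^*$, and that is the step you leave open.

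Your reduction of this uniqueness to unimodality of $R=B'/A'$ is sound: $\theta=R(y)$ has at most two roots, and the smaller one is a local minimum of $\theta A-B$. But the unimodality itself is only announced. Writing $s=\log(x_1/x_r)$, one finds $\mathrm{sign}\,R'=\mathrm{sign}\bigl(\gamma((v-1)s)-\tfrac{c-1}{e^s+c-1}\bigr)$ with $\gamma(z)=\tfrac1z-\tfrac1{e^z-1}$. Proving that this changes sign at most once for all $c,v\ge 2$ is a real lemma, not a routine convexity remark. For instance, $(e^s+c-1)\gamma((v-1)s)$ is not monotone once $c(v-1)>6$, since its derivative at $0$ is $\tfrac12-\tfrac{c(v-1)}{12}$.

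Your fallback does not close the gap either. Convexity of $V(\theta)=\max_y\{\theta A(y)-B(y)\}$ and strict monotonicity of $A$ give uniqueness of $y^*$ wherever $V$ is differentiable. Nothing, however, excludes a kink of $V$ at some $\theta>\theta_{\rm crit}$ where two distinct interior local maxima tie. Ruling that out requires exactly the critical-point count you are missing.

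The continuity claim is about ``the unique optimizer with $x_{\theta,1}>\max_{r\ge2}x_{\theta,r}$'', so it inherits the same gap. Once uniqueness of $y^*$ is in hand, your Berge argument works, provided you run it on the compact set $\{\boldsymbol x\in\Delta_{c-1}: x_1\ge\max_{r\ge2}x_r\}$; on the full simplex the argmax has $c$ points, not one. Done that way it is cleaner than the paper's appeal to the implicit function theorem, which would also need nondegeneracy at the maximizer. To finish, either prove the single-crossing property above, or import the uniqueness, as the paper does, from the phase-transition analysis of the tensor Curie--Weiss--Potts model in \cite{bhowal2025limit}.
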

% \reihaneh{I think we need to tell stories about the phenomenon in $\theta=\theta_c$.}

%The next result provides conditions under which the variational problem admits a unique maximizer. In particular, it shows that when one coordinate is sufficiently favored—either through the interaction parameters or the external field—the system concentrates on a single dominant state.
\begin{prop}\label{prop:potts_diffalpha}
  Consider the optimization problem \eqref{eq:potts_unconstrained_opt}, under the assumption 
$$W_\infty>0\quad \lambda\text{-a.e.},\quad \text{  }\mathcal{V}[W_\infty](x)=1\quad\lambda\text{-a.e.}.$$ %Moreover, suppose \eqref{eq:cut} and \eqref{eq:l1} are satisfied.  
Then the following conclusions hold:
\begin{itemize}
  %   \item [(i)] Assume $h_i=0$ for all $i \in [c]$.
		% Then there exists $\eta=\eta(c,v)>0$ such that for any $\boldsymbol{\alpha}=(\alpha_1,\alpha_2,\ldots,\alpha_c)$ with $\alpha_1,\ldots,\alpha_c\geq 0$ and $\alpha_1 \ge \eta+ \max_{2\le r\le c} \alpha_r$, the optimization problem \eqref{eq:potts_unconstrained_opt} has a unique maximizer. Furthermore, denoting this maximizer by $(x_{\boldsymbol{\alpha},1},\ldots , x_{\boldsymbol{\alpha},c})$, we have $x_{\boldsymbol{\alpha},1}>\max_{2\le r\le c} x_{\boldsymbol{\alpha},r}$, and the map $\boldsymbol{\alpha}\mapsto (x_{\boldsymbol{\alpha},1},\ldots ,x_{\boldsymbol{\alpha},c})$ is continuous.
		\item [(i)] For any ${\boldsymbol h} \in \mathbb{R}^c,$ define $\tilde{h}:=\max_{1\le r\le c} h_r - h_1$. Then, there exists $\eta=\eta(c,v,{\tilde{ h}})>0$, such that for any $\boldsymbol{\alpha}=(\alpha_1,\alpha_2,\ldots,\alpha_c)$ with $\alpha_1,\ldots,\alpha_c\geq 0$ and $\alpha_1 \ge \eta+ \max_{2\le r\le c} \alpha_r$, the optimization problem \eqref{eq:potts_unconstrained_opt} has a unique maximizer. Furthermore, denoting this maximizer by $ (x_{\boldsymbol{\alpha}, {\boldsymbol h} ,1},\ldots ,x_{ \boldsymbol{\alpha},{\boldsymbol h} ,c})$, we have $ x_{\boldsymbol{\alpha}, {\boldsymbol h} , 1}>\max_{2\le r\le c} x_{ \boldsymbol{\alpha},{\boldsymbol h} ,r}$, and the map ${(\boldsymbol{\alpha},\boldsymbol h}) \mapsto (x_{ \boldsymbol{\alpha},{\boldsymbol h} ,1},\ldots ,x_{\boldsymbol{\alpha}, {\boldsymbol h} ,c})$ is continuous.
        \item [(ii)] For any $\boldsymbol{\alpha} \in \mathbb{R}_{\geq0}^c,$ define $\widetilde{\alpha}:=\max_{1\le r\le c} \alpha_r-\alpha_1$. Then, there exists $\eta=\eta(c,v,\widetilde{\alpha})>0$, such that for any ${\boldsymbol h}=(h_1,\ldots,h_c)$ with $h_1 \ge \eta+ \max_{2\le r\le c} h_r$, the optimization problem \eqref{eq:potts_unconstrained_opt} has a unique maximizer. Furthermore, denoting this maximizer by $ (x_{\boldsymbol{\alpha}, {\boldsymbol h} ,1},\ldots ,x_{ \boldsymbol{\alpha},{\boldsymbol h} ,c})$, we have $x_{{\boldsymbol{\alpha}},\boldsymbol{h},1}>\max_{2\le r\le c} x_{{\boldsymbol{\alpha}},\boldsymbol{h},r}$, and the map $({\boldsymbol{\alpha}},\boldsymbol{h})\mapsto  (x_{\boldsymbol{\alpha}, {\boldsymbol h} ,1},\ldots ,x_{ \boldsymbol{\alpha},{\boldsymbol h} ,c})$ is continuous.
        % \item [(iv)] Suppose $\alpha_i=\theta>0$ for all $i \in [c]$, where $\theta<\frac{2}{v(v-1)}$. Then there exist $\eta=\eta(c,v)>0$ such that for any ${\boldsymbol h}=(h_1,h_2,\ldots, h_c)$ with $h_1=h_2 \geq \eta + \max_{3\le r\le c} h_r $, the optimization problem \eqref{eq:potts_unconstrained_opt} has a unique maximizer. Furthermore, denoting this maximizer by $(x_{{\boldsymbol h},1},\ldots , x_{{\boldsymbol h},c})$, we have $x_{{\boldsymbol h},1}=x_{{\boldsymbol h},2} >\max_{r=3}^c x_{{\boldsymbol h},r}$, and the map ${\boldsymbol h}\mapsto (x_{{\boldsymbol h},1},\ldots ,x_{{\boldsymbol h},c})$ is continuous.
        \item [(iii)] Assume $0\leq \alpha_r\leq \frac{1}{v(v-1)}$ for all $r\in [c]$ and ${\boldsymbol h} \in \mathbb{R}^c$.  Then the maximizer of the optimization problem \eqref{eq:potts_unconstrained_opt} is unique.
        \end{itemize}
		\end{prop}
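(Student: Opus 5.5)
The plan is to first reduce all three parts to a finite-dimensional problem on the simplex $\Delta_{c-1}$, using \cref{prop:propotts}(iii). Under $W_\infty>0$ and $\mathcal V[W_\infty]\equiv 1$, every maximizer is constant when $\alpha_r\ge0$. A constant $\boldsymbol f\equiv \boldsymbol x$ gives $G_{W_\infty,\mathbf 1_r}(\boldsymbol f)=x_r^v\int W_\infty=x_r^v$. Hence the maximizers are exactly the maximizers over $\Delta_{c-1}$ of
\[
F(\boldsymbol x):=\sum_{r=1}^c\big(\alpha_r x_r^v+h_rx_r-x_r\log x_r\big).
\]
By \eqref{eq:propottshow} with $\mathcal V_r[W_\infty,\boldsymbol f]=x_r^{v-1}$, every maximizer is interior and satisfies the fixed-point equation
\[
x_r\propto \exp\big(v\alpha_r x_r^{v-1}+h_r\big).
\]
Equivalently, $\log x_r - v\alpha_r x_r^{v-1}-h_r$ takes a common value $\lambda$ for all $r$. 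The analysis then reduces to the scalar maps $g_r(t):=\log t - v\alpha_r t^{v-1}-h_r$ on $(0,1]$.

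For part (iii), compute $g_r'(t)=1/t - v(v-1)\alpha_r t^{v-2}$. Since $t\le 1$ and $\alpha_r\le 1/(v(v-1))$, we get $g_r'(t)\ge 1/t - t^{v-2}\ge 0$, with equality only at $t=1$. So each $g_r$ is strictly increasing. For each $\lambda$ the critical equation then determines $x_r=g_r^{-1}(\lambda)$ uniquely, and $\sum_r g_r^{-1}(\lambda)$ is strictly increasing in $\lambda$, so the simplex constraint pins down $\lambda$ and hence the critical point uniquely. An alternative route is to note that the Hessian of $F$ is diagonal with entries $v(v-1)\alpha_r x_r^{v-2}-1/x_r\le x_r^{v-2}-1/x_r\le 0$. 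This makes $F$ concave on the simplex and strictly concave away from vertices. Since maximizers are interior, uniqueness follows either way.

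For parts (i) and (ii), the goal is to show that any maximizer has $x_1$ close to $1$ and that $F$ is strictly concave in a neighbourhood of that region.

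\textbf{Step 1 (localization).} Compare $F$ at a maximizer with $F(e_1$-ish$)$. Concretely, $F(\boldsymbol x^*)\ge F(1,0,\dots,0)=\alpha_1+h_1$. The entropy term is bounded by $\log c$, and $\sum_r(\alpha_r x_r^v+h_rx_r)\le \alpha_1x_1^v+\max_{r\ge2}\alpha_r(1-x_1^v)+h_1+\tilde h$. This gives the inequality $(\alpha_1-\max_{r\ge2}\alpha_r)(1-x_1^v)\le \log c+\tilde h$. Hence $x_1\ge 1-\delta$ with $\delta=O((\log c+\tilde h)/\eta)$. Part (ii) is analogous: $h_1-h_r\ge\eta$ forces $1-x_1=O((\log c+\widetilde\alpha)/\eta)$.

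\textbf{Step 2 (small coordinates are exponentially small).} Use the fixed point for the other coordinates: $x_r/x_1=\exp\big(v\alpha_rx_r^{v-1}+h_r-v\alpha_1x_1^{v-1}-h_1\big)$. With $x_1\ge 1-\delta$, $x_r\le\delta$ and $\alpha_1\ge\eta+\alpha_r$, this ratio is at most about $\exp(\tilde h - v\eta/2+O(\alpha_r\delta))$. I expect this step to be the main obstacle. The hard part is making it quantitative uniformly in large $\alpha_r$, since $\alpha_r x_r^{v-1}$ need not be small a priori. I would first bootstrap: use Step 1 to get $x_r\le\delta$. Then, since $\alpha_r\le\alpha_1$, the exponent satisfies $v\alpha_r(x_r^{v-1}-x_1^{v-1})\le -v\alpha_r(1-2\delta)$. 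Combined with $-v\eta x_1^{v-1}$, this gives $x_r\le e^{\tilde h-v\eta/2}$.

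\textbf{Step 3 (uniqueness on the localized region).} Show that in the region $x_1\ge 1-\delta$ and $x_r\le \epsilon:=e^{\tilde h-v\eta/2}$, the Hessian entries for $r\ge2$ satisfy $v(v-1)\alpha_r x_r^{v-2}-1/x_r<0$ once $\epsilon$ is small. Here the terms $\alpha_r\epsilon^{v-1}$ are dominated because $\epsilon$ decays exponentially in $\eta$ while possibly $\alpha_r\sim\alpha_1$. If that domination fails for huge $\alpha_r$, I would instead parametrize by $(x_2,\dots,x_c)$ with $x_1=1-\sum_{r\ge 2}x_r$ and argue via the fixed-point map, showing it is a contraction on this region. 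Its derivative in $x_r$ involves $v(v-1)\alpha_r x_r^{v-1}$ and $v(v-1)\alpha_1 x_1^{v-2}x_r$, both small times $e^{-c\eta}$-type factors. Strict concavity or contraction gives a unique critical point in the region, so there is a unique maximizer, and it satisfies $x_1>\max_{r\ge2}x_r$.

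Continuity in $(\boldsymbol\alpha,\boldsymbol h)$ follows from uniqueness plus compactness of the simplex and continuity of $F$, by a Berge maximum theorem argument.
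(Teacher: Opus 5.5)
Your reduction to the simplex, part (iii) (by either route), the Step 1 localization (comparing with the vertex $e_1$ is a clean substitute for the paper's swap argument), and the continuity argument are all fine. The gap is in Steps 2--3 of parts (i) and (ii): your estimates are not uniform in $\alpha_1$. The statement requires $\eta$ to depend only on $(c,v,\tilde h)$, resp.\ $(c,v,\widetilde\alpha)$, while $\alpha_1$ is unbounded.

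On the simplex the Hessian is not diagonal. In the coordinates $(x_2,\dots,x_c)$ with $x_1=1-\sum_{r\ge2}x_r$ it equals $\gamma_1\mathbf 1\mathbf 1^\top+\operatorname{diag}(\gamma_2,\dots,\gamma_c)$, where $\gamma_r=v(v-1)\alpha_rx_r^{v-2}-1/x_r$. In part (i), $\alpha_1\ge\eta$ and $x_1\approx1$ force $\gamma_1\approx v(v-1)\alpha_1>0$, so the objective is convex along the $x_1$-direction. Checking $\gamma_r<0$ for $r\ge2$ therefore proves nothing. Since $|\gamma_r|\le 1/x_r$, negative definiteness at a point forces $\gamma_1\sum_{r\ge2}x_r<1$, i.e.\ roughly $\sum_{r\ge2}x_r<1/(v(v-1)\alpha_1)$.

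Your bound $x_r\le\epsilon=e^{\tilde h-v\eta/2}$ does not involve $\alpha_1$, so concavity on your region fails once $\alpha_1\gg e^{v\eta/2}$. Even $\gamma_r<0$ fails there when $v(v-1)\alpha_r\epsilon^{v-1}\ge1$, which is allowed since $\alpha_r$ may be as large as $\alpha_1-\eta$. The contraction fallback has the same defect: its Jacobian contains $v(v-1)\alpha_1x_1^{v-2}x_r$, which is of size $\alpha_1\epsilon$, not $e^{-c\eta}$. Part (ii) has the same problem, with $\alpha_1\ge0$ arbitrary.

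The missing point is that the small coordinates are exponentially small in $\alpha_1$ as well. You discarded this when you replaced $\alpha_1-\alpha_r$ by $\eta$. Using only $\alpha_r\le\alpha_1$, the exponent in $x_r/x_1$ is at most $-v\alpha_1(x_1^{v-1}-x_r^{v-1})+\tilde h\le -v\alpha_1/2+\tilde h$ once $\delta$ is small. (Incidentally, $x_r^{v-1}-x_1^{v-1}\le-(1-2\delta)$ is false for $v\ge4$; the correct bound is $-((1-\delta)^{v-1}-\delta^{v-1})$, which suffices.) In (ii) the exponent is at most $v(\alpha_1+\widetilde\alpha)x_r^{v-1}-v\alpha_1x_1^{v-1}-\eta\le-\eta-v\alpha_1/2+v\widetilde\alpha$.

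Let $B$ be the resulting bound on $x_r$ and work on the convex box $\{x_r\le B,\ r\ge2\}$. There $x_1\ge3/4$ and $|\gamma_1|\le v^2\alpha_1+4/3$. In each row of the reduced Hessian, the diagonal entry plus the absolute values of the off-diagonal entries is at most $cv^2\alpha_1+4c/3-1/B$, plus $v^2\widetilde\alpha$ in (ii). This is negative for all $\alpha_1$ once $\eta$ is large, because $1/B$ grows exponentially in $\alpha_1$. That is exactly the paper's argument; its $\tilde\eta$ contains the term $\frac{v\alpha_1}{2}(\frac34)^{v-1}$, resp.\ $vA_{c,v}\alpha_1$, for precisely this reason.
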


        \begin{remark}[Comparison with literature]\label{rem:complit}
	Substantial attention has been devoted to studying the case $v=2, W_\infty\equiv 1$ (see~\cite[Theorem 2.3]{gandolfo2010limit} for a summary of earlier results in the area). %For instance, in~\cite{wu1982potts}, the author shows the existence of a sharp critical point $\theta_{\rm crit}$ when $\boldsymbol h=0$. In~\cite{blanchard2008thermodynamic}, the authors provide conditions on $\theta$ under which the optimization problem \eqref{eq:potts_unconstrained_opt} admits a unique optimizer (potentially up to permutations).
    Going beyond the $v=2$ case,~\cref{prop:potts_gibbs_ldp} guarantees the existence of a phase transition point $\theta_{\rm crit}$ under which the optimization problem \eqref{eq:potts_unconstrained_opt} admits a unique optimizer, and above which there are exactly $c$ optimizers, in the case where ${\bf h}={\mathbf 0}$. \cref{prop:potts_diffalpha}
provides sufficient conditions on $(\theta,\boldsymbol h)$  for a unique optimizer.  Both these propositions apply to a significantly broader class of models for any $v\ge 2$, and $W_\infty$ is allowed to be a positive nonconstant tensor function satisfying the regularity condition $\mathcal{V}[W_\infty]\equiv 1$.

% 	One of the key takeaways of the preceding corollary is that the limiting log partition function (see \eqref{logpf}) of the model \eqref{eq:gibbs_potts2} is universal for any $W$ with $\mathcal{V}[W](x)=1$ $\lambda$-a.e. In the special case $H=K_2$ (quadratic interaction), this condition reduces to demanding $\int_{[0,1]} W(.,y)\,dy=1$ $\lambda$-a.e., which essentially means that $W$ is regular. This special case also follows from \cite[Thm 2.1]{Basak2017}. In this case, the optimization problem that arises in \eqref{eq:potts_unconstrained_opt} has been analyzed in~\cite[Theorem 2.3]{gandolfo2010limit}, where the authors show that \[
% \theta_{\rm crit}
% =
% \begin{cases}
% 1, & c=2,\\
% \frac{c-1}{c-2}\log(c-1), & c\ge3.
% \end{cases}
% \]  It seems to be of interest to compute this value for general $H$.
    
\end{remark}
        % \cref{prop:potts_diffalpha} identifies regimes where the optimization problem is effectively one-dimensional, with a single dominant coordinate determining the maximizer. This highlights how strong asymmetry in the parameters enforces uniqueness and suppresses competing configurations.

So far we have focused entirely on the unconstrained optimization problem in \eqref{eq:potts_unconstrained_opt}. The following result shows that, under suitable assumptions, the constrained optimization problem from the rate function (see \eqref{eq:i2}) can also be solved somewhat explicitly.

\begin{prop}[Closed-Form Rate Function in the Symmetric Case]\label{cor:Pottsconopt}
% Suppose ${\bf{X}}=(X_1,\ldots,X_n)$ where $X_i \overset{\text{i.i.d.}}{\sim} \mu$, defined in \eqref{mu}.
	%Consider the optimization problem \eqref{eq:potts_unconstrained_opt}, under the assumptions
 Suppose we have
\begin{align*}
		W_\infty>0 \,\,\,\lambda\text{-a.e.},\quad \mathcal{V}[W_\infty](x)=1\,\,\lambda\text{-a.e.},\quad
    \text{and} \newline \quad\alpha_r = \theta\geq0 \,\,\text{for all  } r\in [c]. 
		\end{align*}
        Then the following conclusions hold:

    % satisfying $\mu_r=P(X_1=r)\propto \exp(h_r)$ for all $r\in [c]$ for some $\boldsymbol h=(h_1,\ldots ,h_c)$.
    
    % Assume that \eqref{eq:cut} and \eqref{eq:q} hold, $\mathcal{V}[W](x)=1$ a.e., $W>0$ a.e., and for all $i$, $\alpha_i =\theta\geq 0$. Then the following conclusions hold:
    
	\begin{itemize}
		\item[(i)] Let $\boldsymbol h$ be such that $h_1\ge \eta+\max_{2\le r\le c}h_r$, where $\eta=\eta(c,v)$ is as in~\cref{prop:potts_diffalpha}, part (ii). For every $\theta\geq 0$, let $(x_{\theta,\boldsymbol h,1},\ldots ,x_{\theta,\boldsymbol h,c})$ be the unique optimizer from~\cref{prop:potts_diffalpha}, part (ii). Then the good rate function $I_{W_\infty,(1,\ldots, 1)}({\cdot} )$ (from \eqref{eq:i2}) simplifies  for  $t \in \big[\sum_{r=1}^c \mu_r^v, 1\big)$ as $$I_{W_\infty,(1,\ldots,1)}({ t})=\sum_{r=1}^c x_{\theta_{t},\boldsymbol h,r}\log{\frac{x_{\theta_{ t},\boldsymbol h,r}}{\mu_r}}.$$ Here $\theta_{ t}\geq 0$ denotes the unique solution to the equation $\sum_{r=1}^c x_{\theta_{ t},\boldsymbol h,r}^v=t$.
        
        % Moreover, for ${t}$ such that $t>1,$ we have $I_{(1,\ldots,1)}({t})=\infty.$

  %       \reihaneh{THIS IS FOR $N_2$ NOT $N$: on the range $t\in [\sum_{r=1}^c \mu_r^v,\infty)$ as follows:
		% $$I(t)=\begin{cases} \sum_{r=1}^c x_{\theta_t,r}\log{\frac{x_{\theta_t,r}}{\mu_r}} & \mbox{if}\ t\in \big[\sum_{r=1}^c \mu_r^v,1\big), \\  \infty & \mbox{if}\ t>1\end{cases}.$$
		% Here $\theta_t\geq 0$ is the unique solution to the equation $\sum_{r=1}^v x_{\theta,r}^v=t$ in $\theta\ge 0$.}
		
		\item[(ii)] Let $h_r=0$ for all $r\in [c]$ and $\theta_{\rm crit}$ be as in~\cref{prop:potts_gibbs_ldp} part (ii). For any $\theta>\theta_{\rm crit}$, define $(x_{\theta,1},\ldots ,x_{\theta,c})$ as the unique optimizer which satisfies $x_{\theta,1}>\max_{2\le r\le c} x_{\theta,r}$, the existence of which is guaranteed by ~\cref{prop:potts_gibbs_ldp} part (ii). Then the good rate function $I_{W_\infty,(1,\ldots,1)}$  simplifies as in part (i), on the range ${ t}$ where $t \in \big(y,1\big)$, where $y:= \lim_{\theta\downarrow \theta_{\rm crit}}\sum_{r=1}^c x^v_{{\theta},r}.$
        
        % \text{and } \theta_{\rm crit}^+$ denotes any number such that $\theta_{\rm crit}<\theta_{\rm crit}^+.$ 
        
        % $\theta_{\boldsymbol t}$ now denotes the unique solution to the equation $\sum_{r=1}^c x^v_{\theta_{\boldsymbol t},r}=\sum_{r=1}^ct_r$.
        % Moreover, for $t>1,$ we have $I_{(1,\ldots,1)}({ t})=\infty.$
		%on the range $t\in (\sum_{r=1}^c x_{\tilde\theta_{\rm crit},r}^v,\infty)$ as follows:
		%$$I_2(t)=\begin{cases} \sum_{r=1}^c x_{\theta_t,r}\log{ c x_{\theta_t,r}} & \mbox{if}\ t\in \big(\sum_{r=1}^v x_{\tilde\theta_{\rm crit},r}^v,1\big], \\ \log{q} & \mbox{if}\ t=1, \\ \infty & \mbox{if}\ t>1\end{cases},$$
		%where $\theta_t$ is defined as in part (i).
	\end{itemize}
\end{prop}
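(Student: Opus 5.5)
The plan is to reduce the constrained problem \eqref{eq:i2} with $\boldsymbol\alpha=(1,\ldots,1)$ to the unconstrained problem \eqref{eq:potts_unconstrained_opt} through a Lagrangian (Legendre duality) argument. The unconstrained problem with $\alpha_r=\theta$ has already been solved in \cref{prop:potts_diffalpha}(ii) and \cref{prop:potts_gibbs_ldp}(ii).

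\emph{Step 1: reduction to constants.} Write $S(\boldsymbol f):=\int_0^1\sum_r f_r\log(f_r/\mu_r)\,du$, which equals the bracket in \eqref{eq:i2} plus $\log\sum_s e^{h_s}$. Write $\mathcal N(\boldsymbol f):=\sum_r G_{W_\infty,\mathbf 1_r}(\boldsymbol f)$. Then
\[
\theta\,\mathcal N(\boldsymbol f)-S(\boldsymbol f)
\]
is exactly the objective in \eqref{eq:potts_unconstrained_opt}, up to the additive constant $\log\sum_s e^{h_s}-\log c$. For the constant function $\boldsymbol f\equiv\boldsymbol x$ we have $\mathcal N(\boldsymbol x)=\sum_r x_r^v\int W_\infty=\sum_r x_r^v$, since $\mathcal V[W_\infty]\equiv1$ forces $\int W_\infty=1$.

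\emph{Step 2: duality bound.} Fix $t$ in the stated range, let $\theta=\theta_t$ and $\boldsymbol x^*=\boldsymbol x_{\theta_t,\boldsymbol h}$. The optimizer is unique, and by \cref{prop:propotts}(iii) it is constant. So for any $\boldsymbol f$ with $\mathcal N(\boldsymbol f)=t$,
\[
S(\boldsymbol f)=\theta t-\big(\theta\mathcal N(\boldsymbol f)-S(\boldsymbol f)\big)\ge \theta t-\big(\theta\mathcal N(\boldsymbol x^*)-S(\boldsymbol x^*)\big)=S(\boldsymbol x^*),
\]
where the last equality uses $\mathcal N(\boldsymbol x^*)=\sum_r (x^*_r)^v=t$. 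The constant $\boldsymbol x^*$ is itself feasible, so the infimum equals $\sum_r x^*_r\log(x^*_r/\mu_r)$, which is the claim. Part (ii) is the same argument with $\boldsymbol h=0$ and $\theta>\theta_{\rm crit}$. There one uses any one of the $c$ optimizers from \cref{prop:potts_gibbs_ldp}; they are permutations of each other, so they share the same value of $\sum_r x_r^v$ and of $S$. The identification $\theta\ge0$ is what keeps us inside the hypothesis $\alpha_r\ge0$ of the earlier results.

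\emph{Step 3: existence and uniqueness of $\theta_t$ (the main obstacle).} Define $g(\theta):=\sum_r x_{\theta,\boldsymbol h,r}^v$. Continuity of $g$ follows from the continuity statements in \cref{prop:potts_diffalpha}(ii) and \cref{prop:potts_gibbs_ldp}(ii). At $\theta=0$ the optimizer is $\boldsymbol\mu$, so $g(0)=\sum_r\mu_r^v$. For monotonicity, compare the optimality of the maximizers at two values $\theta<\theta'$:
\[
\theta g(\theta)-S_\theta\ge\theta g(\theta')-S_{\theta'},\qquad \theta' g(\theta')-S_{\theta'}\ge\theta' g(\theta)-S_\theta.
\]
Adding the two inequalities gives $(\theta'-\theta)(g(\theta')-g(\theta))\ge0$. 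To get strict monotonicity, suppose $g(\theta)=g(\theta')$. Then the two inequalities become equalities, so $\boldsymbol x_\theta$ also optimizes at $\theta'$. By uniqueness $\boldsymbol x_\theta=\boldsymbol x_{\theta'}$, and this contradicts the fixed-point equation \eqref{eq:propottshow}, whose right side changes with $\theta$ unless $\boldsymbol x$ is a vertex.

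For the limit $g(\theta)\to1$ as $\theta\to\infty$: the entropy term is bounded, so the optimizer must maximize $\sum_r x_r^v$ asymptotically, which forces it toward a vertex of the simplex. Combining these facts with the intermediate value theorem gives a unique $\theta_t$ for each $t\in[g(0),1)$, and similarly for each $t\in(y,1)$ in part (ii). Strict monotonicity, together with ruling out $g\equiv$ const on intervals, is the delicate point, and the fixed-point argument above is what I would try first.
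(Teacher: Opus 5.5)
Your proposal is correct, apart from one misstated step noted in the second paragraph, and it argues differently from the paper.

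\textbf{Comparison with the paper.} The paper works through the limiting free energy $\mathscr Z^{(\rm eq)}(\theta,\boldsymbol h)$. It uses convexity to get a.e.\ differentiability. It applies \cref{lmm1.4ext}(i) to obtain $(\mathscr Z^{(\rm eq)})'(\theta)=\sum_r x_{\theta,\boldsymbol h,r}^v$, and upgrades this to every $\theta$ by continuity of the optimizer. It then shows this derivative is strictly increasing: first $\theta\mapsto\boldsymbol x_\theta$ is injective, because the softmax in $\theta$ is increasing when $z_1>\max_{r\ge2}z_r$, and then uniqueness of the optimizer is used. Finally it invokes \cref{lmm1.4ext}(ii) to identify the constrained minimizers.

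You instead use the weak-duality inequality directly: any maximizer of $\theta\mathcal N-S$ that satisfies $\mathcal N=t$ minimizes $S$ on $\{\mathcal N=t\}$. Your two-point exchange inequality gives monotonicity of $g$; it is the monotonicity of subgradients of the convex function $\mathscr Z^{(\rm eq)}$ in disguise. Your route is more elementary and self-contained. It needs neither differentiability nor \cref{lmm1.4ext}, and for the value identity in part (ii) you do not need to check that all $c$ optimizers share the same $\sum_r x_r^v$. The paper's route gives more than the statement needs. It identifies $g$ as the derivative of the limiting free energy, and via \cref{lmm1.4ext}(ii) it shows the constrained minimizers are exactly the unconstrained maximizers, which the paper reuses for the $c=v=2$ results.

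\textbf{The step to fix.} You say the right side of \eqref{eq:propottshow} changes with $\theta$ ``unless $\boldsymbol x$ is a vertex''. That is wrong. A vertex never solves \eqref{eq:propottshow}, since its right side is strictly positive. If a constant $\boldsymbol x$ solves it at two values $\theta\neq\theta'$, taking log-ratios gives $(\theta-\theta')\,v\,(x_r^{v-1}-x_s^{v-1})=0$ for all $r,s$. This forces $\boldsymbol x=(1/c,\ldots,1/c)$, and there the right side genuinely does not depend on $\theta$. So the case you must exclude is the uniform vector:
\begin{itemize}
\item In (i) it is ruled out by $x_{\theta,\boldsymbol h,1}>\max_{r\ge2}x_{\theta,\boldsymbol h,r}$ from \cref{prop:potts_diffalpha}(ii).
\item In (ii) it is ruled out by $\theta>\theta_{\rm crit}$, since the optimizers there are non-uniform.
\end{itemize}
Also, in (ii) ``by uniqueness'' should be replaced by this: $\boldsymbol x_\theta$ is one of the $c$ permutations of $\boldsymbol x_{\theta'}$, and only $\boldsymbol x_{\theta'}$ itself has a strictly largest first coordinate.

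With these adjustments your limit argument, $\theta(1-g(\theta))\le\max_r\log(1/\mu_r)$, and the intermediate value step complete the proof.
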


%This result shows that, in the symmetric regime, the complexity of the variational problem reduces to solving a one-parameter equation. In particular, the rate function can be computed explicitly once the optimizer is identified, providing a direct link between the optimization problem and the large deviation behavior.

 \subsubsection{$c=v=2$}
To further illustrate the structure of the optimization problem, we now specialize to the two-color and two-vertex case, where more explicit analysis is possible. 
% Therefore, in this subsection, we consider the case $c=2$ and $H=K_2$. 
\begin{prop}\label{c2-1}
 Suppose $c=v=2$, $\int_0^1 W_\infty (., y) dy = 1$ $\lambda$-a.e., $W_\infty>0$ $\lambda$-a.e., $\mu(\{1\})=p_1=p\in(0,1)$, $\mu(\{2\})= p_2=1-p$, and $\boldsymbol{\alpha}= (\alpha_1,\alpha_2)$ where $\alpha_1,\alpha_2\geq 0$.
 \begin{itemize}
     \item [(i)] If $\alpha_1 + \log p_1\neq \alpha_2+\log p_2$, then the optimization problem \eqref{eq:potts_unconstrained_opt} has a unique maximizer.
 \item[(ii)] If $\alpha_1 + \log p_1=\alpha_2+\log p_2$, then \eqref{eq:potts_unconstrained_opt} has one optimizer if $\frac{\alpha_1+\alpha_2}{2}\leq 1$ and two maximizers if $\frac{\alpha_1+\alpha_2}{2}> 1$.
 \end{itemize}
 
 % \item [(ii)] In particular, consider the case $\boldsymbol{\alpha}=(\alpha_1,0)$. If $p\neq \frac{1}{1+\exp({\alpha_1})}$, then \eqref{eq:potts_unconstrained_opt} has a unique maximizer.  If  $p= \frac{1}{1+\exp({\alpha_1})}$ and $\alpha_1\le 2 \,(\alpha_1>2)$, \eqref{eq:potts_unconstrained_opt}  has one (two) maximizer(s). 
\end{prop}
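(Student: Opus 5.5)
The plan is to reduce the infinite-dimensional problem \eqref{eq:potts_unconstrained_opt} to a one-dimensional Curie--Weiss type problem. We then analyse the resulting scalar mean-field equation directly.

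\textbf{Step 1: reduction to constants.} The hypotheses $\int_0^1 W_\infty(\cdot,y)\,dy=1$ and $W_\infty>0$ $\lambda$-a.e., together with $\alpha_1,\alpha_2\ge 0$, are exactly those of \cref{prop:propotts} part (iii) with $v=2$, since $\mathcal V[W_\infty]\equiv 1$. Hence every maximizer of \eqref{eq:potts_unconstrained_opt} is constant $\lambda$-a.e. For a constant ${\boldsymbol f}\equiv(x,1-x)$ we have $\int\!\!\int W_\infty\,dx\,dy=1$. Also $\mu_r\propto e^{h_r}$, so $h_1-h_2=\log p_1-\log p_2$. Up to an additive constant, the objective therefore becomes
\[
g(x)=\alpha_1x^2+\alpha_2(1-x)^2+x\log p_1+(1-x)\log p_2-x\log x-(1-x)\log(1-x),\qquad x\in[0,1].
\]
Thus the number of maximizers of \eqref{eq:potts_unconstrained_opt} equals the number of maximizers of $g$ on $[0,1]$. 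Since $g'(x)\to\pm\infty$ at the endpoints, all maximizers are interior.

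\textbf{Step 2: Curie--Weiss form.} Substitute $x=(1+m)/2$ with $m\in(-1,1)$. Expanding the quadratic terms gives, up to constants,
\[
g=\tfrac{\beta}{2}m^2+Bm-I(m),\qquad \beta:=\tfrac{\alpha_1+\alpha_2}{2},\quad B:=\tfrac{1}{2}\big(\alpha_1+\log p_1-\alpha_2-\log p_2\big),
\]
where $I(m)=\frac{1+m}{2}\log(1+m)+\frac{1-m}{2}\log(1-m)$. Stationary points satisfy $m=\tanh(\beta m+B)$. Note that $B\neq0$ is exactly the condition of part (i), and $B=0$ is that of part (ii).

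\textbf{Step 3: the case $B\ne0$.} By the symmetry $m\mapsto -m$ we may take $B>0$. Then $g(|m|)-g(-|m|)=2B|m|>0$, so any maximizer satisfies $m>0$; $m=0$ is excluded because $g'(0)=B\neq0$. On $m\ge0$ the argument $\beta m+B$ is positive, and $\tanh$ is concave on $[0,\infty)$. Since $\beta\ge0$, the function $k(m)=\tanh(\beta m+B)-m$ is concave on $[0,1)$. It satisfies $k(0)=\tanh B>0$ and $k(m)<0$ near $1$. A concave function that is positive at $0$ and negative near $1$ has exactly one zero on $(0,1)$. So there is a unique positive critical point, which must be the unique maximizer. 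This step is the main place where care is needed: one has to check that concavity survives when $\beta=0$, where it holds trivially, and that the boundary behaviour is handled correctly.

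\textbf{Step 4: the case $B=0$.} Now $g$ is even. If $\beta\le1$, then $|\tanh(\beta m)|<|m|$ for $m\neq0$, so $m=0$ is the only critical point and hence the unique maximizer. If $\beta>1$, then $g''(0)=\beta-1>0$, so $0$ is a local minimum. The concavity argument of Step 3, applied to $\tanh(\beta m)-m$ on $(0,1)$, yields exactly one positive root $m^*$. The maximizers are therefore precisely $\pm m^*$, which gives two maximizers, with $\beta=\frac{\alpha_1+\alpha_2}{2}$ as in the statement. Translating back through $x=(1+m)/2$ completes the argument.
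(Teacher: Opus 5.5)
Your proposal is correct and follows essentially the same route as the paper: reduce to constant maximizers via \cref{prop:propotts}(iii), pass to $m=2s-1$ to get the fixed-point equation $m=\tanh(\beta m+B)$, use the $m\mapsto -m$ comparison to force $m\ge 0$ when $B\neq 0$, and split the symmetric case $B=0$ at $\beta=1$. Your concavity argument supplies the justification, which the paper only asserts, that the positive fixed point is unique. In Step 4, where $k(0)=0$, you additionally need strict concavity of $\tanh$ on $(0,\infty)$, or $k'(0)=\beta-1>0$, to rule out a second positive root.
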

\cref{c2-1} shows that even in the simplest two-color setting, the optimization problem can exhibit multiple regimes, depending on the balance between the parameters. In particular, the transition from a unique maximizer to multiple maximizers reflects a qualitative change in the structure of the solution.

\begin{prop}\label{hmm?}
    Under the assumption of \cref{c2-1}, suppose we further have $\alpha_1=\alpha_2\geq 0$ and $p> 1/2$.
        % \item [(i)] For $\boldsymbol t$ such that $t_1+t_2 \in [p^2+(p^2-2p+1),1)$, we have $I({\boldsymbol t})=x_{\theta_{\bf{t}}}\log (\frac{x_{\theta_{\bf{t}}}}{p})+(1-x_{\theta_{\bf{t}}})\log(\frac{1-x_{\theta_{\bf{t}}}}{1-p}),$ where $x_{\theta_{\bf{t}}}>1/2$ is the unique solution to $x_{\theta_{\bf{t}}}^2+(x_{\theta_{\bf{t}}}^2-2x_{\theta_{\bf{t}}}+1)=t_1+t_2$. Moreover, for $\boldsymbol t$ such that $t_1+t_2>1$, we have $I(\boldsymbol t)= \infty$.
%  According to \eqref{eq:Irho}, $I_{(1,1)}$
% denotes the good rate function for $N_1(K_2,Q_n,{\bf{X}})+N_2(K_2,Q_n,{\bf{X}})$.    %  If $\tilde{I}$ denotes the good rate function for $N_1+N_2$, we have $$
    % \tilde{I}({y}):=\inf_{{\boldsymbol f}\in \mathcal{F}_2:\ G_{W,1}({\boldsymbol f})+G_{W,2}({\boldsymbol f})=y}  \Big\{ \int_0^1 f(u)\log \frac{f(u)}{p}du+\int_0^1 (1-f(u))\log \frac{1-f(u)}{1-p}du\Big\}.$$
    
    Then, for  $y \in [p^2+(1-p)^2,1)$ we have \[I_{W_\infty,(1,1)}({y})=q_{y}\log \Big(\frac{q_{{{y}}}}{p}\Big)+(1-q_{{{y}}})\log\Big(\frac{1-q_{y}}{1-p}\Big),\] where $I_{{W_\infty},{\boldsymbol \alpha}}(\cdot)$ is as in \eqref{eq:i2}, and $q_{y}>1/2$ is the unique solution to $q^2+(1-q)^2=y$. 
    % For  $y>1$, we have $I_{(1,1)}(y)= \infty$. 

\end{prop}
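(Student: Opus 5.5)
The plan is to reduce the constrained problem defining $I_{W_\infty,(1,1)}(y)$ to a one-dimensional problem. I will get a lower bound from an operator inequality and a convex-monotone Jensen argument, and a matching upper bound from a constant competitor.

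\textbf{Reformulation.} With $\mu$ as in \eqref{mu} we have $h_r=\log \mu_r+\text{const}$. Hence the objective in \eqref{eq:i2} equals
\[
\int_0^1 \Big[f(u)\log\tfrac{f(u)}{p}+(1-f(u))\log\tfrac{1-f(u)}{1-p}\Big]du=:\int_0^1 D(f(u))\,du,
\]
where $f=f_1$ and $f_2=1-f$. Set $m=\int f$ and $g(q):=q^2+(1-q)^2$. Using symmetry of $W_\infty$ and $\int_0^1W_\infty(\cdot,z)dz=1$,
\[
G_{W_\infty,\mathbf 1_1}(\boldsymbol f)+G_{W_\infty,\mathbf 1_2}(\boldsymbol f)=\int\!\!\int W_\infty(x,z)\big[1-f(x)-f(z)+2f(x)f(z)\big]=1-2m+2\langle f,W_\infty f\rangle .
\]
So $I_{W_\infty,(1,1)}(y)=\inf\{\int D(f): 1-2m+2\langle f,W_\infty f\rangle=y\}$.

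\textbf{Upper bound.} Take $f\equiv q_y$. Because the row integrals of $W_\infty$ equal $1$, the constraint value is $1-2q_y+2q_y^2=g(q_y)=y$. Therefore $I_{W_\infty,(1,1)}(y)\le D(q_y)$.

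\textbf{Lower bound, step 1 (operator inequality).} Since $W_\infty\ge0$ is symmetric with unit row sums, the Schur test gives $\|W_\infty\|_{L^2\to L^2}\le1$. Hence $\langle f,W_\infty f\rangle\le\int f^2$, and any feasible $f$ satisfies $y\le \int_0^1 g(f(u))\,du$.

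\textbf{Lower bound, step 2 (one-dimensional envelope).} Define $\psi(s):=\inf\{D(q):g(q)\ge s\}$ for $s\le 1$. For $s\le g(p)$ we have $\psi(s)=0$. For $s\in[g(p),1]$, the set $\{g\ge s\}$ is $[0,1-q_s]\cup[q_s,1]$ with $q_s=\tfrac12(1+\sqrt{2s-1})\ge p$. Because $D$ is convex with minimum at $p$, the infimum over this set is $\min\{D(q_s),D(1-q_s)\}$. Moreover
\[
D(q)-D(1-q)=(2q-1)\log\tfrac{1-p}{p}<0 \quad\text{for } q>\tfrac12,\ p>\tfrac12,
\]
so $\psi(s)=D(q_s)$. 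Pointwise $D(f(u))\ge\psi(g(f(u)))$. If $\psi$ is nondecreasing and convex, Jensen gives
\[
\int D(f)\ge\int\psi(g(f))\ge \psi\Big(\int g(f)\Big)\ge\psi(y)=D(q_y),
\]
which matches the upper bound.

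\textbf{Main obstacle: convexity of $\psi$.} Monotonicity is clear. For $s>g(p)$, write $q=q_s$, so that $dq/ds=1/(2(2q-1))$. Then
\[
\psi'(s)=\frac{\operatorname{logit}(q)-\operatorname{logit}(p)}{2(2q-1)} .
\]
It suffices that this is nondecreasing in $q\in(p,1)$, which follows from two facts:
\begin{itemize}
\item $\operatorname{logit}(q)/(2q-1)=2\operatorname{artanh}(t)/t$ with $t=2q-1$, and $\operatorname{artanh}(t)/t$ is increasing in $t>0$ by its power series;
\item $-\operatorname{logit}(p)/(2q-1)$ is increasing in $q$, because $\operatorname{logit}(p)>0$ for $p>1/2$.
\end{itemize}
At $s=g(p)$ the right derivative is $0$, matching $\psi\equiv0$ to the left. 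Hence $\psi$ is convex on $(-\infty,1]$, and continuity at the endpoints is routine.

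Finally, $q_y>1/2$ is the unique such root of $g(q)=y$ for $y\in[g(p),1)$. This range is exactly $[p^2+(1-p)^2,1)$, which completes the identification.
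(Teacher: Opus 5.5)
Your proof is correct, and it takes a genuinely different route from the paper's.

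\textbf{The paper's route (dual side).} It sets $\theta:=\alpha_1=\alpha_2$ and proceeds as follows:
\begin{enumerate}
\item It uses \cref{prop:propotts}(iii), i.e.\ H\"older's inequality together with $W_\infty>0$, to reduce the tilted problem to the scalar problem $\sup_{q}\{\theta(q^2+(1-q)^2)-D(q)\}$.
\item It obtains a unique maximizer $q_\theta>1/2$ from \cref{c2-1}(i).
\item Repeating the argument of \cref{cor:Pottsconopt}, it shows that $\theta\mapsto q_\theta^2+(1-q_\theta)^2$ is a continuous, strictly increasing bijection from $[0,\infty)$ onto $[p^2+(1-p)^2,1)$.
\item It invokes the imported duality statement \cref{lmm1.4ext}(ii) to identify the constrained minimizers at level $y$ with the tilted maximizer.
\end{enumerate}

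\textbf{Your route (primal side).} You get the upper bound from a constant competitor. For the lower bound you combine $\langle f,W_\infty f\rangle\le\int f^2$ with the convex nondecreasing envelope $\psi$ and Jensen. That inequality is exactly the $v=2$ case of the H\"older bound behind \cref{prop:propotts}(iii).

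\textbf{What your approach buys.} It is self-contained: it needs none of the differentiability of the limiting free energy, the continuity and monotonicity of $\theta\mapsto q_\theta$, the uniqueness for the tilted problem, or the external lemma. It only requires $W_\infty\ge 0$, not $W_\infty>0$. It also makes clear that the hypothesis $\alpha_1=\alpha_2$ plays no role in $I_{W_\infty,(1,1)}$.

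\textbf{What it costs.} Your convexity check for $\psi$ (the $\operatorname{artanh}(t)/t$ computation) is tailored to $c=2$ and to this particular constraint. The paper's tilting template is reused verbatim for general $c,v$ in \cref{cor:Pottsconopt}, and it identifies the full set of constrained minimizers at once. You could recover uniqueness of the minimizer by tracking equality in Jensen, using strict convexity and strict monotonicity of $\psi$ on $[p^2+(1-p)^2,1]$.

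\textbf{A bonus of your viewpoint.} The envelope explains exactly where the asymmetric case breaks down. For $\boldsymbol\alpha=(1,0)$ the analogous envelope is $t\mapsto\sqrt t\log\frac{\sqrt t}{p}+(1-\sqrt t)\log\frac{1-\sqrt t}{1-p}$. It fails to be convex near $1/4$ when $p<(1+e^2)^{-1}$, which is precisely what \cref{Wc} exploits.
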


    This provides an explicit expression for the rate function in the symmetric case $\alpha_1=\alpha_2$. Our next result focuses on an asymmetric case, showing that the situation is more delicate there,
    %showing that it depends only on a scalar parameter determined by the constraint. Such simplifications are specific to low-dimensional settings and do not persist in general.
%We now consider a setting where the symmetry between coordinates is broken. As we will see, this leads to a more intricate structure of the optimization problem,
including the possibility of nonconstant optimizers in the constrained formulation.

\begin{prop}\label{unc}
 Under the assumption of \cref{c2-1}, suppose $\alpha_1\ge 0$ and $\alpha_2=0$.

\begin{itemize}
    \item [(i)] If $p\ge\frac{1}{1+e^2}$, for $y\in [p^2,1)$ we have 
    \begin{align}\label{eq:ratei11}I_{W_\infty,(1,0)}({y})= \sqrt{y}\log (\frac{\sqrt{y}}{p})+(1-\sqrt{y})\log(\frac{1-\sqrt{y}}{1-p}),
\end{align}
where $I_{W_\infty,(1,0)}$ is defined in \eqref{eq:i2}.
% and for $y>1$, $I_{(1,0)}({y})=\infty$.
\item [(ii)] \begin{itemize}
    \item [(a)]
If $p<\frac{1}{1+e^2} $, for \[y\in [p^2, (1-\eta)^2)\cup (\eta^2,1),\]
$I_{W_\infty,(1,0)}({y})$ is the same as \eqref{eq:ratei11}, where $\eta>1/2$ is the unique value satisfying $2\eta-1=\tanh(\frac{1}{2}\log \frac{1-p}{p}(2\eta-1)).$

\item[(b)] There exist  $y_\infty\in \big[(1-\eta)^2, \eta^2\big],$ and  $W_\infty$ satisfying $W_\infty>0$ $\lambda$-a.e. and $\int_0^1 W_\infty(.,y)dy=1$ $\lambda$-a.e.,
such that all optimizers of the optimization problem 
\[\inf_{{\boldsymbol f}=(f,1-f):\ G_{W_{\infty},\mathbf{1}_1}({\boldsymbol f})=y_\infty}  \Big\{ \int_0^1f(u)\log \frac{f(u)}{p}+(1-f(u))\log\frac{1-f(u)}{1-p}du\Big\}\]
are nonconstant functions.
\end{itemize}
% \begin{align*}\label{eq:ratei11}I({y})=\begin{cases} \sqrt{y}\beta(\sqrt{y}) - \gamma(\beta(\sqrt{y})), & \mbox{if}\ y\in [p^2,1)\\ \infty & \mbox{if}\ y>1\end{cases},
% \end{align*}
% where the functions $\gamma, \beta$ are defined in \cref{def:tilt1}.
\end{itemize}

\end{prop}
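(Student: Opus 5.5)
The plan is to sandwich the constrained infimum in \eqref{eq:i2} (with $c=v=2$ and $\boldsymbol\alpha=(1,0)$) between an upper bound from constant test functions and a lower bound from Lagrangian duality with the unconstrained problem \eqref{eq:potts_unconstrained_opt} at $\boldsymbol\alpha=(\theta,0)$.

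\textbf{Basic reductions.} Write $\mathrm{KL}(q):=q\log\frac{q}{p}+(1-q)\log\frac{1-q}{1-p}$ and $\mathcal H(f):=\int_0^1\mathrm{KL}(f(u))\,du$. Since $\int_0^1 W_\infty(\cdot,y)\,dy=1$, every constant $f\equiv q$ has $G_{W_\infty,\mathbf 1_1}(f)=q^2$.

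\emph{Upper bound.} The constant $f\equiv\sqrt y$ is feasible, so $I_{W_\infty,(1,0)}(y)\le \mathrm{KL}(\sqrt y)$.

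\emph{Lower bound.} For any $\theta\ge 0$ and any feasible $f$,
\[
\mathcal H(f)\ \ge\ \theta y-\sup_{g}\{\theta G(g)-\mathcal H(g)\}.
\]
By \cref{prop:propotts}(iii), since $W_\infty>0$, $\mathcal V[W_\infty]\equiv1$ and $\theta\ge0$, the supremum is attained only at constants. It therefore equals $\sup_{q\in[0,1]}\psi_\theta(q)$, where $\psi_\theta(q):=\theta q^2-\mathrm{KL}(q)$. Hence, if $\sqrt y$ is a global maximizer of $\psi_\theta$ for some $\theta\ge0$, the two bounds coincide and \eqref{eq:ratei11} follows. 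The whole proof thus reduces to determining the set of $q$ that arise as global maximizers of $\psi_\theta$ as $\theta$ ranges over $[0,\infty)$.

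\textbf{Parts (i) and (ii)(a).} By \cref{c2-1}, $\psi_\theta$ has a unique maximizer $q_\theta$ unless $\theta+\log p=\log(1-p)$, that is, $\theta=\theta^*:=\log\frac{1-p}{p}$. Even at $\theta^*$ the maximizer is unique if $\theta^*\le 2$, which is exactly the condition $p\ge \frac1{1+e^2}$.

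In case (i), $\theta\mapsto q_\theta$ is then single-valued. Continuity follows from uniqueness plus compactness via a standard argmax argument. At the endpoints, $q_0=p$, and $q_\theta\to1$ as $\theta\to\infty$, since the stationarity equation $\theta=(\mathrm{logit}\,q-\mathrm{logit}\,p)/(2q)$ forces $q\to 1$. By the intermediate value theorem every $q\in[p,1)$ is attained, which covers $y\in[p^2,1)$.

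In case (ii)(a), $\theta^*>2$. Substituting $q=\frac{1+m}{2}$ shows that at $\theta^*$ the two maximizers are $\eta$ and $1-\eta$, where $2\eta-1=\tanh(\tfrac{\theta^*}{2}(2\eta-1))$. Maximizers are unique for $\theta\ne\theta^*$, and they jump across the interval $(1-\eta,\eta)$ at $\theta^*$. Continuity on $[0,\theta^*)$ gives the range $[p,1-\eta)$, and continuity on $(\theta^*,\infty)$ gives the range $(\eta,1)$. This yields the stated $y$-range.

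\textbf{Part (ii)(b), the main obstacle.} Here we must produce a regular, positive $W_\infty$ for which constants are strictly suboptimal.

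\emph{Two-block construction.} Take a two-block graphon: $W=a$ on $[0,\tfrac12]^2\cup(\tfrac12,1]^2$ and $W=b$ off the diagonal blocks, with $a+b=2$ and $b=\varepsilon>0$ small. This $W$ is positive and satisfies $\int_0^1 W(\cdot,y)\,dy=1$. Let $f$ equal $\eta$ on the first half and $1-\eta$ on the second half.

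\emph{The degenerate case $\varepsilon=0$.} Here $G(f)=\tfrac12(\eta^2+(1-\eta)^2)=:y_0\in((1-\eta)^2,\eta^2)$. Moreover $\mathcal H(f)=\theta^*y_0-\sup\psi_{\theta^*}$, because each block value maximizes $\psi_{\theta^*}$.

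\emph{Constants are strictly worse.} By the duality bound above, $\mathrm{KL}(\sqrt{y_0})\ge\theta^*y_0-\sup\psi_{\theta^*}$. The inequality is strict, because $\sqrt{y_0}\in(1-\eta,\eta)$ is not a maximizer of $\psi_{\theta^*}$. So $\mathrm{KL}(\sqrt{y_0})>\mathcal H(f)$ with a positive margin.

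\emph{Perturbation to $\varepsilon>0$.} Set $y_\infty:=G_{W}(f)$, which differs from $y_0$ by $O(\varepsilon)$. Both $\mathrm{KL}(\sqrt{y_\infty})$ and $\mathcal H(f)$ move continuously in $\varepsilon$, while $\mathcal H(f)$ does not depend on $\varepsilon$ at all. So for small $\varepsilon$ we keep $y_\infty$ inside $[(1-\eta)^2,\eta^2]$ and keep $\mathcal H(f)<\mathrm{KL}(\sqrt{y_\infty})$.

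\emph{Conclusion.} Any constant feasible function must equal $\sqrt{y_\infty}$, since $G(q)=q^2$. Therefore no constant function is an optimizer, and every optimizer is nonconstant. Existence of an optimizer follows from the attainment argument used for \cref{NewLDP}, via the goodness of the rate function.
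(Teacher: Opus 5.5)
Your proof is correct, but it is organized differently from the paper's.

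\textbf{Parts (i) and (ii)(a).} The paper works through the limiting free energy $\mathscr Z^{(\mathrm{un})}(\alpha_1,p)$. It uses \cref{lmm1.4ext}(i) to get $(\mathscr Z^{(\mathrm{un})})'(\alpha_1,p)=s_{\alpha_1,p}^2$, first a.e.\ by convexity and then everywhere by continuity. It takes strict monotonicity of $\alpha_1\mapsto s_{\alpha_1,p}$ from \cref{lem:rootlim}(v) and the one-sided limits from \cref{lem:rootlim}(ii),(iv) to get a bijection onto $[p,1)$, resp.\ $[p,1-\eta)\cup(\eta,1)$. It then concludes with \cref{lmm1.4ext}(ii).

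You bypass differentiability altogether. The weak-duality bound $\mathcal H(f)\ge\theta y-\sup_q\psi_\theta(q)$, together with the feasible constant $\sqrt y$, closes the sandwich as soon as $\sqrt y$ is \emph{some} global maximizer of $\psi_\theta$ for some $\theta\ge0$. So you only need the argmax map to cover the required range, which continuity and the intermediate value theorem give; monotonicity is not needed. You do not even need to know which of $\eta,1-\eta$ is the left and which the right limit at $\theta^*$, since every accumulation point there lies in $\{1-\eta,\eta\}$. As a by-product, your argument also gives the formula at $y=(1-\eta)^2$ and $y=\eta^2$. What the paper's route yields beyond the statement is that the constrained minimizer set coincides with the unconstrained one, via \cref{lmm1.4ext}(ii).

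\textbf{Part (ii)(b).} You use the same two-block family as the paper ($a=4/(2+\delta)$, $b=2\delta/(2+\delta)$), but a different competitor and a different comparison. The paper proves \cref{Wc} for the degenerate $W_\circ$ at level $1/4$, using strict concavity of $J_p$ near $1/4$. It then transfers this to $W_{(\delta)}$ by a contradiction-through-limits argument built around a fixed optimizer $f^*$ of the $W_\circ$ problem. You instead take the explicit function with block values $\eta$ and $1-\eta$. You get the strict gap $\mathrm{KL}(\sqrt{y_0})>\mathcal H(f)$ from the same duality, since $\sqrt{y_0}\in(1-\eta,\eta)$ is not a maximizer of $\psi_{\theta^*}$. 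You then perturb directly; in fact $y_\infty=y_0-\tfrac{\varepsilon}{4}(2\eta-1)^2$ explicitly. This is more explicit and does not require an optimizer of the degenerate problem to exist.

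\textbf{One small correction.} Existence of constrained optimizers comes from compactness of $\{\nu\in\mathcal P:T_{W_\infty,\mathbf 1_1}(\nu)=y\}$ and lower semicontinuity of $D(\cdot\mid\nu^*)$, not from goodness of the rate function. It is not needed for the statement as phrased anyway.
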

\begin{remark}
\cref{hmm?} and \cref{unc} illustrate how the distinction between the unconstrained
and constrained optimization problems depends on whether
$\alpha_1=\alpha_2$ or $\alpha_1\neq\alpha_2$.
% illustrate a key distinction between the unconstrained and constrained optimization problems between the symmetric case $\alpha_1=\alpha_2$ and the asymmetric case $\alpha_1\ne \alpha_2$.
While the unconstrained problem admits constant optimizers in both cases, the constrained problem can exhibit nonconstant solutions in the asymmetric case, reflecting a richer structure in the presence of asymmetry. The existence of symmetry breaking was already demonstrated in \cite[Theorem 1.1(b)]{SomabhaBhattacharya2020}, with a graphon $W$ which is not positive a.e. We extend this to allow for $W$ to be strictly positive everywhere, using continuity arguments for local perturbations.
%which draws a true contrast between symmetric and the asymmetric cases.
\end{remark}

% \begin{defn}\label{def:tilt1}
% If $\mu$ is Bernoulli distribution with parameter $p$, its log moment generating function is $\gamma(\theta):=\log ( pe^\theta +1-p)$, and  $\gamma'(\theta)=\frac{p e^{\theta}}{pe^{\theta}+1-p}$. One could verify that $\gamma'(.)$ is strictly increasing on $\R$, and has an inverse $\beta(.):(0,1)\mapsto \R $. \end{defn}
Finally, we  consider a special case  in the contrasting regime where the interaction parameters $(\alpha_1,\alpha_2)$ have different signs (so outside the ferromagentic regime). As it turns out, in this case there is a unique explicit constant optimizer.
\begin{prop}\label{negvalue1}
   Suppose $c=v=2$, $\int_0^1 W_\infty (., y) dy = 1$ $\lambda$-a.e., $W_\infty>0$ $\lambda$-a.e., $\mu(\{1\})=p_1=p\in(0,1)$, $\mu(\{2\})= p_2=1-p$, and $\boldsymbol{\alpha}=(\theta,-\theta) $ for some $\theta \in \R$. 
 \begin{itemize}
     \item[(i)] The maximizer of \eqref{eq:potts_unconstrained_opt} is $(\frac{p\exp({2\theta})}{1-p+p\exp({2\theta})},\frac{1-p}{1-p+p\exp({2\theta})})$ $\lambda$-a.e. and hence it is unique. 
     \item [(ii)] For $y\in(-1,1)$, the good rate function $I_{W_\infty,(1,-1)}(y)$ (see \eqref{eq:i2}) can be simplified as 
     \[ \Big(\frac{y+1}{2}\Big)\log \Big(\frac{y+1}{2p}\Big)+\Big(\frac{1-y}{2}\Big)\log\Big(\frac{1-y}{2(1-p)}\Big).\]
%      \begin{align*}I_{(1,-1)}({y})=\begin{cases} (\frac{y+1}{2})\log (\frac{y+1}{2p})+(\frac{1-y}{2})\log(\frac{1-y}{2(1-p)}), & \mbox{if}\ y\in (0,1)\\ \infty & \mbox{if}\ y>1\,\end{cases}.
% \end{align*}
\end{itemize}\end{prop}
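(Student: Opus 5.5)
For part (i), I will apply the fixed point characterization of \cref{prop:propotts} (i) with $c=v=2$, $\boldsymbol\alpha=(\theta,-\theta)$ and $h_r=\log p_r$; the additive constants in $\boldsymbol h$ do not matter since only $\mu$ enters after normalisation. The plan is to avoid solving the fixed point equation and instead show that the objective collapses to a functional of a single scalar. Write $\boldsymbol f=(f,1-f)$. Then
\[
\boldsymbol\alpha^\top\boldsymbol G_{W_\infty}(\boldsymbol f)=\theta\int W_\infty(x,y)\big(f(x)f(y)-(1-f(x))(1-f(y))\big)\,dx\,dy .
\]
Expanding the product, $f(x)f(y)-(1-f(x))(1-f(y))=f(x)+f(y)-1$. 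The quadratic terms therefore cancel, and the regularity assumption $\int_0^1 W_\infty(\cdot,y)\,dy=1$ (together with symmetry of $W_\infty$) reduces this to $\theta(2m-1)$, where $m:=\int_0^1 f$.

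The objective then becomes
\[
\theta(2m-1)-\int_0^1 \Big[f\log\frac{f}{p}+(1-f)\log\frac{1-f}{1-p}\Big]\,du,
\]
up to additive constants. By strict convexity of $t\mapsto t\log(t/p)+(1-t)\log((1-t)/(1-p))$ and Jensen's inequality, for fixed $m$ the entropy term is uniquely minimised at $f\equiv m$, with equality only for constant $f$. So every maximiser is constant. It then remains to maximise the strictly concave one-variable function $2\theta m-m\log(m/p)-(1-m)\log((1-m)/(1-p))$. Its derivative vanishes exactly when $m/(1-m)=pe^{2\theta}/(1-p)$, which gives the stated unique maximiser.

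For part (ii), $I_{W_\infty,(1,-1)}(y)$ is an infimum of the relative entropy subject to $\boldsymbol\alpha^\top\boldsymbol G=y$ with $\boldsymbol\alpha=(1,-1)$. By the computation above, this constraint is simply $2m-1=y$, i.e.\ $m=(1+y)/2$. The Jensen step again shows that the infimum is attained at $f\equiv(1+y)/2$, which lies in $(0,1)$ for $y\in(-1,1)$. Evaluating the entropy there gives the displayed formula.

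I also need to reconcile normalisations. Formula \eqref{eq:i2} involves $\int f\log f-\sum_r h_r\int f_r+\log\sum_s e^{h_s}$, which equals $\int\sum_r f_r\log(f_r/\mu_r)$ by \eqref{mu}, so the constants match. The main obstacle is just this linearisation identity and the careful use of symmetry of $W_\infty$ so that both marginals equal $1$. Everything else is elementary convexity, and \cref{prop:propotts} is not even strictly needed.
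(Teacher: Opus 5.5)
Your proof is correct, and it takes a genuinely different route from the paper's.

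\emph{The paper's route.} For (i) the paper plugs into the fixed-point equation of \cref{prop:propotts}(i). Your cancellation appears there inside the exponent, since $2\theta\int W_\infty(x,y)f_1(y)\,dy+2\theta\int W_\infty(x,y)(1-f_1(y))\,dy=2\theta$, which forces every maximizer to equal the stated constant. This step relies on the a priori existence of maximizers and on the first-order perturbation argument behind \cref{prop:propotts2}. For (ii) the paper never handles the constrained problem directly. It computes $\frac{d}{d\theta}{\mathscr Z}^{(\rm op)}(\theta,p)=2u_{\theta,p}-1$ via \cref{lmm1.4ext}(i) and checks that $\theta\mapsto 2u_{\theta,p}-1$ is a bijection onto $(-1,1)$. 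It then uses the duality statement \cref{lmm1.4ext}(ii) to identify the constrained minimizer with the unconstrained maximizer at $\theta_y$.

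\emph{Your route.} The identity $f(x)f(y)-(1-f(x))(1-f(y))=f(x)+f(y)-1$, together with symmetry and the row-sum condition, makes $\boldsymbol\alpha^\top\boldsymbol G_{W_\infty}(\boldsymbol f)$ an affine function of $\int_0^1 f$. Both the unconstrained and the constrained problems then reduce to Jensen's inequality for the strictly convex map $t\mapsto t\log\frac tp+(1-t)\log\frac{1-t}{1-p}$, followed by one-variable calculus.

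\emph{What each approach buys.} Your argument is self-contained. It needs neither existence of maximizers nor \cref{lmm1.4ext}, whose proof the paper only defers to an external reference. It uses only integrability, symmetry and the row-sum condition on $W_\infty$. It also extends verbatim to $y=\pm1$ (with $0\log 0=0$) and gives $I_{W_\infty,(1,-1)}=\infty$ for $|y|>1$. It explains why the answer is just the Cram\'er rate function of the sample mean: with $\boldsymbol\alpha=(\theta,-\theta)$ the statistic is essentially linear. The paper's route, by contrast, is the generic template it reuses in \cref{cor:Pottsconopt}, \cref{hmm?} and \cref{unc}, where the interaction is genuinely quadratic and no such linearization exists.

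One small point: your opening sentence says you will apply \cref{prop:propotts}(i), but your argument never uses it, as you note yourself at the end. You can drop that sentence.
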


\section{Proofs}
\subsection{Proof of Main Results}
Throughout this section, the values of generic constants $C, C',\ldots$, etc.  may
change from line to line. Before beginning the proof of \cref{NewLDP}, we state two lemmas which are proven in \cref{AuxiliarySection}.
    
    \begin{lmm}\label{bijection}
The map $\nu\mapsto {\boldsymbol{f}}^\nu$ from $\mathcal{P}$ to $\mathcal{F}_c$,
defined by $f^{\nu}_r(x):=\nu(B=r\mid A=x)$ for $r\in[c]$, is a bijection. Moreover, for all $\phi$ and $W\in \mathcal{W}_v$, $T_{W,\phi}(\nu)=G_{W,\phi}({\boldsymbol f}^\nu)$.
\end{lmm}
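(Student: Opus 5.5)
We read $\mathcal P$ as the set of probability measures $\nu$ on $[0,1]\times[c]$ whose first marginal is Lebesgue measure $\lambda$, with $(A,B)\sim\nu$. We read $T_{W,\phi}(\nu)$ as
\[
T_{W,\phi}(\nu)=\int W(a_1,\ldots,a_v)\,\phi(b_1,\ldots,b_v)\,\prod_{i=1}^v d\nu(a_i,b_i),
\]
which is the continuum version of $N_n$ evaluated at the empirical measure of $(i/n,X_i)$. The proof then splits into a measure-theoretic part (bijectivity) and a computation (the identity $T_{W,\phi}(\nu)=G_{W,\phi}({\boldsymbol f}^\nu)$).

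\textbf{Well-definedness and bijectivity.} Since $[c]$ is finite, each $\nu\in\mathcal P$ decomposes as $\nu(S\times\{r\})=\nu_r(S)$ for $r\in[c]$. Each $\nu_r$ is a measure on $[0,1]$ with $\nu_r\le\sum_s\nu_s=\lambda$, so $\nu_r\ll\lambda$. By Radon--Nikodym set $f_r^\nu:=d\nu_r/d\lambda$, which is a version of $\nu(B=r\mid A=x)$.

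\begin{itemize}
\item \emph{Membership in $\mathcal F_c$.} We have $0\le f^\nu_r\le 1$ and $\sum_r f_r^\nu=d\lambda/d\lambda=1$ $\lambda$-a.e. Modifying on a null set (allowed, since $\mathcal F_c$ is taken modulo a.e.\ equality) gives ${\boldsymbol f}^\nu\in\mathcal F_c$. Uniqueness of Radon--Nikodym derivatives makes the map well defined.
\item \emph{Injectivity.} If ${\boldsymbol f}^\nu={\boldsymbol f}^{\nu'}$ a.e., then $\nu_r(S)=\int_S f_r\,d\lambda=\nu'_r(S)$ for every Borel $S$ and every $r$. Hence $\nu=\nu'$ on the rectangles $S\times\{r\}$, which generate the $\sigma$-algebra and form a $\pi$-system.
\item \emph{Surjectivity.} Given ${\boldsymbol f}\in\mathcal F_c$, define $\nu(S\times R):=\sum_{r\in R}\int_S f_r\,d\lambda$. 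This is a probability measure with first marginal $\lambda$, because $\sum_r f_r=1$, and its conditional densities are exactly $f_r$.
\end{itemize}

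\textbf{Identity.} Since the $B$-coordinates take finitely many values, expand the integral over each factor $[0,1]\times[c]$ as a sum over $b_i=r_i$ followed by an integral against $\nu_{r_i}(da_i)=f^\nu_{r_i}(a_i)\,da_i$. This gives
\[
T_{W,\phi}(\nu)=\sum_{r_1,\ldots,r_v}\phi(r_1,\ldots,r_v)\int_{[0,1]^v} W(a_1,\ldots,a_v)\prod_{i=1}^v f^\nu_{r_i}(a_i)\,da_i .
\]
Exchanging the finite sum with the integral yields $\int W\,\Phi_{{\boldsymbol f}^\nu}=G_{W,\phi}({\boldsymbol f}^\nu)$.

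\textbf{Main technical point.} The only care needed is justifying Fubini and the change of measure $d\nu^{\otimes v}=\prod_i f_{r_i}(a_i)\,da_i$ on each colour slice when $W$ is merely in $L^1$. This follows from $|W\,\phi\prod_i f_{r_i}|\le\|\phi\|_\infty|W|$, which is integrable because $W\in L^1([0,1]^v)$ and $0\le f_{r_i}\le 1$. Consequently both $T_{W,\phi}(\nu)$ and $G_{W,\phi}({\boldsymbol f}^\nu)$ are finite, and the interchange is legitimate.
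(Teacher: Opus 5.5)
Your proof is correct and follows essentially the same route as the paper. You get surjectivity by constructing $\nu_{\boldsymbol f}(S\times\{r\})=\int_S f_r$, injectivity from $\nu(S\times\{r\})=\int_S f^\nu_r$, and the identity $T_{W,\phi}(\nu)=G_{W,\phi}({\boldsymbol f}^\nu)$ by decomposing over the colour coordinates, which is what the paper calls the law of iterated expectations. Your Radon--Nikodym construction of ${\boldsymbol f}^\nu$ and the integrability bound $|W\phi\prod_i f_{r_i}|\le\|\phi\|_\infty|W|$ add measure-theoretic detail that the paper leaves implicit.
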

\begin{lmm}[Functional form of the cut norm]\label{functionalcut}
Let \(W\in L^1([0,1]^v)\). Then
\[
\|W\|_{\square}
=
\sup_{0\le g_1,\ldots,g_v\le 1}
\left|
\int_{[0,1]^v}
W(x_1,\ldots,x_v)
\prod_{a=1}^v g_a(x_a)\,
dx_1\cdots dx_v
\right|,
\]
where the supremum is over measurable functions \(g_a:[0,1]\to[0,1]\).
\end{lmm}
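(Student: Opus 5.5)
The goal is to prove the functional form of the bad cut norm stated in \cref{functionalcut}. Write
\[
F(g_1,\ldots,g_v):=\int_{[0,1]^v} W(x_1,\ldots,x_v)\prod_{a=1}^v g_a(x_a)\,dx_1\cdots dx_v .
\]

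The lower bound is immediate. For any $S_1,\ldots,S_v\subseteq[0,1]$, take $g_a=\mathbf 1_{S_a}$; these are admissible functions with values in $[0,1]$, and $F(g_1,\ldots,g_v)$ is exactly the integral over $S_1\times\cdots\times S_v$. So the functional supremum dominates $\|W\|_{\square}$.

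For the reverse inequality, fix measurable $g_1,\ldots,g_v:[0,1]\to[0,1]$. The plan is to exploit multilinearity and replace the $g_a$ by indicators one coordinate at a time without decreasing $|F|$.

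Since $W\in L^1$ and the $g_b$ are bounded, Fubini applies. Hence, with the other functions held fixed, $F$ is a linear functional of $g_1$:
\[
F=\int_0^1 g_1(x_1)\,h_1(x_1)\,dx_1,
\]
where
\[
h_1(x_1)=\int W(x_1,x_2,\ldots,x_v)\prod_{a\ge 2}g_a(x_a)\,dx_2\cdots dx_v ,
\]
and $h_1\in L^1$. Suppose without loss of generality that $F\ge 0$; otherwise replace $W$ by $-W$. Then over all $0\le g_1\le 1$ the quantity $\int g_1h_1$ is maximised by $g_1=\mathbf 1_{\{h_1>0\}}$. Replacing $g_1$ by $\mathbf 1_{S_1}$ with $S_1=\{h_1>0\}$ therefore gives a value that is at least $F\ge 0$, so $|F|$ does not decrease.

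Repeat this for coordinates $2,\ldots,v$ in turn. At stage $a$, keep the new indicators $\mathbf 1_{S_1},\ldots,\mathbf 1_{S_{a-1}}$ and the remaining original $g$'s fixed, and define $h_a$ analogously. Throughout we remain in the nonnegative case, since the value only increases. After $v$ steps we obtain
\[
0\le F(g_1,\ldots,g_v)\le \int_{S_1\times\cdots\times S_v}W\le\|W\|_{\square}.
\]
A cleaner alternative is to note that the functional is affine in each $g_a$ separately, so its extreme values over the convex set of $[0,1]$-valued functions are attained at indicators. The coordinatewise argument above makes this precise.

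The only point needing care is measurability and integrability: the $h_a$ must be well defined a.e. and lie in $L^1$. This follows from $|h_a(x_a)|\le\int |W|\,dx_{-a}$ together with Fubini–Tonelli, and the level sets $\{h_a>0\}$ are then measurable. Taking the supremum over $g$ completes the proof.
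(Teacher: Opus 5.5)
Your proof is correct, but it takes a different route from the paper's. The paper bounds the functional with the layer-cake identity $g_a(x_a)=\int_0^1\mathbf 1_{\{g_a(x_a)>t_a\}}\,dt_a$. One application of Fubini turns $\int W\prod_a g_a$ into an average over $(t_1,\ldots,t_v)\in[0,1]^v$ of rectangle integrals $\int_{\prod_a\{g_a>t_a\}}W$, each bounded in absolute value by $\|W\|_{\square}$. The absolute value is therefore handled directly by the triangle inequality, no sign reduction is needed, and the rectangles involved are superlevel sets of the $g_a$ alone, independent of $W$.

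Your argument is instead an extremal (bang-bang) one. After reducing to $F\ge 0$ via $W\mapsto -W$, you replace each $g_a$ in turn by the indicator of the positivity set of the partial kernel $h_a$. This uses the fact that $g\mapsto\int g\,h_a$ over $[0,1]$-valued $g$ is maximized at $\mathbf 1_{\{h_a>0\}}$. Since each step can only increase the value, the final rectangle satisfies $\int_{S_1\times\cdots\times S_v}W\ge F\ge 0$.

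Your route costs $v$ successive Fubini applications and the sign trick. In return it produces a rectangle explicitly adapted to $W$, and it is the continuous analogue of the fact that a multilinear form on a cube is extremized at vertices. The paper's averaging proof is shorter and is the continuous counterpart of the Bernoulli randomization used in the proof of \cref{DtC}.
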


\begin{proof}[Proof of \cref{NewLDP}]
\begin{itemize}
    \item [(i)]
   Let ${\mathcal{P}}$ denote the set of probability measures on $[0,1] \times[c]$, equipped with weak topology, where the first marginal is uniform on $[0,1]$.
For $W\in \mathcal{W}_v$, we define $T_{W,\phi}$ on ${\mathcal{P}}$ as 
\[
T_{W,\phi}(\nu) := \mathbb{E} \left[ W(A_1, \ldots, A_v)\phi(B_1, \ldots,B_v) \right],
\]
where the expectation is over $\{(A_a, B_a)\}_{1 \leq a \leq v} \overset{i.i.d.}{\sim} \nu$.

% \textcolor{red}{Make tensor $Q_n$, make $(B_i',B_i)=(A_i,B_i)$.}

Now it is not hard to check that
\[{N}_n(Q_n,\phi,\mathbf{X})={T}_{W_{Q_n},\phi}(\tilde{\mathcal{L}}_n),\] where $\tilde{\mathcal{L}}_n=\tilde{\mathcal{L}}_n({\bf X})\in {\mathcal{P}}$  is the joint law of $\left(U, X_{\lceil n U\rceil}\right)$ conditional on $\bf X$, and  $U$ is a uniform random variable on $[0,1]$. For $r\in[c]$, define
\[
f_{n,r}(x):=f^{\widetilde{\mathcal L}_n}_r(x)=
\widetilde{\mathcal L}_n(B=r\mid A=x).
\]
Then ${\boldsymbol f}_n=(f_{n,1},\ldots,f_{n,c})\in\mathcal F_c$, and
$T_{W,\phi}(\widetilde{\mathcal L}_n)
=
G_{W,\phi}({\boldsymbol f}_n)$ for all $W\in \mathcal{W}_v$, by \cref{bijection}. Now,

\begin{align}\label{eq:gpt_best}
&\left|{T}_{W_{Q_n},\phi}(\tilde{\mathcal{L}}_n)-{T}_{W_\infty,\phi}(\tilde{\mathcal{L}}_n)\right|=
\left|
G_{W_{Q_n},\phi}({\boldsymbol f}_n)
-
G_{W_\infty,\phi}({\boldsymbol f}_n)
\right| \nonumber\\
&=\left|\int_{[0,1]^v}
(W_{Q_n}-W_{\infty})(x_1,\ldots,x_v)\,\Phi_{\boldsymbol f_n}(x_1,\ldots,x_v)\,dx_1\cdots dx_v,\right| \nonumber \\
&=\left|\int_{[0,1]^v}
(W_{Q_n}-W_{\infty})(x_1,\ldots,x_v)\,\sum_{r_1,\ldots,r_v\in[c]}
\phi(r_1,\ldots,r_v)\prod_{a=1}^v f_{n,r_a}(x_a)\,dx_1\cdots dx_v,\right| \nonumber \\
&\le
\sum_{r_1,\ldots,r_v\in[c]}
|\phi(r_1,\ldots,r_v)|
\left|
\int_{[0,1]^v}
(W_{Q_n}-W_\infty)(x_1,\ldots,x_v)
\prod_{a=1}^v
f_{n,r_a}(x_a)
\,dx_1\cdots dx_v
\right| \nonumber \\
&\le
\left(
\sum_{r_1,\ldots,r_v\in[c]}
|\phi(r_1,\ldots,r_v)|
\right)
\|W_{Q_n}-W_\infty\|_{\square} \nonumber \\
&\le
\frac{1}{C_v}
\left(
\sum_{r_1,\ldots,r_v\in[c]}
|\phi(r_1,\ldots,r_v)|
\right)
\|W_{Q_n}-W_\infty\|_{\square^*},
\end{align}

where in the fourth line we used \cref{functionalcut}.
% Since $\phi:[c]^v\mapsto \R$, we can write 
% \begin{align*}
% \phi(x_1,\ldots,x_v)=\sum_{r_1,\ldots,r_v=1}^c \phi(r_1,\ldots,r_v) \mathbf{1}_{\{x_1=r_1,\ldots, x_v=r_v\}},
% \end{align*}
% and so $\phi$ is a finite linear combination of product functions.
% \begin{align*}
%     T_{W_{Q_n},\phi}(\tilde{\mathcal{L}}_n)=\sum_{r_1,\cdots,r_v=1}^c \phi(r_1,\cdots,r_v) \mathbf{1}_{x_1=r_1,\ldots, x_v=r_v}
% \end{align*}
Therefore, \eqref{eq:cut} gives the exponential equivalence of ${T}_{W_{Q_n},\phi}(\tilde{\mathcal{L}}_n)$ and ${T}_{W_\infty,\phi}(\tilde{\mathcal{L}}_n)$ (see \cite[Theorem 4.2.13]{DZ}), and so it suffices to derive an LDP for ${T}_{W_\infty,\phi}(\tilde{\mathcal{L}}_n)$.
% \begin{proof}[Proof of \cref{new Lemma 2.5}]
% This proof is very similar to the proof of \cite[Lemma 2.5]{bhattacharya2024ldp}, considering $W_{Q_n}, W_\infty$ instead of $W^*_n, W^*$ respectively, so we skip the details. The main difference is 

% that we prove in \cref{AuxiliarySection}
% \begin{lmm}\label{New Proposition 3.1}
% Let $\{W_{Q_n}\}_{n\ge1}$ and $W_\infty$ be $v$-hypergraphons such that
% \[
% \limsup_{n\to\infty}\|W_{Q_n}\|_1<\infty,
% \qquad
% \|W_{Q_n}-W_\infty\|_{\square^*}\longrightarrow0.
% \]
% Then, setting 
% \[t(W,{\bf g}):=\int_{[0,1]^v}W(x_1, x_2, \ldots, x_v) \prod_{r=1}^v g_r(x_r)dx_r,\]
% we have
% \begin{align}\label{t}
% &\lim_{n\to\infty}
% \sup_{{\bf g}:[0,1]^v \mapsto [-1,1]^v}
% \big| t(W_{Q_n},{\bf g}) - t(W_\infty,{ \bf g}) \big|
% = 0.\\ \nonumber
% \end{align}
% \end{lmm}
% \end{proof}

% \begin{lmm}\label{new Lemma 2.5}
% Suppose $\{W_{n}\}_{n\ge 1}$ be a sequence of functions in $ \mathcal{W}_v$ that satisfy \eqref{eq:cut} and \eqref{eq:l1}.
% Then, for any function $\phi:[c]^v\to \R$ and $\delta > 0$, we have
% \[\lim_{n\to\infty} \frac{1}{n} \log \mathbb{P}
% \left(
% \left| 
% T_{W_{n},\phi}(\tilde{\mathcal L}_n) 
% - 
% T_{W_\infty,\phi}(\tilde{\mathcal L}_n)
% \right| 
% \ge \delta
% \right)
% = -\infty.\]
% \end{lmm}
Since $W_\infty\in L^1([0,1]^v)$, we get that the function $\nu \mapsto {T}_{W_\infty,\phi}(\nu)$ is well-defined and finite.
Moreover, by approximating $W_\infty$ in $L^1([0,1]^v)$ by bounded continuous functions, one can show that the function $\nu\mapsto T_{W_\infty,\phi}(\nu)$ is  continuous w.r.t. weak topology on ${\mathcal{P}}$. Invoking the known LDP of $\tilde{\mathcal{L}}_n$ (see \cite[Lemma 2.1 (ii)]{bhattacharya2024ldp}) along with the contraction principle (see \cite[Theorem 4.2.1]{DZ}), it follows that ${T}_{W_\infty,\phi}(\tilde{\mathcal{L}}_n)$ satisfies an LDP with the good rate function 
    \begin{equation*}
    J_{W_\infty}({{t}})=\inf _{\nu \in {\mathcal{P}}:  {{T}}_{W_\infty, \phi}(\nu)={{t}}} D(\nu \mid {{ \nu^*}}),
    \end{equation*}
    where $\nu^*:=\mathscr{U}[0,1]\times \mu$, $\mathscr{U}[0,1]$ denotes the uniform random variable on $[0,1]$, and $D(\cdot|\cdot)$ denotes the standard Kullback-Leibler divergence.
    
% For any $\nu\in \mathcal{P}$, defining $f^{\nu}_r(x):=\nu(B=r|A=x)$ for $r\in[c]$,
% we have ${\boldsymbol f}^\nu=(f^\nu_1,\ldots,f^\nu_c)\in \mathcal{F}_c$, i.e.~$\sum_{r=1}^cf_r^\nu(x)=1$ for all $x\in [0,1]$. Also, given  ${\boldsymbol{f}}=(f_1,\dots,f_c)\in \mathcal{F}_c$, one can construct a $\nu\in \mathcal{P}$  such that ${\boldsymbol{f}}^\nu={\boldsymbol{f}}$. Then the map $\nu\mapsto {\boldsymbol{f}}^\nu$ from $\mathcal{P}$ to $\mathcal{F}_c$ is a bijection.

Using \cref{bijection}, we can write
\[
D(\nu\mid\nu^*)
=
\int_0^1\sum_{r=1}^c f^\nu_r(x)\log\frac{f_r^\nu(x)}{\mu_r}\,dx .
\]
% Also with $G_{W_\infty,\phi}(\cdot)$ as in \cref{def:g2},
% using the law of iterated expectations one can check that $T_{W_\infty,\phi}(\nu)=G_{W_\infty,\phi}({\boldsymbol f}^\nu)$.

Thus, our rate function $J_{W_\infty}(\cdot)$ simplifies as
\[J_{W_\infty}(t)=\inf_{{\boldsymbol f}\in \mathcal{F}_c:\ { G}_{W_\infty,\phi}({\boldsymbol f})={t}}  \left\{ \int_0^1\sum_{r=1}^c f_{r}(u)\log \frac{f_{r}(u)}{\mu_r}du\right\}.\]

\item[(ii)] Using \eqref{eq:gibbs_potts} gives
				\begin{align*}
				    e^{nZ_{Q_n}(\beta,\mu)}=
				    &\E_{{\mu^{\otimes n}}}\exp\Big(n\beta {N}_n(Q_n,\phi, \mathbf{X})\Big),
				\end{align*}
	which along with Varadhan's Lemma ~\cite[Theorem 4.3.1]{DZ} gives
                \begin{align*}
                    \frac{1}{n}\log\E_{\mu^{\otimes n}}\exp\Big(n\beta {N}_n(Q_n,\phi,\mathbf{X})\Big) \xrightarrow{n\to\infty}&\sup_{x\in \R}\, \{\beta  x-J_{W_\infty}(x)\}\\
                 =& \sup_{{\boldsymbol f}\in \mathcal{F}_c}  \Big\{\beta{ G}_{{W_\infty},\phi}(\boldsymbol f) -\int_0^1\sum_{r=1}^c f_{r}(u)\log \frac{f_{r}(u)}{{\mu}_r}du\Big\},
                \end{align*}
                where the last equality uses part (i). 
                % Notice that the condition 4.3.3 in ~\cite[Theorem 4.3.1]{DZ} is guaranteed by the assumption \eqref{eq:l1}.
                Notice that condition 4.3.3 in~\cite[Theorem 4.3.1]{DZ} is satisfied since~\eqref{eq:gpt_best} and \eqref{eq:cut} imply that there exists a constant $C<\infty$ such that
\[
\sup_{n\ge 1}\sup_{x\in[c]^n}\big|N_n(Q_n,\phi,x)\big|\le C.
\]
                To verify that the maximizers are attained, using \cref{bijection}, it suffices to show that the maximizers of 
                the optimization problem
                \begin{align*}
                   \sup_{\nu \in \mathcal{P}} \{\beta T_{W_\infty,\phi}(\nu)-D(\nu|\nu^*)\}
                \end{align*}
                are attained. But this follows on noting that the space $\mathcal{P}$ is compact w.r.t. weak topology, and the function $\nu\mapsto \beta T_{W_\infty,\phi}(\nu)-D(\nu|\nu^*)$ is upper semi-continuous.

 \end{itemize}
\end{proof}

\begin{proof}[Proof of \cref{prop:propotts2}]
			\begin{itemize}
			    
			\item[(i)] Using \cref{NewLDP} (ii) the set of maximizers of ~\eqref{eq:potts_unconstrained_opt2} is nonempty. Let ${\boldsymbol f}\in\mathcal{F}_c$ be  a maximizer of~\eqref{eq:potts_unconstrained_opt2}.
            Define
            \[\Xi(\boldsymbol{f}):=\beta{G}_{{W_\infty},\phi}({\boldsymbol f})
-\int_0^1\sum_{r=1}^c f_r(u)\log \frac{f_r(u)}{\mu_r}\,du.\]
We first claim that
\[
0<f_r(x)<1,
\qquad r\in[c],
\quad \lambda\text{-a.e. }x\in[0,1].
\]
Indeed, it is enough to prove that \(f_r>0\) \(\lambda\)-a.e. for every \(r\), since
\(\sum_{r=1}^c f_r=1\) and \(c\ge2\) then imply \(f_r<1\) \(\lambda\)-a.e.

Suppose, toward a contradiction, that for some \(r\in[c]\), the set
\[
B_r:=\{x\in[0,1]: f_r(x)=0\}
\]
has positive Lebesgue measure. For \(\varepsilon\in(0,1)\), define
${\boldsymbol f}^\varepsilon:=(1-\varepsilon){\boldsymbol f}+\varepsilon\mu$,
where \(\mu\) is viewed as the constant element of \(\mathcal F_c\). Then
\({\boldsymbol f}^\varepsilon\in\mathcal F_c\). Since \(G_{W_\infty,\phi}\) is multilinear in
\({\boldsymbol f}\) and \(W_\infty\in L^1([0,1]^v)\), there exists \(C<\infty\) such that
\[
\left|
\beta G_{W_\infty,\phi}({\boldsymbol f}^\varepsilon)
-
\beta G_{W_\infty,\phi}({\boldsymbol f})
\right|
\le C\varepsilon .
\]
% Therefore, for some \(C'<\infty\),
% \[
% \int_0^1\sum_{s=1}^c f_s^\varepsilon(u)\log\frac{f_s^\varepsilon(u)}{\mu_s}\,du
% -
% \int_0^1\sum_{s=1}^c f_s(u)\log\frac{f_s(u)}{\mu_s}\,du
% \le
% \varepsilon\mu_r\lambda(B_r)\log\varepsilon
% +
% C'\varepsilon .
% \]

Let
\[
\psi_s(t):=t\log\frac{t}{\mu_s},\qquad t\in[0,1],
\]
with the convention \(0\log 0=0\). 
On \(B_r\), we have \(f_r^\varepsilon(x)=\varepsilon\mu_r\), and so
$f_r^\varepsilon(x)\log[ f_r^\varepsilon(x)/{\mu_r}]
=
\varepsilon\mu_r\log\varepsilon$. Therefore,
\[
\psi_r(f_r^\varepsilon(x))-\psi_r(f_r(x))
=
\varepsilon\mu_r\log\frac{\varepsilon\mu_r}{\mu_r}-0
=
\varepsilon\mu_r\log\varepsilon,
\qquad x\in B_r.
\]
Integrating over \(B_r\), this gives
\[
\int_{B_r}
\left[
\psi_r(f_r^\varepsilon(u))-\psi_r(f_r(u))
\right]du
=
\varepsilon\mu_r\lambda(B_r)\log\varepsilon.
\]

It remains to bound the contribution from all other terms. Since each \(\psi_s\) is
convex on \([0,1]\), we have
\[
\psi_s(f_s^\varepsilon)
=
\psi_s((1-\varepsilon)f_s+\varepsilon\mu_s)
\le
(1-\varepsilon)\psi_s(f_s)+\varepsilon\psi_s(\mu_s).
\]
But \(\psi_s(\mu_s)=0\), so
\[
\psi_s(f_s^\varepsilon)-\psi_s(f_s)
\le
-\varepsilon\psi_s(f_s).
\]
Since \(\psi_s\) is bounded below on \([0,1]\), there exists \(C_s<\infty\) such that
\[
-\psi_s(t)\le C_s,\qquad t\in[0,1].
\]
Hence
\[
\psi_s(f_s^\varepsilon)-\psi_s(f_s)\le C_s\varepsilon.
\]
Summing over \(s\in[c]\) and integrating over \([0,1]\), while keeping the sharper
contribution of the \(r\)-th coordinate on \(B_r\), we obtain, for some
\(C'<\infty\),
\[
\int_0^1\sum_{s=1}^c \psi_s(f_s^\varepsilon(u))\,du
-
\int_0^1\sum_{s=1}^c \psi_s(f_s(u))\,du
\le
\varepsilon\mu_r\lambda(B_r)\log\varepsilon
+
C'\varepsilon .
\]
Equivalently,
\[
\int_0^1\sum_{s=1}^c f_s^\varepsilon(u)
\log\frac{f_s^\varepsilon(u)}{\mu_s}\,du
-
\int_0^1\sum_{s=1}^c f_s(u)
\log\frac{f_s(u)}{\mu_s}\,du
\le
\varepsilon\mu_r\lambda(B_r)\log\varepsilon
+
C'\varepsilon .
\]
It follows that
\[
\Xi({\boldsymbol f}^\varepsilon)-\Xi({\boldsymbol f})
\ge
\varepsilon\mu_r\lambda(B_r)|\log\varepsilon|-C''\varepsilon .
\]
For \(\varepsilon>0\) sufficiently small, the right-hand side is positive, contradicting
the maximality of \({\boldsymbol f}\). Thus \(f_r>0\) \(\lambda\)-a.e. for every \(r\), and hence
\(0<f_r<1\) \(\lambda\)-a.e. for every \(r\).

For $r\in[c]$, define
\[
q_r(x):=
\frac{
\mu_r\exp\bigl(\beta\mathcal T_r[W_\infty,\phi,\boldsymbol f](x)\bigr)
}{
\sum_{a=1}^c
\mu_a\exp\bigl(\beta\mathcal T_a[W_\infty,\phi,\boldsymbol f](x)\bigr)
}.
\]
We claim that $f_r=q_r$ $\lambda$-a.e. for every $r\in[c]$.
Suppose, toward a contradiction, that they disagree on a set of
positive Lebesgue measure. Since
$\sum_{r=1}^c f_r(x)=\sum_{r=1}^c q_r(x)=1$
for $\lambda$-a.e. $x$, there exist distinct $r,s\in[c]$ and a
measurable set $A\subseteq[0,1]$ with $\lambda(A)>0$ such that
\[
f_r(x)>q_r(x)
\qquad\text{and}\qquad
f_s(x)<q_s(x),
\qquad x\in A.
\]
After removing a null subset of $A$, we may assume that
$f_r>0$, $0<f_s<1$, and
$\mathcal T_r[W_\infty,\phi,\boldsymbol f]$ and
$\mathcal T_s[W_\infty,\phi,\boldsymbol f]$ are finite on $A$.
For each integer $M\geq2$, define
\[
A_M
:=
A\cap
\left\{
f_r\geq\frac1M,\quad
\frac1M\leq f_s\leq1-\frac1M,\quad
\left|\mathcal T_r[W_\infty,\phi,\boldsymbol f]\right|
+
\left|\mathcal T_s[W_\infty,\phi,\boldsymbol f]\right|
\leq M
\right\}.
\]
Then $A_M\uparrow A$ as $M\to\infty$. Hence, there exists an integer
$M\geq2$ such that $\lambda(A_M)>0$.

For $0<t<1/(2M)$, define
$\boldsymbol f^{(t)}=(f_1^{(t)},\ldots,f_c^{(t)})$ by
\[
f_r^{(t)}(x):=f_r(x)-t\mathbf 1_{A_M}(x),
\qquad
f_s^{(t)}(x):=f_s(x)+t\mathbf 1_{A_M}(x),
\]
and
\[
f_a^{(t)}(x):=f_a(x),
\qquad a\notin\{r,s\}.
\]
Then $\boldsymbol f^{(t)}\in\mathcal F_c$ for every
$0<t<1/(2M)$. Indeed, on $A_M$,
\[
f_r^{(t)}(x)
\geq\frac1M-t
\geq\frac1{2M}>0,
\]
whereas
\[
\frac1M
\leq f_s^{(t)}(x)
\leq1-\frac1M+t
\leq1-\frac1{2M}<1.
\]
Outside $A_M$, the coordinates are unchanged. Moreover,
$\sum_{a=1}^c f_a^{(t)}(x)
=
\sum_{a=1}^c f_a(x)=1$.

Since $\boldsymbol f$ maximizes $\Xi$, for every
$0<t<1/(2M)$,
$\Xi(\boldsymbol f^{(t)})-\Xi(\boldsymbol f)\leq0$,
and therefore
\[
\frac{\Xi(\boldsymbol f^{(t)})-\Xi(\boldsymbol f)}{t}\leq0.
\]
We now let $t\downarrow0$. On $A_M$, for every
$0<t<1/(2M)$,
\[
f_r^{(t)}(x)\in\left[\frac1{2M},1\right],
\qquad
f_s^{(t)}(x)\in
\left[\frac1M,1-\frac1{2M}\right].
\]
Hence, by the mean value theorem, the difference quotients
corresponding to
\[
u\longmapsto u\log\frac{u}{\mu_r}
\qquad\text{and}\qquad
u\longmapsto u\log\frac{u}{\mu_s}
\]
are uniformly bounded on $A_M$, independently of
$0<t<1/(2M)$. Since $A_M$ has finite Lebesgue measure, the dominated
convergence theorem applies to the entropy terms.

Moreover, by the multilinearity of $G_{W_\infty,\phi}$,
\[
\lim_{t\downarrow0}
\frac{
G_{W_\infty,\phi}(\boldsymbol f^{(t)})
-
G_{W_\infty,\phi}(\boldsymbol f)
}{t}
=
\int_{A_M}
\left[
\mathcal T_s[W_\infty,\phi,\boldsymbol f](x)
-
\mathcal T_r[W_\infty,\phi,\boldsymbol f](x)
\right]\,dx.
\]
Consequently,
\begin{align*}
0
&\geq
\lim_{t\downarrow0}
\frac{\Xi(\boldsymbol f^{(t)})-\Xi(\boldsymbol f)}{t}\\
&=
\int_{A_M}
\Bigg[
\beta\mathcal T_s[W_\infty,\phi,\boldsymbol f](x)
-\beta\mathcal T_r[W_\infty,\phi,\boldsymbol f](x)
+\log\frac{f_r(x)}{\mu_r}
-\log\frac{f_s(x)}{\mu_s}
\Bigg]\,dx\\
&=
\int_{A_M}
\log\left(
\frac{f_r(x)q_s(x)}
{f_s(x)q_r(x)}
\right)\,dx.
\end{align*}
For every $x\in A_M\subseteq A$,
\[
f_r(x)>q_r(x)
\qquad\text{and}\qquad
f_s(x)<q_s(x),
\]
and therefore
$\frac{f_r(x)q_s(x)}{f_s(x)q_r(x)}>1$.
The last integrand is thus strictly positive on $A_M$. It is also
bounded on $A_M$, by the defining bounds of $A_M$, and
$\lambda(A_M)>0$. Hence
\[
\int_{A_M}
\log\left(
\frac{f_r(x)q_s(x)}
{f_s(x)q_r(x)}
\right)\,dx>0,
\]
which is a contradiction. This proves
\eqref{eq:general_fp_onlyWsym}.

			\item[(ii)] Suppose there exists a $\lambda$-a.e. constant function ${\boldsymbol f}=(f_1,\ldots ,f_c)\in\mathcal{F}_{c}$, say $f_r(x)=y_r$ for $\lambda$-a.e. $x\in [0,1]$, which maximizes \eqref{eq:potts_unconstrained_opt2}. With $\mathcal{T}_r, \mathcal{T}_r^{(m)}, \mathcal{V}$ as in \cref{def:tr}, using \eqref{eq:def_T_general_sum} and \eqref{deft1}  we get
            \begin{align*}
 & \mathcal T_r[{W_\infty},\phi,{\boldsymbol y}](x)=\sum_{m=1}^v\mathcal T_r^{(m)}[{W_\infty},\phi,{\boldsymbol y}](x)=\mathcal{V}[{W_\infty}](x) \Gamma_r({\boldsymbol y}),
\end{align*}
where $\Gamma_r$ is as in \eqref{eq:def_Gamma_general}.
Using~\eqref{eq:general_fp_onlyWsym} with $f_r(x)=y_r$ gives
\begin{align*}\label{niceform3}
       &\frac{y_r}{y_s}\stackrel{\lambda-a.e.}{=}\frac{\mu_r}{\mu_s}\exp\Big(\beta\big(\mathcal T_r[{W_\infty},\phi,{\boldsymbol y}](x)-\mathcal T_s[{W_\infty},\phi,{\boldsymbol y}](x)\big)\Big)\\
       &\notag=\frac{\mu_r}{\mu_s}\exp\Big(\beta\mathcal{V}[{W_\infty}](x) \big(\Gamma_r({\boldsymbol y})- \Gamma_s({\boldsymbol y})\big)\Big),
   \end{align*}
   for all $r,s\in[c]$. Since $\mathcal{V}[{W_\infty}](\cdot)$ is not constant $\lambda$-a.e. and $\beta\neq 0$, we must have 
\begin{align*}%\label{niceform4}
\Gamma_1({\boldsymbol y})
=\Gamma_2({\boldsymbol y})
=\cdots
=\Gamma_c({\boldsymbol y}).
\end{align*}
So the above two displays give
   \[\frac{y_r}{y_s}=\frac{\mu_r}{\mu_s},
\qquad r,s\in[c],\]
which means $(y_1,\ldots,y_c)=(\mu_1,\ldots,\mu_c)$
and hence $\Gamma_r({(\mu_1,\ldots,\mu_c)})=\Gamma_s((\mu_1,\ldots,\mu_c))$, for all $r,s\in [c]$, which is a contradiction.
          \end{itemize}  \end{proof}

            % \subsection{Proofs of \cref{}}
			
% Let $\mathcal N(H,\mathcal{G}_n), \mathcal N(\widetilde H,\mathcal{G}_n) $ be the number of copies of $H, \widetilde{ H}$ in the graph $\mathcal{G}_n$, respectively. Formally,
% \begin{align}\label{N}
% \mathcal{N}(H,\mathcal{G}_n):=\sum_{(i_1,\ldots,i_v)\in [n]^v} \prod_{\{a,b\}\in E(H)}\mathcal{G}_n(i_a,i_b).\end{align}

% Moreover, define
%  \begin{align}\label{M}
%  & \mathcal{M}(E',\mathcal{G}_n):=\sum_{(i_u)_{u\in V'}\in[n]^{\omega}}\prod_{\{a,b\}\in E'} \mathcal{G}_n(i_a,i_b).    
%  \end{align}
\begin{proof}[Proof of \cref{condldpdef}]
As shown in the proof of \cref{NewLDP},
\[
\sup_{\mathbf x\in[c]^n}
\left|
T_{W_{Q_n},\phi}\big(\widetilde{\mathcal L}_n(\mathbf x)\big)
-
T_{W_\infty,\phi}\big(\widetilde{\mathcal L}_n(\mathbf x)\big)
\right|
\leq
C_{\phi,v}\|W_{Q_n}-W_\infty\|_{\square^*}.
\]
Moreover,
$N_n(Q_n,\phi,\mathbf X)
=
T_{W_{Q_n},\phi}
\big(\widetilde{\mathcal L}_n(\mathbf X)\big)$.
Recall that $\widetilde{\mathcal L}_n$ satisfies an LDP on $\mathcal P$
with speed $n$, and that the map
$\nu\longmapsto T_{W_\infty,\phi}(\nu)$
is continuous w.r.t.\ the weak topology. Hence, by the contraction
principle \cite[Theorem 4.2.1]{DZ},
$T_{W_\infty,\phi}
\big(\widetilde{\mathcal L}_n(\mathbf X)\big)$
satisfies an LDP with speed $n$ and good rate function
$J_{W_\infty}$, as identified in the proof of \cref{NewLDP}.

Fix a Borel set $F\subseteq\mathbb R$ and $\varepsilon>0$. For
$\delta>0$, define
\[
F^\delta:=\{t:\operatorname{dist}(t,F)\le\delta\},
\qquad
F_{-\delta}:=\{t:\operatorname{dist}(t,F^c)>\delta\}.
\]
On the event
$\{C_{\phi,v}\|W_{Q_n}-W_\infty\|_{\square^*}\leq\delta/2\}$,
using the independence of $Q_n$ and $\mathbf X$, we have
\[
\mathbb P\left(
T_{W_\infty,\phi}
\big(\widetilde{\mathcal L}_n(\mathbf X)\big)\in F_{-\delta}
\right)
\leq
\mathbb P\big(N_n(Q_n,\phi,\mathbf X)\in F\mid Q_n\big)
\leq
\mathbb P\left(
T_{W_\infty,\phi}
\big(\widetilde{\mathcal L}_n(\mathbf X)\big)\in F^\delta
\right).
\]

Since $J_{W_\infty}$ is a good rate function, as $\delta\downarrow0$,
\[
\inf_{{F^\delta}}J_{W_\infty}
\uparrow
\inf_{\overline F}J_{W_\infty},
\qquad
\inf_{F_{-\delta}}J_{W_\infty}
\downarrow
\inf_{F^\circ}J_{W_\infty}.
\]
Thus, we may choose $\delta>0$ sufficiently small that
\[
\inf_{{F^\delta}}J_{W_\infty}
\geq
\inf_{\overline F}J_{W_\infty}-\frac{\varepsilon}{2},
\qquad
\inf_{F_{-\delta}}J_{W_\infty}
\leq
\inf_{F^\circ}J_{W_\infty}+\frac{\varepsilon}{2}.
\]

Applying the LDP lower bound to the open set $F_{-\delta}$ and the
upper bound to the closed set $F^\delta$, and combining
these bounds with the preceding inequalities, we obtain, for all
sufficiently large $n$, on the event
$\{C_{\phi,v}\|W_{Q_n}-W_\infty\|_{\square^*}\leq\delta/2\}$,
\[
-\inf_{t\in F^\circ}J_{W_\infty}(t)-\varepsilon
\leq
\frac1n\log
\mathbb P\big(
N_n(Q_n,\phi,\mathbf X)\in F\mid Q_n
\big)
\leq
-\inf_{t\in\overline F}J_{W_\infty}(t)+\varepsilon.
\]
Finally, since
$\|W_{Q_n}-W_\infty\|_{\square^*}\xrightarrow{\mathbb P}0$, we obtain
$\mathbb P\left(
C_{\phi,v}\|W_{Q_n}-W_\infty\|_{\square^*}\leq\frac{\delta}{2}
\right)\longrightarrow1$,
which proves \eqref{conldp}.
\end{proof}
\subsection{Proofs for \cref{ER Hypergraph}}
Throughout this subsection, we will work in the setting of \cref{ER Hypergraph}. In particular, we will use the notation $\mathcal{G}_n^{(v)}$ defined in \eqref{DEF:hyptensor}.
\begin{proof}[Proof of \cref{LDPforERHyp}]
We need to show
\[
\|W_{\mathcal{G}_n^{(v)}}-\mathbbm{1}\|_{\square^*}=o_\P(1),
\]
after which \cref{condldpdef} and \cref{NewLDP} complete the proof. By \cref{DtC}, it is enough to show
\[
\frac{1}{n^v}\sup_{S\subseteq[n]}
\left|
\sum_{\substack{i_1,\ldots,i_v\in S\\ \text{all distinct}}}
\big(\mathcal G_n^{(v)}(i_1,\ldots,i_v)-1\big)
\right|=o_\P(1).
\]
For fixed $S\subseteq[n]$, the sum in the above display can be written as
$v!\sum_{e\in\binom{S}{v}}\left(\frac{Y_e}{p_n}-1\right)$,
where $\{Y_e:e\in\binom{[n]}{v}\}$ are independent $\operatorname{Ber}(p_n)$ random variables. Since
$\left|Y_e/p_n-1\right|\leq p_n^{-1}$ and
$\operatorname{Var}(Y_e/p_n-1)\leq p_n^{-1}$, Bernstein's inequality gives, for every $\varepsilon>0$,
\[
\P\Bigg(
\Big|
\sum_{\substack{i_1,\ldots,i_v\in S\\ \text{all distinct}}}
\big(\mathcal G_n^{(v)}(i_1,\ldots,i_v)-1\big)
\Big|>\varepsilon n^v
\Bigg)
\leq 2\exp(-C_{\varepsilon,v}n^vp_n),
\]
uniformly over $S\subseteq[n]$. Taking a union bound over the at most $2^n$ choices of $S$ gives
\[
\P\left(
\frac{1}{n^v}\sup_{S\subseteq[n]}
\left|
\sum_{\substack{i_1,\ldots,i_v\in S\\ \text{all distinct}}}
\big(\mathcal G_n^{(v)}(i_1,\ldots,i_v)-1\big)
\right|>\varepsilon
\right)
\leq 2^{n+1}e^{-C_{\varepsilon,v}n^vp_n}=o(1),
\]
where the last equality follows from $n^{1-v}\ll p_n$.
\end{proof}

Now we state a lemma proven in \cref{AuxiliarySection}.
\begin{lmm}\label{DtC}
Suppose $D_n$ is a zero-diagonal real-valued function on $[n]^v$. Then
\begin{align}
 &\label{DtC1}\sup_{\{{\tau_i}^\ell\in [0,1] :i\in [n], \ell \in [v]\}}\left| \sum_{(i_1,\ldots,i_v)\in [n]^v}D_n(i_1,\ldots,i_v)\prod_{r=1}^v \tau^r_{i_r}\right|=\sup_{\{{\beta_i}^\ell\in \{0,1\} :i\in [n], \ell \in [v]\}}\left|\sum_{(i_1,\ldots,i_v)\in [n]^v}D_n(i_1,\ldots,i_v) \prod_{r=1}^v\beta^r_{i_r} \right|,\\
&\notag\text{and}\\
&\label{DtC2}\underset{\substack{\tau_1,\ldots,\tau_n \\ \in [0,1]}}{\sup}\left| \sum_{(i_1,\ldots,i_v)\in [n]^v}D_n(i_1,\ldots,i_v)\prod_{r=1}^v \tau_{i_r}\right|=\underset{\substack{\beta_1,\ldots,\beta_n \\ \in \{0,1\}}}{\sup} \left|\sum_{(i_1,\ldots,i_v)\in [n]^v}D_n(i_1,\ldots,i_v) \prod_{r=1}^v\beta_{i_r} \right|.
\end{align}

\end{lmm}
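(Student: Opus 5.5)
The plan is to use multilinearity: for fixed values of all the other variables, the expression inside the absolute value is affine in each single variable, and an affine function on a box attains the maximum of its absolute value at a vertex. I will prove \eqref{DtC1} first and then adapt the argument to \eqref{DtC2}.

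For \eqref{DtC1}, write
\[
L(\tau):=\sum_{(i_1,\ldots,i_v)\in[n]^v}D_n(i_1,\ldots,i_v)\prod_{r=1}^v\tau^r_{i_r}.
\]
This is a polynomial in the $nv$ variables $\{\tau^\ell_i\}$. Each monomial contains $\tau^\ell_i$ at most once, because the $\ell$-th factor of $\prod_{r}\tau^r_{i_r}$ is the only one carrying superscript $\ell$. Hence $L$ is affine in each coordinate $\tau^\ell_i$ when the others are fixed. The inequality ``$\ge$'' is immediate, since $\{0,1\}\subseteq[0,1]$. For ``$\le$'', the domain $[0,1]^{nv}$ is compact and $L$ is continuous, so $|L|$ attains its maximum at some point $\tau^*$.
\begin{itemize}
\item Take any coordinate of $\tau^*$ lying strictly inside $(0,1)$, and freeze all the other coordinates.
\item The map $t\mapsto L$ in that coordinate is affine, so $t\mapsto|L|$ is convex, and its maximum over $[0,1]$ is attained at $t=0$ or $t=1$.
\item Replacing the coordinate by that endpoint does not decrease $|L|$.
\end{itemize}
Doing this one coordinate at a time, after at most $nv$ steps we reach a point with all coordinates in $\{0,1\}$ and $|L|\ge|L(\tau^*)|$. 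This gives \eqref{DtC1}.

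For \eqref{DtC2} the only change is that all $v$ factors use the same vector $\tau$. The monomial $\prod_{r}\tau_{i_r}$ could then contain a square $\tau_i^2$ if some index repeats. This is where the zero-diagonal hypothesis is needed, and it is the one point requiring care. Since $D_n(i_1,\ldots,i_v)=0$ unless $i_1,\ldots,i_v$ are all distinct, every surviving monomial $\prod_r\tau_{i_r}$ has distinct indices. The polynomial is therefore again affine in each single $\tau_i$. The same compactness-plus-vertex rounding argument, applied to the $n$ coordinates $\tau_1,\ldots,\tau_n$, then yields \eqref{DtC2}.

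The only subtle step is the multi-affinity in \eqref{DtC2}. Without the zero-diagonal condition this can fail (for example, the polynomial $-\tau_1^2+\tau_1$ has its maximum at an interior point). That is why the hypothesis on $D_n$ is used there, while \eqref{DtC1} holds for any $D_n$.
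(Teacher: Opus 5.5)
Your proof is correct, but it takes a different route from the paper.

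\textbf{The paper's argument.} The paper uses randomized rounding. For fixed $\tau$ it takes independent $\zeta^\ell_i\sim\operatorname{Ber}(\tau^\ell_i)$ (resp.\ $\zeta_i\sim\operatorname{Ber}(\tau_i)$). It uses $\mathbb E\bigl[\prod_{r=1}^v\zeta^r_{i_r}\bigr]=\prod_{r=1}^v\tau^r_{i_r}$ to write the form at $\tau$ as the expectation of the form at the random $\{0,1\}$-vector $\zeta$. It then concludes with $|\mathbb E X|\le\mathbb E|X|\le\sup_\beta|\cdot|$.

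The zero-diagonal hypothesis enters at the same point you identified. In \eqref{DtC2}, factorizing $\mathbb E\bigl[\prod_r\zeta_{i_r}\bigr]$ requires every nonzero summand to have distinct indices. This is the probabilistic counterpart of your multi-affinity observation.

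\textbf{Comparison.} The paper effectively writes $L(\tau)$ in one stroke as a convex combination of vertex values with product weights. You instead move to a vertex one coordinate at a time, using convexity of the absolute value of an affine function. The paper's version is a one-line averaging argument with no iteration. Yours is deterministic and produces a specific vertex $\beta$ with $|L(\beta)|\ge|L(\tau)|$.

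Your compactness step is not actually needed. The same coordinate-wise rounding, started from an arbitrary $\tau$, already gives $|L(\tau)|\le\max_\beta|L(\beta)|$.
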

% \begin{proof}[Proof of \cref{DtC}]
%   We only prove the first equality, the latter can be shown similarly.  
% Fix $\tau^{\ell}_1, \ldots,\tau^{\ell}_n \in [0,1]$, for all $\ell \in [v]$. Define independent random variables $\zeta^{\ell}_i\sim \mathrm{Ber}(\tau^{\ell}_i)$ for $i \in [n], \ell \in [v]$. If $\mathbb{E}$ denotes the expectation w.r.t random variables $\{\zeta^{\ell}_1\ldots\zeta^{\ell}_n: \ell\in[v]\}$, one could write 
% \begin{align*}
% & \left|\sum_{(i_1,\ldots,i_v)\in [n]^v}D_n(i_1,\ldots,i_v) \prod_{r=1}^v \tau^{r}_{i_r} \right|=\left|\E\Bigg[\sum_{(i_1,\ldots,i_v)\in[n]^v}D_n(i_1,\ldots,i_v) \prod_{r=1}^v\zeta^{r}_{i_r} \Bigg]\right|\\
% &\le \E\left|\Bigg[\sum_{(i_1,\ldots,i_v)\in [n]^v}D_n(i_1,\ldots,i_v) \prod_{r=1}^v\zeta^{r}_{i_r} \Bigg]\right|\le\max_
% {\ell\in[v]}\sup_{\beta^{\ell}_1,\ldots,\beta^{\ell}_n\in \{0,1\}} \left|\sum_{(i_1,\ldots,i_v)\in [n]^v}D_n(i_1,\ldots,i_v) \prod_{r=1}^v\beta^r_{i_r} \right|.
% \end{align*}
% These inequalities are valid for all choices of $\tau^{\ell}_1, \ldots,\tau^{\ell}_n \in [0,1]$, so we have 
% \begin{align*}
% &\max_
% {\ell\in[v]}\sup_{\tau^\ell_1,\ldots,\tau^\ell_n \in [0,1]}\left| \sum_{(i_1,\ldots,i_v)\in [n]^v}D_n(i_1,\ldots,i_v)\prod_{r=1}^v \tau^r_{i_r}\right|\le\max_
% {\ell\in[v]}\sup_{\beta^{\ell}_1,\ldots,\beta^{\ell}_n\in \{0,1\}} \left|\sum_{(i_1,\ldots,i_v)\in [n]^v}D_n(i_1,\ldots,i_v) \prod_{r=1}^v\beta^r_{i_r} \right|.   
% \end{align*}
% The reverse inequality is obvious, so we are done.
% \end{proof}

\subsection{Proofs for \cref{SIERgraph}}
Throughout this subsection, we will work in the setting of \cref{SIERgraph}. In particular, we will use the notations $\mathcal{G}_n, {\rm Sym}[\mathcal{G}_n], H, \widetilde{\Delta}$ from \cref{symg}, and \cref{def:notation}.

    \begin{proof}[Proof of \cref{LDPforINSER}]
    We need to show 
\begin{align*}
 \|W_{\mathrm{Sym}[\mathcal{G}_n]}-\mathbbm{1}\|_{\square^*}=o_\P(1).
\end{align*}
This implies, by \cref{condldpdef}, the conditional LDP in part (i). The convergence of the scaled log-partition function in part (ii) follows from \cref{NewLDP} (ii) applied conditionally, together with the bound above. 
%then using \cref{condldpdef} and applying \cref{NewLDP} complete the proof. 
We prove the above display in the following proposition.
\end{proof}
       \begin{prop}[$\,\square^*$-Convergence of Sparse Erd\H{o}s-R\'enyi Graphs]\label{graphthm2}
Suppose $H$ is not a tree. Also assume that $(n\log n)^{-1/ {\widetilde \Delta}}\ll p_n \ll1 $. Then
\[\|W_{\mathrm{Sym}[\mathcal{G}_n]}- \mathbbm{1}\|_{\square^*}=o_\P(1).\] 

\end{prop}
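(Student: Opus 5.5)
By \cref{DtC} (its symmetric form \eqref{DtC2}) and the definition of $W_{\mathrm{Sym}[\mathcal G_n]}$, it suffices to prove that
\[
\frac{1}{n^v p_n^{e}}\sup_{S\subseteq[n]}\Big|\sum_{\substack{i_1,\ldots,i_v\in S\\ \text{distinct}}}\Big(\prod_{\{a,b\}\in E(H)}\mathcal G_n(i_a,i_b)-p_n^{e}\Big)\Big|=o_\P(1).
\]
The symmetrization over $S_v$ is harmless here: each permuted sum is a copy count of $H$ in $S$ with relabeled vertices, and the supremum over $S$ is invariant under this. For $S\subseteq[n]$, write $N_H(S)$ for the number of labeled copies of $H$ with all vertices in $S$, i.e.\ the sum above without the centering. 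The goal is then $\sup_S|N_H(S)-|S|_{(v)}p_n^e|=o_\P(n^vp_n^e)$, where $|S|_{(v)}$ is the falling factorial. Sets with $|S|\le \delta n$ contribute at most $\delta^v n^v p_n^e$ to the deterministic term. Their random part is bounded by the same argument applied to supersets, so I will work with a fixed small $\delta$ and then send $\delta\to0$.

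The plan is to reduce from $H$ to its 2-core $\widetilde H$. I peel off the leaves of $H$ in the reverse order of the pruning described after \cref{corH}. Each pendant edge attaches a new vertex to an already embedded vertex $j$, contributing a factor $\deg_S(j)=|N(j)\cap S|$. This is the content of \cref{lem:uniform-core-reduction-ER}. I first show that with high probability, uniformly over $j\in[n]$ and over $S$ with $|S|\ge\delta n$, we have $\deg_S(j)=|S|p_n(1+o(1))$. For fixed $j,S$ this follows from Chernoff, since $\mathbb E\deg_S(j)\ge \delta np_n$. However, a union bound over all $2^n$ sets $S$ requires $np_n\gg n$, which fails. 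So I will instead control $\sum_{j}|\deg_S(j)-|S|p_n|$ with weights, through a cut-norm bound on $\mathcal G_n-p_n$. Standard results give $\|\mathcal G_n-p_n\|_{\square}=O_\P(n^{3/2}p_n^{1/2})=o(n^2p_n)$ via Bernstein plus a union bound over $4^n$ pairs of sets, since $np_n\to\infty$. Iterating along the tree-like pendant structure, each pendant edge replaces $\mathbf 1_{S}(i_u)\mathcal G_n(i_w,i_u)$ summed over $i_u$ by $p_n|S|$, up to an error. That error is a bilinear form in $\mathcal G_n-p_n$, tested against the function $j\mapsto(\text{partial count of the rest at }j)$. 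This function is nonnegative but not bounded by $1$, so it must be normalized by its sup norm, which is controlled by the maximal rooted counts. This is where the degree bound $\widetilde\Delta$ enters.

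The core step is \cref{lem:uniform-core-discrepancy-ER}: with high probability, uniformly over $S$, we need $N_{\widetilde H}(S)=|S|_{(\widetilde v)}p_n^{\widetilde e}(1+o(1))$, and also $\max_j$ of the rooted $\widetilde H$-counts at $j$ should be $O(n^{\widetilde v-1}p_n^{\widetilde e})$. For the upper direction, I will invoke the sharp upper-tail bounds of \cite{BasakKarmakar2025UpperTailIrregular,BasakBasu2023UpperTailRegularLocalized}. Their probabilities, of the form $\exp(-c\,n^2p_n^{\widetilde\Delta}\log(1/p_n))$ (or $n^{2}p_n^{\widetilde\Delta}$ scale for the localized/irregular regimes), beat $2^n$ precisely when $np_n^{\widetilde\Delta}\log n\gg 1$, i.e.\ $p_n\gg(n\log n)^{-1/\widetilde\Delta}$. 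For the lower direction, I will use the relative-entropy lower-tail bounds \cite{KozmaSamotijLowerTailsRelativeEntropy}, which are of order $\exp(-c\,\Phi)$ with $\Phi\gtrsim n^2p_n$ in this range and so also beat $2^n$. Applying these to the induced graph on each $S$ and taking a union bound over $S$ gives the uniform core statement.

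Combining the two steps, I expand $N_H(S)-|S|_{(v)}p_n^e$ as a telescoping sum over the $\omega$ pendant edges. Each term is bounded by $\|\mathcal G_n-p_n\|_\square$ times $p_n^{\cdot}n^{\cdot}$ times the normalized core count, and each is $o(n^vp_n^e)$. I expect the main obstacle to be the core step. The cited tail bounds are stated for the full graph and a fixed count, not uniformly over $2^n$ vertex subsets, and their exponent must be checked to exceed $n\log 2$ exactly at the threshold $(n\log n)^{-1/\widetilde\Delta}$. I would first try to apply them to $\mathcal G_n[S]$, which is itself an Erd\H{o}s--R\'enyi graph $G(|S|,p_n)$ with $|S|\ge\delta n$, so that the threshold condition transfers with constants depending only on $\delta$.
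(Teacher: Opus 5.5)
\textbf{The gap is in your pendant-edge step.} Your outer structure matches the paper's: the reduction via \eqref{DtC2} to a supremum over $S\subseteq[n]$, peeling pendant edges down to the $2$-core, then a uniform core estimate for $|S|\ge\delta n$ via tail bounds and a $2^n$ union bound, with small sets handled through supersets.

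You bound the error $\sum_j \mathcal N(S;j)\,(D\mathbf 1_S)(j)$, with $D=\mathcal G_n-p_n$ off the diagonal, by $\max_j\mathcal N([n];j)\cdot\|D\|_{\square}$. This forces you to prove that the \emph{maximal} rooted count over all $n$ roots is $O_{\mathbb P}(n^{v_K-2}p_n^{e_K-1})$, or at least $o(n^{v_K-2}p_n^{e_K-1}\sqrt{np_n})$. You list this as something the core step "should" give, but nothing you cite delivers it:
\begin{itemize}
\item The Basak--Basu and Basak--Karmakar upper tails, and the lower-tail bounds, concern the total count on $S$ at scale $n^2p_n^{\widetilde\Delta}\log(1/p_n)$. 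They say nothing about one vertex's rooted count, whose tail lives on a much smaller scale and must survive a union over $n$ roots.
\item Maximum-degree bounds do not rescue this near the threshold. For a triangle core, $\max_w e(N(w))\lesssim(np_n)^2$ exceeds the mean $n^2p_n^3$ by a factor $1/p_n$, and $1/p_n=o(\sqrt{np_n})$ requires $p_n\gg n^{-1/3}$, far above $(n\log n)^{-1/2}$.
\end{itemize}
So you would need a genuine uniform concentration result for rooted extension counts (Spencer or Kim--Vu type) for general cores and intermediate graphs $K'$, and you give no argument for it.

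\textbf{How the paper avoids this.} It needs no $L^\infty$ control. Using $0\le\mathcal N(S;d)\le\mathcal N([n];d)$ and Cauchy--Schwarz, $\sup_S|\sum_d\mathcal N(S;d)(D\mathbf 1_S)(d)|\le\|\mathcal N([n];\cdot)\|_2\,\|D\|_{\mathrm{op}}\sqrt n$. It then combines $\|D\|_{\mathrm{op}}=O_{\mathbb P}(\sqrt{np_n})$ from \cref{lem:ER-centered-opnorm} with the $L^2$ bound $\sum_d\mathcal N([n];d)^2=O_{\mathbb P}(n^{2v_K-3}p_n^{2e_K-2})$ of \cref{eq:rooted-L2}. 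That $L^2$ bound needs only a first-moment count over pairs of rooted copies. The overlap bound $s\le\widetilde\Delta(r-1)-1$ plus the sparsity assumption make overlapping pairs negligible, and this is where $\widetilde\Delta$ enters the reduction.

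\textbf{Two smaller issues.}
\begin{itemize}
\item You omit the injectivity correction (the paper's $E^{(2)}_{K,S}$). It is harmless but must be bounded.
\item Your claim $\Phi\gtrsim n^2p_n$ for the lower-tail exponent is false in general. Taking $J$ to be a single edge gives $\Phi\le n^2p_n$, and for a triangle core with $p_n\ll n^{-1/2}$ one has $\Phi\asymp n^3p_n^3\ll n^2p_n$. What you need, and what holds, is $\Phi\gg n^2p_n^{\widetilde\Delta}\log(1/p_n)\gg n$. This has to be checked over all subgraphs $J\subseteq\widetilde H$.
\end{itemize}
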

% \begin{prop}[Norm Control for Sparse Erd\H{o}s-R\'enyi Graphs]\label{normcinser}In the setting of \cref{SIERgraph}, where $\mathrm{Sym}[\mathcal{G}_n]$ is defined in \cref{symg}, one has $\|W_{\mathrm{Sym}[\mathcal{G}_n]}\|_1 = O_{\mathbb P}(1).$
% \end{prop}
Before proving \cref{graphthm2}, we need the following lemmas proven in \cref{AuxiliarySection}. For \(S\subseteq[n]\) and a graph $F$ define
\begin{align}\label{defon}
\mathcal N_F(S;\mathcal G_n)
:=
\sum_{\substack{(i_x)_{x\in V(F)}\in S^{|V(F)|}\\
i_x\text{ all distinct}}}
\prod_{\{x,y\}\in E(F)}
\mathcal G_n(i_x,i_y)=\sum_{\substack{\varphi:V(F)\hookrightarrow S}}
\prod_{\{a,b\}\in E(F)}
\mathcal G_n(\phi(a),\phi(b)),
\end{align}

% Notice that $\mathcal N_F(S;\mathcal G_n)
% =
% \sum_{\substack{\varphi:V(F)\hookrightarrow S}}
% \prod_{\{a,b\}\in E(F)}
% \mathcal G_n(\phi(a),\phi(b)),$
where \(\varphi:V(F)\hookrightarrow S\) denotes an injective map from \(V(F)\) into
\(S\). 
\begin{lmm}[Uniform reduction to the \(2\)-core]
\label{lem:uniform-core-reduction-ER}
Assume \(H\) is not a tree. Assume also that
$(n\log n)^{-1/\widetilde\Delta}\ll p_n\ll1$. \
Then, with $\widetilde{H}$ denoting the core of $H$ (see \cref{corH}), we have
\[
\sup_{S\subseteq[n]}
\frac{1}{n^v}
\left|
p_n^{-e}\mathcal N_H(S;\mathcal G_n)
-
(|S|-\widetilde v)_\omega
p_n^{-\widetilde e}\mathcal N_{\widetilde H}(S;\mathcal G_n)
\right|
=o_{\mathbb P}(1),
\]
where we use the notation
$(m)_d:=m(m-1)\cdots(m-d+1)$,
and $(m)_0:=1$.
\end{lmm}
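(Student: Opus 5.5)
We prove the lemma by peeling off the leaves of $H$ one at a time, in the same order used to build $\widetilde H$. Order the vertices of $V'$ as $u_1,\ldots,u_\omega$ so that $u_j$ is a leaf of $H_{j-1}$, where $H_0:=H$ and $H_j:=H_{j-1}-u_j$. Then $H_\omega=\widetilde H$, and $H_j$ has $v-j$ vertices and $e-j$ edges. Since $\widetilde H$ is nonempty when $H$ is not a tree, every $H_j$ contains a cycle.

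By the triangle inequality it suffices to prove, for each $j$, that uniformly in $S\subseteq[n]$,
\[
\frac{1}{n^{v-j+1}}\Big|p_n^{-(e-j+1)}\mathcal N_{H_{j-1}}(S;\mathcal G_n)-(|S|-v+j)\,p_n^{-(e-j)}\mathcal N_{H_j}(S;\mathcal G_n)\Big|=o_{\mathbb P}(1).
\]
Multiplying by the deterministic factor $(|S|-\widetilde v)_{\omega-j}/n^{\omega-j}\le 1$ and telescoping then gives the claim. Let $w$ be the unique neighbour of $u_j$ in $H_{j-1}$. Then
\[
\mathcal N_{H_{j-1}}(S)=\sum_{\varphi:V(H_j)\hookrightarrow S}\prod_{E(H_j)}\mathcal G_n\cdot d_S^{\varphi}(\varphi(w)),
\]
where $d_S^{\varphi}(i)$ counts the neighbours of $i$ in $S\setminus\varphi(V(H_j))$. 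Since $|d_S^\varphi(i)-d_S(i)|\le v$, replacing $d_S^\varphi$ by $d_S$ costs $O(v)\,p_n^{-1}$ times $p_n^{-(e-j)}\mathcal N_{H_j}(S)/n^{v-j+1}$. This is negligible provided $\sup_S p_n^{-(e-j)}\mathcal N_{H_j}([n])/n^{v-j}=O_{\mathbb P}(1)$ and $np_n\to\infty$; both hold in our regime.

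The main term of the error is then bounded by
\[
\frac{1}{n^{v-j+1}p_n^{e-j+1}}\sum_{\varphi}\prod_{E(H_j)}\mathcal G_n\,\big|d_S(\varphi(w))-p_n|S|\big|
\le \max_{i,S}\frac{|d_S(i)-p_n|S||}{np_n}\cdot\frac{\mathcal N_{H_j}([n])}{n^{v-j}p_n^{e-j}}.
\]
The first factor is not uniformly small over all $S$: take $S$ to be the neighbourhood of $i$. So the sup-over-$S$ bound must be done more carefully, and this is the \textbf{main obstacle}. We handle it by writing $d_S(i)-p_n|S|=\sum_{k\in S}(\mathcal G_n(i,k)-p_n)$ and bounding
\[
\sup_S\sum_{i}W_j(i)\Big|\sum_{k\in S}(\mathcal G_n(i,k)-p_n)\Big|,
\]
where $W_j(i)$ is the $\mathcal G_n$-weighted number of copies of $H_j$ rooted at $w\mapsto i$. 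We split the absolute value by sign classes $T_\pm\subseteq[n]$, which reduces the problem to a bilinear form $\sum_{i\in T,k\in S}W_j(i)(\mathcal G_n(i,k)-p_n)$ uniformly over $S,T$.

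Since $W_j$ is correlated with the edges, we proceed in two steps. First, show that $W_j(i)\le (1+o(1))\,n^{v-j-1}p_n^{e-j}$ for all $i$ with high probability. This is a uniform upper-tail bound for rooted counts of a graph containing a cycle, and it is where the cited sharp upper-tail results (\cite{BasakKarmakar2025UpperTailIrregular,BasakBasu2023UpperTailRegularLocalized}) enter. The threshold $(n\log n)^{-1/\widetilde\Delta}\ll p_n$ is exactly what is needed: it makes the rooted upper-tail probability $e^{-\omega(\log n)}$ so that a union over the $n$ roots survives. Second, with $W_j$ effectively replaced by its mean, the remaining term is $n^{v-j-1}p_n^{e-j}\sup_{S,T}|e(S,T)-p_n|S||T||$. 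This is $o(n^{v-j+1}p_n^{e-j+1})$ by Chernoff and a union bound over $4^n$ pairs, since $n^2p_n\gg n$. The lower deviations of $W_j$ can be controlled by the first step together with the global count $\mathcal N_{H_j}([n])=(1+o_{\mathbb P}(1))n^{v-j}p_n^{e-j}$, using \cite{KozmaSamotijLowerTailsRelativeEntropy} or simply a second-moment bound.

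If the rooted uniform concentration fails for small $j$, the fallback is to apply Cauchy–Schwarz in $i$. This reduces the problem to controlling $\sum_i W_j(i)^2$, a count of a graph obtained by gluing two copies of $H_j$ at $w$, which is again a graph with a cycle, together with the spectral bound $\|\mathcal G_n-p_n J\|_{op}=O(\sqrt{np_n})$.
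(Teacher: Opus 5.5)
\textbf{Where the proposal breaks down.} Your outer structure is the paper's. You peel one leaf at a time, write $\mathcal N_{H_{j-1}}(S)$ as a sum over embeddings of $H_j$ weighted by the number of admissible images of the leaf, and pay $O_{\mathbb P}(1/(np_n))$ for the injectivity correction (the paper's $E^{(2)}_{K,S}$). You then telescope. One bookkeeping slip there: the step-$j$ error is multiplied by the factors of the earlier steps, $\prod_{k<j}(|S|-v+k)=O(n^{j-1})$, not by $(|S|-\widetilde v)_{\omega-j}$.

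The gap is in the main error term $\sup_S\sum_i \mathcal N_S(i)\,(D\mathbf 1_S)(i)$. Here $D_{ij}=(\mathcal G_n(i,j)-p_n)\mathbf 1_{\{i\neq j\}}$, and $\mathcal N_S(i)\le W_j(i)$ is the $S$-restricted rooted count.

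\textbf{Your primary route.} It needs $\max_i W_j(i)=O_{\mathbb P}(n^{v-j-1}p_n^{e-j})$, a uniform bound on counts rooted at a prescribed vertex. Nothing you cite gives this. The Basak--Basu and Basak--Karmakar theorems concern the global count of $\widetilde H$. A single root exceeding its mean by a constant factor changes the global count only by a $1/n$ fraction, so there is no reduction from one to the other.

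Nor is $(n\log n)^{-1/\widetilde\Delta}$ ``exactly'' the rooted threshold. It is what makes $n^2p_n^{\widetilde\Delta}\log(1/p_n)\gg n$, i.e.\ what the $2^n$-set union bound in \cref{lem:uniform-core-discrepancy-ER} needs. A union over $n$ roots only needs rooted rates $\gg\log n$.

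A uniform rooted bound could plausibly be imported from a Spencer-type counting-extensions theorem. Every sub-extension of $H_j$ rooted at $w$ has mean at least $\min\{np_n,n^{1/2-o(1)}\}\gg\log n$ here. But that is a different tool, which you would have to state and verify.

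If you had it, only the upper bound would matter, since $\sum_i W_j(i)\,|d_S(i)-p_n|S||\le 2\max_i W_j(i)\,\sup_{S,T}|e(S,T)-p_n|S||T||$. Your mean-replacement and lower-deviation steps are therefore unnecessary.

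\textbf{Your fallback.} This is exactly the paper's proof. By monotonicity and Cauchy--Schwarz, the error is at most $\|W_j\|_2\,\|D\|_{\mathrm{op}}\sqrt n$, with $\|D\|_{\mathrm{op}}=O_{\mathbb P}(\sqrt{np_n})$.

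Its crux is $\sum_i W_j(i)^2=O_{\mathbb P}(n^{2(v-j)-1}p_n^{2(e-j)})$, and your justification (``a count of two copies glued at $w$, which again has a cycle'') does not give it. The sum runs over all pairs of embeddings with the same image of $w$. That includes pairs sharing $r\ge 2$ vertices and $s\ge 1$ edges, which contribute order $n^{2(v-j)-r}p_n^{2(e-j)-s}$. The cycle plays no role.

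What must be shown is that these overlap terms are lower order. The paper does this via $s\le\widetilde\Delta(r-1)-1$: shared edges inside the core number at most $\widetilde\Delta/2$ per shared core vertex, and each shared pendant vertex adds at most one. Combined with $p_n\gg(n\log n)^{-1/\widetilde\Delta}$, this gives $n^{r-1}p_n^{\widetilde\Delta(r-1)-1}\to\infty$.

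That overlap count is where the sparsity hypothesis actually enters this lemma. With it supplied, and the fallback promoted to the main argument, your proof coincides with the paper's.
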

% \begin{lmm}[Upper tails control]
% \label{lem:hom-to-inj-upper-tail}
% Let \(F\) be a fixed graph with \(\widetilde v\) vertices, \(\widetilde e\) edges, and maximum degree
% \(\widetilde\Delta\). Assume that
% \[
% (n\log n)^{-1/\widetilde\Delta}\ll p_n\ll1.
% \]
% Let \(S\subseteq[n]\) with \(|S|\ge \rho n\), where \(\rho>0\) is fixed. Then, for
% every \(\varepsilon>0\), there exists \(c_{\varepsilon,\rho,F}>0\) such that
% \[
% \mathbb P\left(
% \mathcal N_F(S;\mathcal G_n)
% >
% (1+\varepsilon)p_n^{\widetilde e}(|S|)_{\widetilde v}
% \right)
% \le
% \exp\left\{
% -c_{\varepsilon,\rho,F}|S|^2p_n^{\widetilde\Delta}\log(1/p_n)
% \right\}.
% \]
% \end{lmm}
\begin{lmm}[Uniform discrepancy for the \(2\)-core]\label{lem:uniform-core-discrepancy-ER}
Assume $H$ is not a tree, and
$(n\log n)^{-1/\widetilde\Delta}\ll p_n\ll1$.
Then
\[
\sup_{S\subseteq[n]}
n^{-\widetilde v}
\left|
p_n^{-\widetilde e}\mathcal N_{\widetilde H}(S;\mathcal G_n)
-
(|S|)_{\widetilde v}
\right|
=o_{\mathbb P}(1).
\]
\end{lmm}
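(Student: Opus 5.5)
The plan is to prove the two one-sided bounds separately, first for a single fixed set $S$ with exponentially small failure probabilities, and then to take a union bound over the at most $2^n$ subsets of $[n]$. For each fixed $S$ we have
\[
\E\,\mathcal N_{\widetilde H}(S;\mathcal G_n)=p_n^{\widetilde e}(|S|)_{\widetilde v},
\]
since distinct injective images use $\widetilde e$ distinct edges. So the claim says that $p_n^{-\widetilde e}\mathcal N_{\widetilde H}(S;\mathcal G_n)$ concentrates around its mean, with error $o(n^{\widetilde v})$, uniformly in $S$. What makes a $2^n$ union bound affordable is that the growth condition gives tail bounds of order $\exp(-\omega(n))$.

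\emph{Step 1 (small sets).} Fix $\rho>0$. If $|S|<\rho n$, then $(|S|)_{\widetilde v}\le \rho^{\widetilde v}n^{\widetilde v}$. Because $\mathcal N_{\widetilde H}(\cdot\,;\mathcal G_n)$ is monotone under inclusion, we have $\mathcal N_{\widetilde H}(S)\le \mathcal N_{\widetilde H}(S')$ for any superset $S'\supseteq S$ with $|S'|=\lceil\rho n\rceil$. Hence on the event where the upper-tail bound of Step 2 holds for all sets of size $\lceil \rho n\rceil$, small sets contribute at most $C\rho^{\widetilde v}$ to the supremum. Letting $\rho\downarrow0$ at the end, it suffices to treat sets with $|S|\ge\rho n$.

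\emph{Step 2 (upper tail).} For $|S|\ge\rho n$ I would invoke the sharp upper-tail estimates for subgraph counts in $G(m,p)$ restricted to the vertex set $S$ (Basak--Basu, Basak--Karmakar). For a fixed graph of maximum degree $\widetilde\Delta$ these give, for each $\varepsilon>0$,
\[
\mathbb P\Big(\mathcal N_{\widetilde H}(S;\mathcal G_n)>(1+\varepsilon)p_n^{\widetilde e}(|S|)_{\widetilde v}\Big)\le \exp\big(-c_{\varepsilon,\rho}\, n^2p_n^{\widetilde\Delta}\log(1/p_n)\big).
\]
If the cited results are stated for homomorphism counts or in a slightly different range, I would pass to injective counts by bounding the degenerate terms, which are of lower order. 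Under $(n\log n)^{-1/\widetilde\Delta}\ll p_n$ we have $n p_n^{\widetilde\Delta}\log n\to\infty$ and $\log(1/p_n)\gtrsim\log n/\widetilde\Delta$. Therefore the exponent is $\omega(n)$, and this absorbs the $2^n$ union bound. Checking that the cited theorems apply uniformly for $|S|\in[\rho n,n]$ with the stated constant, including the boundary of the sparsity window and the case $\widetilde\Delta=2$ (cycles), is the step I expect to be the main obstacle; this is where the exact threshold $(n\log n)^{-1/\widetilde\Delta}$ enters.

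\emph{Step 3 (lower tail).} I would use Janson's inequality, or the relative-entropy lower-tail bounds of Kozma--Samotij. These give
\[
\mathbb P\big(\mathcal N_{\widetilde H}(S)<(1-\varepsilon)p_n^{\widetilde e}(|S|)_{\widetilde v}\big)\le\exp\big(-c_\varepsilon\,\Phi_S\big),\qquad \Phi_S:=\min_{F\subseteq\widetilde H,\ e_F\ge1}|S|^{v_F}p_n^{e_F}.
\]
To see that $\Phi_S=\omega(n)$ when $|S|\ge\rho n$, note that $e_F\le\widetilde\Delta v_F/2$, so $|S|^{v_F}p_n^{e_F}\gtrsim n^{v_F}(n\log n)^{-v_F/2}=(n/\log n)^{v_F/2}$. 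This is $\omega(n)$ when $v_F\ge3$. When $v_F=2$ the graph $F$ is a single edge, and then $n^2p_n\gg n$ because $p_n\gg (n\log n)^{-1/\widetilde\Delta}\ge (n\log n)^{-1/2}$. A union bound over $2^n$ sets again gives $o(1)$.

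Combining Steps 2 and 3 yields $\sup_{|S|\ge\rho n}n^{-\widetilde v}\big|p_n^{-\widetilde e}\mathcal N_{\widetilde H}(S)-(|S|)_{\widetilde v}\big|\le\varepsilon$ with probability tending to one. Step 1 handles the sets with $|S|<\rho n$. Finally, letting $\varepsilon,\rho\downarrow0$ gives the claim.
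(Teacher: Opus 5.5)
Your proposal follows the paper's proof essentially step for step. Both restrict to $|S|\ge\rho n$, take the upper tail from sharp upper-tail theorems, take the lower tail from a Janson-type bound, apply a $2^n$ union bound, and handle small sets by monotonicity and $\rho\downarrow 0$. The paper splits the upper tail into regular $\widetilde H$ (Basak--Basu) and irregular $\widetilde H$ (a recent irregular-graph result), checking the sparsity thresholds uniformly over $|S|\in[\rho n,n]$; this is exactly the step you flagged. Your check that $\min_F|S|^{v_F}p_n^{e_F}=\omega(n)$ is a slightly more direct version of the paper's comparison with $|S|^2p_n^{\widetilde\Delta}\log(1/p_n)$.

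One slip: $(n\log n)^{-1/\widetilde\Delta}\ll p_n$ gives an \emph{upper} bound $\log(1/p_n)\le(1+o(1))\widetilde\Delta^{-1}\log n$, not a lower bound (take $p_n=1/\log n$). The needed $n^2p_n^{\widetilde\Delta}\log(1/p_n)=\omega(n)$ still holds if you split at $p_n=n^{-1/(2\widetilde\Delta)}$. Below that point, $\log(1/p_n)\ge\log n/(2\widetilde\Delta)$. Above it, $n^2p_n^{\widetilde\Delta}\ge n^{3/2}$ while $\log(1/p_n)\to\infty$.
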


\begin{proof}[Proof of \cref{graphthm2}]
Fix a measurable set \(T\subseteq[0,1]\), and define
\[
I_{T,i}:=\left[\frac{i-1}{n},\frac{i}{n}\right)\cap T,
\qquad i\in[n].
\]
By the definition of \(\mathrm{Sym}[\mathcal G_n]\) and $W_{\mathrm{Sym}[\mathcal G_n]}$ (see \cref{symg}),
\begin{align*}
&\int_{ T^v}
\left(W_{\mathrm{Sym}[\mathcal G_n]}(x_1,\ldots,x_v)-1\right)
\prod_{r=1}^v dx_r \\
&=
\sum_{\substack{(i_1,\ldots,i_v)\in[n]^v\\ \mathrm{distinct}}}
\left[
\frac1{v!p_n^e}\sum_{\sigma\in S_v}
\prod_{\{a,b\}\in E(H)}
\mathcal G_n(i_{\sigma(a)},i_{\sigma(b)})
-1
\right]
\prod_{r=1}^v \lambda(I_{T,i_r})
-
\sum_{\substack{(i_1,\ldots,i_v)\in[n]^v\\ \mathrm{not\ distinct}}}
\prod_{r=1}^v \lambda(I_{T,i_r}).
\end{align*}
The last term is deterministic and negligible, since
\[
0\le
\sum_{\substack{(i_1,\ldots,i_v)\in[n]^v\\ \mathrm{not\ distinct}}}
\prod_{r=1}^v \lambda(I_{T,i_r})
\le C_H\frac{n^{v-1}}{n^v}
=o(1).
\]
Moreover, for every \(\sigma\in S_v\), the change of variables
\(j_r=i_{\sigma(r)}\) gives
\begin{align*}
&\sum_{\substack{(i_1,\ldots,i_v)\in[n]^v\\ \mathrm{distinct}}}
\prod_{\{a,b\}\in E(H)}
\mathcal G_n(i_{\sigma(a)},i_{\sigma(b)})
\prod_{r=1}^v \lambda(I_{T,i_r}) =
\sum_{\substack{(i_1,\ldots,i_v)\in[n]^v\\ \mathrm{distinct}}}
\prod_{\{a,b\}\in E(H)}
\mathcal G_n(i_a,i_b)
\prod_{r=1}^v \lambda(I_{T,i_r}).
\end{align*}
Therefore,
\begin{align*}
&\int_{ T^v}
\left(W_{\mathrm{Sym}[\mathcal G_n]}(x_1,\ldots,x_v)-1\right)
\prod_{r=1}^v dx_r =
\sum_{\substack{(i_1,\ldots,i_v)\in[n]^v\\ \mathrm{distinct}}}
\left[
p_n^{-e}
\prod_{\{a,b\}\in E(H)}
\mathcal G_n(i_a,i_b)
-1
\right]
\prod_{r=1}^v \lambda(I_{T,i_r})
+o(1),
\end{align*}
where the \(o(1)\) term is deterministic and uniform in \(T\).

Now set
\[
w_i:=n\lambda(I_{T,i}),\qquad i\in[n].
\]
Then \(0\le w_i\le1\), and
$\prod_{r=1}^v\lambda(I_{T,i_r})
=
n^{-v}\prod_{r=1}^v w_{i_r}$. Hence, by invoking \eqref{DtC2} in \cref{DtC} for \[D_n(i_1,\ldots,i_v)=\left[
p_n^{-e}
\prod_{\{a,b\}\in E(H)}
\mathcal G_n(i_a,i_b)
-1
\right]\mathbf 1_{\{\text{all}\,i_1,\ldots,i_v\,\text{are distinct}\}},\] it is enough to prove
\[
\sup_{S\subseteq[n]}
\frac1{n^v}
\left|
\sum_{\substack{i_1,\ldots,i_v\in S\\ \mathrm{distinct}}}
\left[
p_n^{-e}
\prod_{\{a,b\}\in E(H)}
\mathcal G_n(i_a,i_b)
-1
\right]
\right|
=o_{\mathbb P}(1).
\]
But the sum inside the absolute value is exactly
$p_n^{-e}\mathcal N_H(S;\mathcal G_n)-(|S|)_v$. Now by \cref{lem:uniform-core-reduction-ER},
\[
p_n^{-e}\mathcal N_H(S;\mathcal G_n)
=
(|S|-\widetilde v)_\omega
p_n^{-\widetilde e}\mathcal N_{\widetilde H}(S;\mathcal G_n)
+
o_{\mathbb P}(n^v),
\]
uniformly over \(S\subseteq[n]\).
Since
$\left|(|S|-\widetilde v)_\omega\right|
\leq C_H n^\omega$
uniformly over $S\subseteq[n]$, \cref{lem:uniform-core-discrepancy-ER} implies
\[
(|S|-\widetilde v)_\omega
p_n^{-\widetilde e}
\mathcal N_{\widetilde H}(S;\mathcal G_n)
=
(|S|-\widetilde v)_\omega
(|S|)_{\widetilde v}
+
o_\mathbb P(n^v),
\]
uniformly over $S\subseteq[n]$.
Finally,
$(|S|-\widetilde v)_\omega (|S|)_{\widetilde v}
=
(|S|)_v$.
Therefore
\[
\sup_{S\subseteq[n]}
n^{-v}
\left|
p_n^{-e}\mathcal N_H(S;\mathcal G_n)
-
(|S|)_v
\right|
=o_{\mathbb P}(1),
\]
and the proof is complete.
\end{proof}
\subsection{Proofs for \cref{GPM}}

Throughout this subsection, we will work in the setting of \cref{GPM}. In particular, we will use the notations $N^{(r)}_n(Q_n,{\bf X}), \boldsymbol{N}_n(Q_n,{\bf X})$, $\mathcal{R}_{Q_n,{ {\boldsymbol{\alpha}}},\boldsymbol h}, \mathcal{Z}_{Q_n}(\boldsymbol{\alpha},\boldsymbol{h})$ from \eqref{defNinGPT}, \eqref{defNinGPT2}, \eqref{eq:gibbs_potts2} and \eqref{logpf}.

\begin{proof}[Proof of \cref{prop:ldp+zn}]

    Parts (i) and (ii) follow by invoking \cref{NewLDP} parts (i) and (ii), respectively, with
    $$\phi(x_1,\ldots,x_v)=\sum_{r=1}^c \alpha_r \mathbf{1}_{\{x_1=\ldots=x_v=r\}},\quad \beta=1,\quad \mu_r=\frac{e^{h_r}}{\sum_{s=1}^c e^{h_s}}.$$
     
    \begin{itemize}
        \item [(iii)]
     Using the definition of $\tilde{\mathcal{L}}_n(\bf X)$ in the proof of  \cref{NewLDP}, we have 

\[
{\boldsymbol m}_n({\bf X})=\Big(\int_0^1 f^{\tilde{\mathcal{L}}_n(\bf X)}_1(u)\,du,\ldots,\int_0^1 f^{\tilde{\mathcal{L}}_n(\bf X)}_c(u)\,du\Big).
\]

By \cref{bijection}, we may work on $\mathcal P$ instead of $\mathcal F_c$. Defining
\[
\Psi(\nu)
:=
\sum_{r=1}^c{\alpha}_r T_{W_\infty,\mathbf{1}_r}(\nu)
+\sum_{r=1}^c h_r \nu(B=r)
-\int_0^1 \sum_{r=1}^c f_r^\nu(u)\log f_r^\nu(u)\,du,
\]
we get \[\Psi(\nu)=\sum_{r=1}^c \alpha_r { G}_{W_\infty,\mathbf 1_r}({\boldsymbol f}^\nu)
+\sum_{r=1}^c h_r\int_0^1 f^\nu_r(u)\,du
-\int_0^1\sum_{r=1}^c f^{\nu}_r(u)\log f^{\nu}_r(u)\,du.\] So $\boldsymbol{f}^\nu$ maximizes \eqref{eq:potts_unconstrained_opt}, if and only if $\nu$ maximizes $\Psi(\cdot)$. Now fix $\varepsilon>0$ and define the set
\[
\mathcal P_\varepsilon
:=
\Big\{\nu\in\mathcal P:\ 
\operatorname{dist}\Big(
\Big(\int_0^1 f^\nu_1(u)\,du,\ldots,\int_0^1 f^\nu_c(u)\,du\Big),\mathcal S
\Big)\ge\varepsilon
\Big\}.
\]
Since the map $\nu\mapsto \int_0^1 f_r^\nu(u)du=\nu(B=r)$ on $\mathcal{P}$ is continuous w.r.t. weak topology, it follows that the set $\mathcal{P}_\varepsilon$ is weakly closed.
Also, we have $\{\operatorname{dist}({\boldsymbol m}_n({\bf X}),\mathcal S)
>\varepsilon\}
\subseteq\{{\tilde{\mathcal{L}}_n(\bf X)}\in\mathcal P_\varepsilon\}
$. It is thus enough to show that 
\begin{align}
& \limsup_{n\to\infty}\frac1n\log
\mathcal{R}_{Q_n,{ {\boldsymbol\alpha}},\boldsymbol h}(\{\operatorname{dist}({\boldsymbol m}_n({\bf X}),\mathcal S)
>\varepsilon\})\le\limsup_{n\to\infty}\frac1n\log
\mathcal{R}_{Q_n,{ {\boldsymbol\alpha}},\boldsymbol h}({\tilde{\mathcal{L}}_n(\bf X)}\in\mathcal P_\varepsilon)<0\nonumber . 
\end{align}

Using the definition of the Gibbs measure~\eqref{eq:gibbs_potts2}, we get
\[
{\mathcal{R}}_{Q_n,{ {\boldsymbol\alpha}},\boldsymbol h}({\tilde{\mathcal{L}}_n(\bf X)}\in\mathcal P_\varepsilon)
=
\frac{
\mathbb E_{\mu^{\otimes n}}\left[
\exp\Big(n\boldsymbol\alpha^\top \boldsymbol{N}_n(Q_n,{\bf X})\Big)\,
\mathbf 1_{\{{\tilde{\mathcal{L}}_n(\bf X)}\in\mathcal P_\varepsilon\}}
\right]
}{
\mathbb E_{\mu^{\otimes n}}\left[
\exp\Big(n\boldsymbol\alpha^\top \boldsymbol{N}_n(Q_n,{\bf X})\Big)
\right]
}.
\]

Now,
\begin{align*}
& \lim_{n\to\infty}\frac1n\log\mathbb E_{\mu^{\otimes n}}\left[
\exp\Big(n\boldsymbol\alpha^\top \boldsymbol{N}_n(Q_n,{\bf X})\Big)
\right]= \sup_{{\nu}\in \mathcal{P}}  \Psi(\nu)-\log\sum_{s=1}^c e^{h_s},
\end{align*}
where in the last line we used part (ii).

Applying \cite[Exercise 4.3.11]{DZ} on the closed set $\mathcal P_\varepsilon$ yields
\begin{align*}
& \limsup_{n\to\infty}\frac1n\log
\mathbb E_{\mu^{\otimes n}}
\bigg(\exp\Big(n\boldsymbol\alpha^\top \boldsymbol{N}_n(Q_n,{\bf X})\Big)
\mathbf 1_{\{{\tilde{\mathcal{L}}_n(\bf X)}\in\mathcal P_\varepsilon\}}\bigg)
\le \sup_{\nu\in\mathcal P_\varepsilon} \Psi(\nu)-\log\sum_{s=1}^c e^{h_s}.
\end{align*}

Therefore,
\begin{align}\label{LLNeq}
\limsup_{n\to\infty}\frac1n\log
\mathcal{R}_{Q_n,{ {\boldsymbol\alpha}},\boldsymbol h}(\tilde{\mathcal{L}}_n({\bf X})\in\mathcal P_\varepsilon)
\le
\sup_{\nu\in\mathcal P_\varepsilon} \Psi(\nu)-\sup_{\nu\in\mathcal P} \Psi(\nu) .   
\end{align}

The functional $\Psi$ is upper semi-continuous on $\mathcal P$, since the negative
entropy term is upper semi-continuous and the rest is continuous. The second supremum in the above display is achieved by part (ii).
Moreover, since $\mathcal P_\varepsilon$ is a closed subset of the
compact space $\mathcal P$, it is compact w.r.t. weak topology, and the first supremum is
also attained. Now by definition of $\mathcal{P_\varepsilon}$, we have $\mathcal P_\varepsilon\cap\{\nu\in \mathcal{P}: \boldsymbol{f}^\nu \in \mathcal{A}\}=\varnothing$, therefore,
\[
\sup_{\nu\in\mathcal P_\varepsilon} \Psi(\nu)<\sup_{\nu\in\mathcal P} \Psi(\nu).
\]
So the right-hand side (RHS) in \eqref{LLNeq} is strictly negative, which completes the proof.
%If $\mathcal A=\{{\boldsymbol f}^*\}$ is a singleton, then $\mathcal S$ is a singleton as well,
%and the convergence reduces to the stated law of large numbers.
\end{itemize}

\end{proof}
\begin{proof}[Proof of \cref{thisgivesthat}]
The LDP follows on invoking \cref{prop:ldp+zn} with the choice $Q_n$ as in \eqref{eq:333}. It suffices to verify~\eqref{eq:cut}, which follows from \cite[Proposition 3.1, part (ii)]{bhattacharya2024ldp}.

%show that the assumptions in \eqref{eq:twomatbd} implies the assumption of \cref{prop:ldp+zn}, namely \eqref{eq:cut}. The former claim follows from \cite[Proposition 3.1, part (ii)]{bhattacharya2024ldp} while the latter claim follows from \cite[Proposition 3.1, part (i)]{bhattacharya2024ldp} by choosing the functions $f_a(x)=1$, $1\le a\le v$ in \cite[Proposition 3.1]{bhattacharya2024ldp}.
\end{proof}

\begin{proof}[Proof of \cref{prop:propotts}]
			Notice that (i) and (ii) are simply implied by \cref{prop:propotts2} (i) and (ii), with
    $$\phi(x_1,\ldots,x_v)=\sum_{r=1}^c \alpha_r \mathbf{1}_{r},\quad \beta=1,\quad \mu_r=\frac{e^{h_r}}{\sum_{s=1}^c e^{h_s}}.$$
    In this case, 
    
    \begin{align*}
& \mathcal T_r[W,\phi,{\boldsymbol f}](x)
=
\sum_{m=1}^v \mathcal T_r^{(m)}[W,\phi,{\boldsymbol f}](x)=
\sum_{m=1}^v \sum_{s=1}^c \alpha_s\mathcal T_r^{(m)}[W,  \mathbf{1}_{s},{\boldsymbol f}](x) \\
&=
\sum_{m=1}^v \alpha_r\mathcal T_r^{(m)}[W,  \mathbf{1}_{r},{\boldsymbol f}](x)=v \alpha_r\mathcal{V}_r[W,{\boldsymbol{f} }](x).
    \end{align*}
 Notice that one can check $ T_r^{(m)}[W,  \mathbf{1}_{s},{\boldsymbol f}](x)=\mathcal{V}_r[W,{\boldsymbol{f} }](x) \mathbf{1}_{\{r=s\}}$ for all $m\in [v]$.
\begin{itemize}
			\item[(iii)] First assume $\alpha_r>0$ for all $r\in[c]$. Since $\mathcal V[W_\infty](x)=1$ for $\lambda$-a.e. $x\in[0,1]$, $W_\infty(x_1, \ldots, x_v)$ is a probability density function on $[0,1]^v$ with all marginals being uniformly distributed on $[0,1]$. 
            Define 
            \begin{align*}\label{eq:delta}
                \Delta_{c-1}:=\{(y_1,\ldots ,y_c)\in [0,1]^c:\ \sum_{r=1}^c y_r=1\}.
                \end{align*}
                By an application of H\"older's inequality (see \cite[Theorem 8.1]{BorgsLPII}), using the nonnegativity of $\boldsymbol{\alpha}$ we  have
			\begin{align*}
			    {\boldsymbol{\alpha}}^\top {\boldsymbol G}_{W_\infty}({\boldsymbol f})=\sum_{r=1}^c \alpha_r \E_{(Z_1,\ldots ,Z_v)\sim W_\infty}\bigg[\prod_{i=1}^v f_r(Z_i)\bigg]\leq \sum_{r=1}^c \int_{[0,1]} \alpha_r f_r^v(x)\,dx
			\end{align*}
			for all ${\boldsymbol f}\in\mathcal{F}_c$. Note that the equality condition in H\"older's inequality implies that the equality holds in the above display if and only if $f_r(\cdot)$'s are all $\lambda$-a.e. constant functions.

            Consequently
			\begin{align*}
				&\sup_{{\boldsymbol f}\in \mathcal{F}_c}\left\{{\boldsymbol\alpha}^\top {\boldsymbol G}_{W_\infty}({\boldsymbol f})
+\sum_{r=1}^c h_r\int_0^1 f_r(u)\,du
-\int_0^1\sum_{r=1}^c f_r(u)\log f_r(u)\,du\right\}\\
&\le \sup_{{\boldsymbol f}\in\mathcal{F}_c}\left\{\sum_{r=1}^c \int_0^1 \left(\alpha_r f_r^v(x) +h_r f_r(x)-f_r(x)\log{f_r(x)}\right)\,dx\right\}\\
&\leq \sup_{(y_1,\ldots, y_c)\in \Delta_{c-1}} \sum_{r=1}^c\left(\alpha_r  y_r^v +h_r y_r- y_r \log y_r \right).
			\end{align*}

			The equality condition in H\"older's inequality implies that the equality holds in the above display if and only if $f_r(\cdot)$'s are all $\lambda$-a.e. constant functions.
            
            It remains to consider the case where $\alpha_r=0$ for some
$r\in[c]$. For any $r \in [c]$ where $\alpha_r= 0$, the fixed-point equation \eqref{eq:propottshow} gives that $f_r(x)$ does not depend on $x$, so it is a constant function. Consequently, all
coordinates of $\boldsymbol f$ are $\lambda$-a.e. constant.
\end{itemize}

   \end{proof}
   \begin{proof}[Proof of~\cref{prop:potts_gibbs_ldp}]
By \cref{prop:propotts} (iii), under the assumptions
$W_{\infty}>0$ $\lambda$-a.e., $\mathcal V[W_{\infty}](x)=1$ $\lambda$-a.e.,
and $\alpha_r=\theta\ge 0$, $h_r=0$ for all $r\in[c]$, every maximizer
of~\eqref{eq:potts_unconstrained_opt} is constant $\lambda$-a.e. Therefore,
writing such a maximizer as $f_r(x)=x_r$ for $r\in[c]$,
the optimization problem~\eqref{eq:potts_unconstrained_opt} reduces to
\[
\sup_{\boldsymbol x\in \Delta_{c-1}}
\left\{
\theta\sum_{r=1}^c x_r^v
-
\sum_{r=1}^c x_r\log x_r
\right\}.
\]

This is precisely the finite-dimensional optimization problem for the
$v$-tensor Curie--Weiss Potts model with $c$ colors and no external field,
with the correspondence
\[
p=v,\qquad q=c,\qquad \beta=\theta,\qquad h=0,
\]
studied in \cite{bhowal2025limit}.
By ~\cite[Proposition F.1.]{bhowal2025limit}, every global
maximizer of this finite-dimensional problem is, up to permutation, of the
form
\[
\left(
\frac{1+(c-1)y^*}{c},
\frac{1-y^*}{c},
\ldots,
\frac{1-y^*}{c}
\right)
\]
for some $0\le y^*<1$. Hence, any optimizer whose first coordinate is maximal,
that is, $x_1\ge \max_{2\le r\le c}x_r$, has the form \eqref{eq:potts_one_large}. This proves part (i).

It remains to prove part~(ii). By the phase-transition analysis in ~\cite[Section 4]{bhowal2025limit}, for the above
finite-dimensional Curie--Weiss Potts optimization problem with $h=0$,
there exists a critical threshold $\theta_{\rm crit}\in(0,\infty)$ such that, for
$\theta<\theta_{\rm crit}$, the uniform vector
$\left(\frac1c,\ldots,\frac1c\right)$
is the unique global maximizer, while for $\theta>\theta_{\rm crit}$ the global
maximizers are exactly the $c$ non-uniform permutations of the symmetry-broken
maximizer. Consequently, if
$(x_{\theta,1},\ldots,x_{\theta,c})$
denotes the unique maximizer satisfying
$x_{\theta,1}>\max_{2\le r\le c}x_{\theta,r}$, then the implicit function theorem implies
that the map
\[
\theta\mapsto (x_{\theta,1},\ldots,x_{\theta,c})
\]
is continuous on $(\theta_{\rm crit},\infty)$. This proves part~(ii).
\end{proof}
   \begin{proof}[Proof of~\cref{prop:potts_diffalpha}]
			
			 First notice that, by applying \cref{prop:propotts} part (iii), all maximizers
are constant functions. Therefore, the optimization problem \eqref{eq:potts_unconstrained_opt} reduces to maximizing
\begin{align}\label{defofH}
\mathscr H({\boldsymbol z})
:=
\sum_{r=1}^c\alpha_r z_r^v
+\sum_{r=1}^ch_rz_r
-\sum_{r=1}^cz_r\log z_r,
\qquad
{\boldsymbol z}\in\Delta_{c-1}.    
\end{align}

Moreover, in parts (i) and (ii), it is enough to establish uniqueness of the
maximizer. Indeed, the continuity of the maximizer map
follows from the continuity of \(\mathscr H\), the compactness of
\(\Delta_{c-1}\), and the uniqueness of the maximizer. 
            % First notice that by applying \cref{prop:propotts} part (iii), all maximizers are constant functions. Moreover, in both parts (i) and (ii), it is enough to establish uniqueness of the maximizer. Indeed, the continuity of the maximizer map then follows from the continuity of the objective function together with the compactness of the set $\Delta_{c-1}$ and the uniqueness of the maximizer.

Fix a maximizer ${\boldsymbol z}=(z_1,\ldots,z_c)\in \Delta_{c-1}$.  Then by \cref{prop:propotts} (i),\begin{equation}\label{2}
					z_r= \frac{\exp\left(\alpha_r v z_r^{v-1}+h_r\right)}{\sum_{s=1}^{c}\exp \left(\alpha_s  v z_s^{v-1}+h_s\right)},\ r\in [c].
				\end{equation}
                To prove uniqueness of the optimizer in parts (i) and (ii), it is enough to show that  $(z_2,\ldots,z_c)\in\left(0,\exp\left(-\tilde{\eta}\right)\right]^{c-1}$ for some $\tilde{\eta}>0$ and the function 
                \begin{equation}\label{simpleoptfunc}
                    \mathcal{H}(p_2,\ldots,p_c):=\mathscr H(1- \sum\limits_{r=2}^c p_r,p_2,\ldots,p_c)
                \end{equation} is strictly concave on  $\left(0,\exp\left(-\tilde{\eta}\right)\right]^{c-1}$.
				Notice that we have
			\begin{align}\notag\frac{\partial^2\mathcal{H}}{\partial p^2_r}(p_2,\ldots,p_c)=&  v(v-1)(\alpha_r p^{v-2}_r +\alpha_1 p^{v-2}_1) -\frac{1}{p_1}-\frac{1}{p_r}, \quad r \geq 2,\\
            \text{and} \quad
		\Big|\frac{\partial^2\mathcal{H}}{\partial p_r\partial p_s}(p_2,\ldots,p_c)\Big|=&\Big|\alpha_1 v(v-1)p^{v-2}_1-\frac{1}{p_1}\Big| \quad r \neq s, r,s\geq 2.\label{eq:hhh}
		\end{align}
        In both parts we show
        \begin{align}\label{hessiancondition}
       \frac{\partial^2\mathcal H}{\partial p_r^2}
+
\sum_{s\ne r}
\left|
\frac{\partial^2\mathcal H}{\partial p_r\partial p_s}
\right|
<0,
\qquad r=2,\ldots,c,     
        \end{align}
on $\left(0,\exp\left(-\tilde{\eta}\right)\right]^{c-1}$. Thus the Hessian of \(\mathcal H\) is strictly diagonally dominant with negative diagonal entries, and hence negative definite. Therefore \(\mathcal H\) is strictly concave on $\left(0,\exp\left(-\tilde{\eta}\right)\right]^{c-1}$, which gives uniqueness of the optimizer.
				  \begin{itemize}
        \item[(i)] 
				Suppose $\alpha_1 \geq \eta+\max_{ 2 \le r \le c} \alpha_r$ for some large $\eta>0$, to be specified later. First, note that if $\eta>\eta_1:=c^{v-1}\tilde{h}$ where $\tilde{h}=\max_{1\le r\le c} h_r-h_1$, then $z_1 \ge \max_{ 2 \le r \le c}z_r$. We prove the claim by contradiction. Suppose $\exists r \ge 2$ such that $z_r=\max_{s\in [c]} z_s > z_1$. In particular, $z_r \geq {c}^{-1}$. Now we compare the value of \eqref{defofH} for $\tilde{{\boldsymbol z}}=(z_r, z_2,\ldots, z_{r-1}, z_1, z_{r+1},\ldots, z_c)$ and ${\boldsymbol z}=(z_1,\ldots,z_c)$. 

                The optimality of ${\boldsymbol z}$ indicates that we have $\mathscr{H}({\boldsymbol z})\geq\mathscr{H}(\tilde{{\boldsymbol z}}).$ Since
\begin{align*}
    & \mathscr{H}({\boldsymbol z})-\mathscr{H}(\tilde{{\boldsymbol z}})=(\alpha_r-\alpha_1) (z^v_r-z^v_1)+ (h_r - h_1)(z_r - z_1),
\end{align*}
the condition $\mathscr{H}({\boldsymbol z})\geq\mathscr{H}(\tilde{{\boldsymbol z}})$ is equivalent to 
    $$\alpha_1 \leq\alpha_r + (h_r - h_1)\frac{(z_r-z_1)}{(z^v_r-z^v_1)}.$$
Also $\alpha_1\geq \eta + \alpha_r,$ so 
  \begin{align}\label{??}
      c^{v-1}\tilde{h}=\eta_1<\eta \leq \alpha_1-\alpha_r \leq (h_r - h_1)\frac{(z_r-z_1)}{(z^v_r-z^v_1)}.
  \end{align}
  However, $h_r - h_1\leq \tilde{h}$ and 
${(z_r-z_1)}/{(z^v_r-z^v_1)} \leq {z_r^{1-v}}\leq c^{v-1}$, gives us \[(h_r - h_1)\frac{(z_r-z_1)}{(z^v_r-z^v_1)}\le c^{v-1}\tilde{h}.\] This contradicts \eqref{??}. 

 Therefore, $z_1 \ge \max_{ 2 \le r \le c}z_r$ and hence $z_1 \geq c^{-1}$.
				Now, by \eqref{2} we get
				\begin{align*}
					\frac{z_r}{z_1} &= \exp \Big(  v (\alpha_r z_r^{v-1}-\alpha_1 z_1^{v-1})+(h_r - h_1)\Big) \\
                    & \le \exp (vz_1^{v-1}(\alpha_r-\alpha_1)+(h_r - h_1))\leq \exp\left( -\eta v c^{1-v}+\tilde{h}\right) .
				\end{align*}
                Set $\eta_2:={(\tilde{h}+\log(4c))c^{v-1}}/{v}$, and note that for $\eta>\eta_2$ we have
                $$-\eta v c^{1-v}+\tilde{h}<-\log(4c)\Rightarrow \frac{z_r}{z_1}\le \frac{1}{4c}.$$
        %$\tilde{\eta}:=\eta v (\frac{1}{c})^{v-1}-\tilde{h}$. If we take $\tilde{\eta}>0$, we get $\frac{z_r}{z_1}\leq \exp(-\tilde{\eta})<1$. So $z_r \leq \exp(-\tilde{\eta}) $ and $z_1>\max_{2\le r\le c} z_r$. Thus with $\eta_2:=\log(4c)$, for  $\tilde{\eta} >\eta_2$ we have $\max_{2\le r\le c}z_r\le (4c)^{-1}$.
       In particular, $z_1>\max_{2\le r\le c} z_r$. Moreover, this gives $z_r\le {(4c)}^{-1}$, and so $z_1\ge \frac{3}{4}$. Now, we can write for $r \in \{2,\ldots,c\}$.
                 
                \begin{align*}
           & \alpha_1 \geq \alpha_r + \eta \Rightarrow  \frac{\alpha_1}{2}(3c)^{v-1}>\alpha_r + \eta\Rightarrow \alpha_1(3c)^{v-1}>\alpha_r + \eta +\frac{\alpha_1}{2}(3c)^{v-1}\\
             & \Rightarrow \alpha_1 \left(\frac{3}{4}\right)^{v-1} > \alpha_r \left(\frac{1}{4c}\right)^{v-1}+ \eta \left(\frac{1}{4c}\right)^{v-1} + \frac{\alpha_1}{2}\left(\frac{3}{4}\right)^{v-1}\\
        & \Rightarrow \alpha_1 z_1^{v-1}> \alpha_r z_r^{v-1}+\eta \left(\frac{1}{4c}\right)^{v-1} + \frac{\alpha_1}{2}(\frac{3}{4})^{v-1}\\
            & \Rightarrow \alpha_1 z_1^{v-1}- \alpha_r z_r^{v-1}> \eta \left(\frac{1}{4c}\right)^{v-1} +\frac{\alpha_1}{2}\left(\frac{3}{4}\right)^{v-1}.
               \end{align*}
            
Therefore,
\begin{align*}
\frac{z_1}{z_r} 
= \exp\left(v(\alpha_1 z_1^{v-1} - \alpha_r z_r^{v-1}) + (h_1 - h_r)\right) 
> e^{\tilde{\eta}},\text{ where }\tilde{\eta}:=v\eta \left(\frac{1}{4c}\right)^{v-1} + \frac{v\alpha_1}{2} \left(\frac{3}{4}\right)^{v-1} - \tilde{h}.
\end{align*}
% \begin{align}\label{ratio1}
% = \exp\left(\tilde{\eta} \left(\frac{1}{4}\right)^{v-1} + v\frac{\alpha_1}{2} \left(\frac{3}{4}\right)^{v-1} - \Big(1-4^{1-v}\Big)\tilde{h}\right).
% \end{align}
Therefore,
\begin{align}\label{ratio1}
    z_r\le \exp(-\tilde{\eta}).
\end{align}
				Now we show that the function $\mathcal{H} $ defined in \eqref{simpleoptfunc} is strictly concave on $\left(0,\exp\left(-\tilde{\eta}\right)\right]^{c-1}$ for all $\eta>\eta_\circ$, where  $\eta_\circ=\eta_\circ(c,v,\tilde{h})$ will be specified later.
				To this end, for $\eta>\max(\eta_1,\eta_2)$ we have
			\begin{align*}\notag\frac{\partial^2\mathcal{H}}{\partial p^2_r}(p_2,\ldots,p_c)=&  v(v-1)(\alpha_r p^{v-2}_r +\alpha_1 p^{v-2}_1) -\frac{1}{p_1}-\frac{1}{p_r}\le 2v^2\alpha_1-\frac{1}{p_r}, \quad r \geq 2,\\
            \text{and} \quad
		\Big|\frac{\partial^2\mathcal{H}}{\partial p_r\partial p_s}(p_2,\ldots,p_c)\Big|=&\Big|\alpha_1 v(v-1)p^{v-2}_1-\frac{1}{p_1}\Big|\le \alpha_1  v^2+\frac{4}{3} \quad r \neq s, r,s\geq 2.
		\end{align*}
        Using the above bound along with \eqref{ratio1} we have
        \begin{align*}
           \frac{\partial^2\mathcal{H}}{\partial p_r^2}(p_2,\ldots,p_c)+ \sum_{s: s\ne r} \Big|\frac{\partial^2\mathcal{H}}{\partial p_r\partial p_s}(p_2,\ldots,p_c)\Big|\le &cv^2\alpha_1-\frac{1}{p_r}+\frac{4c}{3}\\
           \le &cv^2\alpha_1+\frac{4c}{3}-\exp\left(v\eta \left(\frac{1}{4c}\right)^{v-1} + \frac{v\alpha_1}{2} \left(\frac{3}{4}\right)^{v-1} - \tilde{h}\right)<0
        \end{align*}
        for all $\eta>\eta_3(v,c,\tilde{h})$. Thus setting $\eta_\circ:=\max(\eta_1,\eta_2,\eta_3)$, for $\eta>\eta_\circ$ the function $\mathcal{H}$ is strictly concave in the required domain, thus giving uniqueness.
        
				%If $v >2$, by choosing $p_i, i \ge 2$ small enough yields concavity. When $v=2$, 
		% 	Now using \eqref{ratio1}, one could check that if we take $\eta_3:=4^{v-1}\log(\kappa) $, where $$\kappa=\kappa(c,v,\tilde{h}):=\exp\left((1-4^{1-v})\tilde{h}\right)\times\sup_{\alpha_1>0}\frac{\big[(2+c)\alpha_1 v(v-1)+4c/3\big]}{\exp(\frac{1}{2}\alpha_1 v(\frac{3}{4})^{v-1})}\in (0,\infty),$$ we get $$\frac{\partial^2\mathcal{H}}{\partial p^2_r} <0 \quad\text{and}\quad \sum_{s\ne r}\Big|\frac{\partial^2\mathcal{H}}{\partial p_r\partial p_s}\Big|< \Big|\frac{\partial^2\mathcal{H}}{\partial p^2_r}\Big|$$ for all $r \in \{2,\ldots,c\}$. 
    
		% Thus the Hessian matrix of $\mathcal{H}$ is negative diagonally dominant, and hence negative definite, for all $\tilde{\eta}> \eta_3$. So we need to take $\tilde{\eta}>\max\{\eta_2, \eta_3\}$. Recalling $\tilde{\eta}=\eta v (\frac{1}{c})^{v-1}-\tilde{h}$ and $\eta_1=c^{v-1}\tilde{h}$, it is enough to take $\eta>\eta_\circ$, where 
  %       \begin{align*}
  %       &\eta_\circ:=c^{v-1} \max \{\tilde{h}, (\tilde{h}+\max\{\eta_2,\eta_3\}) /v\}\\
  %       &=c^{v-1} \max \left\{\tilde{h}, \frac{\tilde{h}+\max\{\log(4c),4^{v-1}\log\kappa(c,v,\tilde{h})\}} {v}\right\}.    
  %       \end{align*}
        %Finally, the continuity follows from implicit function theorem.
		%$\frac{\delta^2\mathcal{H}}{\delta p^2_r} \le \kappa -e^{\eta}$ and $\frac{\partial^2\mathcal{H}}{\partial p_r\partial p_s} \ge -2$ as $p_1 > \frac12$. Hence the Hessian matrix is negative definite for $\eta>\eta_3$. 

        \item [(ii)]
				Suppose $h_1 \geq \eta+\max_{ 2 \le r \le c} h_r$ for some large $\eta>0$, to be specified later. 
 First, note that if \[\eta>\eta_1:={v}\widetilde{\alpha}+\log(4c),\] where $\widetilde{\alpha}=\max_{1\le r\le c} \alpha_r-\alpha_1$, then $z_1 \ge \max_{ 2 \le r \le c}z_r$. As in the previous part, we prove the claim by contradiction. Suppose $\exists r \ge 2$ such that $z_r=\max_{s\in [c]} z_s >z_1$. In particular, $z_r \geq c^{-1}$. Optimality of ${\boldsymbol z}$ indicates that $\mathscr{H}({\boldsymbol z})\geq\mathscr{H}(\tilde{{\boldsymbol z}}),$ where $\mathscr{H}({\boldsymbol z})$ is defined in \eqref{defofH}. Notice that $\mathscr{H}({\boldsymbol z})\geq\mathscr{H}(\tilde{{\boldsymbol z}})$ is equivalent to 
    $$h_1 \leq h_r + (\alpha_r - \alpha_1)\frac{(z^v_r-z^v_1)}{(z_r-z_1)}.$$
  Also $h_1\geq \eta + h_r,$ so 

  \begin{align*}
v\widetilde{\alpha}+\log(4c)=\eta_1<\eta \leq h_1-h_r \leq (\alpha_r - \alpha_1)\frac{(z^v_r-z^v_1)}{(z_r-z_1)}.
\end{align*}

However, $(\alpha_r - \alpha_1)\leq \widetilde{\alpha}$ and 
$\frac{z^v_r-z^v_1}{z_r-z_1} \leq v$, which leads us to a contradiction. Therefore, $z_1 \ge \max_{ 2 \le r \le c}z_r$ and hence $z_1 \geq c^{-1}$. Again using \eqref{2} gives
				\begin{align*}
					&\frac{z_r}{z_1} = \exp \Big(  v (\alpha_r z_r^{v-1}-\alpha_1 z_1^{v-1})+(h_r - h_1)\Big) \\
                    &\le\exp \Big(  v (\alpha_r -\alpha_1) z_1^{v-1}+(h_r - h_1)\Big)  \leq \exp(\widetilde{\alpha} v -\eta)\le(4c)^{-1} .
				\end{align*}
				  %which is strictly less than $1$ 
                   %But this is a contradiction, and so $z_1>\max_{2\le r\le c} z_r$.
                  
                  % Define $\tilde{\eta}:= \eta-\widetilde{\alpha} v$, so we have $z_r \leq \exp(-\tilde{\eta})$ for all $r\neq 1$.
                Therefore, we get $\max_{2\le r\le c}z_r\le (4c)^{-1}$ and hence $z_1\ge {3}/{4}$.
Moreover, we have
\[
\begin{aligned}
\alpha_1 z_1^{v-1}-\alpha_r z_r^{v-1}
&\ge
\alpha_1\left(\frac34\right)^{v-1}
-\alpha_r\left(\frac1{4c}\right)^{v-1}  \ge
\alpha_1\left[
\left(\frac34\right)^{v-1}
-\left(\frac1{4c}\right)^{v-1}
\right]
-\widetilde{\alpha}\left(\frac1{4c}\right)^{v-1}.
\end{aligned}
\]
Therefore, by \eqref{2},
\[
\frac{z_r}{z_1}
=\exp \Big(  v (\alpha_r z_r^{v-1}-\alpha_1 z_1^{v-1})+(h_r - h_1)\Big)\le
\exp\left(
-v\alpha_1\left[
\left(\frac34\right)^{v-1}
-\left(\frac1{4c}\right)^{v-1}
\right]
+v\widetilde{\alpha}\left(\frac1{4c}\right)^{v-1} -\eta
\right).
\]
Since \(z_1\le1\), this gives for any $r \ge 2$,
\[
z_r
\le
\exp\left(
-\eta
-v\alpha_1\left[
\left(\frac34\right)^{v-1}
-\left(\frac1{4c}\right)^{v-1}
\right]
+v\widetilde{\alpha}\left(\frac1{4c}\right)^{v-1}
\right)= \exp\left(
-\tilde{\eta}
\right),
\]
where
\[\tilde{\eta}:=\eta
+vA_{c,v}\alpha_1
-v\widetilde{\alpha}\left(\frac1{4c}\right)^{v-1}\]
for $A_{c,v}:=
\left(\frac34\right)^{v-1}
-\left(\frac1{4c}\right)^{v-1}>0$.
%Set
%\[
%A_{c,v}:=
%\left(\frac34\right)^{v-1}
%-\left(\frac1{4c}\right)^{v-1}>0.
%\]
%Then
% \[
% z_r\le
% \exp\left(
% -\eta
% -vA_{c,v}\alpha_1
% +v\widetilde{\alpha}\left(\frac1{4c}\right)^{v-1}
% \right),
% \qquad r\ge2.
% \]
% Now we show that the function $\mathcal{H} $ defined in \eqref{simpleoptfunc} is strictly concave on $\left(0,\exp\left(-\tilde{\eta}\right)\right]^{c-1}$. For \(r\ge2\),
% \[
% \frac{\partial^2\mathcal H}{\partial p_r^2}
% =
% v(v-1)
% \left(
% \alpha_rp_r^{v-2}
% +\alpha_1p_1^{v-2}
% \right)
% -\frac1{p_r}
% -\frac1{p_1},
% \]
% and for \(r\ne s\),
% \[
% \left|
% \frac{\partial^2\mathcal H}{\partial p_r\partial p_s}
% \right|
% =
% \left|
% \alpha_1v(v-1)p_1^{v-2}
% -\frac1{p_1}
% \right|.
% \]
Since \(z_1\ge 3/4\) and \(\alpha_r\le \alpha_1+\widetilde{\alpha}\), we get
\[
\frac{\partial^2\mathcal H}{\partial p_r^2}(p_2,\ldots,p_c)
\le
v^2(2\alpha_1+\widetilde{\alpha})
-\frac1{p_r}, \qquad \left|
\frac{\partial^2\mathcal H}{\partial p_r\partial p_s}(p_2,\ldots,p_c)
\right|
\le
\alpha_1v^2+\frac43.
\]
% and
% \[
% \left|
% \frac{\partial^2\mathcal H}{\partial p_r\partial p_s}
% \right|
% \le
% \alpha_1v(v-1)+\frac43.
% \]
Therefore,
\[
\frac{\partial^2\mathcal H}{\partial p_r^2}(p_2,\ldots,p_c)
+
\sum_{s:s\ne r}
\left|
\frac{\partial^2\mathcal H}{\partial p_r\partial p_s}(p_2,\ldots,p_c)
\right|
\le
cv^2\alpha_1
+\widetilde{\alpha}v^2
+\frac{4c}{3}
-\frac1{p_r}.
\]
On the above box, $p_r^{-1}
\ge
\exp\left(
\tilde\eta
\right)$. Since the exponential term in \(\alpha_1\) dominates the linear term in \(\alpha_1\), there exists $\eta_2=\eta_2(c,v,\widetilde{\alpha})$
such that for all \(\eta>\eta_2\),
\[
cv^2\alpha_1
+\widetilde{\alpha}v^2
+\frac{4c}{3}
<
\exp\left(
\tilde\eta
\right)
\]
uniformly over all \(\alpha_1\ge0\).
% Hence
% \[
% \frac{\partial^2\mathcal H}{\partial p_r^2}
% +
% \sum_{s\ne r}
% \left|
% \frac{\partial^2\mathcal H}{\partial p_r\partial p_s}
% \right|
% <0,
% \qquad r=2,\ldots,c.
% \]
Thus the Hessian of \(\mathcal H\) is strictly diagonally dominant with negative diagonal entries, and hence negative definite. Therefore \(\mathcal H\) is strictly concave on the above box, which gives uniqueness of the optimizer. Finally, we take
$\eta\ge\eta_\circ:=\max\{\eta_1,\eta_2\}$.
\item[(iii)]
Notice that \eqref{2} implies that every maximizer lies in
the interior of \(\Delta_{c-1}\). Thus it is enough to show that
\(\mathcal H\) is strictly concave on $\operatorname{int}(\Delta_{c-1})$. Fix $x \in \operatorname{int}(\Delta_{c-1})$. For \(r\in[c]\), define
\[
\gamma_r
:=
\alpha_r v(v-1) x_r^{v-2}
-\frac1{x_r}.
\]
Since \(0<x_r<1\) and $0\le \alpha_r\le \frac1{v(v-1)}$,
we have
$\gamma_r
\le
x_r^{v-2}-{x_r}^{-1}<0$.

Using \eqref{eq:hhh}, the Hessian of \(\mathcal H\) at
\((x_2,\ldots,x_c)\) can be written as
\[
\nabla^2\mathcal H(x_2,\ldots,x_c)
=
\gamma_1 {\boldsymbol 1}{\boldsymbol 1}^{\top}
+
\operatorname{diag}(\gamma_2,\ldots,\gamma_c),
\]
where \({\boldsymbol 1}\in\mathbb R^{c-1}\) is the all-one vector.

Therefore, for every nonzero
\({\boldsymbol u}=(u_2,\ldots,u_c)\in\mathbb R^{c-1}\),
\[
{\boldsymbol u}^{\top}
\nabla^2\mathcal H(x_2,\ldots,x_c)
{\boldsymbol u}
=
\gamma_1\left(\sum_{r=2}^c u_r\right)^2
+
\sum_{r=2}^c \gamma_r u_r^2
<0,
\]
because \(\gamma_r<0\) for every \(r\in[c]\). Hence the Hessian is negative
definite. Thus \(\mathcal H\) is strictly concave on
\(\operatorname{int}(\Delta_{c-1})\), and so the maximizer is unique.

\end{itemize}
\end{proof}

We need the following lemma, the proof of which follows from \cite[Lemma 1.4]{bhattacharya2024ldp}, replacing graphons with $v$-hypergraphons and invoking \cref{bijection}.
% We need the following lemma whose proof follows from \cite[Lemma 1.4]{bhattacharya2024ldp}, replacing graphons with $v$-hypergraphons and revoking \cref{bijection}.
\begin{lmm}\label{lmm1.4ext}  Let  $G_{W_\infty, \phi}$  be as in \cref{def:g2}.
For an arbitrary function $\phi:[c]^v\to \R$, a probability measure $\mu$ on $[c]$, and $\beta\in \R$, let $Z(\beta,\mu)$ be as in
  %   \begin{align*}
  % Z({\beta},\mu):= \sup_{{{\boldsymbol f}}\in \mathcal{F}_c}  \Big\{ \beta G_{W_{\infty},\phi}(\boldsymbol f)- \int_0^1\sum_{r=1}^c f_{r}(u)\log \frac{f_{r}(u)}{\mu_r}du \Big\}    
  %   \end{align*}
 the RHS of \eqref{eq:potts_unconstrained_opt2}. Also, let $\mathscr{F}_{\beta, \mu}\subseteq \mathcal{F}_c$  be the set of maximizers in \eqref{eq:potts_unconstrained_opt2}.
    \\
    \begin{itemize}
        \item[(i)]     If $\big\{ G_{W_{\infty},\phi}(\boldsymbol f): {\boldsymbol f} \in \mathscr{F}_{\beta, \mu}\big\}$ has cardinality one for some $\left(\beta, \mu\right)$, and $Z(., \mu)$ is differentiable at $\beta$, then
 $Z^\prime\left(\beta, \mu\right)= G_{W_{\infty},\phi}({\boldsymbol f}^*)$ for ${\boldsymbol f}^* \in \mathscr{F}_{\beta, \mu}$.
 \item[(ii)] Moreover, if for the same $\left(\beta, \mu\right)$ as above we also have $ Z^\prime\left(\beta, \mu\right)=t$, then $$\underset{\substack{\boldsymbol f \in \mathcal{F}_c \\ { G_{W_{\infty},\phi}(\boldsymbol f)= t}}}{\arginf} \int_0^1\sum_{r=1}^c f_{r}(u)\log \frac{f_{r}(u)}{\mu_r}du=\mathscr{F}_{\beta, \mu}. $$
 \end{itemize}
 \end{lmm}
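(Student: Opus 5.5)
The plan is to run the classical convex-duality argument linking free energy, its derivative and the constrained entropy problem, working in the $\mathcal F_c$ formulation. Write $D(\boldsymbol f):=\int_0^1\sum_r f_r\log(f_r/\mu_r)\,du$ and $G(\boldsymbol f):=G_{W_\infty,\phi}(\boldsymbol f)$, so that $Z(\beta,\mu)=\sup_{\boldsymbol f}\{\beta G(\boldsymbol f)-D(\boldsymbol f)\}$. By \cref{NewLDP}(ii), maximizers exist. We also know $|G(\boldsymbol f)|\le C$ uniformly over $\mathcal F_c$, since $\Phi_{\boldsymbol f}$ is bounded and $W_\infty\in L^1$. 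Hence $Z(\cdot,\mu)$ is a supremum of affine functions of $\beta$ and is therefore finite and convex.

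For part (i), I take any $\boldsymbol f^*\in\mathscr F_{\beta,\mu}$ and any $h\in\R$. Plugging $\boldsymbol f^*$ into the variational problem at $\beta+h$ gives
\[
Z(\beta+h,\mu)\ge (\beta+h)G(\boldsymbol f^*)-D(\boldsymbol f^*)=Z(\beta,\mu)+hG(\boldsymbol f^*).
\]
So $G(\boldsymbol f^*)$ is a subgradient of $Z(\cdot,\mu)$ at $\beta$. Since $Z$ is differentiable at $\beta$, the subdifferential is the singleton $\{Z'(\beta,\mu)\}$, and hence $Z'(\beta,\mu)=G(\boldsymbol f^*)$. This argument does not even need the cardinality-one hypothesis, which only ensures the statement is unambiguous.

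For part (ii), set $t=Z'(\beta,\mu)$. By part (i), every $\boldsymbol f^*\in\mathscr F_{\beta,\mu}$ satisfies $G(\boldsymbol f^*)=t$.

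To show $\mathscr F_{\beta,\mu}$ is contained in the arginf, take any $\boldsymbol g$ with $G(\boldsymbol g)=t$. Maximality of $\boldsymbol f^*$ gives $\beta t-D(\boldsymbol g)\le\beta t-D(\boldsymbol f^*)$, so $D(\boldsymbol f^*)\le D(\boldsymbol g)$. Thus $\boldsymbol f^*$ attains the constrained infimum, and in particular the infimum is finite and attained.

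Conversely, let $\boldsymbol g$ be a constrained minimizer. Then $D(\boldsymbol g)=D(\boldsymbol f^*)$ and $G(\boldsymbol g)=t=G(\boldsymbol f^*)$, so $\beta G(\boldsymbol g)-D(\boldsymbol g)=Z(\beta,\mu)$ and $\boldsymbol g\in\mathscr F_{\beta,\mu}$. This gives equality of the two sets.

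I do not expect a serious obstacle, as the argument is purely variational. The points to check are:
\begin{itemize}
\item finiteness of $Z$, which holds because $D\ge0$ on $\mathcal F_c$ and $D(\mu)=0$, and $G$ is bounded;
\item that everything holds modulo $\lambda$-a.e.\ equality, consistent with the definition of $\mathcal F_c$.
\end{itemize}
Alternatively, one could transfer the statement through \cref{bijection} to $\mathcal P$ and quote \cite[Lemma 1.4]{bhattacharya2024ldp} verbatim, since $T_{W,\phi}(\nu)=G_{W,\phi}(\boldsymbol f^\nu)$ and $D(\nu\mid\nu^*)=D(\boldsymbol f^\nu)$. The direct argument above avoids this dependence.
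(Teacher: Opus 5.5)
Your proof is correct. It differs from the paper mainly in being self-contained rather than delegated to a citation.

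The paper gives no direct argument. It states that the lemma follows from \cite[Lemma 1.4]{bhattacharya2024ldp} by replacing graphons with $v$-hypergraphons and transporting between $\mathcal P$ and $\mathcal F_c$ via \cref{bijection}. That transfer uses $T_{W,\phi}(\nu)=G_{W,\phi}(\boldsymbol f^\nu)$ and $D(\nu\mid\nu^*)=\int_0^1\sum_r f^\nu_r\log(f^\nu_r/\mu_r)\,du$, and it is exactly the alternative you mention at the end.

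Your route stays on $\mathcal F_c$ throughout:
\begin{itemize}
\item $Z(\cdot,\mu)$ is finite and convex, as a supremum of affine functions of $\beta$ with $|G_{W_\infty,\phi}|\le \max|\phi|\,\|W_\infty\|_1$, $D\ge 0$ and $D(\mu)=0$.
\item Each maximizer's value $G_{W_\infty,\phi}(\boldsymbol f^*)$ is a subgradient of $Z(\cdot,\mu)$ at $\beta$, so differentiability forces it to equal $Z'(\beta,\mu)$.
\item Part (ii) is then the two-line comparison of $\beta t - D$ over the level set $\{G_{W_\infty,\phi}=t\}$.
\end{itemize}

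What your version buys is transparency and slightly more generality. It shows the cardinality-one hypothesis is a consequence of differentiability, not an extra assumption. It also spares the reader from checking that the graphon-based proof in the earlier paper carries over to the hypergraphon setting. The paper's citation buys only brevity.

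The one non-elementary input you need is $\mathscr F_{\beta,\mu}\neq\varnothing$. This is already implied by the cardinality-one hypothesis, and it also follows from \cref{NewLDP}(ii).
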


\begin{proof}[Proof of~\cref{cor:Pottsconopt}] 
				\begin{itemize}
				    \item [(i)] With $\mathcal{Z}_{Q_n}(\boldsymbol{\alpha},\boldsymbol{h})$ as in \eqref{logpf},  set $\mathscr Z_{n}(\theta, \boldsymbol h):=\mathcal Z_{Q_n}((\theta, \ldots, \theta), {\boldsymbol h})$, and use \cref{prop:ldp+zn} (ii) to get 
                \begin{align}\label{limzn} 
                &\mathscr{Z}^{(\rm eq)}(\theta, {\boldsymbol h}):=\lim_{n\to\infty}\mathscr{Z}_n(\theta, {\boldsymbol h}) \\
                &=\sup_{{\boldsymbol f}\in \mathcal{F}_c}  \Big\{  \theta\sum_{r=1}^c G_{W_{\infty},\mathbf 1_r}(\boldsymbol f)+\sum_{r=1}^{c}h_r \int_0^1 f_r(u)du
- \int_0^1\sum_{r=1}^c f_{r}(u)\log f_{r}(u)du \Big\}-\log c\nonumber.
\end{align}
Since  $\mathscr Z_n(\cdot, {\boldsymbol h})$ is convex, so is
             $\mathscr{Z}^{(\rm eq)}(\cdot, {\boldsymbol h})$, and hence it is differentiable almost everywhere.
% To begin, use \eqref{eq:potts_unconstrained_opt2} to note that the function ${Z}
% (\beta, \mu)$ is a limit of convex functions, and hence convex. Consequently, so is the function 
% \begin{align}\label{limzn} \mathscr{Z}(\theta, {\boldsymbol h}):= \sup_{{\boldsymbol f}\in \mathcal{F}_c}  \Big\{  \theta\sum_{r=1}^c G_{W_{\infty},\mathbf{1}_r}(\boldsymbol f)+\sum_{r=1}^{c}h_r \int_0^1 f_r(u)du
% - \int_0^1\sum_{r=1}^c f_{r}(u)\log f_{r}(u)du, \Big\}.%\\ &=:Z(\theta,\boldsymbol h).\nonumber
% 		\end{align}
%         \begin{align}\label{limzn} \mathscr{Z}(\theta, {\boldsymbol h}):=\mathcal{Z}_{Q_n}(\theta {\mathbf 1}, {\boldsymbol h})= \sup_{{\boldsymbol f}\in \mathcal{F}_c}  \Big\{  \theta\sum_{r=1}^c G_{W_{\infty},\mathbf{1}_r}(\boldsymbol f)+\sum_{r=1}^{c}h_r \int_0^1 f_r(u)du
% - \int_0^1\sum_{r=1}^c f_{r}(u)\log f_{r}(u)du, \Big\}.%\\ &=:Z(\theta,\boldsymbol h).\nonumber
% 		\end{align}
                 %Invoking \cref{prop:ldp+zn} (ii) we have
                For any  $\theta\geq0$, and $\boldsymbol h$ such that $h_{1} \ge \eta+ \max_{2\le r\le c} h_{r}$ 
               where $\eta=\eta(c,v)$ is as introduced in ~\cref{prop:potts_diffalpha} part (ii), invoking \cref{prop:potts_diffalpha}(ii) gives that $(x_{\theta,\boldsymbol h,1},\ldots,x_{\theta,\boldsymbol h,c})$ is the unique maximizer of \eqref{eq:potts_unconstrained_opt} and $x_{\theta,\boldsymbol h,1}>\max_{2\le r\le c} x_{\theta,\boldsymbol h,r}$. It then follows from applying \cref{lmm1.4ext} part (i) for $\phi=\sum_{r=1}^c\mathbf 1_r$ that for almost every $\theta\geq0$, we have:

% 				 By $\mathcal Z_{n}(\theta, \boldsymbol h)$, we denote $\mathcal Z_{Q_n}(\boldsymbol{\alpha}, {\boldsymbol h})$ when $\boldsymbol{\alpha}=(\theta, \ldots, \theta)$. First notice that for all $n$, $\mathcal Z_n(\theta, {\boldsymbol h})$ is a convex function of $\theta$,
%                 therefore $\mathcal{Z}(\theta, {\boldsymbol h})= \lim_{n \to \infty}\mathcal Z_n(\theta, {\boldsymbol h}) $ is convex and hence it is differentiable almost everywhere as a function of $\theta$ and by \cref{prop:ldp+zn} (ii) we have 
%                 \begin{align}\label{limzn} \mathcal{Z}(\theta, {\boldsymbol h})= \sup_{{\boldsymbol f}\in \mathcal{F}_c}  \Big\{  \theta\sum_{r=1}^c G_{W_{\infty},\mathbf{1}_r}(\boldsymbol f)+\sum_{r=1}^{c}h_r \int_0^1 f_r(u)du
% - \int_0^1\sum_{r=1}^c f_{r}(u)\log f_{r}(u)du \Big\}.%\\ &=:Z(\theta,\boldsymbol h).\nonumber
% 		\end{align}
                
%                 For any  $\theta\geq0$, and $\boldsymbol h$ such that $h_{1} \ge \eta+ \max_{2\le r\le c} h_{r}$ 
%                where $\eta=\eta(c,v)$ is as introduced in ~\cref{prop:potts_diffalpha} part (ii), invoking ~\cref{prop:potts_diffalpha} gives that $(x_{\theta,1},\ldots,x_{\theta,c})$ is the unique maximizer of \eqref{eq:potts_unconstrained_opt}. It then follows from applying \cref{lmm1.4ext} part (i) for $\phi=\sum_{r=1}^c\mathbf 1_r$ that for almost every $\theta\geq0$, we have:
				\begin{equation}\label{eq:impeq1}
				\big(\mathscr{Z}^{(\rm eq)}\big)^\prime\left(\theta, {\boldsymbol h}\right)=\sum_{r=1}^cx_{\theta,\boldsymbol h,r}^v.
				\end{equation}
				The map $g:\theta\mapsto (x_{\theta,1},\ldots ,x_{\theta,c})$ is continuous by~\cref{prop:potts_diffalpha} part (ii). Consequently %the right hand side of~\eqref{eq:impeq} admits a continuous extension on the non-negative half line. This implies
				$\mathscr{Z}^{(\rm eq)}$ is differentiable everywhere as a function of $\theta$  and~\eqref{eq:impeq1} holds for all $\theta\geq 0$. Also, it is easy to check that $g(0)=(\mu_1,\ldots ,\mu_c)$ and $\lim_{\theta\to\infty} g(\theta)=(1,0,\ldots ,0)$. As $\boldsymbol h$ is fixed in this result, we will drop $\boldsymbol h$ from all our notation for the sake of simplicity. By continuity of $\big(\mathscr{Z}^{(\rm eq)}\big)^\prime(\cdot)$ we have $\big[\sum_{r=1}^c \mu_r^v,1 \big)\subseteq \big(\mathscr{Z}^{(\rm eq)}\big)^\prime[0,\infty)$. So, for any $s\in \big[\sum_{r=1}^c \mu_r^v,1\big)$, there exists some $\theta_s$ such that  $\sum_{r=1}^c x_{\theta_s,r}^v=s$. We now claim such a solution is unique (on $[0,\infty)$), and consequently we will refer to it as $\theta_s$. To this effect, it suffices to show that $\big(\mathscr{Z}^{(\rm eq)}\big)^\prime(\cdot)$ is strictly increasing. Equivalently, we will now show that \begin{equation}\label{eq:monotone}
				\theta_1\neq \theta_2\in [0,\infty)\quad \implies \quad \big(\mathscr{Z}^{(\rm eq)}\big)^\prime(\theta_1)\neq\big(\mathscr{Z}^{(\rm eq)}\big)^\prime(\theta_2).
				\end{equation}
				
				First we show that the function $g$ is injective. %$(x_{\theta_1,1},\ldots ,x_{\theta_1,c})\neq (x_{\theta_2,1},\ldots ,x_{\theta_2,c})$. 
				If not, then we would have $(x_{\theta_1,1},\ldots ,x_{\theta_1,c})= (x_{\theta_2,1},\ldots ,x_{\theta_2,c})$ for some $\theta_1<\theta_2$. In particular, $x_{\theta_1,1}=x_{\theta_2,1}$.
                On the other hand,  by~\eqref{2}, we have
				\begin{align}\label{contrd}
				    x_{\theta_1,1}=\frac{e^{v \theta_1x_{\theta_1,1}^{v-1}+h_1}}{\sum_{r=1}^c e^{v\theta_1 x_{\theta_1,r}^{v-1}+h_r}}, \qquad x_{\theta_2,1}=\frac{e^{v \theta_2x_{\theta_2,1}^{v-1}+h_1}}{\sum_{r=1}^c e^{v\theta_2 x_{\theta_2,r}^{v-1}+h_r}}.
				\end{align}
				 Moreover, one can check that the map 
				$$\theta\mapsto \frac{\exp(v\theta z_1^{v-1}+h_1)}{\sum_{r=1}^c \exp(v\theta z_r^{v-1}+h_r)},$$
				is strictly increasing in $\theta\in [0,\infty)$, provided $z_1>\max_{2\le r\le c} z_r$, for any ${\boldsymbol h}=(h_1,\ldots ,h_c)$.  Therefore, \eqref{contrd} with $z_r=x_{\theta_1,r}=x_{\theta_2,r}$ for $r\in [c]$ yield  $$x_{\theta_1,1}=\frac{\exp(v\theta_1 z_1^{v-1}+h_1)}{\sum_{r=1}^c \exp(v\theta_1 z_r^{v-1}+h_r)}<\frac{\exp(v\theta_2 z_1^{v-1}+h_1)}{\sum_{r=1}^c \exp(v\theta_2 z_r^{v-1}+h_r)}=x_{\theta_2,1}$$ which is a contradiction. So $g$ is an injective function. To  establish~\eqref{eq:monotone} by contradiction, assume $\big(\mathscr{Z}^{(\rm eq)}\big)^\prime(\theta_1)=\sum_{r=1}^c x_{\theta_1,r}^v=\big(\mathscr{Z}^{(\rm eq)}\big)^\prime(\theta_2)=\sum_{r=1}^c x_{\theta_2,r}^v$ for some $\theta_1\ne \theta_2$. By the optimality and uniqueness of $(x_{\theta_1},\ldots ,x_{\theta_1,c})$, coupled with the fact that $(x_{\theta_1,1},\ldots ,x_{\theta_1,c})\neq (x_{\theta_2,1},\ldots ,x_{\theta_2,c})$, we have
				\begin{align*}
				&\theta_1\sum_{r=1}^c x_{\theta_1,r}^v+\sum_{r=1}^c (h_r-\log{(x_{\theta_1,r})}) x_{\theta_1,r}> \theta_1\sum_{r=1}^c x_{\theta_2,r}^v+\sum_{r=1}^c (h_r-\log{(x_{\theta_2,r})}) x_{\theta_2,r}\\ \Rightarrow &\sum_{r=1}^c (h_r-\log{(x_{\theta_1,r})}) x_{\theta_1,r} > \sum_{r=1}^c (h_r-\log{(x_{\theta_2,r})}) x_{\theta_2,r}.  
				\end{align*}
				By switching the roles of $\theta_1$ and $\theta_2$ in the above argument, we get the reverse inequality, which immediately yields a contradiction. This establishes~\eqref{eq:monotone}.
				So for any $s \in \big[\sum_{r=1}^c \mu_r^v,1 \big)$, there exists a unique $\theta_s$ where $s=\big(\mathscr{Z}^{(\rm eq)}\big)^\prime(\theta_s)=\sum_{r=1}^cx_{{\theta}_s,r}^v$, and using \cref{lmm1.4ext} part (ii) we are done.

% 				Now, assume $ t>1.$ With ${\boldsymbol G}_{W_\infty}(\boldsymbol f)$ as in~\cref{def:g2}, consider any function $\boldsymbol f\in \mathcal{F}_c$ such that $\sum_{r=1}^c G_{W_{\infty},\mathbf{1}_r}(\boldsymbol f)= { t}$.
                
%                 Since $\mathcal{V}[W_\infty](x)=1$ for $\lambda$-a.e. $x\in [0,1]$, we have
%                 \begin{align*}
%                     &1<t=\sum_{r=1}^c G_{W_{\infty},\mathbf{1}_r}(\boldsymbol f)= \sum_{r=1}^c \E_{(Y_1,\ldots ,Y_v)\sim W_\infty}\bigg[\prod_{i=1}^v f_r(Y_i)\bigg]\\
%                     & \leq \sum_{r=1}^c \int_{[0,1]}  f_r^v(x)\,dx \leq \sum_{r=1}^c \int_{[0,1]}  f_r(x)\,dx =1, 
%                 \end{align*}
%                 which is a contradiction. The first $\leq$ above is justified by applying generalized H\"older's inequality.
%                 So this ${\boldsymbol f}$ does not exist and hence $I_{W_\infty,(1,\ldots,1)}({ t})=\infty.$
% A similar proof works for other rate functions discussed in \cref{OPT}, so we do not repeat.

				 \item [(ii)]
                % The proof follows along similar lines  as in part (i) on invoking~\cref{prop:potts_gibbs_ldp} part (ii). 

    %             Since for $\theta> \theta_{\rm crit}$ the maximizer in the \eqref{limzn} where $z_1 \geq \max_{2\le r\le c} z_r$ is unique, following the similar steps as in the proof of part (i), we have for every $\theta>\theta_{\rm crit}$, $\mathscr{Z}'(\theta)=\sum_{r=1}^c x_{\theta,r}^v$,   
				% and for any $\theta_{\rm crit}^+>\theta_{\rm crit}$, we have  $\mathscr{Z}'[\theta_{\rm crit}^+,\infty)=\big[\mathscr{Z}^\prime(\theta_{\rm crit}^+),1\big)$. So, for any $s\in \big[\mathscr{Z}^\prime(\theta_{\rm crit}^+),1\big)$, there exists some $\theta_s$ such that  $\sum_{r=1}^c x_{\theta_s,r}^v=s$.
    %             % The only difference is that in this case we use~\cref{prop:potts_gibbs_ldp} part (iv) to note the symmetry of the  $c$ distinct optimizers, and conclude that the quantity $\sum_{r=1}^c z_r^v$ is invariant on the set of optimizers. Thus \cite[Lemma 1.4]{bhattacharya2024ldp} is applicable, giving $$Z'(\theta)=\sum_{r=1}^c x_{\theta,r}^v.$$ 
    %             The rest of the proof is identical to part (i), which we omit for brevity. 
               
             The proof follows along similar lines  as in part (i) on invoking~\cref{prop:potts_gibbs_ldp} part (ii). For every \(\theta>\theta_{\rm crit}\), \cref{prop:potts_gibbs_ldp} (ii) implies that the maximizers are precisely the \(c\) permutations of a vector
\[
x_\theta=(x_{\theta,1},\ldots,x_{\theta,c}),
\qquad x_{\theta,1}>\max_{2\le r\le c}x_{\theta,r},
\]
and that the map \(\theta\mapsto x_\theta\) is continuous on \((\theta_{\rm crit},\infty)\). Although the optimizer is not unique, all maximizers are permutations of one another, so the value of
$\sum_{r=1}^c x_{\theta,r}^v$
is the same for every maximizer. Therefore \cref{lmm1.4ext} part (i), applied with \(\phi=\sum_{r=1}^c{\bf 1}_r\), gives, exactly as in part (i),
\[
\big(\mathscr{Z}^{(\rm eq)}\big)^\prime\left(\theta\right)=\sum_{r=1}^cx_{\theta,r}^v,
\qquad \theta>\theta_{\rm crit}.
\]
The continuity of \(g:\theta\mapsto x_\theta\) upgrades the a.e. identity to every \(\theta>\theta_{\rm crit}\).

Repeating the argument from part (i), we get that \(\theta\mapsto \sum_{r=1}^c x_{\theta,r}^v\) is strictly increasing on \((\theta_{\rm crit},\infty)\).

By continuity and the fact that
$\lim_{\theta\to\infty}\sum_{r=1}^c x_{\theta,r}^v=1$,
 \(\theta\mapsto \sum_{r=1}^c x_{\theta,r}^v\) sends \((\theta_{\rm crit},\infty)\) bijectively onto \((y,1)\), where
\[
y:=\lim_{\theta\downarrow\theta_{\rm crit}}\sum_{r=1}^c x_{\theta,r}^v.
\]
Thus, for every \(t\in(y,1)\), there is a unique \(\theta_t>\theta_{\rm crit}\) such that
$\sum_{r=1}^c x_{\theta_t,r}^v=t$.
Applying both parts of \cref{lmm1.4ext} proves the claim. \end{itemize}
			\end{proof}
                  \begin{proof}[Proof of \cref{c2-1}]

Notice that if $\alpha_1=\alpha_2=0$, the whole statement holds trivially, so we assume $\alpha_1+\alpha_2>0$.
   \begin{itemize}
    \item [(i)]
Using \cref{prop:propotts} (i) and (iii), we get that the maximizers of \eqref{eq:potts_unconstrained_opt} are constants of the form $(f_1,f_2)\equiv(s,1-s)$ where \[s=\frac{\exp (2s(\alpha_1+\alpha_2)+2p^*)}{\exp (2s(\alpha_1+\alpha_2)+2p^*) +1}\] and $p^*:= \frac{1}{2}\log(\frac{p_1}{p_2})-\alpha_2$. Assume without loss of generality, $\alpha_1 + \log p_1> \alpha_2+\log p_2$. One can easily check that we must have $s\geq \frac{1}{2}$ (otherwise, the value of \eqref{eq:potts_unconstrained_opt} for $(1-s,s)$ is larger than $(s,1-s)$ which contradicts our optimality assumption). Now if $t:=2s-1$, we have $t\in [0,1]$ and,
$$t=\tanh\bigg(t \left(\frac{\alpha_1+\alpha_2}{2}\right)+ \frac{\alpha_1-\alpha_2+\log p_1-\log p_2}{2}\bigg).$$
     Now the function $x \mapsto \tanh(Ax+B)$ with $A,B>0$ has exactly one fixed point in $[0,1]$, so $t$ and hence $s$ is unique. Moreover, notice that $t>0$ and therefore $s>1/2$. The argument for the case $\alpha_1 + \log p_1< \alpha_2+\log p_2$ is similar, so we omit this.

    \item[(ii)] Arguing similarly to the previous part, we need to find the fixed points of the function $t \mapsto \tanh(t(\frac{\alpha_1+\alpha_2}{2}))$. It is easy to see that if $\frac{\alpha_1+\alpha_2}{2}\leq 1$, the only fixed point is $t=0$, so $(1/2,1/2)$ is the only optimizer of \eqref{eq:potts_unconstrained_opt}. If $\frac{\alpha_1+\alpha_2}{2}> 1$, then we have three fixed points in the form $-r,0,r$, for some $r \in (0,1)$. So $s$ equals $\frac{1-r}{2}, \frac{1}{2}, \frac{1+r}{2}$. We can also check that the second derivative of the function in \eqref{eq:potts_unconstrained_opt} is positive for $s=\frac{1}{2}$, so it is indeed a minimum. The maximizers of \eqref{eq:potts_unconstrained_opt} are $(\frac{1-r}{2},\frac{1+r}{2})$ and $(\frac{1+r}{2},\frac{1-r}{2})$.
    % (ii) It follows from two previous parts. 
    \end{itemize}
\end{proof}
\begin{proof}[Proof of \cref{hmm?}]
% The proof follows exactly the same argument as in the proof of
% \cref{cor:Pottsconopt}, part~(i), specialized to the case \(c=v=2\).
Using \cref{prop:propotts} part (iii), the optimization problem in \eqref{eq:potts_unconstrained_opt} reduces to 
\[\mathscr{Z}^{(\rm eq)}(\theta,p):=\sup_{q\in [0,1]}\left\{ \theta q^2+\theta (1-q)^2-q\log\frac{q}{p}-(1-q)\log\frac{1-q}{1-p}\right\}.\]
Using \cref{c2-1} part (i) together with
\(\alpha_1=\alpha_2\ge0\) and \(p>1/2\), we conclude uniqueness of the optimizer of the above problem, which is of the form $q_\theta$ with $q_\theta>\frac{1}{2}$,
% \[
% (s_\theta,1-s_\theta),
% \qquad s_\theta>\frac12 ,
% \]
where $\theta:=\alpha_1=\alpha_2$.
Therefore, using an argument similar to the proof of \cref{cor:Pottsconopt} part (i), we obtain that the map
$\theta\mapsto \big(\mathscr{Z}^{(\rm eq)}\big)^\prime(\theta,p)$
is continuous and strictly increasing on \([0,\infty)\).
%where $ \mathscr{Z}^{(2)}(\theta)$ is defined in \eqref{limzn}, and simplifies to
% \[ \mathscr{Z}^{(2)}(\theta)= \sup_{{\boldsymbol f}\in \mathcal{F}_2}  \Big\{  \theta G_{W_{\infty},\mathbf 1_1}(\boldsymbol f)+\theta G_{W_{\infty},\mathbf 1_2}(\boldsymbol f)-\int_0^1 \big(f_{1}(u)\log \frac{f_{1}(u)}{p}+ f_{2}(u)\log \frac{f_{2}(u)}{1-p}\big)du \Big\}.\]
Moreover, it is not hard to check that
\[
 \big(\mathscr{Z}^{(\rm eq)}\big)^\prime(0,p)=p^2+(1-p)^2,
\qquad
\lim_{\theta\to\infty} \big(\mathscr{Z}^{(\rm eq)}\big)^\prime(\theta,p)=1.
\]
Hence, for every
$y\in[p^2+(1-p)^2,1)$,
there exists a unique \(q_y>1/2\) satisfying
$q^2+(1-q)^2=y$.
Applying \cref{lmm1.4ext} part (ii) (similar to the proof of
\cref{cor:Pottsconopt} part (i)) yields
\[
I_{W_\infty,(1,1)}(y)
=
q_y\log\frac{q_y}{p}
+
(1-q_y)\log\frac{1-q_y}{1-p},
\]
which completes the proof.

\end{proof}  % We will use the following lemma to simplify the function $I(\cdot)$ (see \eqref{eq:i2}). Its proof follows directly from \cite[Lemma 1.4 and Corollary 1.5]{bhattacharya2024ldp}. 

% \begin{lmm}\label{lem:simrate}
%     Suppose the assumptions from \cref{unc} hold. Recall the definitions of $Z(\cdot)$, $G_W(\cdot)$, and $\beta(\cdot)$, $\gamma(\cdot)$ from \eqref{eq:opt_uncon}, \eqref{eq:quadG} and \ref{def:tilt} respectively. Define $F_{\theta} \subseteq \mathcal{L}$ to be the set where the supremum of \eqref{eq:opt_uncon} is achieved. Suppose $\theta\ge 0$ is such that the set $G_W(F_{\theta})$ is a singleton and $Z(\cdot)$ is differentiable at $\theta$. Then the following conclusions hold:
%     \begin{enumerate}
%         \item[(a)] $Z'(\theta)=G_W(F_{\theta})$.
%         \item[(ii)] Suppose $Z'(\theta)=t$. Then, $I(t)=\int_0^1 \gamma(\beta(f(x)))\,dx$ for any $f\in F_{\theta}$. In particular, the right hand side of the above is the same for any $f\in F_{\theta}$       
%     \end{enumerate}
% \end{lmm}
We need two preparatory lemmas for proving~\cref{unc}.
\begin{lmm}\label{smdef}
Let \(p\in(0,1)\) and \(\theta\ge 0\). Define
\[
\mv_{\theta,p}(x)
:=
\theta x^2
-x\log\frac{x}{p}
-(1-x)\log\frac{1-x}{1-p},
\qquad x\in[0,1],
\]
where here and throughout, we use the convention $0\log 0:=0$.

If $(\theta,p) \in \Omega$, where
 \begin{align}\label{defomega}
\Omega
:=
([0,\infty)\times(0,1))
\setminus
\left\{
\left(\log\frac{1-p}{p},p\right):
0<p<\frac1{1+e^2}
\right\},\end{align}

then \(\mv_{\theta,p}\) has a unique maximizer on \([0,1]\), denoted by $\sm$. Consequently, the map $(\theta,p)\mapsto \sm$ is well-defined on $\Omega$.
\end{lmm}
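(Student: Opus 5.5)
The plan is to reduce $\mv_{\theta,p}$ to a one-dimensional Curie--Weiss free energy with an external field, and then use the fixed-point analysis already carried out in the proof of \cref{c2-1}.

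\textbf{Step 1: maximizers are interior.} First observe that
\[
\mv_{\theta,p}'(x)=2\theta x-\log\frac{x}{p}+\log\frac{1-x}{1-p},
\]
which tends to $+\infty$ as $x\downarrow0$ and to $-\infty$ as $x\uparrow1$. Since $\mv_{\theta,p}$ is continuous on the compact interval $[0,1]$, every maximizer lies in $(0,1)$ and is a critical point.

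\textbf{Step 2: change of variables.} Put $t=2x-1\in(-1,1)$. Expand $\theta x^2=\tfrac\theta4(1+t)^2$ and separate the linear part $x\log p+(1-x)\log(1-p)$ from the entropy. This gives
\[
\mv_{\theta,p}\Big(\frac{1+t}{2}\Big)=C+\frac{\theta}{4}t^2+Bt-I(t),
\]
where
\[
B:=\frac12\Big(\theta+\log\frac{p}{1-p}\Big),\qquad I(t):=\frac{1+t}{2}\log(1+t)+\frac{1-t}{2}\log(1-t),
\]
and $C$ is a constant. The function $I$ is even and strictly convex. The critical point equation then becomes
\[
t=\tanh\big(\tfrac{\theta}{2}t+B\big).
\]

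\textbf{Step 3: case $B\neq0$.} Say $B>0$; the case $B<0$ is symmetric. Replacing $t$ by $|t|$ does not decrease the objective, since the only odd term is $Bt$. It strictly increases it when $t<0$. Hence every maximizer satisfies $t>0$, the value $t=0$ being excluded because $t=0$ is not a fixed point when $B\neq0$. On $[0,1]$ the map $t\mapsto\tanh(\frac\theta2 t+B)-t$ is strictly concave, because $\tanh$ is concave on $[0,\infty)$ and the argument is positive there. This map is positive at $0$ and negative at $1$. So it has exactly one zero in $(0,1)$, and the maximizer is unique.

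\textbf{Step 4: case $B=0$.} Now the objective is even in $t$. If $\theta/2\le1$, the only solution of $t=\tanh(\frac\theta2 t)$ is $t=0$, so the maximizer is unique. If $\theta/2>1$, there are three fixed points $-r,0,r$. The second derivative of the objective at $t=0$ is $\frac\theta2-1>0$, so $t=0$ is a local minimum. Therefore $\pm r$ are two distinct maximizers, as in \cref{c2-1}(ii).

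\textbf{Step 5: identifying the exceptional set.} Non-uniqueness occurs exactly when $B=0$ and $\theta>2$. The condition $B=0$ is equivalent to $\theta=\log\frac{1-p}{p}$. Given this, $\theta>2$ is equivalent to $\frac{1-p}{p}>e^2$, that is, $p<\frac1{1+e^2}$. This is precisely the set removed in \eqref{defomega}, so uniqueness holds on $\Omega$. The identity $(\theta,p)\mapsto\sm$ is then well-defined on $\Omega$.

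The only step needing care is the concavity and sign argument in the case $B\neq0$. I expect this to be the main (though mild) obstacle; everything else is routine.
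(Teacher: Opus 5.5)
Your proof is correct, and its core is the same as the paper's. The odd/even split of the objective (your term $Bt$, the paper's identity $F(m)-F(-m)=2am$ with $a=B$) confines every maximizer to one half-interval. A concavity argument then gives a unique critical point there. The paper phrases that second step through $F'(m)=\frac{\theta}{2}m+a-\operatorname{artanh}(m)$, which is concave on $[0,1)$ because $F'''<0$. This is the same fact as your concavity of $t\mapsto\tanh(\frac{\theta}{2}t+B)-t$, just stated before inverting $\tanh$.

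The genuine difference is the case split.

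\textbf{The paper splits on $\theta$.} For $\theta\le 2$ it notes that $\mv_{\theta,p}''(x)=2\theta-\frac{1}{x(1-x)}\le 0$, with equality only at $(x,\theta)=(\frac12,2)$. Strict concavity then gives uniqueness with no fixed-point analysis, and this silently covers the case $B=0$, $\theta\le 2$. Only for $\theta>2$ does it use membership in $\Omega$ to get $a\neq 0$ and run the symmetry argument.

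\textbf{You split on $B$.} This makes your asymmetric argument uniform in $\theta\ge 0$ and turns $B=0$ into the Curie--Weiss pitchfork. It is slightly more work, but it also gives the converse: uniqueness fails exactly on the removed set, so $\Omega$ is sharp. The paper's proof of this lemma does not establish that; it obtains it only later, through \cref{c2-1}(ii), in \cref{lem:rootlim}(ii).

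One small imprecision: at $\theta=0$ the map $g(t)=\tanh(\frac{\theta}{2}t+B)-t$ is affine, not strictly concave. Nothing breaks, because plain concavity together with $g(0)>0>g(1)$ already forces a unique zero. If $0<t_1<t_2$ were both zeros, concavity would give $g(t_1)\ge (1-t_1/t_2)\,g(0)>0$, a contradiction.
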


% \begin{lmm}\label{lem:rootlim}
% Consider the function $h(\theta,B)=\tm$ for $\theta\ge 0$ and $B\in\R$. Then
% \begin{itemize}
% %\item[(a)] For fixed $B\in\R$, the map $\theta\mapsto h(\theta,B)$ is continuous on $[0,\infty)$.
% \item[(a)] The function $h(\cdot,\cdot)$ is jointly continuous in the region $$\Omega:=\{(\theta,B):\ B=0,\ \theta\le (\alpha''(0))^{-1}/2\}\cup \{(\theta,B):\ B\neq 0,\ \theta\ge 0\}.$$
% \item[(ii)] Fix  $\theta_{\infty}>(\alpha''(0))^{-1}/2$. Then we have:
% 			$$\lim_{\theta\to\theta_{\infty},\ B\to 0^{-}} h(\theta,B)=-t_{\theta_{\infty},0,\mu},\quad \lim_{\theta\to\theta_{\infty},\ B\to 0^{+}} h(\theta,B)=t_{\theta_{\infty},0,\mu}.$$
%    In particular, $h(.,.)$ is not continuous on the closure of $\Omega$.
% \end{itemize}
% \end{lmm}
\begin{lmm}\label{lem:rootlim}
Consider the function $(\theta,p)\mapsto \sm$ on the region $\Omega$ defined as $\eqref{defomega}$. Then
\begin{itemize}
%\item[(a)] For fixed $B\in\R$, the map $\theta\mapsto h(\theta,B)$ is continuous on $[0,\infty)$.
\item[(i)] The function $s_{(\cdot,\cdot)}$ is jointly continuous in $\Omega$.
\item[(ii)] Fix  $0<p<\frac{1}{1+e^2}$. Then we have
			$$\lim_{ \theta\to \big(\log\frac{1-p}{p}\big)^{-}} s_{\theta,p}=\frac{1-t_{\log\frac{1-p}{p},p}}{2},\quad \lim_{ \theta\to \big(\log\frac{1-p}{p}\big)^{+}} s_{\theta,p}=\frac{1+t_{\log\frac{1-p}{p},p}}{2},$$
            where $t_{\log\frac{1-p}{p},p}$ is defined as the unique positive solution of $x=\tanh(\frac{x}{2}\log\frac{1-p}{p})$.
   In particular, $s_{(.,.)}$ does not admit a continuous extension to the closure of $\Omega$.
   \item[(iii)] One has $$\lim_{ p\to \big(\frac{1}{1+e^2}\big)^{+}} s_{\log\frac{1-p}{p},p}=1/2,\quad\lim_{ p\to \big(\frac{1}{2}\big)^{-}} s_{\log\frac{1-p}{p},p}=1/2.$$
   
 \item[(iv)]Fixing $p\in(0,1)$, one has $$\lim_{ \theta\to 0^{+}} s_{\theta,p}=s_{0,p}=p,\quad\lim_{  \theta\to \infty} s_{\theta,p}=1.$$
 
 \item [(v)]Fix $p \in (0,1)$. Assume that for every $\theta$ in an interval $J\subseteq\mathbb{R}$, $\mv_{\theta,p}$ admits a unique maximizer, denoted by $\sm$.
Then the map $\theta\mapsto \sm$ is strictly increasing on $J$.
\end{itemize}
\end{lmm}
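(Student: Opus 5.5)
The plan rests on one reparametrization of the first-order condition. For $x\in(0,1)$,
\[
\mv_{\theta,p}'(x)=2\theta x-\log\frac{x}{p}+\log\frac{1-x}{1-p}.
\]
This tends to $+\infty$ as $x\downarrow 0$ and to $-\infty$ as $x\uparrow1$, so every maximizer is interior. Substituting $t=2x-1$, the stationarity equation becomes
\[
t=\tanh(At+B),\qquad A:=\tfrac{\theta}{2},\qquad B:=\tfrac12\Big(\theta-\log\tfrac{1-p}{p}\Big).
\]
A second elementary identity compares the points $x=\tfrac{1+t}{2}$ and $1-x=\tfrac{1-t}{2}$; the entropy parts cancel and
\[
\mv_{\theta,p}\big(\tfrac{1+t}{2}\big)-\mv_{\theta,p}\big(\tfrac{1-t}{2}\big)=t\Big(\theta+\log\tfrac{p}{1-p}\Big)=2Bt .
\]
Consequently every maximizer has $t$ of the same sign as $B$ whenever $B\neq0$. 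The excluded set in \eqref{defomega} is exactly $\{B=0,\ A>1\}$, since $\theta=\log\frac{1-p}{p}>2$ is equivalent to $p<\frac1{1+e^2}$. I will use these two facts throughout.

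For (i), $(\theta,p,x)\mapsto\mv_{\theta,p}(x)$ is jointly continuous on $[0,\infty)\times(0,1)\times[0,1]$ (with $0\log0=0$), and $[0,1]$ is compact. Berge's maximum theorem then gives upper semicontinuity of the argmax correspondence, and on $\Omega$ this correspondence is single-valued, so $s_{(\cdot,\cdot)}$ is continuous there.

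For (ii), fix $p<\frac1{1+e^2}$ and set $\theta_0=\log\frac{1-p}{p}>2$. Upper semicontinuity of the argmax shows that every limit point of $s_{\theta,p}$ as $\theta\to\theta_0$ is a maximizer of $\mv_{\theta_0,p}$. At $\theta_0$ we have $B=0$, so the maximizers correspond to fixed points of $t\mapsto\tanh(\theta_0 t/2)$, namely $0$ and $\pm t_{\theta_0,p}$. The point $t=0$ is excluded because $\mv''_{\theta_0,p}(1/2)=2\theta_0-4>0$, and both $\pm t_{\theta_0,p}$ are maximizers by the symmetry identity with $B=0$. For $\theta<\theta_0$ we have $B<0$, so $s_{\theta,p}<1/2$ and the limit must be $\frac{1-t_{\theta_0,p}}2$; symmetrically, $\theta>\theta_0$ gives $\frac{1+t_{\theta_0,p}}2$. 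I expect this step to be the main obstacle, since it is the only one that requires both identifying the maximizer set at a degenerate point and using the sign identity to pick a branch. The failure of a continuous extension follows because the two one-sided limits differ.

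For (iii), when $p\in[\frac1{1+e^2},\frac12)$ we have $B=0$ and $A=\theta_0/2\le1$. The map $t\mapsto\tanh(At)$ then has $0$ as its only fixed point, so $s_{\theta_0,p}=1/2$ identically on this range, and both limits are trivial. For (iv), at $\theta=0$ the function $-\mathrm{KL}$ is uniquely maximized at $x=p$; continuity at $0^+$ follows from (i). As $\theta\to\infty$, comparing with $x=1$ gives $\theta(1-s^2)\le \log 2+|\log p|+|\log(1-p)|$, which forces $s_{\theta,p}\to1$.

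For (v), take $\theta_1<\theta_2$ in $J$ with maximizers $s_1,s_2$. Adding the two optimality inequalities gives $(\theta_2-\theta_1)(s_2^2-s_1^2)\ge0$, hence $s_2\ge s_1$. If $s_1=s_2=s$, then the two stationarity equations differ by $2(\theta_2-\theta_1)s\neq0$, because $s$ is interior and so $s>0$. This contradiction gives strict monotonicity.
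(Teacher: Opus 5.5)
Your proof is correct and follows essentially the same route as the paper: upper semicontinuity of the argmax (the paper's preliminary ``general fact''), the sign selection $s_{\theta,p}\gtrless 1/2$ to pick the branch in (ii), the comparison with $x=1$ in (iv), and the exchange argument plus first-order condition in (v). The differences are minor: you derive the maximizer structure directly from $t=\tanh(At+B)$ and the symmetry identity rather than citing \cref{c2-1}, and in (iii) you note that $s_{\log\frac{1-p}{p},p}\equiv 1/2$ on all of $[\frac{1}{1+e^2},\frac12)$, so both limits are immediate without the paper's compactness argument at the boundary points.
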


\begin{proof}[Proof of \cref{unc}]

\begin{itemize}
%     \item [(i)]The assumption $p\ge \frac{1}{1+e^2}$ is equivalent to $ \log \frac{1-p}{p} \le 2$. Using \cref{c2-1}, \eqref{eq:potts_unconstrained_opt} has a unique maximizer called $(s_{\theta,p},1-s_{\theta,p})$. Using \cref{lem:rootlim}, and noticing $s_{0,p}=p, \lim_{\theta \to \infty}s_{\theta,p}=1$, we get that $\sm$ maps $[0,\infty)$ to $[p,1)$ injectively.  

% Now using \cref{lmm1.4ext} for $\mathscr{Z}$ as in \eqref{limzn}, we conclude that $\mathscr{Z}^\prime(\theta,(\log[(1-p)^{-1}-1],0))=\sm^2$ for almost all $\theta\ge 0$, and using continuity of $\sm$, $\mathscr{Z}^\prime(\theta,(\log[(1-p)^{-1}-1],0))=\sm^2$ for all $\theta\ge 0$.
% Again using \cref{lmm1.4ext} with $\phi=\mathbf 1_1$, we can simplify the good rate function as desired. 

% \item[(ii)]\begin{itemize}
%      \item[(a)] Fix $p<\frac{1}{1+e^2}$. For $\theta\neq \log \frac{1-p}{p}$, using \cref{c2-1}, \eqref{eq:potts_unconstrained_opt} has a unique maximizer called $(s_{\theta,p},1-s_{\theta,p})$. Now consider $\sm$ as a function of $\theta$. Using \cref{lmm1.4ext}, we can get that $\sm$ maps  $[0,\log \frac{1-p}{p})$ to $[p, 1-s_{\log \frac{1-p}{p},p})$ and $(\log \frac{1-p}{p},\infty)$ to $(s_{\log \frac{1-p}{p},p},1)$ injectively. Notice that from \cref{lem:rootlim} we infer that $\eta:=s_{\log \frac{1-p}{p},p}>1/2$. Similar to part (i) above, we can simplify the good rate function. 
\begin{itemize}
\item[(i)]

Using \cref{prop:propotts} part (iii), the optimization problem in \eqref{eq:potts_unconstrained_opt} reduces to 
\begin{align}\label{zun}
\mathscr{Z}^{(\rm un)}(\alpha_1,p):=\sup_{s\in [0,1]}\left\{ \alpha_1 s^2-s\log\frac{s}{p}-(1-s)\log\frac{1-s}{1-p}\right\}.    
\end{align}
Since \(p\ge (1+e^2)^{-1}\), equivalently
\(\log\frac{1-p}{p}\le 2\), by invoking both parts of \cref{c2-1}, the optimization problem
above admits a unique maximizer
$s_{\alpha_1,p}$ for every \(\alpha_1\ge0\). By \cref{lem:rootlim} part (v) for $J=[0,\infty)$, the function \(\alpha_1\mapsto s_{\alpha_1,p}\) is strictly increasing. Also by \cref{lem:rootlim} part (iv) we have
\(s_{0,p}=p\) and \(s_{\alpha_1,p}\to1\) as \(\alpha_1\to\infty\). Hence
\(\alpha_1\mapsto s_{\alpha_1,p}\) is a bijection from \([0,\infty)\) onto
\([p,1)\).

% Now define \reihaneh{! zeq?}
% \[
% \mathscr Z^{(\rm un)}(\alpha_1,p)
% :=
% \sup_{{\boldsymbol f}=(f,1-f)}
% \left\{
% \alpha_1 G_{W_\infty,\mathbf 1_1}({\boldsymbol f})
% -
% \left(s\log\frac{s}{p}
% +
% (1-s\log\frac{1-s}{1-p}\right)
% \right\},
% \]
% (see \eqref{def1k}).
Applying \cref{lmm1.4ext} part (i), we obtain
\[
(\mathscr Z^{(\rm un)})'(\alpha_1,p)=s_{\alpha_1,p}^2
\]
for a.e.\ \(\alpha_1\ge0\); continuity of \(s_{\alpha_1,p}\) from
\cref{lem:rootlim} part (i) extends this identity to all \(\alpha_1\ge0\).

Applying \cref{lmm1.4ext} part (ii) with $\phi=\mathbf 1_1$, and writing \(y=s_{\alpha_1,p}^2\) gives
\[
I_{W_\infty,(1,0)}(y)
=
\sqrt y\log\frac{\sqrt y}{p}
+
(1-\sqrt y)\log\frac{1-\sqrt y}{1-p},
\qquad
y\in[p^2,1).
\]

\item[(ii)(a)]
Fix \(p<(1+e^2)^{-1}\). By \cref{c2-1} part (i), for every
\(\alpha_1\neq\log\frac{1-p}{p}\), the optimization problem
\eqref{zun} has a unique maximizer
\(s_{\alpha_1,p}\).
By \cref{lem:rootlim} part (v) on the intervals \([0,\log\frac{1-p}{p})\) and \((\log\frac{1-p}{p},\infty)\), we get that \(\alpha_1\mapsto s_{\alpha_1,p}\) is strictly increasing on both intervals separately. Moreover, by
\cref{lem:rootlim} parts (ii) and (iv),
\[
s_{0,p}=p,
\qquad
\lim_{\alpha_1\uparrow\log\frac{1-p}{p}}s_{\alpha_1,p}=1-\eta,
\qquad
\lim_{\alpha_1\downarrow\log\frac{1-p}{p}}s_{\alpha_1,p}=\eta,
\]
where
\[
\eta:=\frac{1+t_{\log\frac{1-p}{p},p}}2>\frac12.
\]

Therefore \(\alpha_1\mapsto s_{\alpha_1,p}\) maps
\([0,\log\frac{1-p}{p})\) bijectively onto \([p,1-\eta)\), and
\((\log\frac{1-p}{p},\infty)\) bijectively onto \((\eta,1)\).

Again applying \cref{lmm1.4ext} part (i), \((\mathscr Z^{(\rm un)})'(\alpha_1,p)=s_{\alpha_1,p}^2\), and applying
\cref{lmm1.4ext} part (ii) with $\phi=\mathbf 1_1$, and writing \(y=s_{\alpha_1,p}^2\) yields
\[
I_{W_\infty,(1,0)}(y)
=
\sqrt y\log\frac{\sqrt y}{p}
+
(1-\sqrt y)\log\frac{1-\sqrt y}{1-p}
\]
for
\[
y\in[p^2,(1-\eta)^2)\cup(\eta^2,1).
\]

\item[(ii)(b)]
 For $\delta> 0$, consider the function $W_{(\delta)}:[0,1]^2\to (0,\infty)$ given by
				$$W_{(\delta)}(x,y):=\begin{cases} \frac{4}{2+\delta} & \mbox{if}\ (x,y)\in \left[0,\frac{1}{2}\right]^2\cup \left[\frac{1}{2},1\right]^2\\ \frac{2\delta}{2+\delta} & \mbox{if}\ (x,y)\in [0,1]^2\setminus \left( \left[0,\frac{1}{2}\right]^2\cup \left[\frac{1}{2},1\right]^2\right)\end{cases}.$$
				Note that \begin{align}\label{eq:supcon1}\lVert W_{(\delta)}-W_\circ\rVert_{\infty}\to 0 \qquad \mbox{as}\quad  \delta\to 0,\end{align}
                where
                \begin{align}\label{Wcirc}
                    W_\circ(x,y):=\begin{cases}2 & \mbox{if}\ (x,y)\in \left[0,\frac{1}{2}\right]^2\cup \left[\frac{1}{2},1\right]^2\\ 0 & \mbox{if}\ (x,y)\in [0,1]^2\setminus \left( \left[0,\frac{1}{2}\right]^2\cup \left[\frac{1}{2},1\right]^2\right)\end{cases}.\end{align}
                Note that
\[
\int_0^1 W_\circ(x,y)\,dy=1
\qquad \lambda\text{-a.e. }x.
\]
However, $W_\circ(\cdot,\cdot)$ does not satisfy the positivity
constraint. We use the following lemma, proved in
\cref{AuxiliarySection}.
                \begin{lmm}\label{Wc}
For \(W_\circ(\cdot,\cdot)\) defined in \eqref{Wcirc}, none of the optimizers of
\[
%I_{{W_\circ,(1,0)}}(\frac 14)
%=
\inf_{{\boldsymbol f}=(f,1-f):\  G_{W_\circ,\mathbf 1_1}(\boldsymbol f)=1/4}
\left\{
\int_0^1
f(u)\log \frac{f(u)}{p}
+
(1-f(u))\log\frac{1-f(u)}{1-p}
du
\right\}
\]
is a constant function.
\end{lmm}
                
                % In~\cite[Theorem 1.1(b)]{SomabhaBhattacharya2020} (also see~\cite[Lemma A.1]{lubetzky2015replica}), For $W_\circ(\cdot,\cdot)$, it is shown in \cref{wc} that there exists $1/4\in \big[(1-\eta)^2, \eta^2\big]$, such that none of the optimizers of the optimization problem $$I_{{W_\circ,(1,0)}}({1/4}):=\inf_{{\boldsymbol f}=(f,1-f):\  G_{W_\circ,\mathbf 1_1}(\boldsymbol f)=1/4}  \Big\{ \int_0^1f(u)\log \frac{f(u)}{p}+(1-f(u))\log\frac{1-f(u)}{1-p}du\Big\}$$ is a constant function.
                We now show that the same property holds with
$W_{(\delta)}(\cdot,\cdot)$, for all sufficiently small $\delta>0$.

Fix
${\boldsymbol f}^*(\cdot)=(f^*(\cdot),1-f^*(\cdot))$, so that
\begin{align}\label{eq:baseprob1}
{\boldsymbol f}^*
\in
\argmin_{
{\boldsymbol f}=(f,1-f):
\,G_{W_\circ,\mathbf 1_1}({\boldsymbol f})=1/4
}
\left\{
\int_0^1
f(u)\log\frac{f(u)}{p}
+
(1-f(u))\log\frac{1-f(u)}{1-p}
\,du
\right\}.
\end{align}
By \cref{Wc}, $f^*$ is nonconstant. Define
\[
y_\delta
:=G_{W_{(\delta)}, \mathbf 1_1}(\boldsymbol f^*)=
\int_{[0,1]^2}
W_{(\delta)}(x,y)f^*(x)f^*(y)\,dx\,dy,
\qquad \delta>0.
\]
By \eqref{eq:supcon1} and the fact that
$G_{W_\circ,\mathbf 1_1}({\boldsymbol f}^*)=\frac14$,
we have
\[
y_\delta\to\frac14
\qquad \mbox{as } \delta\to0.
\]
We claim that, for all sufficiently small $\delta>0$, the constrained
problem obtained from \eqref{eq:baseprob1} by replacing $W_\circ$ and
$1/4$ with $W_{(\delta)}$ and $y_\delta$, respectively, has no constant
optimizer. Suppose otherwise. Then there exists a sequence $\delta_k\downarrow0$
such that the corresponding constrained problem admits a constant
optimizer \({\boldsymbol f}_{\delta_k}(\cdot)
=
\left(f_{\delta_k}(\cdot),1-f_{\delta_k}(\cdot)\right).
\)
Since
\[
\int_0^1 W_{(\delta_k)}(x,y)\,dy=1
\qquad \lambda\text{-a.e. }x,
\]
the constraint $G_{W_{(\delta_k)},\mathbf 1_1}({\boldsymbol f}_{\delta_k})=y_{\delta_k}$
forces
\(
f_{\delta_k}(\cdot)=\sqrt{y_{\delta_k}}.
\)
By the definition of $y_{\delta_k}$, both $f^*$ and
$f_{\delta_k}\equiv\sqrt{y_{\delta_k}}$ satisfy this constraint. Therefore, by the optimality of
$f_{\delta_k}$,
\begin{align*}
&
\sqrt{y_{\delta_k}}
\log\frac{\sqrt{y_{\delta_k}}}{p}
+
\left(1-\sqrt{y_{\delta_k}}\right)
\log\frac{1-\sqrt{y_{\delta_k}}}{1-p}
\\
&\hspace{2cm}\leq
\int_0^1
f^*(u)\log\frac{f^*(u)}{p}\,du
+
\int_0^1
(1-f^*(u))
\log\frac{1-f^*(u)}{1-p}\,du.
\end{align*}
On letting $k\to\infty$, we get
\begin{align*}
& \sqrt{1/4}\log \frac{\sqrt{1/4}}{p}+  (1-\sqrt{1/4})\log \frac{1-\sqrt{1/4}}{1-p}\\
&\le \int_0^1 f^*(u)\log \frac{f^*(u)}{p}du+ \int_0^1 (1-f^*(u))\log \frac{1-f^*(u)}{1-p}du.
\end{align*}
Since $(f_\circ(\cdot), 1-f_\circ(\cdot))=(1/2,1/2)$ is feasible for
\eqref{eq:baseprob1} and $\boldsymbol f^*$ is an optimizer of that
problem, $(f_\circ(\cdot), 1-f_\circ(\cdot))=(1/2,1/2)$ is also an
optimizer of \eqref{eq:baseprob1}, contradicting \cref{Wc}.
% Consequently $(f_\circ(\cdot), 1-f_\circ(\cdot))=({1/2}, 1/2)$ is a constant optimizer of the problem~\eqref{eq:baseprob1}, this is a contradiction to \cref{Wc}.
                \end{itemize}\end{itemize}
Therefore, for all sufficiently small $\delta>0$, none of the
optimizers of the constrained problem associated with $W_{(\delta)}$
and $y_\delta$ is constant. Moreover, since
$y_\delta\to1/4\in\big((1-\eta)^2,\eta^2\big)$,
we have
$y_\delta\in\big[(1-\eta)^2,\eta^2\big]$
for all sufficiently small $\delta>0$. Choosing one such $\delta$ and
setting
\(
W_\infty:=W_{(\delta)},\ 
y_\infty:=y_\delta
\)
completes the proof.
\end{proof}

            \begin{proof}[Proof of \cref{negvalue1}]
\begin{itemize}
    \item[(i)] By \cref{prop:propotts} (i), noticing $f_1+f_2=1$, we get $\lambda$-a.e.
    \begin{align*}
        f_1(x) &=\frac{p\exp{(2\theta\int W_\infty(x,y)f_1(y)dy)}}{p\exp{(2\theta\int W_\infty(x,y)f_1(y)dy)}+(1-p)\exp{(-2\theta\int W_\infty(x,y)(1-f_1(y))dy)}}\\
        &= \frac{p\exp({2\theta})}{1-p+p\exp({2\theta})},
    \end{align*}
where the final equality uses $\int W_\infty(x,y)dy=1$. %Using the regularity assumption  $\lambda$-a.e., this simplifies to.

\item[(ii)]
By part (i), the unique maximizer of
\eqref{eq:potts_unconstrained_opt} is
$(f_1(\cdot),f_2(\cdot))
=
(u_{\theta,p},1-u_{\theta,p})$,
where
\[
u_{\theta,p}
=
\frac{p\exp(2\theta)}
{1-p+p\exp(2\theta)}
=
\frac{p}{p+(1-p)\exp(-2\theta)}.
\]
It is easy to check that $\theta\mapsto u_{\theta,p}$ is strictly
increasing and $C^1$, with
\[
\lim_{\theta\to-\infty}u_{\theta,p}=0,
\qquad
\lim_{\theta\to\infty}u_{\theta,p}=1.
\]
Consequently, $\theta\mapsto u_{\theta,p}$ is a $C^1$ bijection from
$\mathbb R$ onto $(0,1)$. Define
\[
{\mathscr Z}^{(\rm op)}(\theta,p)
:=
\sup_{{\boldsymbol f}\in\mathcal F_2}
\left\{
\theta\left(
G_{W_\infty,\mathbf 1_1}({\boldsymbol f})
-
G_{W_\infty,\mathbf 1_2}({\boldsymbol f})
\right)
-
\int_0^1
\left[
f_1(u)\log\frac{f_1(u)}p
+
f_2(u)\log\frac{f_2(u)}{1-p}
\right]du
\right\},
\]
and apply \cref{lmm1.4ext} part (i) with
\[
c=2,\qquad
\phi=\mathbf 1_1-\mathbf 1_2,\qquad
\beta=\theta,\qquad
\mu=(p,1-p),
\]
to obtain, for every $\theta\in\mathbb R$,
\[
\frac{d}{d\theta}{\mathscr Z}^{(\rm op)}(\theta,p)
=
u_{\theta,p}^2-(1-u_{\theta,p})^2
=
2u_{\theta,p}-1.
\]
% Since ${\mathscr Z}^{(\rm op)}(\cdot,p)$ is convex and
% $\theta\mapsto 2u_{\theta,p}-1$ is continuous, the above identity holds
% for every $\theta\in\mathbb R$.

Moreover, the map
$\theta\longmapsto 2u_{\theta,p}-1$
is a bijection from $\mathbb R$ onto $(-1,1)$. Thus, for every
$y\in(-1,1)$, there exists a unique $\theta_y\in\mathbb R$ such that
$y=2u_{\theta_y,p}-1$.
Applying \cref{lmm1.4ext} part (ii), the unique optimizer of the
constrained problem defining $I_{W_\infty,(1,-1)}(y)$ is
\[
(f_1,f_2)
=
(u_{\theta_y,p},1-u_{\theta_y,p})
=
\left(\frac{1+y}{2},\frac{1-y}{2}\right).
\]
Therefore, we obtain
\[
I_{W_\infty,(1,-1)}(y)
=
\left(\frac{1+y}{2}\right)
\log\left(\frac{1+y}{2p}\right)
+
\left(\frac{1-y}{2}\right)
\log\left(\frac{1-y}{2(1-p)}\right).
\]
\end{itemize}
\end{proof}

\subsection{Proofs for the Auxiliary Results}\label{AuxiliarySection}
% \subsubsection{Proofs for \cref{ER Hypergraph}}
% \subsubsection{Proofs for \cref{GPM}}
\begin{proof}[Proof of \cref{equivalence of two cut norms}]
\cref{defcutstar} trivially implies $\|W\|_{\square^*}\le  \|W\|_{\square}$, so we only show the other inequality.

\textbf{Step 1: An inequality for $v$-tensors.}

First we claim there exists a constant $C'_v>0$ such that for any symmetric zero-diagonal $v$-tensor $A$, %we have 
\begin{align}\label{discmultitosingle}
\sup_{B_1,\ldots,B_v \subseteq[n]}\left|\mathscr{E}(B_1,\dots,B_v)\right|
\le
C'_v\,\sup_{S\subseteq[n]} |\mathscr{M}(S)|,
\end{align}
where for $S\subseteq[n]$
\[
\mathscr{M}(S):=\sum_{(i_1,\dots,i_v)\in S^v} A(i_1,\dots,i_v),\]
and for $B_1,\dots,B_v\subseteq[n]$
\[
\mathscr{E}(
B_1,\dots,B_v):=\sum_{(i_1,\dots,i_v)\in B_1\times\cdots\times B_v} A(i_1,\dots,i_v).
\]
Given $B_1,\dots,B_v\subseteq[n]$, for each 
$\boldsymbol{\iota}\in \{0,1\}^v$ define 

\[ \mathfrak{B}_{\boldsymbol{\iota}}:=  \Big(\bigcap_{r: \iota_r=1} B_r\Big)\cap\Big(\bigcap_{r: \iota_r=0}B_r^c\Big).\]
% \[
% C_\Upsilon := \Big(\bigcap_{j\in\Upsilon} B_j\Big)\cap\Big(\bigcap_{j\notin\Upsilon}B_j^c\Big).
% \]
Then $\{\mathfrak B_{\boldsymbol{\iota}}\}_{\boldsymbol{\iota}\in \{0,1\}^v}$ are pairwise disjoint with $B_s=\bigsqcup_{\boldsymbol{\iota}:\iota_s=1} \mathfrak B_{\boldsymbol{\iota}}$,
for $s\in [v]$.
By multilinearity of $\mathscr{E}(\cdot)$, we have the following identity
\begin{equation*}\label{eq:atom-expand}
\mathscr{E}(B_1,\dots,B_v)
=
\sum_{{\boldsymbol{\iota}^{(1)}}:{{\iota}^{(1)}_1}=1}\cdots\sum_{{\boldsymbol{\iota}^{(v)}}:{{\iota}^{(v)}_v}=1}
\mathscr{E}( \mathfrak{B}_{\boldsymbol{\iota}^{(1)}},\dots, \mathfrak{B}_{\boldsymbol{\iota}^{(v)}}).
\end{equation*}
There are at most $(2^{v-1})^v$ terms and each pair $\{ \mathfrak{B}_{\boldsymbol{\iota}^{(i)}}, \mathfrak{B}_{\boldsymbol{\iota}^{(j)}}\}$ is either disjoint or identical,
so
% \begin{equation*}
% |\mathscr{E}(B_1,\dots,B_v)|
% \le
% 2^{v(v-1)}\max_{\Upsilon_1\ni1,\dots,\Upsilon_v\ni v}
% |\mathscr{E}(C_{\Upsilon_1},\dots,C_{\Upsilon_v})|.
% \end{equation*}
% Notice that each pair $\{C_{\Upsilon_i},C_{\Upsilon_j}\}$ is either disjoint or identical,
% so
\begin{equation}\label{eq:triangle-atoms}
|\mathscr{E}(B_1,\dots,B_v)|
\le
2^{v(v-1)}\underset{\substack{F_i\text{s} \\ \text{either disjoint}\\\text{or identical}}}{\max}
|\mathscr{E}(F_1,\dots,F_v)|.
\end{equation}
To complete the proof,  we state and prove the following claims. 

% \begin{lmm}[Random-labeling disjointification]\label{Random-labeling disjointification}
% Suppose $F\subseteq[n]$ appears in exactly $1\le d\le v$ coordinate slots of
% a rectangle sum and $|F|\ge d$. Let the other coordinate sets $E_{d+1},\dots,E_v$ be disjoint from $F$. Then there exists a disjoint partition $F=F_1\sqcup\cdots\sqcup F_d$ with nonempty $F_i$s
% such that
% \begin{equation*}\label{eq:label}
% |\mathscr{E}(F,\dots,F,E_{d+1},\dots,E_v)| \le d^d\left|\mathscr{E}(F_1,\dots,F_d,E_{d+1},\dots,E_v)\right|.
% \end{equation*}
% % In particular, the RHS is a rectangle sum with pairwise disjoint sets.
% \end{lmm}
\begin{lmm}[Random-labeling disjointification]
\label{Random-labeling disjointification}
Suppose $F\subseteq[n]$ appears in exactly $1\leq d\leq v$
coordinate slots of a rectangle sum. Let the other coordinate sets
$E_{d+1},\ldots,E_v$ be disjoint from $F$. Then there exist pairwise
disjoint sets $F_1,\ldots,F_d\subseteq F$ such that
$F=\bigsqcup_{a=1}^dF_a$ and
\[
\left|
\mathscr E(F,\ldots,F,E_{d+1},\ldots,E_v)
\right|
\leq
d^d
\left|
\mathscr E(F_1,\ldots,F_d,E_{d+1},\ldots,E_v)
\right|.
\]
\end{lmm}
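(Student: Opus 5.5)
We will use a random-labeling (averaging) argument. Assign to each $i\in F$ an independent label $L_i$ uniform on $[d]$, and set $F_a:=\{i\in F: L_i=a\}$ for $a\in[d]$. The sets $F_1,\ldots,F_d$ are pairwise disjoint with $F=\bigsqcup_a F_a$ for every realization of the labels. Expanding the random rectangle sum gives
\[
\mathscr E(F_1,\ldots,F_d,E_{d+1},\ldots,E_v)=\sum_{\substack{i_1,\ldots,i_d\in F\\ i_{d+1}\in E_{d+1},\ldots,i_v\in E_v}} A(i_1,\ldots,i_v)\prod_{a=1}^d \mathbf 1_{\{L_{i_a}=a\}}.
\]

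The key step is computing the expectation of the product of indicators. Because $A$ is zero-diagonal, only tuples with $i_1,\ldots,i_v$ all distinct contribute. In particular $i_1,\ldots,i_d$ are distinct elements of $F$, so the labels $L_{i_1},\ldots,L_{i_d}$ are independent and
\[
\E\prod_{a=1}^d \mathbf 1_{\{L_{i_a}=a\}}=d^{-d}.
\]
The coordinates in $E_{d+1},\ldots,E_v$ carry no labels, which is harmless since these sets are disjoint from $F$. By linearity it follows that
\[
\E\,\mathscr E(F_1,\ldots,F_d,E_{d+1},\ldots,E_v)=d^{-d}\,\mathscr E(F,\ldots,F,E_{d+1},\ldots,E_v).
\]

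To finish, bound the absolute value of an expectation by the expectation of the absolute value, and then by the maximum over realizations. This gives
\[
|\mathscr E(F,\ldots,F,E_{d+1},\ldots,E_v)|\le d^d\max_{\text{labelings}}|\mathscr E(F_1,\ldots,F_d,E_{d+1},\ldots,E_v)|.
\]
Choosing a labeling that attains the maximum (there are finitely many) yields the required partition. The statement does not require the $F_a$ to be nonempty, so empty parts cause no difficulty: if some $F_a$ is empty, the corresponding sum is simply $0$.

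The only point needing care is that the zero-diagonal assumption is what makes the indicators independent. Without it, repeated indices in $F$ would give expectation $0$ or a correlated value instead of $d^{-d}$. The symmetry of $A$ is not needed for this lemma. The case $d=1$ is trivial, since then $F_1=F$ and the constant is $1$.
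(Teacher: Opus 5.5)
Your proposal is correct and follows the paper's proof essentially step for step: uniform independent labels on $F$, the identity $\mathbb{E}\,\mathscr E(F_1,\ldots,F_d,E_{d+1},\ldots,E_v)=d^{-d}\,\mathscr E(F,\ldots,F,E_{d+1},\ldots,E_v)$, and then choosing a labeling that maximizes $|\mathscr E(F_1,\ldots,F_d,E_{d+1},\ldots,E_v)|$. You are more explicit than the paper on one point: the zero-diagonal assumption is what makes the labels of $i_1,\ldots,i_d$ independent, and the paper uses this only implicitly when it factors $\mathbb{E}\prod_a \mathbf 1\{i_a\in F_a\}$ into a product of probabilities.
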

\begin{proof}[Proof of \cref{Random-labeling disjointification}]
% First, notice that for the case $|F|<d$, the zero-diagonal assumption immediately implies that  
% $\mathscr{E}(F,\dots,F,E_{d+1},\dots,E_v)=0$ 
% and there is nothing to prove, so we assume $|F|\ge d$.
Assume $\mathscr{E}(F,\dots,F,E_{d+1},\dots,E_v)\neq 0$, otherwise we have nothing to prove. Independently assign each $x\in F$ a random label $\ell(x)\in[d]$,
and set $F_a:=\{x\in F:\ell(x)=a\}$.
Then $F=\bigsqcup_{a=1}^d F_a$.
Now if $\mathbb{E}$ denotes the expectation w.r.t this random assignment, we have
\begin{align*}
 &\mathbb{E}\Big(\mathscr{E}(F_1,\dots,F_d,E_{d+1},\dots,E_v)\Big)=\\
 &\mathbb{E}\Big(\sum_{(i_1,\dots,i_v)\in F\times\cdots\times F\times E_{d+1}\times\ldots \times E_{v}} f(i_1,\dots,i_v)\times1\{{i_1}\in F_1\}\times\ldots\times 1\{ i_d \in F_d\}\Big)   \\
 & =\sum_{(i_1,\dots,i_v)\in F\times\cdots\times F\times E_{d+1}\times\ldots \times E_{v}} f(i_1,\dots,i_v)\times\mathbb{P}\{{i_1}\in F_1\}\times\ldots\times \mathbb{P}\{ i_d \in F_d\}\\
 & =d^{-d} \mathscr{E}(F,\dots,F,E_{d+1},\dots,E_v).
\end{align*}
Therefore,
\[\mathscr{E}(F,\dots,F,E_{d+1},\dots,E_v) = d^d \mathbb{E}\left(\mathscr{E}(F_1,\dots,F_d,E_{d+1},\dots,E_v)\right),\]
meaning that
\begin{equation}
|\mathscr{E}(F,\dots,F,E_{d+1},\dots,E_v)| \le d^d \max_{F = F_1 \sqcup \cdots \sqcup F_d}\left|\mathscr{E}(F_1,\dots,F_d,E_{d+1},\dots,E_v)\right|.
\end{equation}
%\reihaneh{NOT NEEDED!}
%\reihaneh{Notice that the maximum above is attained at a partition$F=\bigsqcup_{i=1}^d F_i$ for which all the sets $F_i$ are nonempty.Indeed, the maximum is strictly positive, since $\mathscr{E}(F,\dots,F,E_{d+1},\dots,E_v)\neq 0$.On the other hand, if a maximizing partition had $F_i=\varnothing$for some $i$, then$\mathscr{E}(F_1,\dots,F_d,E_{d+1},\dots,E_v)=0$,contradicting the strict positivity of the maximum. Therefore, the maximum is achieved by some partition with nonempty sets.}
\end{proof}

Invoking \cref{Random-labeling disjointification} at most $v$ times and noticing that each set can appear at most $v$ times, we obtain
\begin{align}\label{rndmatoms}
&\underset{\substack{F_i\text{s} \\ \text{either disjoint}\\\text{or identical}}}{\max}
|\mathscr{E}(F_1,\dots,F_v)| \le (v^v)^v \underset{\substack{D_1,\dots,D_v\\ \text{pairwise disjoint }}}{\max}
|\mathscr{E}(D_1,\dots,D_v)|.
\end{align}
% \underset{\substack{{\boldsymbol k} \in [n]^v \\ k_i \text{s distinct}}}{\sum}
% Notice that for the case $|F|<d$, the zero-diagonal assumption immediately implies that  
% $\mathscr{E}(F,\dots,F,E_{d+1},\dots,E_v)=0$ 
%  and there is nothing to check. 
The next technical lemma provides an upper bound on the RHS of~\eqref{rndmatoms}.
\begin{lmm}[Disjoint Sets]\label{Disjoint Sets}
If $D_1,\dots,D_v\subseteq[n]$ are pairwise disjoint, then
\begin{equation}\label{eq:disjoint-pol}
\mathscr{E}(D_1,\dots,D_v)
=
\frac{1}{v!}\sum_{J\subseteq[v]} (-1)^{v-|J|}
\,\mathscr{M}\Big(\bigcup_{j\in J} D_j\Big).
\end{equation}
Consequently,
\begin{equation}\label{eq:disjoint-bound}
|\mathscr{E}(D_1,\dots,D_v)|\le \frac{2^v}{v!}\,\sup_{S\subseteq[n]} |\mathscr{M}(S)|.
\end{equation}
\end{lmm}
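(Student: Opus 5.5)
The plan is to prove the identity \eqref{eq:disjoint-pol} by inclusion--exclusion, using that $A$ is symmetric and zero-diagonal and that the $D_j$ are pairwise disjoint.

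Fix $J\subseteq[v]$ and write $U_J:=\bigcup_{j\in J}D_j$. Since the $D_j$ are disjoint, every tuple $(i_1,\ldots,i_v)\in U_J^v$ determines a map $\kappa:[v]\to J$ with $i_r\in D_{\kappa(r)}$. This gives
\[
\mathscr M(U_J)=\sum_{\kappa:[v]\to J}\mathscr E(D_{\kappa(1)},\ldots,D_{\kappa(v)}).
\]
Substituting this into the alternating sum and exchanging the order of summation, the coefficient of a fixed $\kappa:[v]\to[v]$ is
\[
\sum_{J\supseteq \kappa([v])}(-1)^{v-|J|}.
\]
This sum equals $1$ if $\kappa([v])=[v]$ and $0$ otherwise. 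Indeed, it is $(1-1)^{v-|\kappa([v])|}$ up to sign, and when $\kappa$ is onto it reduces to the single term $J=[v]$ with sign $+1$. Hence
\[
\sum_{J\subseteq[v]}(-1)^{v-|J|}\mathscr M(U_J)=\sum_{\kappa\in S_v}\mathscr E(D_{\kappa(1)},\ldots,D_{\kappa(v)}).
\]

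By symmetry of $A$, each term on the right equals $\mathscr E(D_1,\ldots,D_v)$. To see this, relabel the summation variables by $\kappa^{-1}$ and use $A(i_{\sigma(1)},\ldots,i_{\sigma(v)})=A(i_1,\ldots,i_v)$. The right side is therefore $v!\,\mathscr E(D_1,\ldots,D_v)$, which is \eqref{eq:disjoint-pol}. Zero-diagonality is not actually needed for this identity; disjointness is what makes the decomposition of $U_J^v$ by $\kappa$ exact.

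For \eqref{eq:disjoint-bound}, apply the triangle inequality to \eqref{eq:disjoint-pol}. There are $2^v$ subsets $J$, and each satisfies $|\mathscr M(U_J)|\le\sup_S|\mathscr M(S)|$. The $J=\emptyset$ term is $\mathscr M(\emptyset)=0$, so it is harmless.

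The only delicate point is the sign bookkeeping in the Möbius-inversion step. Checking the cases $v=2,3$ by hand confirms the normalization.
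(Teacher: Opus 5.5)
Your proof is correct and is essentially the paper's inclusion--exclusion argument. The paper computes a coefficient for each individual tuple $\boldsymbol i$ via the set $I(\boldsymbol i)$ of blocks it hits; you group tuples by the block map $\kappa$, so $\kappa([v])$ plays the role of $I(\boldsymbol i)$. Both then use symmetry of $A$ to collapse the $v!$ surviving permutations to $v!\,\mathscr{E}(D_1,\dots,D_v)$, and both get the bound by the triangle inequality. Your remark that zero-diagonality is unnecessary is also right: tuples with a collision cannot hit all $v$ disjoint blocks, so they already receive coefficient zero.
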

\begin{proof}[Proof of \cref{Disjoint Sets}]
Write $S_J:=\bigcup_{j\in J} D_j$.
Expand $\mathscr{M}(S_J)$ and interchange sums:
\[
\frac{1}{v!}\sum_{J\subseteq[v]} (-1)^{v-|J|}\mathscr{M}(S_J)
=
\sum_{\boldsymbol i\in[n]^v} f(\boldsymbol i)\cdot c(\boldsymbol i),
\qquad \boldsymbol i=(i_1,\dots,i_v),
\]
where
\[
c(\boldsymbol i):=\frac{1}{v!}\sum_{J\subseteq[v]} (-1)^{v-|J|}\mathbf 1_{\{\boldsymbol i\in S_J^v\}}.
\]
If $\boldsymbol i$ has a collision then $f(\boldsymbol i)=0$, so assume $i_1,\dots,i_v$ are distinct.
Since $D_1,\dots,D_v$ are disjoint, each $i_r$ belongs to at most one $D_s$.
Define
$I(\boldsymbol i):=\{s\in[v]:\exists\,r\in[v]\text{ with } i_r\in D_s\}$.
Then $\mathbf 1_{\{\boldsymbol i\in S_J^v\}}=1$ if and only if $I(\boldsymbol i)\subseteq J$, so
\[
c(\boldsymbol i)=\frac{1}{v!}\sum_{J:\,I(\boldsymbol i)\subseteq J\subseteq[v]} (-1)^{v-|J|}
=\frac{(-1)^{v-|I(\boldsymbol i)|}}{v!}\sum_{T\subseteq[v]\setminus I(\boldsymbol i)} (-1)^{|T|}.
\]
Using $\sum_{T\subseteq E}(-1)^{|T|}=(1-1)^{|E|}$, we get $c(\boldsymbol i)=0$ unless
$|I(\boldsymbol i)|=v$, in which case $c(\boldsymbol i)=1/v!$.
Thus only tuples $\boldsymbol i$ that hit every $D_1,\dots,D_v$ survive; since the $D_s$ are disjoint
and there are $v$ coordinates, this means exactly one coordinate in each $D_s$.

For each fixed choice of distinct elements $d_s\in D_s$, there are $v!$ permutations of the
coordinates. By symmetry of $f$ all these permutations contribute the same value, and the
factor $1/v!$ cancels the overcounting, leaving precisely $\mathscr{E}(D_1,\dots,D_v)$.
This proves \eqref{eq:disjoint-pol}. The bound \eqref{eq:disjoint-bound} follows
by triangle inequality.
\end{proof}
Now combining \eqref{eq:triangle-atoms}, \eqref{rndmatoms}, and \eqref{eq:disjoint-bound} we get
\[
|\mathscr{E}(B_1,\dots,B_v)|
\le \frac{(2v)^{v^2}}{v!}\,\sup_{S\subseteq[n]} |\mathscr{M}(S)|.
\]
This holds for all $B_1,\dots,B_v\subseteq[n]$, completing the proof of our claim~\eqref{discmultitosingle} with $C'_v=(2v)^{v^2}/{v!}$.

\textbf{Step 2: Proof of $C_v\|W\|_{\square}\le\|W\|_{\square^*}$  for $v$-hypergraphons.}

% Now we show $C_v\|W\|_{\square}\le\|W\|_{\square^*}$ for some $C_v>0$, to be specified later.
For each $n\in\mathbb N$, partition $[0,1]^v$ into the cubes
\[
\mathcal{C}_{\boldsymbol k,n}
:=\prod_{r=1}^v \Big[\frac{k_r-1}{\mathfrak n},\frac{k_r}{\mathfrak n}\Big),
\qquad
\boldsymbol k\in[\mathfrak n]^v \qquad \mathfrak{n}:=2^n.
\]
Define the step function
\[
Y^{(n)}(x_1,\ldots,x_v):=\sum_{\boldsymbol k\in[\mathfrak n]^v }y_{{\boldsymbol k}, n} 1\{\boldsymbol x \in \mathcal{C}_{\boldsymbol k,n}\}
,
\]
where $y_{{\boldsymbol k}, n}:=
\int_{\mathcal{C}_{\boldsymbol k,n}} W(u)\,du/\lambda(\mathcal{C}_{\boldsymbol k,n})$, and $\lambda$ denotes the Lebesgue measure.
Equivalently, $Y^{(n)}=\mathbb E[W \mid \mathcal F_n]$ $\lambda$-a.e., where 
$\mathcal F_n$ is the $\sigma$-algebra generated by this partition. 
Since $(\mathcal F_n)$ increases to the Borel $\sigma$-algebra of $[0,1]^v$, 
the martingale convergence theorem yields
$\|Y^{(n)}-W\|_1 \longrightarrow 0$, and hence $\|Y^{(n)}-W\|_{\square}\longrightarrow 0$.
Also define
\[\tilde Y^{(n)}(x_1,\ldots,x_v):=\underset{\substack{{\boldsymbol k} \in [\mathfrak n]^v \\ k_r \text{s distinct}}}{\sum}y_{{\boldsymbol k}, n}
 1\{\boldsymbol x \in \mathcal{C}_{\boldsymbol k,n}\},\]
 and notice that by construction $Y^{(n)}$ and $\tilde Y^{(n)}$ are symmetric.
For fixed $n$, we can write
\begin{align*}
    &\|\tilde Y^{(n)}\|_{\square}=\sup _{T_1,\ldots,T_v \subseteq[0,1]} \Big|\int_{  T_1\times\ldots\times T_v} \tilde Y^{(n)}\Big(x_1,\ldots, x_v\Big) \prod_{r=1}^v d x_r\Big|\\
    &=\sup _{T_1,\ldots,T_v \subseteq[0,1]} \Big|\underset{\substack{{\boldsymbol k} \in [\mathfrak n]^v \\ k_r \text{s distinct}}}{\sum} y_{{\boldsymbol k}, n}\lambda(T_1\times\ldots\times T_v\cap \mathcal{C}_{{\boldsymbol k},n})\Big|\\
    &= \sup _{T_1,\ldots,T_v \subseteq[0,1]} \Big|\underset{\substack{{\boldsymbol k} \in [\mathfrak n]^v \\ k_r \text{s distinct}}}{\sum} y_{{\boldsymbol k}, n}\prod_{r=1}^v\lambda\Big(T_r\cap\Big[\frac{k_r-1}{\mathfrak n},\frac{k_r}{\mathfrak n}\Big)\Big)\Big|\\
    &= \mathfrak n^{-v}\sup _{T_1,\ldots,T_v \subseteq[0,1]} \Big|\underset{\substack{{\boldsymbol k} \in [\mathfrak n]^v \\ k_r \text{s distinct}}}{\sum} y_{{\boldsymbol k}, n}\prod_{r=1}^v\Bigg[\mathfrak n\lambda\Big(T_r\cap\Big[\frac{k_r-1}{\mathfrak n},\frac{k_r}{\mathfrak n}\Big)\Big)\Bigg]\Big|\\
    &=\mathfrak n^{-v}\sup _{S_1,\ldots,S_v \subseteq[\mathfrak n]} \Big|\underset{\substack{{\boldsymbol k} \in [\mathfrak n]^v \\ k_r \text{s distinct}}}{\sum} y_{{\boldsymbol k}, n}\prod_{r=1}^v\Bigg[1\{k_r\in S_r\}\Bigg]\Big|,
\end{align*}
where in the last line we used \eqref{DtC1} from \cref{DtC}, stated and proved in \cref{AuxiliarySection}. Now using \eqref{discmultitosingle},
\begin{align*}
    &\le C'_v\mathfrak n^{-v}\sup _{S \subseteq[\mathfrak n]} \Big|\underset{\substack{{\boldsymbol k} \in S^v \\ k_r \text{s distinct}}}{\sum} y_{{\boldsymbol k}, n}\Big|=C'_v\mathfrak n^{-v}\sup_{\beta_1,\ldots,\beta_{\mathfrak n}\in \{0,1\}} \Big|\underset{\substack{{\boldsymbol k} \in [\mathfrak n]^v \\ k_r \text{s distinct}}}{\sum} y_{{\boldsymbol k}, n} \prod_{r=1}^v\beta_{k_r} \Big|\\
    &  =C'_v\mathfrak n^{-v}\sup_{\tau_1,\ldots,\tau_n \in [0,1]}\Big| \underset{\substack{{\boldsymbol k} \in [\mathfrak n]^v \\ k_r \text{s distinct}}}{\sum} y_{{\boldsymbol k}, n}\prod_{r=1}^v \tau_{k_r}\Big|,
\end{align*}
where the last equality is justified by \eqref{DtC2}. It is equal to
\begin{align*}
    & C'_v\mathfrak n^{-v}\sup _{T \subseteq[0,1]} \Big|\underset{\substack{{\boldsymbol k} \in [\mathfrak n]^v \\ k_r \text{s distinct}}}{\sum} y_{{\boldsymbol k}, n}\prod_{r=1}^v\Bigg[\mathfrak n\lambda\Big(T\cap\Big[\frac{k_r-1}{\mathfrak n},\frac{k_r}{\mathfrak n}\Big)\Big)\Bigg]\Big|\\
    &=C'_v\sup _{T \subseteq[0,1]} \Big|\underset{\substack{{\boldsymbol k} \in [\mathfrak n]^v \\ k_r \text{s distinct}}}{\sum} y_{{\boldsymbol k}, n}\lambda(T^v\cap \mathcal{C}_{{\boldsymbol k},n})\Big|=C'_v\sup _{T \subseteq[0,1]} \Big|\int_{  T^v} \tilde Y^{(n)}\Big(x_1,\ldots, x_v\Big) \prod_{r=1}^v d x_r\Big|=C'_v\|\tilde Y^{(n)}\|_{\square^*}.
\end{align*}

Recall that $\lim_{n\to\infty}\|W-Y^{(n)}\|_{\square}=0$ and some standard measure-theoretic arguments show $\lim_{n\to \infty}\|\tilde Y^{(n)}-Y^{(n)}\|_1 =0$ (recall that $W, Y^{(n)}\in L^1([0,1]^v)$), therefore $\lim_{n\to \infty}\|\tilde Y^{(n)}-Y^{(n)}\|_{\square}=0$. Now
\begin{align*}\label{}
& \|W\|_{\square}\le \lim_{n\to\infty}\|\tilde Y^{(n)}\|_{\square}+\lim_{n\to\infty}\|W-Y^{(n)}\|_{\square}+\lim_{n\to\infty}\|\tilde Y^{(n)}-Y^{(n)}\|_{\square}\le C'_v\lim_{n\to\infty}\|\tilde Y^{(n)}\|_{\square^*}  = C'_v\| W\|_{\square^*},  
\end{align*}
where in the last line we use $\lim_{n\to\infty}\|\tilde Y^{(n)}-W\|_{\square^*}\le\lim_{n\to\infty}\|\tilde Y^{(n)}-W\|_{\square}=0$. So with $C_v:=1/C'_v$, we are done.
\end{proof}

\begin{proof}[Proof of \cref{bijection}]
Let \({\boldsymbol f}=(f_1,\ldots,f_c)\in\mathcal F_c\). Define a probability measure
\(\nu_{\boldsymbol f}\) on \([0,1]\times[c]\) by
\[
\nu_{\boldsymbol f}(S\times\{r\})
:=
\int_S f_r(x)\,dx,
\qquad S\subseteq[0,1]\ \text{measurable},\ r\in[c].
\]
Then $\nu_{\boldsymbol f}\in\mathcal P$ and
$\nu_{\boldsymbol f}(B=r\mid A=x)=f_r(x)$, proving surjectivity.
If $\boldsymbol f^\nu=\boldsymbol f^{\tilde\nu}$, then for all measurable
$S\subseteq[0,1]$ and $r\in[c]$,
\[
\nu(S\times\{r\})=\int_S f_r^\nu(x)\,dx
=\int_S f_r^{\tilde\nu}(x)\,dx=\tilde\nu(S\times\{r\}),
\]
so $\nu=\tilde\nu$. Hence the map is injective. Also, using the law of iterated expectations for $W\in \mathcal{W}_v$, we obtain $T_{W,\phi}(\nu)=G_{W,\phi}({\boldsymbol f}^\nu)$.

\end{proof}

\begin{proof}[Proof of \cref{functionalcut}]
The inequality \(\le\) follows by taking \(g_a=\mathbf 1_{S_a}\), so the RHS is at least
\(\|W\|_{\square}\).

For the reverse inequality, let \(0\le g_a\le 1\). We write
$g_a(x_a)=\int_0^1 \mathbf 1_{\{g_a(x_a)>t_a\}}\,dt_a$.
Hence, by Fubini,
\[
\int_{[0,1]^v}
W(x_1,\ldots,x_v)\prod_{a=1}^v g_a(x_a)\,dx
=
\int_{[0,1]^v}
\left(
\int_{\prod_{a=1}^v \{g_a>t_a\}}
W(x_1,\ldots,x_v)\,dx
\right)
dt_1\cdots dt_v .
\]
Therefore,
\[
\left|
\int_{[0,1]^v}
W(x_1,\ldots,x_v)\prod_{a=1}^v g_a(x_a)\,dx
\right|
\le
\int_{[0,1]^v}
\|W\|_{\square}\,dt_1\cdots dt_v
=
\|W\|_{\square}.
\]
Taking the supremum over all \(0\le g_1,\ldots,g_v\le1\) gives the result.
\end{proof}
\subsubsection{Proofs for \cref{ER Hypergraph}}
% \begin{proof}[Proof of \cref{DtC}]
%   We only prove the first equality, the latter can be shown similarly.  
% Fix $\tau^{\ell}_1, \ldots,\tau^{\ell}_n \in [0,1]$, for all $\ell \in [v]$. Define independent random variables $\zeta^{\ell}_i\sim \mathrm{Ber}(\tau^{\ell}_i)$ for $i \in [n], \ell \in [v]$. If $\mathbb{E}$ denotes the expectation w.r.t random variables $\{\zeta^{\ell}_1\ldots\zeta^{\ell}_n: \ell\in[v]\}$, we have  
% \begin{align*}
% & \left|\sum_{(i_1,\ldots,i_v)\in [n]^v}D_n(i_1,\ldots,i_v) \prod_{r=1}^v \tau^{r}_{i_r} \right|=\left|\E\Bigg[\sum_{(i_1,\ldots,i_v)\in[n]^v}D_n(i_1,\ldots,i_v) \prod_{r=1}^v\zeta^{r}_{i_r} \Bigg]\right|\\
% &\le \E\left|\Bigg[\sum_{(i_1,\ldots,i_v)\in [n]^v}D_n(i_1,\ldots,i_v) \prod_{r=1}^v\zeta^{r}_{i_r} \Bigg]\right|\le\sup_{\{{\beta_i}^\ell\in \{0,1\} :i\in [n], \ell \in [v]\}} \left|\sum_{(i_1,\ldots,i_v)\in [n]^v}D_n(i_1,\ldots,i_v) \prod_{r=1}^v\beta^r_{i_r} \right|.
% \end{align*}
% These inequalities are valid for all choices of $\tau^{\ell}_1, \ldots,\tau^{\ell}_n \in [0,1]$ for $\ell \in [v]$, so we have 
% \begin{align*}
% &\sup_{\{{\tau_i}^\ell\in [0,1] :i\in [n], \ell \in [v]\}}\left| \sum_{(i_1,\ldots,i_v)\in [n]^v}D_n(i_1,\ldots,i_v)\prod_{r=1}^v \tau^r_{i_r}\right|\le\sup_{\{{\beta_i}^\ell\in \{0,1\} :i\in [n], \ell \in [v]\}} \left|\sum_{(i_1,\ldots,i_v)\in [n]^v}D_n(i_1,\ldots,i_v) \prod_{r=1}^v\beta^r_{i_r} \right|.   
% \end{align*}
% The reverse inequality is obvious, so we are done.
% \end{proof}
\begin{proof}[Proof of \cref{DtC}]
We first prove \eqref{DtC1}. Fix
$(\tau_i^\ell)_{i\in[n],\,\ell\in[v]}\in[0,1]^{nv}$,
and let
\[
\zeta_i^\ell\sim \operatorname{Ber}(\tau_i^\ell),
\qquad i\in[n],\ \ell\in[v],
\]
be mutually independent. Then, for every
\((i_1,\ldots,i_v)\in[n]^v\),
$\mathbb E\left[\prod_{r=1}^v\zeta_{i_r}^r\right]
=
\prod_{r=1}^v\tau_{i_r}^r$.
Therefore,
\begin{align*}
&\left|
\sum_{(i_1,\ldots,i_v)\in[n]^v}
D_n(i_1,\ldots,i_v)
\prod_{r=1}^v\tau_{i_r}^r
\right|=
\left|
\mathbb E\left[
\sum_{(i_1,\ldots,i_v)\in[n]^v}
D_n(i_1,\ldots,i_v)
\prod_{r=1}^v\zeta_{i_r}^r
\right]
\right|\\
&\le
\mathbb E\left|
\sum_{(i_1,\ldots,i_v)\in[n]^v}
D_n(i_1,\ldots,i_v)
\prod_{r=1}^v\zeta_{i_r}^r
\right|\le
\sup_{(\beta_i^\ell)\in\{0,1\}^{nv}}
\left|
\sum_{(i_1,\ldots,i_v)\in[n]^v}
D_n(i_1,\ldots,i_v)
\prod_{r=1}^v\beta_{i_r}^r
\right|.
\end{align*}
Taking the supremum over
\((\tau_i^\ell)\in[0,1]^{nv}\) gives one direction of
\eqref{DtC1}. The reverse inequality follows immediately from
\(\{0,1\}^{nv}\subseteq[0,1]^{nv}\).

We now prove \eqref{DtC2}. Fix
$(\tau_1,\ldots,\tau_n)\in[0,1]^n$,
and let
\[
\zeta_i\sim\operatorname{Ber}(\tau_i),
\qquad i\in[n],
\]
be independent. Since \(D_n\) is zero-diagonal, every nonzero
summand has \(i_1,\ldots,i_v\) all distinct. Consequently,
$\mathbb E\left[\prod_{r=1}^v\zeta_{i_r}\right]
=
\prod_{r=1}^v\tau_{i_r}$
for every nonzero summand. Hence,
\begin{align*}
&\left|
\sum_{(i_1,\ldots,i_v)\in[n]^v}
D_n(i_1,\ldots,i_v)
\prod_{r=1}^v\tau_{i_r}
\right|=
\left|
\mathbb E\left[
\sum_{(i_1,\ldots,i_v)\in[n]^v}
D_n(i_1,\ldots,i_v)
\prod_{r=1}^v\zeta_{i_r}
\right]
\right|\\
&\le
\mathbb E\Big|
\sum_{(i_1,\ldots,i_v)\in[n]^v}
D_n(i_1,\ldots,i_v)
\prod_{r=1}^v\zeta_{i_r}
\Big|\le
\sup_{(\beta_1,\ldots,\beta_n)\in\{0,1\}^n}
\Big|
\sum_{(i_1,\ldots,i_v)\in[n]^v}
D_n(i_1,\ldots,i_v)
\prod_{r=1}^v\beta_{i_r}
\Big|.
\end{align*}
Taking the supremum over
\((\tau_1,\ldots,\tau_n)\in[0,1]^n\) gives one direction of
\eqref{DtC2}. The reverse inequality follows from
\(\{0,1\}^n\subseteq[0,1]^n\).
\end{proof}
\subsubsection{Proofs for \cref{SIERgraph}}
\begin{proof}[Proof of \cref{lem:uniform-core-reduction-ER}]
Write $p:=p_n$. We prove the result by removing the
vertices of $V(H)\setminus V(\widetilde H)$ one at a time in the leaf-removal procedure as described after \cref{corH}. Let $K,K^{\prime}$ be two consecutive intermediate graphs along this process. Then
$K,K^{\prime}$ both have $2$-core $\widetilde H$, and there exists $u\in V(K)$ such that $V(K)=V(K^{\prime})\cup \{u\}$. Since $u$ is a leaf in $K$ (by construction), there exists $u'\in V(K^{\prime})$ such that $u'$ is the unique neighbor of $u$ in $K$.
Also,
$|V(K^{\prime})|=v_K-1$ and $|E(K^{\prime})|=e_K-1$.
We first show that
\begin{align}
\sup_{S\subseteq[n]}n^{-v_K}
\left|
p^{-e_K}\mathcal N_K(S;\mathcal G_n)
-(|S|-v_K+1)p^{-(e_K-1)}
\mathcal N_{K^{\prime}}(S;\mathcal G_n)
\right|
=o_\mathbb P(1).
\label{eq:one-leaf-independent}
\end{align}

For $d\in S\subseteq[n]$, define
\[\mathcal R_{d,S}
:=
\left\{
\varphi:V(K^{\prime})\hookrightarrow S:
\varphi(u')=d
\right\},\quad
\mathcal N_{K^{\prime}}(S;\mathcal G_n,d)=\mathcal N(S;d)
:=
\sum_{\varphi\in\mathcal R_{d,S}}
\prod_{\{a,b\}\in E(K^{\prime})}
\mathcal G_n(\varphi(a),\varphi(b)).
\]
We regard
$\mathcal N(S;.)=\bigl(\mathcal N(S;d)\bigr)_{d=1}^n$
as a vector in $\mathbb R^n$. Now we use \cref{eq:rooted-L2} and \cref{lem:ER-centered-opnorm}.
% which give
% \begin{align}
% \sum_{d=1}^n U_{u,[n]}(d)^2
% =
% O_\mathbb P\left(
% n^{2v_K-3}p^{2e_K-2}
% \right)
% \label{eq:rooted-L2}
% \end{align}
% and
% \[
% \|D\|_{\mathrm{op}}
% =
% O_\mathbb P\left(\sqrt{np\log n}\right),
% \]
% where
% $D_{ij}
% :=
% \bigl(\mathcal G_n(i,j)-p\bigr)\mathbf 1\{i\neq j\}$.
\begin{lmm}\label{eq:rooted-L2} Under the setting of \cref{lem:uniform-core-reduction-ER}, let $K, K'$ be two consecutive  intermediate graphs in the leaf-removal procedure, such that $V(K)=V(K')\cup \{u\}$, where  $u$ is the chosen leaf as in the proof of \cref{lem:uniform-core-reduction-ER} above. Then, with $\mathcal N(\cdot;d)$ as defined above, we have
\begin{equation*}
\sum_{d=1}^n \mathcal N([n];d)^2
=
O_{\mathbb P}\left(n^{2v_K-3}p^{2e_K-2}\right).
\end{equation*}    
\end{lmm}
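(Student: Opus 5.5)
The plan is to bound $\E\bigl[\sum_{d=1}^n \mathcal N([n];d)^2\bigr]$ by $C\,n^{2v_K-3}p^{2e_K-2}$ and then apply Markov's inequality, which gives the $O_{\mathbb P}$ statement directly.

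\textbf{Expansion.} By definition,
\[
\sum_{d=1}^n \mathcal N([n];d)^2=\sum_{d}\ \sum_{\varphi,\psi\in\mathcal R_{d,[n]}}\ \prod_{\{a,b\}\in E(K')}\mathcal G_n(\varphi(a),\varphi(b))\prod_{\{a,b\}\in E(K')}\mathcal G_n(\psi(a),\psi(b)).
\]
This is a sum over pairs of injective copies of $K'$ whose images of the root $u'$ coincide. Since $\mathcal G_n(i,j)\in\{0,1\}$, a pair $(\varphi,\psi)$ contributes expectation $p^{|\varphi(E(K'))\cup\psi(E(K'))|}$. I would group the pairs according to their \emph{overlap pattern}. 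This pattern is the subgraph $J\subseteq K'$ containing $u'$ that is spanned by the vertices $a$ with $\varphi(a)\in\psi(V(K'))$, together with the identification of these vertices inside the second copy. There are only $O_H(1)$ patterns. If $J$ has $j$ non-root vertices and $k$ edges that are shared by the two copies, the number of pairs with this pattern is at most $n^{2(v_K-1)-1-j}$. The union graph has exactly $2(e_K-1)-k$ edges. Hence the contribution of this pattern is at most
\[
n^{2v_K-3}p^{2e_K-2}\cdot n^{-j}p^{-k}.
\]

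\textbf{Reduction.} The pattern with $j=0$ (the two copies meet only at the root) gives exactly the target order. It therefore suffices to show that $n^{j}p^{k}\gtrsim 1$ for every possible overlap $J$ with $j\ge1$. Since $p\le1$, it is enough to do this for the largest $k$ allowed for a given $j$, which is when $J$ is an induced subgraph of $K'$ and all of its edges are shared.

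\textbf{Main obstacle: the density bound.} This is the step I expect to be hardest. I would use the structure of $K'$, which consists of the core $\widetilde H$ with trees hanging off it. First, peel the tree parts of $J$. Each pendant (tree) vertex removed costs one vertex and at most one edge, so it contributes a factor $np\ge1$; this uses $p\gg(n\log n)^{-1/\widetilde\Delta}\ge n^{-1/2}$, which holds because $\widetilde\Delta\ge2$. What remains is a subgraph $J_0$ of $\widetilde H$ that contains the root or is attached to it, and whose maximum degree is at most $\widetilde\Delta$. Counting degrees gives $2k_0\le \widetilde\Delta j_0+\deg_{J_0}(u')\le\widetilde\Delta(j_0+1)$. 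So I would aim for the bound
\[
n^{j_0}p^{k_0}\ \ge\ \bigl(np^{\widetilde\Delta/2}\bigr)^{j_0}\,p^{\widetilde\Delta/2},
\]
and then use $np^{\widetilde\Delta/2}\ge n^{1/2}(\log n)^{-1/2}\to\infty$ to absorb the boundary term $p^{\widetilde\Delta/2}$, since $j_0\ge1$.

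The delicate point is the root. The term $\deg(u')$ above can cost a factor $p^{\widetilde\Delta/2}$, and this must be beaten by at least one factor $np^{\widetilde\Delta/2}$. When $j_0\ge1$ this works comfortably, because $n\,p^{\widetilde\Delta}\gg (\log n)^{-1}$ while $n^{1/2}\gg\log n$. If the root lies on a tree part of $K'$ rather than in the core, I would instead orient each shared edge toward its endpoint that is farther from $u'$, so that every non-root vertex receives at most one edge per unit of the peeling. Combining the peeling estimate with the core estimate gives $n^{j}p^{k}\gtrsim1$ for every overlap pattern. Summing over the $O_H(1)$ patterns then yields the expectation bound, and Markov's inequality completes the proof.
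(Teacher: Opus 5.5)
Your overall route is the paper's. You bound $\E\bigl[\sum_d\mathcal N([n];d)^2\bigr]$ by splitting pairs $(\varphi_1,\varphi_2)$ according to their overlap (your $j,k$ are the paper's $r-1,s$), and note that $r=1$ gives exactly $n^{2v_K-3}p^{2e_K-2}$. You then show $n^{r-1}p^{s}$ is bounded below for every other overlap type and finish with Markov. However, the density step fails as written at $j_0=1$. There your displayed bound reads $n^{j_0}p^{k_0}\ge(np^{\widetilde\Delta/2})\,p^{\widetilde\Delta/2}=np^{\widetilde\Delta}$. The hypothesis only gives $np^{\widetilde\Delta}\gg(\log n)^{-1}$, and for $p=\log\log n\,(n\log n)^{-1/\widetilde\Delta}$ this tends to $0$. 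Your appeal to $n^{1/2}\gg\log n$ needs a second factor $np^{\widetilde\Delta/2}$, which exists only when $j_0\ge2$. The degree-sum bound $2k_0\le\widetilde\Delta(j_0+1)$ is too lossy on two vertices; there you must use $k_0\le1$, which gives the factor $np\ge1$. Your treatment of a root lying on a tree part is also not correct as stated. If you orient edges away from $u'$, a core vertex can receive two edges, for example the vertex of a $4$-cycle opposite the attachment point. Edges joining equidistant vertices of an odd cycle are not oriented at all. So ``every non-root vertex receives at most one edge'' is false inside the core.

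Both problems disappear with the paper's uniform accounting. Let $\zeta$ be the number of shared vertices lying in the core. Charge each shared non-core edge to its endpoint farther from the core; this endpoint may be the root itself. Core edges among $\zeta$ vertices number at most $\widetilde\Delta\zeta/2$. Hence
\[
s\le (r-\zeta)+\frac{\widetilde\Delta\zeta}{2}\le\frac{\widetilde\Delta r}{2},
\]
which gives $s\le\widetilde\Delta(r-1)-1$ for every $r\ge2$: for $r=2$ because $s\le1$, and for $r\ge3$ because $\widetilde\Delta\ge2$. Therefore $n^{r-1}p^{s}\ge n^{r-1}p^{\widetilde\Delta(r-1)-1}\gg n^{1/\widetilde\Delta}(\log n)^{-(r-1)}\to\infty$. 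This needs no case split on where $u'$ sits and no separate peeling step. The rest of your proposal (the pair count, reduction to induced overlaps, finitely many patterns, Markov) is fine.
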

\begin{lmm}
\label{lem:ER-centered-opnorm}
Define the symmetric matrix \(D=(D_{ij})_{1\le i,j\le n}\) by $D_{ij}:=\big(\mathcal G_n(i,j)-p_n\big)\mathbf 1_{\{i\neq j\}}$.
% \[
% D_{ij}:=
% \begin{cases}
% \mathcal G_n(i,j)-p_n, & i\ne j,\\
% 0, & i=j.
% \end{cases}
% \]
If \(n p_n\gg \log n\), then
$\|D\|_{\mathrm{op}}
=
O_{\mathbb P}\left(\sqrt{np_n}\right)$.    
\end{lmm}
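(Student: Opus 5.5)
The plan is to get a bound on $\mathbb E\|D\|_{\mathrm{op}}$ of the right order $\sqrt{np_n}$, with no spurious logarithmic factor, and then pass to $O_{\mathbb P}$ by Markov's inequality.

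\textbf{Structure of $D$.} The matrix $D$ is symmetric with zero diagonal. Its entries $\{D_{ij}\}_{i<j}$ are independent and centered. Each satisfies $|D_{ij}|\le 1$ and $\operatorname{Var}(D_{ij})=p_n(1-p_n)\le p_n$. Hence the two natural variance parameters are
\[
\sigma:=\max_{i}\Big(\sum_{j}\mathbb E D_{ij}^2\Big)^{1/2}\le \sqrt{np_n},
\qquad
\sigma_*:=\max_{i,j}\|D_{ij}\|_\infty\le 1 .
\]

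\textbf{Expectation bound.} We invoke the nonasymptotic bound of Bandeira--van Handel for symmetric matrices with independent bounded centered entries above the diagonal:
\[
\mathbb E\|D\|_{\mathrm{op}}\le C\big(\sigma+\sigma_*\sqrt{\log n}\big).
\]
This follows from their Gaussian result after symmetrization, and equivalently from the Latała/Seginer-type refinements. Plugging in the parameters gives
\[
\mathbb E\|D\|_{\mathrm{op}}\le C\big(\sqrt{np_n}+\sqrt{\log n}\big).
\]

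\textbf{Conclusion.} Since $np_n\gg\log n$, we have $\sqrt{\log n}=o(\sqrt{np_n})$, so $\mathbb E\|D\|_{\mathrm{op}}\le C'\sqrt{np_n}$ for all large $n$. Markov's inequality then gives, for every $M>0$,
\[
\mathbb P\big(\|D\|_{\mathrm{op}}>M\sqrt{np_n}\big)\le C'/M,
\]
which is exactly $\|D\|_{\mathrm{op}}=O_{\mathbb P}(\sqrt{np_n})$.

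\textbf{Main obstacle and fallback.} The main obstacle is avoiding the extra $\sqrt{\log n}$ factor. Matrix Bernstein alone gives only $O_{\mathbb P}(\sqrt{np_n\log n})$, which is too weak here. If one prefers not to cite Bandeira--van Handel, the fallback is the Feige--Ofek argument. In that argument one bounds $\sup_{x,y\in\mathcal N}x^\top Dy$ over a $1/2$-net $\mathcal N$ of the sphere, splitting index pairs into light pairs ($|x_iy_j|\le\sqrt{p_n/n}$) and heavy pairs. The light pairs are handled by Bernstein plus a union bound over the $e^{O(n)}$ net. The heavy pairs are handled by a discrepancy property of $\mathcal G_n$: edge counts $e(A,B)$ are controlled relative to $p_n|A||B|$. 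This property holds with high probability once all degrees are $O(np_n)$, which is guaranteed precisely by $np_n\gg\log n$ via Chernoff and a union bound over vertices.
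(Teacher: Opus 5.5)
Your proof is correct and follows the same route as the paper: get $\mathbb E\|D\|_{\mathrm{op}}\lesssim\sqrt{np_n}$ from a known nonasymptotic bound, then conclude with Markov's inequality. The only difference is where the expectation bound comes from. The paper cites the random-graph concentration result of Le, Levina and Vershynin (maximal expected degree $d\le np_n$ with $d\gg\log n$, proved there by a Feige--Ofek-type argument like your fallback), whereas you use the general Bandeira--van Handel bound $C(\sigma+\sigma_*\sqrt{\log n})$, which shows explicitly that $np_n\gg\log n$ is needed only to absorb the $\sqrt{\log n}$ term.
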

Now fix $S\subseteq[n]$. Each injective embedding of $K$ into $S$ is obtained uniquely by first choosing an injective map $\varphi:V(K^{\prime})\hookrightarrow S$ and then choosing the image $i_u \in S\setminus\varphi(V(K^{\prime}))$. Since
$u$ is adjacent only to $u'$, using the definition of $\mathcal N_K(S;\mathcal G_n)$ (see \eqref{defon}), we have
\begin{align}
\mathcal N_K(S;\mathcal G_n)
=
\sum_{\varphi:V(K^{\prime})\hookrightarrow S}
\left[
\prod_{\{a,b\}\in E(K^{\prime})}
\mathcal G_n(\varphi(a),\varphi(b))
\right]
\sum_{i_u\in S\setminus\varphi(V(K^{\prime}))}
\mathcal G_n(i_u,\varphi(u')).
\label{idk}
\end{align}
Using
$\mathcal G_n(i_u,\varphi(u'))
=
p+D_{i_u,\varphi(u')}$
and
\[
\sum_{i_u\in S\setminus\varphi(V(K^{\prime}))}
D_{i_u,\varphi(u')}
=
(D\mathbf 1_S)(\varphi(u'))
-
\sum_{z\in V(K^{\prime})}
D_{\varphi(z),\varphi(u')},
\]
we obtain
\begin{align*}
\sum_{i_u\in S\setminus\varphi(V(K^{\prime}))}
\mathcal G_n(i_u,\varphi(u'))
&=
p(|S|-v_K+1)
+(D\mathbf 1_S)(\varphi(u'))-
\sum_{z\in V(K^{\prime})}
D_{\varphi(z),\varphi(u')}.
\end{align*}
Substituting this identity into \eqref{idk} gives
\begin{align}
\mathcal N_K(S;\mathcal G_n)
&=
p(|S|-v_K+1)\mathcal N_{K^{\prime}}(S;\mathcal G_n)
+E^{(1)}_{K,S}-E^{(2)}_{K,S},
\label{est4}
\end{align}
where
\begin{align*}
E^{(1)}_{K,S}
&:=
\sum_{\varphi:V(K^{\prime})\hookrightarrow S}
\left[
\prod_{\{a,b\}\in E(K^{\prime})}
\mathcal G_n(\varphi(a),\varphi(b))
\right]
(D\mathbf 1_S)(\varphi(u')),
\end{align*}
and
\begin{align*}
E^{(2)}_{K,S}
&:=
\sum_{\varphi:V(K^{\prime})\hookrightarrow S}
\left[
\prod_{\{a,b\}\in E(K^{\prime})}
\mathcal G_n(\varphi(a),\varphi(b))
\right]
\sum_{z\in V(K^{\prime})}
D_{\varphi(z),\varphi(u')}.
\end{align*}

We first control $E^{(1)}_{K,S}$. By the definition of
$\mathcal N(S;d)$,
\[
E^{(1)}_{K,S}
=
\sum_{d=1}^n
\mathcal N(S;d)(D\mathbf 1_S)(d).
\]
Since all summands defining $\mathcal N(S;d)$ are nonnegative,
\[
0\leq \mathcal N(S;d)\leq \mathcal N([n];d),
\qquad d\in[n].
\]
Therefore, by the Cauchy--Schwarz inequality,
\begin{align*}
\sup_{S\subseteq[n]}|E^{(1)}_{K,S}|
&\leq
\sup_{S\subseteq[n]}
\|\mathcal N(S;.)\|_2\,
\|D\mathbf 1_S\|_2\leq
\|\mathcal N([n];.)\|_2\,
\|D\|_{\mathrm{op}}\sqrt n.
\end{align*}
Using \cref{eq:rooted-L2} and \cref{lem:ER-centered-opnorm}, we obtain $\sup_{S\subseteq[n]}|E^{(1)}_{K,S}|
=
O_\mathbb P (
n^{v_K-\frac12}
p^{e_K-\frac12}
).$
Consequently,
\begin{align*}
p^{-e_K}\sup_{S\subseteq[n]}|E^{(1)}_{K,S}|
&=
O_\mathbb P\left(
n^{v_K}\sqrt{\frac{1}{np}}
\right)
=o_\mathbb P(n^{v_K}),
\end{align*}
since
$np\gg\log n$.

We next control $E^{(2)}_{K,S}$. Since
$D_{\varphi(u'),\varphi(u')}=0$,
the term corresponding to $z=u'$ vanishes, and hence
\begin{align*}
E^{(2)}_{K,S}
&=
\sum_{\varphi:V(K^{\prime})\hookrightarrow S}
\left[
\prod_{\{a,b\}\in E(K^{\prime})}
\mathcal G_n(\varphi(a),\varphi(b))
\right]
\sum_{z\in V(K^{\prime})\setminus\{u'\}}
D_{\varphi(z),\varphi(u')}.
\end{align*}
Since
$|D_{ij}|\leq \mathcal G_n(i,j)+p$,
we have
\begin{align*}
|E^{(2)}_{K,S}|
&\leq
\sum_{z\in V(K^{\prime})\setminus\{u'\}}
\sum_{\varphi:V(K^{\prime})\hookrightarrow S}
\left[
\prod_{\{a,b\}\in E(K^{\prime})}
\mathcal G_n(\varphi(a),\varphi(b))
\right]\times
\left(
\mathcal G_n(\varphi(z),\varphi(u'))+p
\right).
\end{align*}
The displayed upper bound is monotone under inclusion of $S$.
Therefore,
\begin{align*}
\sup_{S\subseteq[n]}|E^{(2)}_{K,S}|
&\leq
\sum_{z\in V(K^{\prime})\setminus\{u'\}}
\sum_{\varphi:V(K^{\prime})\hookrightarrow[n]}
\left[
\prod_{\{a,b\}\in E(K^{\prime})}
\mathcal G_n(\varphi(a),\varphi(b))
\right]\times
\left(
\mathcal G_n(\varphi(z),\varphi(u'))+p
\right).
\end{align*}

For each fixed $z$ and $\varphi$, consider the expectation of the
corresponding summand. If $\{z,u'\}\in E(K^{\prime})$, then the additional
factor
$\mathcal G_n(\varphi(z),\varphi(u'))$ duplicates an edge indicator
already present in the product, and hence the expectation of that
term is $p^{e_K-1}$. If $\{z,u'\}\notin E(K^{\prime})$, its expectation is
$p^{e_K}$. The term containing the added constant $p$ also has
expectation $p^{e_K}$. Thus, for all sufficiently large $n$,
the expectation of each summand is at most $2p^{e_K-1}$.
Since $K$ is fixed and there are at most $n^{v_K-1}$ injective maps
$\varphi:V(K^{\prime})\hookrightarrow[n]$, it follows that
\[
\mathbb E\left[
\sup_{S\subseteq[n]}|E^{(2)}_{K,S}|
\right]
\leq
C_K n^{v_K-1}p^{e_K-1}\Rightarrow p^{-e_K}
\sup_{S\subseteq[n]}|E^{(2)}_{K,S}|
=
O_\mathbb P\left(\frac{n^{v_K-1}}{p}\right)
=o_{\mathbb P}(n^{v_K})
\]
because $np\to\infty$. 
% Therefore, by Markov's inequality,
% \[
% \sup_{S\subseteq[n]}|E^{(2)}_{K,S}|
% =
% O_\mathbb P\left(
% n^{v_K-1}p^{e_K-1}
% \right).
% \]
% % % % % % % Consequently,
%$p^{-e_K}
%\sup_{S\subseteq[n]}|E^{(2)}_{K,S}|
%=
%% O_\mathbb P\left(\frac{n^{v_K-1}}{p}\right)
% =o_{\mathbb P}(n^{v_K})$,
% because $np\to\infty$.
Recalling \eqref{est4}, this proves
\eqref{eq:one-leaf-independent}.
\\

Finally, with $\omega=v-\tilde{v}$  as in \cref{def:notation},
% choose an order
% $u_1,\ldots,u_\omega$
% in which the vertices of $V(H)\setminus V(\widetilde H)$ are removed
% as leaves. 
let $\{K_j: 0\le j\le \omega\}$ be the sequence of graphs produced by the leaf-removal process, such that  $K_0:=H$ and $K_\omega=\widetilde H$.
% \[
% K_0:=H,\qquad
% K_j:=K_{j-1}-u_j,\qquad 1\leq j\leq\omega.
% \]
Then we have $v_{K_j}=v-j$, and
$e_{K_j}=e-j$. For $0\leq j\leq\omega$ and $S\subseteq[n]$, define
$A_j(S):=
p^{-e_{K_j}}\mathcal N_{K_j}(S;\mathcal G_n).$
By \eqref{eq:one-leaf-independent}, for each
$0\leq j\leq\omega-1$,
\[
A_j(S)
=
(|S|-v+j+1)A_{j+1}(S)+R_{j,n}(S), \quad \mbox{where}\quad \sup_{S\subseteq[n]}|R_{j,n}(S)|
=
o_{\mathbb P}(n^{v-j}).
\]

Iterating these identities gives
\begin{align*}
A_0(S)
&=
\left[
\prod_{j=0}^{\omega-1}(|S|-v+j+1)
\right]A_\omega(S)+
\sum_{j=0}^{\omega-1}
\left[
\prod_{k=0}^{j-1}(|S|-v+k+1)
\right]R_{j,n}(S),
\end{align*}
where the product is understood to be $1$ when $j=0$.

Since $\omega$ is fixed and
$\bigl||S|-v+k+1\bigr|\leq n+v$
uniformly over $S\subseteq[n]$, we have, for each fixed $j$,
\begin{align*}
\sup_{S\subseteq[n]}
\left|
\prod_{k=0}^{j-1}(|S|-v+k+1)R_{j,n}(S)
\right|
&\leq
(n+v)^j
\sup_{S\subseteq[n]}|R_{j,n}(S)|=
o_{\mathbb P}(n^v).
\end{align*}
Since there are only finitely many such error terms, their sum is
$o_P(n^v)$. Therefore, uniformly over $S\subseteq[n]$,
\[
p^{-e}\mathcal N_H(S;\mathcal G_n)
=
\prod_{j=0}^{\omega-1}(|S|-v+j+1)
p^{-\widetilde e}
\mathcal N_{\widetilde H}(S;\mathcal G_n)
+o_{\mathbb P}(n^v).
\]
Finally,
$\prod_{j=0}^{\omega-1}(|S|-v+j+1)
=
(|S|-\widetilde v)_\omega.$
Hence,
\[
\sup_{S\subseteq[n]}
\frac1{n^v}
\left|
p^{-e}\mathcal N_H(S;\mathcal G_n)
-
(|S|-\widetilde v)_\omega
p^{-\widetilde e}
\mathcal N_{\widetilde H}(S;\mathcal G_n)
\right|
=o_{\mathbb P}(1),
\]
which proves the result.
\end{proof}

\begin{proof}[Proof of \cref{lem:uniform-core-discrepancy-ER}]
Write $p:=p_n$. We first consider subsets $S\subseteq [n]$ of linear size. Fix
$\rho>0$. We claim that, for every $\varepsilon>0$, there exist
constants $c_{\varepsilon,\rho},C_{\varepsilon,\rho}>0$ such that,
uniformly over all $S\subseteq[n]$ satisfying $|S|\geq \rho n$,
\begin{align}
&\mathbb{P}\left(
n^{-\widetilde v}
\left|
p^{-\widetilde e}
\mathcal{N}_{\widetilde H}(S;\mathcal{G}_n)
-(|S|)_{\widetilde v}
\right|>\varepsilon
\right) \leq
C_{\varepsilon,\rho}
\exp\left\{
-c_{\varepsilon,\rho}
|S|^2p^{\widetilde\Delta}\log(1/p)
\right\}.
\label{est5}
\end{align}

The upper-tail estimate follows from
\cref{lem:hom-to-inj-upper-tail}.
\begin{lmm}[Upper-tail control]\label{lem:hom-to-inj-upper-tail} Consider $\widetilde{H}$ defined in \cref{corH}. Assume that
  $  (n\log n)^{-1/\widetilde\Delta}\ll p_n\ll 1$.
Then, for every $\rho,\varepsilon>0$, there exist constants
$c_{\varepsilon,\rho},C_{\varepsilon,\rho}>0$ such that, for all $n$,
uniformly over all $S\subseteq[n]$ satisfying $|S|\ge \rho n$, we have
\[
\mathbb P\left(
    \mathcal N_{\widetilde{H}}(S;\mathcal G_n)>
    (1+\varepsilon)p_n^{\widetilde e}(|S|)_{\widetilde v}
\right)
\le
C_{\varepsilon,\rho}\exp\left\{
    -c_{\varepsilon,\rho, \widetilde H}
    |S|^2p_n^{\widetilde\Delta}\log(1/p_n)
\right\}.
\]
\end{lmm}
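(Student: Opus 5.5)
The plan is to reduce the statement to a known sharp upper-tail estimate for subgraph counts in $G(m,p)$, applied to the induced subgraph $\mathcal G_n[S]$. That subgraph is an Erd\H{o}s--R\'enyi graph $G(|S|,p_n)$ on $m:=|S|\ge\rho n$ vertices. Moreover, $\mathcal N_{\widetilde H}(S;\mathcal G_n)$ is exactly the injective count of $\widetilde H$ in $G(m,p_n)$, whose mean is $p_n^{\widetilde e}(m)_{\widetilde v}$. The statement therefore says: for any fixed graph $\widetilde H$ with maximum degree $\widetilde\Delta\ge2$ and $m\asymp n$, the relative upper tail $\{X_{\widetilde H}>(1+\varepsilon)\mathbb E X_{\widetilde H}\}$ has probability at most $\exp\{-c\,m^2p^{\widetilde\Delta}\log(1/p)\}$. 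Uniformity over $S$ is automatic, because the law of $\mathcal G_n[S]$ depends only on $|S|$ and every constant below depends only on $(\varepsilon,\rho,\widetilde H)$.

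The first step is to check that $p_n$ is in the localized regime. Since $\rho n\le m\le n$, the assumption $(n\log n)^{-1/\widetilde\Delta}\ll p_n\ll1$ gives $m^{-1/\widetilde\Delta}\ll p_n$, with room to spare, and also $m^2p^{\widetilde\Delta}\log(1/p)\to\infty$. I would then invoke the upper-tail results cited in the introduction. For regular $\widetilde H$ this is \cite{BasakBasu2023UpperTailRegularLocalized}, and for irregular $\widetilde H$ it is \cite{BasakKarmakar2025UpperTailIrregular}. Both show that in this range
\[
-\log\mathbb P\big(X_{\widetilde H}\ge(1+\varepsilon)\mathbb E X_{\widetilde H}\big)\ \ge\ c_{\varepsilon,\widetilde H}\,m^2p^{\widetilde\Delta}\log(1/p)
\]
for all large $m$, with the lower bound coming from the large-deviation (mean-field) variational problem. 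The count there may be homomorphism-based or normalized by $m^{\widetilde v}$. In that case I would convert as follows. The non-injective homomorphisms of $\widetilde H$ are bounded by counts of proper quotient graphs of $\widetilde H$, and each such count has mean $O(m^{\widetilde v-1}p^{\widetilde e-\widetilde\Delta'})$, which is $o(m^{\widetilde v}p^{\widetilde e})$ in this regime. So at the level of events, $\{\mathrm{inj}>(1+\varepsilon)\mathbb E\}\subseteq\{\mathrm{hom}>(1+\varepsilon/2)\mathbb E\,\mathrm{hom}\}$ once $n$ is large. Likewise, $(m)_{\widetilde v}/m^{\widetilde v}\to1$, which costs only another $\varepsilon/2$. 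For the finitely many small $n$ I would absorb everything into $C_{\varepsilon,\rho}$, which is possible since $m^2p^{\widetilde\Delta}\log(1/p)$ is bounded on that finite range.

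The main obstacle is matching the exact hypotheses of the cited theorems. Some results are stated only for connected graphs, or assume $p$ above a threshold that depends on $\widetilde H$. For example, the irregular case may require $p\gg m^{-1/\widetilde\Delta}$ rather than a threshold involving the minimum degree. If $\widetilde H$ is disconnected, I would first try the following. Each component $\widetilde H_j$ has maximum degree at most $\widetilde\Delta$. Since $\mathcal N_{\widetilde H}\le\prod_j\mathcal N_{\widetilde H_j}$, the event $\{\mathcal N_{\widetilde H}>(1+\varepsilon)p^{\widetilde e}(m)_{\widetilde v}\}$ is contained in the union over $j$ of $\{\mathcal N_{\widetilde H_j}>(1+\varepsilon')p^{\widetilde e_j}(m)_{\widetilde v_j}\}$ for some $\varepsilon'>0$ with $(1+\varepsilon')^{k}$ just below $1+\varepsilon$. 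Each of these component events has probability at most $\exp\{-c\,m^2p^{\Delta_j}\log(1/p)\}\le\exp\{-c\,m^2p^{\widetilde\Delta}\log(1/p)\}$, because $\Delta_j\le\widetilde\Delta$ and $p<1$. A union bound over the finitely many components then finishes the argument. If a cited result gives only the weaker rate $m^2p^{\widetilde\Delta}$ without the $\log(1/p)$ factor, I would fall back on the Janson-type or Kim--Vu polynomial concentration bound. That would be a weaker lemma but still enough downstream, since a union bound over the $2^n$ subsets $S$ needs only $n^2p^{\widetilde\Delta}\gg n$. I would flag this as the point most likely to need adjustment.
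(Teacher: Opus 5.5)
Your route is the paper's: work in $\mathcal G_n[S]$, split according to whether $\widetilde H$ is regular, cite Basak--Basu in the regular case and an irregular-graph upper-tail theorem otherwise, pass between labelled and unlabelled counts by a constant factor, and absorb finitely many $n$ into $C_{\varepsilon,\rho}$. (Also, $\widetilde H$ is the 2-core of a connected graph, so it is connected and your component argument is unnecessary.) However, the step you treat as routine, checking the regime, is wrong, and that check is essentially the whole content of the proof.

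Since $(n\log n)^{-1/\widetilde\Delta}=n^{-1/\widetilde\Delta}(\log n)^{-1/\widetilde\Delta}$, the hypothesis does \emph{not} give $p_n\gg m^{-1/\widetilde\Delta}$. It permits, e.g., $p_n=n^{-1/\widetilde\Delta}(\log n)^{-1/(2\widetilde\Delta)}$, which lies below $m^{-1/\widetilde\Delta}$. So if the irregular theorem really needed $p\gg m^{-1/\widetilde\Delta}$, as you contemplate, the lemma would not follow. The paper instead verifies that both cited results reach below $n^{-1/\widetilde\Delta}$:
\begin{itemize}
\item In the regular case, $|S|p_n^{\widetilde\Delta/2}\gg\sqrt{n/\log n}\gg(\log|S|)^{1/(\widetilde v-2)}$, which is Basak--Basu's localized regime.
\item In the irregular case, $m(\widetilde H)\le\widetilde\Delta/2$ forces $p^\star_{\widetilde H}(|S|)\le|S|^{-1/\widetilde\Delta-\eta_{\widetilde H}}$. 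Hence $p_n\gg p^\star_{\widetilde H}(|S|)(\log|S|)^K$ and $\log(p_n/p^\star_{\widetilde H}(|S|))\ge c\log(1/p_n)$.
\end{itemize}
That last inequality is what converts the theorem's rate $f\cdot\min\{\log(p/p^\star),\log(1/p)\}$, with $f\ge|S|^2p^{\widetilde\Delta}$, into $|S|^2p_n^{\widetilde\Delta}\log(1/p_n)$. Your ``both show'' glosses over this step.

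The same misreading breaks your two contingency steps in the window $(n\log n)^{-1/\widetilde\Delta}\ll p_n\lesssim n^{-1/\widetilde\Delta}$.
\begin{itemize}
\item Degenerate homomorphisms are not negligible there. For $\widetilde H=K_{2,3}$ (so $\widetilde\Delta=3$), maps identifying the two degree-$3$ vertices have mean $\asymp m^4p^3$, against $\asymp m^5p^6$ for injective maps. The ratio is $(mp^3)^{-1}$, which diverges for $p=m^{-1/3}(\log m)^{-1/6}$. So you must use results stated for copies, as the paper does, not convert from homomorphism counts.
\item A rate $m^2p^{\widetilde\Delta}$ without the $\log(1/p)$ factor is not ``enough downstream''. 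The hypothesis gives only $n^2p_n^{\widetilde\Delta}\gg n/\log n$, so the union bound over $2^n$ sets in the proof of \cref{lem:uniform-core-discrepancy-ER} fails. The factor $\log(1/p_n)$, of order $\log n$ in this window, is exactly what the $\log n$ in the threshold compensates for. Janson's inequality is also a lower-tail tool, so it cannot serve as that fallback.
\end{itemize}
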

% Notice that since $H$ is connected and is not a tree, its 2-core $\widetilde{H}$ is
% nonempty, connected and has minimum degree at least $2$. Hence \cref{lem:hom-to-inj-upper-tail} applies with $F=\widetilde H$.

Without loss of generality assume
$\varepsilon\in (0,1/2)$. Since $(|S|)_{\widetilde v}\le n^{\widetilde v}$, we have
\begin{align*}
&\left\{
n^{-\widetilde v}
\left(
p^{-\widetilde e}\mathcal N_{\widetilde H}(S;\mathcal G_n)
-(|S|)_{\widetilde v}
\right)
>\varepsilon
\right\}\subseteq
\left\{
\mathcal N_{\widetilde H}(S;\mathcal G_n)
>
(1+\varepsilon)p^{\widetilde e}(|S|)_{\widetilde v}
\right\},
\end{align*}
and
\begin{align*}
&\left\{
n^{-\widetilde v}
\left(
(|S|)_{\widetilde v}
-p^{-\widetilde e}\mathcal N_{\widetilde H}(S;\mathcal G_n)
\right)
>\varepsilon
\right\}\subseteq
\left\{
\mathcal N_{\widetilde H}(S;\mathcal G_n)
<
(1-\varepsilon)p^{\widetilde e}(|S|)_{\widetilde v}
\right\}.
\end{align*}

For the lower-tail event, we use the standard subgraph-count
lower-tail estimate of \cite[Theorem 3]{jansonwarnke2016lower},
% equivalently, it follows from Janson's inequality
% \cite[Theorem 2.14]{jansonluczakrucinski2000}, together with the
% usual overlap estimate. 
applied to the graph $\mathcal G_n$ induced on $S$,
it gives
\[
\mathbb P\left(
\mathcal N_{\widetilde H}(S;\mathcal G_n)
<
(1-\varepsilon)p^{\widetilde e}(|S|)_{\widetilde v}
\right)
\le
C_\varepsilon\exp\left\{
-c_{\varepsilon}
\min_{\substack{J\subseteq\widetilde H\\ e_J\ge1}}
|S|^{v_J}p^{e_J}
\right\},
\]
where $v_J$ and $e_J$ denote the number of vertices and edges,
respectively, of $J$, and $C_\varepsilon>0$ is a constant depends on $\varepsilon$.
% For the lower tail, we use the standard subgraph-count lower-tail
% estimate of \cite[Theorem 3]{jansonwarnke2016lower}; equivalently, it
% follows from Janson's inequality
% \cite[Theorem 2.14]{jansonluczakrucinski2000}, together with the usual
% overlap estimate. Applied to the graph $\mathcal{G}_n$ induced on
% $S$, it gives
% \begin{align*}
% &\mathbb{P}\left(
% \mathcal{N}_{\widetilde H}(S;\mathcal{G}_n)
% <
% (1-\varepsilon')
% p^{\widetilde e}(|S|)_{\widetilde v}
% \right)\leq
% \exp\left\{
% -c_{\varepsilon'}
% \min_{\substack{J\subseteq\widetilde H\\e_J>0}}
% |S|^{v_J}p^{e_J}
% \right\},
% \end{align*}
% where $v_J$ and $e_J$ denote the number of vertices and edges,
% respectively, of $J$.
We now claim that, for $|S|\geq \rho n$,
\begin{align}
\min_{\substack{J\subseteq\widetilde H\\e_J\ge1}}
|S|^{v_J}p^{e_J}
\gg
|S|^2p^{\widetilde\Delta}\log(1/p).
\label{eq:lower-tail-overlap}
\end{align}
Indeed, fix a nonempty subgraph $J\subseteq\widetilde H$. It is enough
to show
\[
|S|^{v_J-2}p^{e_J-\widetilde\Delta}
\gg \log(1/p).
\]
Since $e_J\ge 1$, we have $v_J\geq2$.

If $e_J<\widetilde\Delta$, then
\[
|S|^{v_J-2}p^{e_J-\widetilde\Delta}
\geq
p^{-(\widetilde\Delta-e_J)}
\gg \log(1/p).
\]

If $e_J=\widetilde\Delta$, then $v_J\geq3$, and hence using $\log(1/p)=O(\log n)$ we have 
\[
|S|^{v_J-2}p^{e_J-\widetilde\Delta}
=
|S|^{v_J-2}
\geq c_\rho n
\gg \log(1/p).
\]
Finally, suppose $e_J>\widetilde\Delta$. Recall that
$p(n\log n)^{1/\widetilde\Delta}\longrightarrow\infty$.
Since $|S|\geq\rho n$, we have
\begin{align*}
|S|^{v_J-2}p^{e_J-\widetilde\Delta}
\gg
 n^{v_J-2}
\left(
(n\log n)^{-1/\widetilde\Delta}
\right)^{e_J-\widetilde\Delta}= 
n^{v_J-1-e_J/\widetilde\Delta}
(\log n)^{-(e_J-\widetilde\Delta)/\widetilde\Delta}.
\end{align*}
Since $J\subseteq\widetilde H$ and $\widetilde H$ has maximum degree
$\widetilde\Delta$, we have $e_J\leq {\widetilde\Delta v_J}/{2}$ implying that $v_J-1-{e_J}/{\widetilde\Delta}
\geq
{v_J}/{2}-1\ge 1/2$.
%\begin{align*}
 %   \Rightarrow 
%\end{align*}
% Moreover, since $e_J>\widetilde\Delta\geq2$, we have $v_J\geq3$.
% Consequently,
% $v_J-1-{e_J}/{\widetilde\Delta}\geq 1/2$.
% Thus,
% \[
% |S|^{v_J-2}p^{e_J-\widetilde\Delta}
% \gg
% c_\rho
% n^{1/2}
% (\log n)^{-(e_J-\widetilde\Delta)/\widetilde\Delta}
% \gg \log n.
% \]
% Since
% $p\gg(n\log n)^{-1/\widetilde\Delta}$,
% we also have $\log(1/p)=O(\log n)$. 
Therefore,
$|S|^{v_J-2}p^{e_J-\widetilde\Delta}
\gg \log(1/p)$. Since $\widetilde H$ is fixed, there are only finitely many subgraphs
$J\subseteq\widetilde H$, and hence
\eqref{eq:lower-tail-overlap} follows.
Therefore,
\begin{align*}
&\mathbb P\left(
n^{-\widetilde v}
\left(
(|S|)_{\widetilde v}
-p^{-\widetilde e}\mathcal N_{\widetilde H}(S;\mathcal G_n)
\right)
>\varepsilon
\right)\le
\mathbb P\left(
\mathcal N_{\widetilde H}(S;\mathcal G_n)
<
(1-\varepsilon)p^{\widetilde e}(|S|)_{\widetilde v}
\right)\le
C_{\varepsilon} \exp\left\{
-c_{\varepsilon,\rho}
|S|^2p^{\widetilde\Delta}\log(1/p)
\right\}.
\end{align*}
Together with the upper-tail estimate from \cref{lem:hom-to-inj-upper-tail}, this proves
\eqref{est5}.
% Therefore,
% \begin{align*}
% &\mathbb{P}\left(
% \mathcal{N}_{\widetilde H}(S;\mathcal{G}_n)
% <
% (1-\varepsilon)
% p^{\widetilde e}(|S|)_{\widetilde v}
% \right)\leq
% \exp\left\{
% -c_{\varepsilon}
% |S|^2p^{\widetilde\Delta}\log(1/p)
% \right\}.
% \end{align*}
% Together with the upper-tail estimate, this proves \eqref{est5},
% after adjusting the constants.

Taking a union bound over all $S\subseteq[n]$ with $|S|\geq\rho n$,
we obtain
\begin{align}\label{eq:large-sets}
&\mathbb{P}\left(
\sup_{\substack{S\subseteq[n]\\|S|\geq\rho n}}
n^{-\widetilde v}
\left|
p^{-\widetilde e}
\mathcal{N}_{\widetilde H}(S;\mathcal{G}_n)
-(|S|)_{\widetilde v}
\right|
>\varepsilon
\right)\leq
2^n C_{\varepsilon,\rho}
\exp\left\{
-c_{\varepsilon,\rho}n^2
p^{\widetilde\Delta}\log(1/p)
\right\}\to 0.
\end{align}
% We now verify that
% $np^{\widetilde\Delta}\log(1/p)\longrightarrow\infty$.
% If $p\leq n^{-1/(2\widetilde\Delta)}$, then
% $\log(1/p)\geq (2\widetilde\Delta)^{-1}\log n$,
% and hence
% \[
% np^{\widetilde\Delta}\log(1/p)
% \geq
% \frac{1}{2\widetilde\Delta}
% np^{\widetilde\Delta}\log n
% \longrightarrow\infty.
% \]
% If $p>n^{-1/(2\widetilde\Delta)}$, then
% \[
% np^{\widetilde\Delta}\log(1/p)
% \geq
% n^{1/2}\log(1/p)
% \longrightarrow\infty,
% \]
% where the last conclusion follows from $p\ll1$. Therefore, the
% right-hand side above converges to zero, and hence, for every fixed
% $\rho>0$,
% \begin{align}
% \sup_{\substack{S\subseteq[n]\\|S|\geq\rho n}}
% n^{-\widetilde v}
% \left|
% p^{-\widetilde e}
% \mathcal{N}_{\widetilde H}(S;\mathcal{G}_n)
% -(|S|)_{\widetilde v}
% \right|
% =o_P(1).
% \label{eq:large-sets}
% \end{align}

It remains to handle small sets. Let $S\subseteq[n]$ satisfy
$|S|<\rho n$, and choose $T\subseteq[n]$ such that $S\subseteq T$ and
$|T|=\lceil\rho n\rceil$. By monotonicity,
$\mathcal{N}_{\widetilde H}(S;\mathcal{G}_n)
\leq
\mathcal{N}_{\widetilde H}(T;\mathcal{G}_n)$.
Thus,
\begin{align*}
&n^{-\widetilde v}
\left(
p^{-\widetilde e}
\mathcal{N}_{\widetilde H}(S;\mathcal{G}_n)
-(|S|)_{\widetilde v}
\right)\leq
n^{-\widetilde v}
\left(
p^{-\widetilde e}
\mathcal{N}_{\widetilde H}(T;\mathcal{G}_n)
-(|T|)_{\widetilde v}
\right)+
n^{-\widetilde v}
\left(
(|T|)_{\widetilde v}
-
(|S|)_{\widetilde v}
\right)\\
&\qquad\leq
\sup_{\substack{R\subseteq[n]\\|R|\geq\rho n}}
n^{-\widetilde v}
\left|
p^{-\widetilde e}
\mathcal{N}_{\widetilde H}(R;\mathcal{G}_n)
-(|R|)_{\widetilde v}
\right|
+
C\rho^{\widetilde v},
\end{align*}
for all $n$. Moreover,
\[n^{-\widetilde v}
\left(
(|S|)_{\widetilde v}-p^{-\widetilde e}
\mathcal{N}_{\widetilde H}(S;\mathcal{G}_n)
\right)\le n^{-\widetilde v}
(|S|)_{\widetilde v}\le \rho^{\widetilde v}\]

Taking the supremum over all $S$ with $|S|<\rho n$ and combining this
with \eqref{eq:large-sets}, we obtain
\[
\sup_{S\subseteq[n]}
n^{-\widetilde v}
\left|
p^{-\widetilde e}
\mathcal{N}_{\widetilde H}(S;\mathcal{G}_n)
-(|S|)_{\widetilde v}
\right|
\leq
o_{\mathbb P}(1)+C\rho^{\widetilde v}.
\]
Since $\rho>0$ is arbitrary, letting $\rho\downarrow0$ proves the
claim.
\end{proof}

\begin{proof}[Proof of \cref{eq:rooted-L2}]
Note that $K$ has $2$-core
$\widetilde H$, and $\widetilde H$ has maximum degree
$\widetilde\Delta$. With $p=p_n$, we have
\begin{align*}
\E\Big[\sum_{d=1}^n\mathcal N([n];d)^2\Big]
&=
\sum_{d=1}^n
\sum_{\varphi_1,\varphi_2\in\mathcal R_{d,[n]}}
\E\left[
\prod_{\{i,j\}\in
E(\varphi_1(K^{\prime}))\cup E(\varphi_2(K^{\prime}))}
\mathcal G_n(i,j)
\right]=
\sum_{d=1}^n
\sum_{\varphi_1,\varphi_2\in\mathcal R_{d,[n]}}
p^{|E(\varphi_1(K^{\prime}))\cup E(\varphi_2(K^{\prime}))|}.
\end{align*}

We classify pairs $(\varphi_1,\varphi_2)$ according to their overlap. Let
$\varphi_1(K')$ and $\varphi_2(K')$ denote the labeled copies of $K'$
produced by the maps $\varphi_1,\varphi_2:V(K')\to[n]$. Let $K_{\cap}$ be the graph with
\[
V(K_{\cap})
=
V(\varphi_1(K'))
\cap
V(\varphi_2(K')),
\qquad
E(K_{\cap})
=
E(\varphi_1(K'))
\cap
E(\varphi_2(K')).
\]
We set $r
:=|V(K_{\cap})|$, $s
:=|E(K_{\cap})|$.
Note that $r \ge 1$ since $\phi_1,\phi_2$ both map $u'$ to $d$.

Since $|V(K^{\prime})|=v_K-1$ and $|E(K^{\prime})|=e_K-1$, the union
graph $\varphi_1(K')\cup\varphi_2(K')$ has
$2(v_K-1)-r$ vertices and
$2(e_K-1)-s$ edges. Thus, for any fixed $d,r$, and $s$, the
number of possible pairs
$\varphi_1,\varphi_2\in\mathcal R_{d,[n]}$ is at most
$O\left(n^{2(v_K-1)-r-1}\right)
=
O\left(n^{2v_K-3-r}\right)$,
and the corresponding expectation is exactly
$p^{2(e_K-1)-s}
=
p^{2e_K-2-s}$. Since $K'$ is fixed, there are only finitely many possible overlap types. For any overlap type having $r$ common vertices and $s$ common edges and summing over $d$, its contribution is $O\left(
n^{2(v_K-1)-r}p^{2e_K-2-s}
\right) $.

\begin{comment}
This yields
\begin{align}\label{eq:bound_1}
    \E\Big[\sum_{d=1}^n\mathcal N([n];d)^2\Big] \lesssim \sum_{r=1}^{v_K-1} \sum_{s \in S_r}\left(
n^{2(v_K-1)-r}p^{2e_K-2-s}
\right) 
\end{align}
\end{comment}
%Summing over $d\in[n]$ gives the total contribution, for fixed $r$ and $s$, as $O\left( n^{2(v_K-1)-r}p^{2e_K-2-s} \right)$.

\textbf{Case 1: $r=1$.} $\varphi_1(K')$ and $\varphi_2(K')$ intersect only at
the vertex $d$. In particular, they have no common edge, so $s=0$.
Hence the total contribution of all pairs $(\varphi_1,\varphi_2)$
with $r=1$ is
$O\left(
n^{2v_K-3}p^{2e_K-2}
\right)$.

\textbf{Case 2: $r\ge 2$.}
Let
\[
\zeta:=|V(K_\cap)\cap V(\varphi_1(\widetilde H))|.
\]
Recall that $K'$ is obtained from $\widetilde H$ by successively
attaching leaves. Thus, among the edges of $K_\cap$, those contained
in $\varphi_1(\widetilde H)$ are at most $\widetilde\Delta \zeta/2$, while
the remaining edges are at most $r-\zeta$. Therefore,
\begin{align*}
s
\le \frac{\widetilde\Delta \zeta}{2}+r-\zeta
\le \frac{\widetilde\Delta r}{2},
\end{align*}
where the last inequality uses $\widetilde\Delta\ge 2$. If $r=2$, we have $s\le 1\le \widetilde\Delta(r-1)-1$. If $r\ge3$, then
\(
s\le {\widetilde\Delta r}/{2}
\le \widetilde\Delta(r-1)-1,
\)
again since $\widetilde\Delta\ge2$. Hence, for any $r \ge 2$,
\begin{align}\label{eq:s_r}
s\le \widetilde\Delta(r-1)-1.
\end{align}

Now we have
\begin{align*}
\E\left[
\sum_{d=1}^n \mathcal N([n];d)^2
\right] & = O\left(
n^{2v_K-3}p^{2e_K-2}
\right)+ O\left(
n^{2(v_K-1)-r}p^{2e_K-2-s}
\right).
\end{align*}
Using~\eqref{eq:s_r}, second summand is
$$O\left(
n^{2(v_K-1)-r}p^{2e_K-2- (\widetilde\Delta(r-1)-1)}
\right) =  O\left( n^{2v_K-3}p^{2e_K-2} \times \Big(n^{1-r} p^{1- \widetilde\Delta(r-1)}\Big) \right)= o(n^{2v_K-3}p^{2e_K-2})$$ using the sparsity of $p$.
\begin{comment}
Since $0<p<1$, every pair with $r\geq2$ contributes at most
\begin{align*}
O\left(
n^{2v_K-2-r}p^{2e_K-2-s}
\right)
&=
O\left(
n^{2v_K-3}p^{2e_K-2}
\cdot n^{1-r}p^{-s}
\right)\leq
O\left(
n^{2v_K-3}p^{2e_K-2}
\cdot n^{1-r}p^{1-\widetilde\Delta(r-1)}
\right).
\end{align*}
Moreover,
\begin{align*}
n^{r-1}p^{\widetilde\Delta(r-1)-1}
&\gg
n^{r-1}
\left(
(n\log n)^{-1/\widetilde\Delta}
\right)^{\widetilde\Delta(r-1)-1}=
n^{1/\widetilde\Delta}
(\log n)^{-(r-1)+1/\widetilde\Delta}
\longrightarrow\infty.
\end{align*}
Thus
$n^{1-r}p^{1-\widetilde\Delta(r-1)}
=
o(1)$,
and hence, for every possible pair $(r,s)$ with $r\geq2$, the
corresponding contribution is
$o\left(
n^{2v_K-3}p^{2e_K-2}
\right)$. Since $K$ is fixed, there are only finitely many possible pairs $(r,s)$.
Combining the cases $r=1$ and $r\geq2$, we obtain
\[
\E\left[
\sum_{d=1}^n \mathcal N([n];d)^2
\right]
\leq
C_K n^{2v_K-3}p^{2e_K-2}.
\]
\end{comment}
Finally, Markov's inequality yields the conclusion.
\end{proof}
\begin{proof}[Proof of \cref{lem:ER-centered-opnorm}]
% It is easily proved by the matrix Bernstein inequality, see for example
% \cite[Theorem~1.4]{tropp2012userfriendly}.
This follows from \cite[Proposition~4.1]{le2018concentration}. Indeed,
\(D=A-\mathbb EA\), where \(A\) is the adjacency matrix of
\(\mathcal G_n\), and the maximal expected degree satisfies
\(d=(n-1)p_n\le np_n\). Since \(np_n\gg\log n\),
\[
\mathbb E\|D\|_{\mathrm{op}}
\lesssim
\sqrt{np_n},
\]
and the result follows by Markov's inequality.
\end{proof}

\begin{proof}[Proof of \cref{lem:hom-to-inj-upper-tail}]
Write $p:=p_n$. Recall that
\[
\mathcal N_{\widetilde H}(S;\mathcal G_n)
=
\sum_{\phi:V(\widetilde H)\hookrightarrow S}
\prod_{\{a,b\}\in E(\widetilde H)}
\mathcal G_n\bigl(\phi(a),\phi(b)\bigr),
\]
where the sum is over injective maps. In particular,
$\mathbb E\mathcal N_{\widetilde H}(S;\mathcal G_n)
=
(|S|)_{\widetilde v}p^{\widetilde e}$.
When an upper-tail result cited below is stated in terms of
unlabelled copies, it applies directly to the labelled count
$\mathcal N_{\widetilde H}(S;\mathcal G_n)$. Indeed, for the fixed graph
$\widetilde H$, the number of injective maps
$\phi:V(\widetilde H)\hookrightarrow S$ whose image equals a given
unlabelled copy is the same for every such copy and depends only on
$\widetilde H$. Consequently, passing from the unlabelled count to
$\mathcal N_{\widetilde H}(S;\mathcal G_n)$ multiplies both the random
count and its corresponding deterministic reference value by the same
fixed positive constant. Therefore, the relative upper-tail events are
unchanged.

We consider separately the cases where $\widetilde H$ is regular and
irregular.

\medskip

\noindent
\textbf{Case 1: $\widetilde H$ is regular.}
Since $\widetilde H$ is connected and $\widetilde\Delta\ge 2$, we have
$\widetilde v\ge 3$. Since $|S|\ge \rho n$ and
$p\gg (n\log n)^{-1/\widetilde\Delta}$, we have
\[
|S|p^{\widetilde\Delta/2}
\gg
\frac{\sqrt n}{\sqrt{\log n}}
\gg
(\log |S|)^{1/(\widetilde v-2)},
\]
uniformly over $|S|\in[\rho n,n]$. Thus, $p$ lies in the regime covered
by
\cite[Theorem 1.2 and Remark 1.6]
{BasakBasu2023UpperTailRegularLocalized}.

Moreover, since $\widetilde H$ is fixed,
\[
(|S|)_{\widetilde v}
=
|S|^{\widetilde v}(1+o(1))
\]
uniformly over $|S|\in[\rho n,n]$. Hence
\[
(1+\varepsilon)(|S|)_{\widetilde v}
\ge
\left(1+\frac{\varepsilon}{2}\right)|S|^{\widetilde v}
\]
throughout the asymptotic regime. It follows from
\cite[Theorem 1.2 and Remark 1.6]
{BasakBasu2023UpperTailRegularLocalized} that
\begin{align*}
&\mathbb P\left(
\mathcal N_{\widetilde H}(S;\mathcal G_n)
>
(1+\varepsilon)p^{\widetilde e}(|S|)_{\widetilde v}
\right)\le
\mathbb P\left(
\mathcal N_{\widetilde H}(S;\mathcal G_n)
>
\left(1+\frac{\varepsilon}{2}\right)
|S|^{\widetilde v}p^{\widetilde e}
\right)\\
&\qquad\le
\exp\left\{
-c_{\varepsilon,\rho}
|S|^2p^{\widetilde\Delta}\log(1/p)
\right\}.
\end{align*}
The last estimate is uniform over $|S|\in[\rho n,n]$.

\medskip

\noindent
\textbf{Case 2: $\widetilde H$ is irregular.}
Let $p_{\widetilde H}^\star(|S|)$ denote the threshold denoted by $p_H$ in
\cite{2026uppertails}, with $H=\widetilde H$ and the graph size there
equal to $|S|$. By its definition,
\[
p_{\widetilde H}^\star(|S|)
=
\max\left\{
|S|^{-1/m(\widetilde H)},
|S|^{-1/\widetilde\Delta-\varepsilon_{\widetilde H}}
\right\},
\qquad
m(\widetilde H)
=
\max\left\{
\frac{e_J}{v_J}:
\varnothing\subsetneq J\subseteq \widetilde H
\right\},
\]
for some $\varepsilon_{\widetilde H}>0$.
Since $m(\widetilde H)\le \widetilde\Delta/2$, there exists
$\eta_{\widetilde H}>0$, depending only on $\widetilde H$, such that
\[
p_{\widetilde H}^\star(|S|)
\le
|S|^{-1/\widetilde\Delta-\eta_{\widetilde H}}.
\]

We first verify that the density assumption in
\cite[Theorem 1.3]{2026uppertails} is satisfied uniformly over
$|S|\in[\rho n,n]$. For every fixed $K>0$,
\[
\frac{p}
{p_{\widetilde H}^\star(|S|)(\log |S|)^K}
\ge
\frac{p\,|S|^{1/\widetilde\Delta+\eta_{\widetilde H}}}
{(\log |S|)^K}.
\]
Since $|S|\in[\rho n,n]$ and
$p\gg(n\log n)^{-1/\widetilde\Delta}$,
the right-hand side tends to infinity uniformly over
$|S|\in[\rho n,n]$. Thus
\[
p\gg
p_{\widetilde H}^\star(|S|)(\log |S|)^K
\]
uniformly over $|S|\in[\rho n,n]$.

We next compare the two logarithmic terms. We have
\[
\log\frac{p}{p_{\widetilde H}^\star(|S|)}
\ge
\Big(
\frac1{\widetilde\Delta}+\eta_{\widetilde H}
\Big)\log |S|
-
\log\frac1p.
\]
Moreover, since
$p\gg(n\log n)^{-1/\widetilde\Delta}$ and
$|S|\in[\rho n,n]$,
\[
\log\frac1p
\le
\left(
\frac1{\widetilde\Delta}+o(1)
\right)\log |S|
\]
uniformly over $|S|\in[\rho n,n]$. Consequently, for some constant
$c_{\rho,\widetilde H}>0$,
\[
\min\Big\{
\log\frac{p}{p_{\widetilde H}^\star(|S|)},
\log\frac1p
\Big\}
\ge
c_{\rho,\widetilde H}\log\frac1p
\]
throughout the asymptotic regime, uniformly over
$|S|\in[\rho n,n]$.

Applying \cite[Theorem 1.3]{2026uppertails} together with
\cite[Proposition 4.3]{2026uppertails} gives
\begin{align*}
&-\log\mathbb P\left(
\mathcal N_{\widetilde H}(S;\mathcal G_n)>
(1+\varepsilon)p^{\widetilde e}(|S|)_{\widetilde v}
\right)\ge
c_{\varepsilon,\rho,\widetilde H}\,
f\,
\min\left\{
\log\frac{p}{p_{\widetilde H}^\star(|S|)},
\log\frac1p
\right\},
\end{align*}
where
$f=f(|S|,p)\ge |S|^2p^{\widetilde\Delta}$
as in \cite[Page 4]{2026uppertails}. This gives
\[
\mathbb P\left(
\mathcal N_{\widetilde H}(S;\mathcal G_n)>
(1+\varepsilon)p^{\widetilde e}(|S|)_{\widetilde v}
\right)
\le
\exp\left\{
-c_{\varepsilon,\rho,\widetilde H}
|S|^2p^{\widetilde\Delta}\log(1/p)
\right\}.
\]
As in the regular case, this estimate is uniform over
$|S|\in[\rho n,n]$.

Finally, by enlarging $C_{\varepsilon,\rho}$ if necessary, the finitely
many remaining values of $n$ are absorbed into the prefactor. Therefore,
for all $n$, uniformly over $S\subseteq[n]$ with $|S|\ge\rho n$,
\[
\mathbb P\left(
\mathcal N_{\widetilde H}(S;\mathcal G_n)>
(1+\varepsilon)p^{\widetilde e}(|S|)_{\widetilde v}
\right)
\le
C_{\varepsilon,\rho}
\exp\left\{
-c_{\varepsilon,\rho,\widetilde H}
|S|^2p^{\widetilde\Delta}\log(1/p)
\right\}.
\]
This proves the result.
\end{proof}

\begin{comment}
\begin{remark}
The assumption
$p\gg (n\log n)^{-1/\widetilde\Delta}$
 is stronger than what is needed for \cref{lem:hom-to-inj-upper-tail}
itself. %Indeed, if $\widetilde H$ is regular, a careful check reveals that the proof only requires
% $p\gg
% n^{-2/\widetilde\Delta}
% (\log n)^{{2}/{(\widetilde\Delta(\widetilde v-2)})}$.
% If $\widetilde H$ is irregular, recall that
% $p_{\widetilde H}^{\star}(n)
% \le
% n^{-1/\widetilde\Delta-\eta_{\widetilde H}}$
% for some $\eta_{\widetilde H}>0$. Hence the same proof applies, for
% example, under
% $p\gg n^{-1/\widetilde\Delta-\delta}$
% for any fixed $0<\delta<\eta_{\widetilde H}$.
We retain the stronger assumption
$p\gg (n\log n)^{-1/\widetilde\Delta}$ since it is needed in the
subsequent application of the lemma (in particular, \cref{lem:}).
\end{remark}
\end{comment}

\subsubsection{Proofs for \cref{OPT}}
\begin{proof}[Proof of \cref{smdef}]
%We first assume $\theta\leq 2$. A direct calculation gives 
Note that,
\[
\mv_{\theta,p}''(x)
=
2\theta-\frac{1}{x(1-x)},
\qquad x\in(0,1).
\]
% Since $x(1-x)\leq 1/4$, if $\theta< 2$, then
% $\mv_{\theta,p}''(x)<0$.
% Hence
% $\mv_{\theta,p}$ is strictly concave on $[0,1]$. Therefore, the
% maximizer is unique.
% Suppose first that $\theta\leq 2$. A direct calculation gives
% \[
% \mv_{\theta,p}''(x)
% =
% 2\theta-\frac{1}{x(1-x)},
% \qquad x\in(0,1).
% \]
% If $\theta<2$, then $\mv_{\theta,p}''(x)<0$ for every
% $x\in(0,1)$, and hence $\mv_{\theta,p}$ is strictly concave.

% If $\theta=2$, then
% \[
% \mv_{2,p}''(x)
% =
% 4-\frac{1}{x(1-x)}
% =
% -\frac{(2x-1)^2}{x(1-x)}
% \leq 0,
% \]
If $\theta \le 2$, $\mv_{\theta,p}''(x)\le0$, with equality only when $(x,\theta)=(1/2, 2)$. %Therefore, for every
%$0<x<y<1$,
%\[ \mv_{\theta,p}'(y)-\mv_{\theta,p}'(x)=\int_x^y \mv_{\theta,p}''(t)\,dt<0.\]
Hence %$\mv_{\theta,p}'$ is strictly decreasing, and
$\mv_{\theta,p}$ is strictly concave, so the maximizer is unique. %Consequently,$\mv_{\theta,p}$ has a unique maximizer whenever $\theta\leq 2$.
Assume now that $\theta>2$. %We may rewrite \[\mv_{\theta,p}(x)=\theta x^2+\mathfrak h(x)+x\log\frac{p}{1-p}+\log(1-p),\]where $\mathfrak h(x):=-x\log x-(1-x)\log(1-x)$.
Set $m:=2x-1$ and define
\[
F(m):=\mv_{\theta,p}\left(\frac{1+m}{2}\right)-\log(1-p),
\qquad m\in[-1,1].
\]
Then
$F(m)-F(-m)=2am$,
where
$a:=\frac12\left(\theta+\log\frac{p}{1-p}\right)$. Since $(\theta,p)\in \Omega$ and $\theta>2$, we have $a \neq 0$.

%We first note that $a\neq 0$. Indeed, $a=0$ is equivalent to $\theta=\log\frac{1-p}{p}$.Since $\theta>2$, this would imply$p<\frac{1}{1+e^2}$ and hence $(\theta,p)\notin\Omega$, a contradiction. 
% For such $p$, the pair $(\theta,p)$ with
% $\theta=\log((1-p)/p)$ is excluded from $\Omega$. Thus, $a\neq 0$.Indeed, $a=0$ is equivalent to
% \[
% \theta=\log\frac{1-p}{p}.
% \]
% Since $\theta>2$, this implies
% $p<1/(1+e^2)$, and hence $(\theta,p)\notin\Omega$,
% a contradiction.
Assume first $a>0$. Then
$F(m)-F(-m)=2am>0$
for every $m >0$, so every global maximizer of $F$ lies in
$[0,1]$. Moreover,
\[
F'(m)
=
\frac{\theta m}{2}+\frac{1}{2}\log\left(\frac{1-m}{1+m}\right)+a,
\qquad
F''(m)
=
\frac{\theta}{2}+\frac{1}{m^2-1}.
\]
Hence, $F'''(m) <0$ for $m>0$. Hence $F''$ is strictly decreasing with $F''(0)>0$ and $F''(m) \rightarrow -\infty$ as $m \rightarrow 1^-$. Consequently $F'$ first increases and then decreases. Since $F^\prime(0)=a>0$ and $\lim_{m\to1^-}F^\prime(m)=-\infty$, $F'$ has exactly one zero in $(0,1)$, implying $F$ has a unique maximizer.

%Since $a>0$, we have $F^\prime(0)=a>0$ and $\lim_{m\to1}F^\prime(m)=-\infty$.  Therefore, $F^\prime$ has an odd number of zeros in $(0,1)$. Suppose $F^\prime$ has at least three zeros in $(0,1)$, then $F^{\prime\prime}$ must have at least two zeros in $(0,1)$.  Clearly, its only root is $\sqrt{1-{2}/{\theta}}\in(0,1)$, so $F^\prime$ has exactly one zero in $(0,1)$. Thus $F$ has a unique maximizer on $[-1,1]$.

% Define
% $\psi(m):=\frac{\theta m}{2}+\frac{1}{2}\log\left(\frac{1-m}{1+m}\right)$,
% so that
% $F'(m)=\psi(m)+a$.

% Since $\theta>2$, the equation $F''(m)=0$ has a unique solution in $(0,1)$, denoted by
% $m_0:=\sqrt{1-\frac{2}{\theta}}\in(0,1)$.
% Therefore, $\psi$ is strictly increasing on $(0,m_0)$ and strictly
% decreasing on $(m_0,1)$. Also,
% \[
% \psi(0)=0,
% \qquad
% \lim_{m\uparrow1}\psi(m)=-\infty.
% \]
% Since $a>0$, we have
% $F'(0)=\psi(0)+a=a>0$.
% Moreover, $\psi(m)+a>0$ on $[0,m_0]$, while
% $\lim_{m\uparrow1}\bigl(\psi(m)+a\bigr)=-\infty$.
% Since $\psi$ is strictly decreasing on $(m_0,1)$, there exists a unique
% $m_*\in(m_0,1)$ such that
% $\psi(m_*)+a=0$.
% Consequently,
% \[
% F'(m)>0 \quad\text{for }m\in[0,m_*),
% \qquad
% F'(m)<0 \quad\text{for }m\in(m_*,1).
% \]
% Thus, $F$ is strictly increasing on $[0,m_*]$ and strictly decreasing
% on $[m_*,1]$. Therefore, $m_*$ is the unique maximizer of $F$ on
% $[0,1]$. Furthermore, for every $m\in[-1,0)$,
% $F(m)<F(-m)\leq F(m_*)$.
% Hence, $m_*$ is also the unique maximizer of $F$ on $[-1,1]$.

If $a<0$, we apply the same argument to the function $m\mapsto F(-m)$,
whose corresponding coefficient $-a>0$. Therefore, $F$, and hence $\mv_{\theta,p}$, again
has a unique maximizer on $[-1,1]$.

%It follows that $\mv_{\theta,p}$ has a unique maximizer on $[0,1]$, which we denote by $s_{\theta,p}$. Consequently, the map $(\theta,p)\longmapsto s_{\theta,p}$ is well-defined on $\Omega$.
\end{proof}
\begin{proof}[Proof of \cref{lem:rootlim}]
We first prove a general fact. Let \((\theta_k,p_k)\in\Omega\) satisfy
\((\theta_k,p_k)\to(\theta_\infty,p_\infty)\in[0,\infty)\times(0,1)\), and suppose
\(s_{\theta_k,p_k}\to s^*\) along a subsequence. Since \(s_{\theta_k,p_k}\)
maximizes \(\mv_{\theta_k,p_k}\) and \(\mv_{\theta_k,p_k}\to
\mv_{\theta_\infty,p_\infty}\) uniformly on \([0,1]\), for every \(x\in[0,1]\),
$\mv_{\theta_\infty,p_\infty}(x)
\le
\mv_{\theta_\infty,p_\infty}(s^*)$.
Hence \(s^*\) is a global optimizer of \(\mv_{\theta_\infty,p_\infty}\).

\begin{itemize}
\item[(i)]
Let \((\theta_k,p_k)\to(\theta_\infty,p_\infty)\in\Omega\). By \cref{c2-1},
\(\mv_{\theta_\infty,p_\infty}\) has a unique optimizer, namely
\(s_{\theta_\infty,p_\infty}\). Therefore every subsequential limit of
\(s_{\theta_k,p_k}\) equals \(s_{\theta_\infty,p_\infty}\), so
\(s_{\theta_k,p_k}\to s_{\theta_\infty,p_\infty}\).

\item[(ii)]
Fix \(0<p<1/(1+e^2)\), and set \(\theta_*:=\log\frac{1-p}{p}\). If
\(\theta_k\to\theta_*\) with \(\theta_k>\theta_*\), then every subsequential
limit of \(s_{\theta_k,p}\) is a global optimizer of \(\mv_{\theta_*,p}\).
By \cref{c2-1}, every global optimizer of
\(\mv_{\theta_*,p}\) is one of $\frac{1 \pm t_{\theta_*,p}}2$,
%\[\frac{1-t_{\theta_*,p}}2,\qquad\frac{1+t_{\theta_*,p}}2,\]
where \(t_{\theta_*,p}>0\) is the unique positive solution of $x=\tanh\Bigg(\frac{x}{2}\log\frac{1-p}{p}\Bigg)$.
Moreover, for \(\theta>\theta_*\), the unique optimizer satisfies
\(s_{\theta,p}>1/2\). Hence the right limit must be
\((1+t_{\theta_*,p})/2\), meaning that
\[
\lim_{\theta\downarrow\theta_*}s_{\theta,p}
=
\frac{1+t_{\theta_*,p}}{2}.
\]
Similarly,
$\lim_{\theta\uparrow\theta_*}s_{\theta,p}
=
\frac{1-t_{\theta_*,p}}{2}$.
The two limits are different, so \(s\) does not admit a continuous extension on \(\overline\Omega\).
\item[(iii)]
% We again use the preliminary compactness argument.
First let
\(p_k\downarrow 1/(1+e^2)\), and set
\(\theta_k=\log\frac{1-p_k}{p_k}\). Then
\((\theta_k,p_k)\to(2,1/(1+e^2))\). Hence every subsequential limit of
\(s_{\theta_k,p_k}\) is a global optimizer of
\(\mv_{2,1/(1+e^2)}\). By \cref{c2-1}, this limiting variational problem has
the unique global optimizer \(1/2\). Therefore
\[
\lim_{p\downarrow 1/(1+e^2)}
s_{\log\frac{1-p}{p},p}
=
\frac12 .
\]

Similarly, if \(p_k\uparrow 1/2\) and
\(\theta_k=\log\frac{1-p_k}{p_k}\), then
\((\theta_k,p_k)\to(0,1/2)\). Every subsequential limit of
\(s_{\theta_k,p_k}\) is a global optimizer of \(\mv_{0,1/2}\). Since
\(\mv_{0,1/2}\) has the unique global optimizer \(1/2\), we get
\[
\lim_{p\uparrow 1/2}
s_{\log\frac{1-p}{p},p}
=
\frac12 .
\]
\item[(iv)] For fixed \(p\in(0,1)\), if \(\theta_k\downarrow0\), then every
subsequential limit of \(s_{\theta_k,p}\) is a global optimizer of
\(\mv_{0,p}\). Since this optimizer is unique and equals \(s_{0,p}\), we have
$\lim_{\theta\downarrow0}s_{\theta,p}=s_{0,p}$.
Moreover, a direct calculation shows $s_{0,p}=p$, since $\mv_{0,p}(x)
=
-x\log\frac{x}{p}
-(1-x)\log\frac{1-x}{1-p}$ which is uniquely maximized at $x=p$.

Now write
$I_p(x):=
x\log\frac{x}{p}
+
(1-x)\log\frac{1-x}{1-p}.$
Then \(\mv_{\theta,p}(x)=\theta x^2-I_p(x)\) and
$I_p(1)=-\log(p)$.

Since \(s_{\theta,p}\) maximizes \(\mv_{\theta,p}\), we have
$\mv_{\theta,p}(s_{\theta,p})\ge \mv_{\theta,p}(1)$.
Hence
\[
\theta s_{\theta,p}^2-I_p(s_{\theta,p})
\ge
\theta-I_p(1)
=
\theta+\log(p) .
\]
Rearranging gives
\[
\theta(1-s_{\theta,p}^2)
\le
I_p(1)-I_p(s_{\theta,p})
\le
I_p(1)
=
-\log(p) .
\]
Therefore $0\le 1-s_{\theta,p}
\le
1-s_{\theta,p}^2
\le
-\log(p)/\theta$.
Letting \(\theta\to\infty\), we obtain
$\lim_{\theta\to\infty}s_{\theta,p}=1$.
% ```

% \item[(iii)]
% The proof is the same. If \(p\downarrow 1/(1+e^2)\) and
% \(\theta=\log\frac{1-p}{p}\), then \((\theta,p)\to(2,1/(1+e^2))\), and every
% subsequential limit of \(h(\theta,p)\) is a global optimizer of
% \(\mv_{2,1/(1+e^2)}\). Since this optimizer is unique, the limit exists.
% Similarly, if \(p\uparrow1/2\), then \((\theta,p)\to(0,1/2)\), and every
% subsequential limit is the unique global optimizer of \(\mv_{0,1/2}\). Finally,
% for fixed \(p\in(0,1)\), if \(\theta\downarrow0\), every subsequential limit of
% \(h(\theta,p)\) is the unique global optimizer of \(\mv_{0,p}\); hence
% \[
% \lim_{\theta\downarrow0}h(\theta,p)=s_{0,p}.
% \]
\item[(v)] Fix $p\in(0,1)$ and let $\theta_1<\theta_2$. We first note that
$s_{\theta_1,p}\neq s_{\theta_2,p}$. Indeed, suppose that
$s_{\theta_1,p}=s_{\theta_2,p}=:s$. Since any maximizer of
$\mv_{\theta,p}$ lies in $(0,1)$, the first-order condition gives
\[
2\theta_i s
=
\log\frac{s(1-p)}{p(1-s)},
\qquad i=1,2.
\]
Therefore,
$2(\theta_2-\theta_1)s=0$,
which is a contradiction since $\theta_1<\theta_2$ and $s>0$.
Now since \(s_{\theta_i,p}\) uniquely maximizes \(\mv_{\theta_i,p}\), we have $\mv_{\theta_1,p}(s_{\theta_1,p})>\mv_{\theta_1,p}(s_{\theta_2,p})$ and $\mv_{\theta_2,p}(s_{\theta_2,p})>\mv_{\theta_2,p}(s_{\theta_1,p})$.
Adding and cancelling common terms gives
$\theta_1s_{\theta_1,p}^2+\theta_2s_{\theta_2,p}^2>
\theta_1s_{\theta_2,p}^2+\theta_2s_{\theta_1,p}^2$,
or equivalently \((\theta_2-\theta_1)(s_{\theta_2,p}^2-s_{\theta_1,p}^2)>0\).
Hence the map $\theta\mapsto s_{\theta,p}$ is strictly increasing on $J$.
\end{itemize}

\end{proof}
\begin{proof}[Proof of \cref{Wc}]
Define $J_p(t)
:=
\sqrt t\log\frac{\sqrt t}{p}
+
(1-\sqrt t)\log\frac{1-\sqrt t}{1-p}$, $t\in[0,1]$. A direct calculation gives
\[
J_p''(t)
=
\frac{1}{4t^{3/2}}
\left[
\frac{1}{1-\sqrt t}
-
\log\frac{\sqrt t(1-p)}{p(1-\sqrt t)}
\right],
\qquad t\in(0,1).
\]
In particular,
$J_p''\left(\frac14\right)
=
2\left(2-\log\frac{1-p}{p}\right)<0$,
where the inequality follows from
$p<(1+e^2)^{-1}$. Hence $J_p$ is strictly concave on some open
interval $U$ containing $1/4$.

Choose $\varepsilon>0$ sufficiently small so that $a,b\in U$, where
$a:=1/4-\varepsilon$, and
$b:=1/4+\varepsilon$. Then
${(a+b)}/{2}=1/4$
and strict concavity gives
\begin{equation}\label{eq:strict-concavity-Jp}
\frac12J_p(a)+\frac12J_p(b)
<
J_p\left(\frac14\right).
\end{equation}

Define
$f_{a,b}(u)
:=
\sqrt a\,\mathbf 1_{[0,1/2]}(u)
+
\sqrt b\,\mathbf 1_{(1/2,1]}(u)$.
Since $W_\circ=2$ on the two diagonal blocks and vanishes on the
off-diagonal blocks,
\[
G_{W_\circ,\mathbf 1_1}(f_{a,b})
=
2\left(\int_0^{1/2}\sqrt a\,du\right)^2
+
2\left(\int_{1/2}^1\sqrt b\,du\right)^2
=
\frac{a+b}{2}
=
\frac14.
\]
Thus, $f_{a,b}$ is feasible. Moreover, its objective value is
\[
\frac12J_p(a)+\frac12J_p(b)
<
J_p\left(\frac14\right),
\]
by \eqref{eq:strict-concavity-Jp}. On the other hand, if $f\equiv C$ is feasible, then
\[
\frac14
=
G_{W_\circ,\mathbf 1_1}(f)
=
C^2\int_{[0,1]^2}W_\circ(x,y)\,dx\,dy
=
C^2.
\]
Since $C\in[0,1]$, necessarily $C=1/2$. The objective value of this
constant function is precisely
$J_p\left(\frac14\right)$.
Therefore, the only constant feasible function has strictly larger
objective value than the nonconstant feasible function $f_{a,b}$.
Consequently, no optimizer is constant. The strict-concavity argument above is the $\gamma=2$ specialization of
\cite[Lemma A.1]{lubetzky2015replica}, and the subsequent two-block
construction %is adapted from 
\cite[Example 1]{SomabhaBhattacharya2020}.
\end{proof}

\bibliographystyle{apalike}
\bibliography{template, Newtemplate}

\end{document}